\newcommand{\N}{{\mathbb N}}
\newcommand{\R}{{\mathbb R}}
\newtheorem{theorem}{Theorem}[section]
\newtheorem{cor}[theorem]{Corollary}
\newtheorem{corollary}[theorem]{Corollary}
\newtheorem{thm}[theorem]{Theorem}
\newtheorem{definition}[theorem]{Definition}
\newtheorem{remark}[theorem]{Remark}
\newtheorem{lemma}[theorem]{Lemma}
\newtheorem{prop}[theorem]{Proposition}
\newtheorem{proposition}[theorem]{Proposition}
\newtheorem{claim}[theorem]{Claim}
\begin{document}

\title{Generalizations of Ekeland-Hofer and Hofer-Zehnder\\ symplectic capacities
and applications}
\date{April 3, 2023}
\author{Rongrong Jin and Guangcun Lu
\thanks{Corresponding author
\endgraf \hspace{2mm} Partially supported
by the NNSF  11271044 of China.
\endgraf\hspace{2mm} 2010 {\it Mathematics Subject Classification.}
 53D35, 53C23 (primary), 70H05, 37J05, 57R17 (secondary).}}
 \maketitle \vspace{-0.3in}

\abstract{
 In this paper we construct analogues of Ekeland-Hofer and Hofer-Zehnder symplectic capacities based  on a class of Hamiltonian boundary value problems  motivated by Clarke's and Ekeland's work,  and study generalizations of some important results about the original two capacities  (for example,  the famous Weinstein conjecture,  representation formula for $c_{\rm EH}$ and $c_{\rm HZ}$, and a theorem by Evgeni Neduv).
   } \vspace{-0.1in}
\medskip\vspace{12mm}

\maketitle 

\noindent{\it Keywords}: Ekeland-Hofer symplectic capacity; Hofer-Zehnder symplectic capacity; Weinstein conjecture\vspace{2mm}

\tableofcontents

\section{Introduction and main results}\label{section:1}
\setcounter{equation}{0}

A. Weinstein \cite{We78} and P. Rabinowitz \cite{Ra78}
proved, respectively, the existence of periodic orbits on a convex energy surface
and a strictly starshaped hypersurface of a Hamiltonian system in $\mathbb{R}^{2n}$.
Based on these, in 1978 A. Weinstein \cite{We79} proposed his famous conjecture: {\it every hypersurface
of contact type in symplectic manifolds carries a closed characteristic}.
Here a compact connected smooth hypersurface $\mathcal{S}$ in a symplectic manifold
$(M, \omega)$  is said to be of {\bf contact type} if there exists a vector
field $X$ defined in an open neighborhood $U$ of  $\mathcal{S}$ in $M$
which is transverse to $S$ and {satisfies $L_X\omega=\omega$ in $U$.
Such a vector field $X$  is called a {\bf Liouville field}}.
(\textsf{All compact manifolds or hypersurfaces in this paper are considered to
be boundaryless without special statements}.)
A {\bf closed characteristic}  of $\mathcal{S}$ is an embedded circle
$P\subset \mathcal{S}$ satisfying $TP=\mathcal{L}_\mathcal{S}{|}_P$, where
$\mathcal{L}_\mathcal{S}\rightarrow \mathcal{S}$ is {the} distinguished line bundle
defined by
$$\mathcal{L}_\mathcal{S}={\Big\{}(x,\xi)\in T\mathcal{S}\ {\Big |}\
 {\omega}_x(\xi,\eta)=0\;\hbox{for all}\;
\eta\in T_{x}\mathcal{S}{\Big \}}.
$$
In 1986, C.Viterbo \cite{Vi87} first proved the Weinstein conjecture in $(\mathbb{R}^{2n},\omega_0)$
with the global variational methods for periodic solutions of general Hamiltonian systems
initiated by P. Rabinowitz \cite{Ra78, Ra79} and A. Weinstein \cite{We78}.
Hereafter $\omega_0$ denotes the standard symplectic structure given by
$\sum_idq_i\wedge dp_i$ with the linear coordinates $(q_1,\cdots,q_n,p_1,\cdots,p_n)$.

Motivated by the above studies,
 I.~Ekeland and H.~Hofer \cite{EH89}
introduced a class of symplectic invariants (called symplectic capacities)
for subsets in $(\mathbb{R}^{2n},\omega_0)$ and reproved
the famous Gromov's nonsqueezing theorem in \cite{Gr85} and
{a $C^0$ rigidity theorem due to Gromov and Eliashberg.}
H. Hofer and E. Zehnder \cite{HoZe90} constructed a symplectic capacity
for any symplectic manifold, called the Hofer-Zehnder capacity.
The second named author of this article introduced the concept of  pseudo symplectic
capacities which is a mild generalization of symplectic
capacities, constructed a pseudo symplectic capacity  as a
generalization of the Hofer-Zehnder capacity and
established an estimate for it in terms of Gromov-Witten invariants (\cite{Lu3}).

For a symplectic matrix of order $2n$, $\Psi\in{\rm Sp}(2n,\mathbb{R})$,
as a generalization of the existence of closed characteristics  on the boundary $\mathcal{S}$ of a compact and convex set $D$ in $(\mathbb{R}^{2n},\omega_0)$ containing the origin in its interior,  Clarke \cite{Cl82, Cl83}
 proved: \textsf{ there exists a nonconstant  absolutely continuous curve $z:[0,T]\to \mathcal{S}$ for some $T>0$  such that  $J\dot{z}(t)\in \partial j_{D}(z(t))\;\hbox{a.e.}$ and that $z(T)=\Psi z(0)$.} Here  $\partial j_D$ is subdifferential of  the Minkowski functional $j_D$ of $D$ given by
 $$
 j_D(x)=\inf\left\{\lambda>0 \,\bigg|\, \frac{x}{\lambda}\in D\right\},
 $$
 and $J$ is the standard  complex structure on $\mathbb{R}^{2n}$ given by the matrix
\begin{equation}\label{e:standcompl}
J=\left(
           \begin{array}{cc}
             0 & -I_n \\
             I_n & 0 \\
           \end{array}
         \right)
\end{equation}
with the linear coordinates $(q_1,\cdots,q_n,p_1,\cdots,p_n)$ on
 $\mathbb{R}^{2n}$, where $I_n$ denotes the identity matrix of order $n$.
(We also use $J$ to denote the  standard  complex structure on $\mathbb{R}^{2k}$ for different $k\in\mathbb{N}_+$  without confusions.)

Clarke's result means that any linear symplectic transformation is realized on some orbit of any convex
energy surface, which was, in \cite[page 356]{Cl82}, viewed as a kind of converse
to the Goldstein's famous statement ``{\it the motion of a mechanical system corresponds to the
continuous evolution or unfolding of a canonical (i.e., symplectic) transformation}" \cite[\S8.6]{Go50}.
We generalize Clarke's results by constructing some analogues of Ekeland-Hofer and Hofer-Zehnder symplectic
capacities associated to symplectomorphisms. The finiteness of these "analogues"  is closely related to
boundary value problem of Hamiltonian systems, the non-periodic case.
The main difficulties in the constructions of these analogues are:
 \begin{description}
 \item[(i)] how to adapt the classical definitions such as admissible-function class, admissible deformations and  nonresonant conditions in \cite{EH89, HoZe90}
 to fit them in the present non-periodic case;
 \item[(ii)] how to solve new problems arisen in the related proofs by following the standard method in \cite{EH89, HoZe90}.
 \end{description}

{We introduce the following definitions about characteristics.}

\begin{definition}\label{def:character}
{\rm {\bf (i)} For a smooth hypersurface $\mathcal{S}$ in a symplectic manifold $(M, \omega)$
and $\Psi\in{\rm Symp}(M, \omega)$, a $C^1$ embedding $z$ from $[0,T]$ {\rm (for some $T>0$)}
 into $\mathcal{S}$ is called a $\Psi$-{\bf characteristic} on $\mathcal{S}$
 if $z(T)=\Psi z(0)$ and $\dot{z}(t)\in(\mathcal{L}_\mathcal{S})_{z(t)}\;\forall t\in [0,T]$.
Clearly, $z(T-\cdot)$ is a $\Psi^{-1}$-characteristic, and for any $\tau>0$ the embedding
$[0, \tau T]\to \mathcal{S},\;t\mapsto z(t/\tau)$ is also a
$\Psi$-characteristic.

\noindent{\bf (ii)} If $\mathcal{S}$ is the boundary of a compact convex set $D$ with nonempty interior in $(\mathbb{R}^{2n},\omega_0)$, and $\Psi\in{\rm Sp}(2n,\mathbb{R})$, corresponding to the definition of closed characteristics on $\mathcal{S}$ in Definition~1 of \cite[Chap.V,\S1]{Ek90} we say
a nonconstant  absolutely continuous curve $z:[0,T]\to \mathbb{R}^{2n}$ (for some $T>0$)
  to be a {\bf generalized characteristic} on $\mathcal{S}$
  if $z([0,T])\subset \mathcal{S}$ and
    $\dot{z}(t)\in JN_\mathcal{S}(z(t))\;\hbox{a.e.}$, where
    $N_\mathcal{S}(x)=\{y\in\mathbb{R}^{2n}\,|\, \langle u-x, y\rangle\le 0\;\forall u\in D\}$
    is the normal cone to $D$ at $x\in\mathcal{S}$.
    Moreover, if $z$ satisfies $z(T)=\Psi z(0)$ additionally, we call $z$ a {\bf generalized $\Psi$-characteristic} on $\mathcal{S}$. }
\end{definition}

Clearly, if $\mathcal{S}$ in (ii) is also  $C^{1,1}$  then generalized $\Psi$-characteristics on $\mathcal{S}$
are $\Psi$-characteristics  up to reparametrization. The notion of generalized characteristic might be defined on general
symplectic manifolds via nonsmooth analysis on manifolds,  but this is outside the scope of this paper
and would appear elsewhere.

The {\bf action} of an absolutely continuous curve
$x:[0,T]\rightarrow \mathbb{R}^{2n}$ is defined by
\begin{equation}\label{e:action1}
A(x)=\frac{1}{2}\int_0^T\langle -J\dot{x},x\rangle dt.
\end{equation}
Denote
  \begin{equation}\label{e:action1+}
  \Sigma^{\Psi}_{\mathcal{S}}=\{A(x)>0\,|\,x\;\text{is a generalized}\;\Psi\hbox{-characteristic on}\;\mathcal{S} \}.
  \end{equation}

The above Clarke's result may be formulated as: the boundary of a compact convex set $D$ in $(\mathbb{R}^{2n},\omega_0)$ containing the origin in its interior carries a generalized $\Psi$-{\bf characteristic}.
  Motivated by this and the Weinstein conjecture (\cite{We79})
 we state the following generalized version of the latter.\\

 \noindent{\bf Question $\Psi$}.\quad{\it
Let $\mathcal{S}$ be a hypersurface  of contact type in a symplectic manifold $(M, \omega)$
and $\Psi\in{\rm Symp}(M, \omega)$ such that $\mathcal{S}\cap {\rm Fix}(\Psi)\ne\emptyset$.
 Under what condition does there exist
a $\Psi$-{\rm characteristic} on $\mathcal{S}$ ?} \\

This question is closely related to the following.\\

\noindent{\bf Leaf-wise intersection problem}:
{\it Given a compact hypersurface $\mathcal{S}$ and a symplectomorphism $\Psi\in{\rm Symp}(M,\omega)$,
under what conditions on $\Psi$ and $\mathcal{S}$ {does there exist}  a point $x\in\mathcal{S}$
such that $\Psi x$ lies on {the} leaf $L_{\mathcal{S}}(x)$ through $x$ ? }Such $x$ is called a leaf-wise intersection point for $\Psi$ on $\mathcal{S}$.\\

Such a question was first addressed by Moser \cite{Mos78}.
Since then various forms or generalizations of it were studied.
See \cite{EH89a, Dra08, GinGu15}, \cite[\S1.1]{AF10a} and \cite[\S1.4]{Kan13} and references therein for a brief history of these problems.

Actually,  the above leaf-wise intersection question for a hypersurface $\mathcal{S}$  of contact type
 is slightly weaker than  Question $\Psi$. Indeed, it is clear that a $\Psi$-characteristic $\gamma:[0,T]\rightarrow \mathcal{S}$
yields a leaf-wise intersection point $\gamma(0)$.
Conversely, if $x$ is a leaf-wise intersection point, we take a
 smooth function $H:M\to\mathbb{R}$  having $\mathcal{S}$  as a regular energy surface,
 and obtain $L_{\mathcal{S}}(x)=\{\varphi^t(x)\,|\,t\in\mathbb{R}\}$ and so $\Psi(x)=\varphi^\tau(x)$ for some $\tau\in\mathbb{R}$,
where $\varphi^t$ is the Hamiltonian flow of $H$. If $\tau>0$, then $[0,\tau]\ni t\to\varphi^t(x)$
is a $\Psi$-characteristic on $\mathcal{S}$. If $\tau<0$ then $y:[0, -\tau]\to \mathcal{S},\;t\mapsto\varphi^{-t}(x)$ satisfies
$\dot{y}(t)=-X_H(y(t))=X_{-H}(y(t))$ and so it is a $\Psi$-characteristic on $\mathcal{S}$.
However,  it is possible that $\tau=0$, i.e., $\Psi(x)=x$, and we cannot get a $\Psi$-characteristic on $\mathcal{S}$ in this case.

As applications of our generalized capacities,
some answers to Question $\Psi$ and the leaf-wise intersection question above
are given in Corollaries~\ref{cor:EHcontact1},\ref{cor:EHcontact2}
and Section~\ref{sec:1.2}.
There exist several methods to study the Weinstein conjecture,
which were developed  based on pseudo-holomorphic curve theory,
for example, Gromov-Witten invariants,  symplectic (co)homology and contact homology.
Our future work is to develop the corresponding theories matching to Question $\Psi$.

{\bf Notations and conventions.} A domain in $\mathbb{R}^m$ is a connected open subset of $\mathbb{R}^m$.
For $r>0$ and $p=(p_1,\cdots,p_m)\in \mathbb{R}^m$ we write
\begin{eqnarray*}
 &&B^{m}(p, r)= \left\{(x_1,\cdots,x_m)\in\mathbb{R}^m\,\Big|\,\sum^m_{i=1}(x_i-p_i)^2<r^2\right\},\\
&&B^{m}(r):=B^{m}(0, r)\quad\hbox{and}\quad B^{m}:=B^{m}(1).
\end{eqnarray*}
For $R>0$  we write as usual
\begin{eqnarray*}
Z^{2n}(R)= \{(q,p)\in\mathbb{R}^n\times\mathbb{R}^n\,\, |  \, q_1^2+p_1^2< R^2 \}
  \end{eqnarray*}
with respect to the symplectic coordinates $(q,p)=(q_1,\cdots,q_n, p_1,\cdots,p_n)$ of
 $(\mathbb{R}^{2n},\omega_0)$.

\subsection{An extension of  Hofer-Zehnder symplectic capacity}\label{sec:1.HZ}

Let us recall the definition of the Hofer-Zehnder symplectic capacity.
Given a symplectic manifold $(M,\omega)$ let $\mathcal{H}(M,\omega)$ denote the set of  smooth functions $H \colon M\to\R$ for which
there exists an nonempty open subset $U=U(H)$ and a compact subset
$K=K(H)\subset M\setminus\partial M$ such that
\begin{description}
  \item[(i)] $H|_U=0$,
\item[(ii)] $H|_{M\setminus K}=m(H):=\max H$,
 \item[(iii)]  $0\leq H\leq m(H)$.
\end{description}
Denote by $X_H$ the Hamiltonian vector field defined by {$\omega(X_H, \cdot)=-dH$}.
A function $H\in \mathcal{H}(M,\omega)$  is called {\bf admissible} if
$\dot x=X_H(x)$ has no nonconstant periodic solutions of period less than or equal to $1$.
 Let $\mathcal{H}_{ad}(M,\omega)$ be the set of admissible Hamiltonians on
$(M,\omega)$. The {\bf Hofer-Zehnder symplectic capacity}
$c_{\rm HZ}(M,\omega)$ of $(M,\omega)$ was defined in \cite{HoZe90} by
$$
c_{\rm HZ}(M,\omega) = \sup \left\{\max H\,|\, H\in \mathcal{H}_{ad}(M,\omega) \right\}.
$$
This symplectic invariant may be used to establish the existence of closed
characteristics on an energy surface, {and} $c_{\rm HZ}(M,\omega)<\infty$ implies
that the Weinstein conjecture holds in $(M,\omega)$.

 Given a $\Psi\in{\rm Symp}(M,\omega)$ with ${\rm Fix}(\Psi)\ne\emptyset$, let
$$
\mathcal{H}^\Psi(M,\omega)=\{H\in \mathcal{H}(M,\omega)\,|\, U\cap {\rm Fix}(\Psi)\neq \emptyset\},
$$
where $U=U(H)$ is as in (i)-(iii). We call $H\in\mathcal{H}^\Psi(M,\omega)$
$\Psi$-{\bf admissible} if all solutions $x:[0, T]\to M$ of the Hamiltonian boundary value problem
\begin{equation}\label{bvp}
   \left\{
   \begin{array}{l}
     \dot{x}=X_H(x), \\
     x(T)=\Psi x(0)
   \end{array}
   \right.
\end{equation}
with $0<T\le 1$ are constant.
The set of all such $\Psi$-admissible Hamiltonians is denoted by
 $\mathcal{H}_{ad}^{\Psi}(M,\omega)$. As an analogue of the Hofer-Zehnder capacity of $(M, \omega)$
we call
\begin{equation}\label{cap}
c^\Psi_{\rm HZ}(M,\omega):=\sup \{\max H\,|\, H\in \mathcal{H}_{ad}^{\Psi}(M,\omega)\}
\end{equation}
 {\bf $\Psi$-Hofer-Zehnder capacity} (Abb., $\Psi$-HZ capacity)  or {\bf Hofer-Zehnder capacity relative to $\Psi$} of $(M,\omega)$.
Moreover, for an open subset $O\subset M$ with $O\cap{\rm Fix}(\Psi)\ne\emptyset$, we also define the $\Psi$-HZ capacity of $O$ by
\begin{equation}\label{cap+}
c^\Psi_{\rm HZ}(O,\omega)=\sup \{\max H\,|\, H\in \mathcal{H}_{ad}^{\Psi}(O,\omega)\},
\end{equation}
where $\mathcal{H}_{ad}^{\Psi}(O,\omega)$ consists of
{$H\in\mathcal{H}(O,\omega)$ such that $U(H)\cap{\rm Fix}(\Psi)\ne\emptyset$ and that the boundary value problem
(\ref{bvp}) has a nonconstant solution $x:[0, T]\to O$ implies $T>1$.}
It is not hard to check that { $c^{\Psi|_O}_{\rm HZ}(O,\omega)=c^\Psi_{\rm HZ}(O,\omega)$}
if $\Psi(O)=O$, where
$\Psi|_O$ is viewed as an element in ${\rm Symp}(O, \omega)$.
Moreover, if $\Psi=id_M$ we have clearly $c^\Psi_{\rm HZ}(M,\omega)=c_{\rm HZ}(M,\omega)$
and $c^\Psi_{\rm HZ}(O,\omega)=c_{\rm HZ}(O,\omega)$ for any open subset $O\subset M$.
{As $c_{\rm HZ}$, it follows immediately from the above definition that  $c^\Psi_{\rm HZ}$ has inner regularity}, i.e., for any precompact open subset
$O\subset M$ with $O\cap{\rm Fix}(\Psi)\ne\emptyset$, we have
\begin{equation}\label{cap+++}
c^\Psi_{\rm HZ}(O,\omega)=\sup\{c^\Psi_{\rm HZ}(K,\omega)\,|\, K\;\hbox{open},\;K\cap{\rm Fix}(\Psi) \ne\emptyset,\;
\overline{K}\subset O\}.
\end{equation}

It follows immediately from the definition that $c^\Psi_{\rm HZ}$ has the following properties:

\begin{proposition}\label{MonComf}
\begin{description}
  \item[(i)] {\rm (Conformality)}. $c^\Psi_{\rm HZ}(M,\alpha\omega)=\alpha c^\Psi_{\rm HZ}(M,\omega)$ for any $\alpha\in\mathbb{R}_{>0}$, and
    $c^{\Psi^{-1}}_{\rm HZ}(M,\alpha\omega)=-\alpha c^\Psi_{\rm HZ}(M,\omega)$ for any $\alpha\in\mathbb{R}_{<0}$.
\item[(ii)] {\rm (Monotonicity)}.
Suppose that $\Psi_i\in{\rm Symp}(M_i,\omega_i)$ ($i=1,2$).
If there exists a symplectic embedding
$\phi:(M_1,\omega_1)\to (M_2,\omega_2)$ of codimension zero such that $\phi\circ\Psi_1=\Psi_2\circ\phi$,
then for open subsets $O_i\subset M_i$ with $O_i\cap{\rm Fix}(\Psi_i)\ne\emptyset$ ($i=1,2$) and $\phi(O_1)\subset O_2$,
it holds that $c^{\Psi_1}_{\rm HZ}(O_1,\omega_1)\le c^{\Psi_2}_{\rm HZ}(O_2,\omega_2)$.
\end{description}
 \end{proposition}

 Clearly  Proposition~\ref{MonComf}(ii) shows that $c^{\Psi}_{\rm HZ}(M,\omega)$ is invariant for the centralizer
 of $\Psi$ in ${\rm Symp}(M,\omega)$, denoted by
 ${\rm Symp}_{\Psi}(M,\omega):=\{\phi\in {\rm Symp}(M,\omega)\,|\,\phi\circ\Psi=\Psi\circ\phi\}$
(i.e., the stabilizer at $\Psi$ for the adjoint action on ${\rm Symp}(M,\omega)$).
Moreover, for any $\Psi\in{\rm Symp}(M,\omega)$ and
 any open subset $O\subset M$ with $O\cap{\rm Fix}(\Psi)\ne\emptyset$, (ii) also implies
\begin{equation}\label{e:inv}
c^\Psi_{\rm HZ}(O,\omega)=c_{\rm HZ}^{\Phi\circ\Psi\circ\Phi^{-1}}(\Phi(O),\omega)\quad\forall\Phi\in {\rm Symp}(M,\omega).
\end{equation}

{In this paper, we mainly consider the standard linear symplectic space $(\mathbb{R}^{2n},\omega_0)$ and its linear symplectomorphisms.
We make the following conventions: }each symplectic matrix $\Psi\in{\rm Sp}(2n,\mathbb{R})$
is identified with the linear symplectomorphism on $(\mathbb{R}^{2n},\omega_0)$
which has the representing matrix $\Psi$ under the standard symplectic basis
of $(\mathbb{R}^{2n},\omega_0)$, $(e_1,\cdots,e_n,f_1,\cdots,f_n)$,
where the $i$-th {(resp. $(n+i)$-th)} coordinate of $e_i$ (resp. {$f_{i}$}) is $1$ and other coordinates are zero.

The following continuity holds for $c^{\Psi}_{\rm HZ}$ where $\Psi\in{\rm Sp}(2n,\mathbb{R})$.
\begin{proposition}\label{prop:convconti}
 For a bounded convex domain $A\subset\mathbb{R}^{2n}$, suppose that
$\Psi\in{\rm Sp}(2n, \mathbb{R})$ satisfies $A\cap{\rm Fix}(\Psi)\ne\emptyset$. Then for every $\varepsilon>0$ there exists some $\delta>0$ such that
 for any bounded convex domain $O\subset\mathbb{R}^{2n}$ intersecting with ${\rm Fix}(\Psi)$, it holds that
\begin{equation}\label{cap4+}
 |c^\Psi_{\rm HZ}(O,\omega_0)-c^\Psi_{\rm HZ}(A,\omega_0)|\le
\varepsilon
\end{equation}
provided that $A$ and $O$ have Hausdorff distance $d_{\rm H}(A,O)<\delta$.
\end{proposition}

\begin{proof}
Let $p\in A\cap{\rm Fix}(\Psi)$. Replacing $A$ and $O$
with $A-p$ and $O-p$ respectively, we may assume $0\in A$. For any $0<\epsilon\ll1$,
by \cite[Lemma~1.8.14]{Sch93} there exists $\delta>0$ such that any bounded convex domain $O\subset\mathbb{R}^{2n}$
with $d_{\rm H}(A,O)<\delta$ satisfies
$$
(1-\epsilon)A\subset O\subset (1+\epsilon)A.
$$
Then the result easily follows from Proposition~\ref{MonComf}(i)--(ii).
\end{proof}

As in \cite{Ba95} we can also get more results on continuity of $c^\Psi_{\rm HZ}$.
For example, as in the proof of \cite[Proposition~2.3]{Ba95} we have
the following outer regularity of $c^\Psi_{\rm HZ}$.
Let $\mathcal{S}$ be a smooth connected compact hypersurface of restricted contact type in $\mathbb{R}^{2n}$
with respect to a global Liouville vector field
    $X$ on $\mathbb{R}^{2n}$ and let $B_{\mathcal{S}}$ be the bounded component of
$\mathbb{R}^{2n}\setminus \mathcal{S}$. Suppose that
$\Psi\in{\rm Sp}(2n, \mathbb{R})$ satisfies
$B_{\mathcal{S}}\cap{\rm Fix}(\Psi)\ne\emptyset$ and that
$X(\Psi(x))=\Psi(X(x))$ for all $x$ near $\mathcal{S}$.
Then
$$
c^\Psi_{\rm HZ}(B_{\mathcal{S}},\omega_0)=\inf\{c^\Psi_{\rm HZ}(V,\omega_0)\,|\,V\subset \mathbb{R}^{2n}\;\hbox{is open and}\;\overline{B_{\mathcal{S}}}\subset V\}.
$$

One of the main results of this paper is the following analogue of
the representation formula for $c_{\rm HZ}$ due to Hofer and Zehnder \cite[Propposition~4]{HoZe90}.

\begin{thm}\label{th:convex}
For $\Psi\in{\rm Sp}(2n,\mathbb{R})$, let $D\subset \mathbb{R}^{2n}$ be a convex bounded domain containing a fixed point $p$ of $\Psi$ and
with boundary $\mathcal{S}=\partial D$. Then
there is a generalized $\Psi$-characteristic $x^{\ast}$ on $\mathcal{S}$ such that
\begin{eqnarray}\label{e:action2}
A(x^{\ast})&=&\min\{A(x)>0\,|\,x\;\text{is a generalized}\;\Psi\hbox{-characteristic on}\;\mathcal{S}\}\\
&=&c^\Psi_{\rm HZ}(D,\omega_0).\label{e:action-capacity1}
\end{eqnarray}
If $\mathcal{S}$ is of class $C^{1,1}$, (\ref{e:action2}) and (\ref{e:action-capacity1}) become
   \begin{equation}\label{e:action-capacity2}
   c^\Psi_{\rm HZ}(D,\omega_0)=A(x^{\ast})=\inf\{A(x)>0\,|\,x\;\text{is a}\;\Psi\hbox{-characteristic on}\;\mathcal{S}\}.
   \end{equation}
   \end{thm}

The proof of this theorem is in Section~\ref{sec:convex}.

\begin{remark}\label{rm:carrier}
{\rm A generalized $\Psi$-characteristic $x^{\ast}$ on $\mathcal{S}$  satisfying
(\ref{e:action2})-(\ref{e:action-capacity1}) is called a {\bf $c^\Psi_{\rm HZ}$-carrier} for $D$.
The proof of Theorem~\ref{th:convex} also shows that
a generalized $\Psi$-characteristic  on $\mathcal{S}$ is a $c^\Psi_{\rm HZ}$-carrier for $D$ if and only if
it may be reparametrized as a solution $x:[0,T]\to \mathcal{S}$ of
$-J\dot{x}^\ast(t)\in  \partial H(x^\ast(t))$ with $T=c^\Psi_{\rm HZ}(D,\omega_0)$ and satisfying
$x(T) =\Psi(x(0))$, where $H=j^2_D$.
Since $\{\partial H(x)|x\in\mathcal{S}\}$ is a bounded set in $\mathbb{R}^{2n}$,
 it follows from Arzela-Ascoli theorem that
all $c^\Psi_{\rm HZ}$-carriers for $D$ form a compact subset in $C^0([0,T],\mathcal{S})$
(and $C^1([0,T],\mathcal{S})$ if $\mathcal{S}$ is $C^1$), where $T=c^\Psi_{\rm HZ}(D,\omega_0)$.}
\end{remark}

\begin{remark}
{\rm Clearly, Theorem~\ref{th:convex} implies Clarke's main result in \cite{Cl82}.
When $\Psi=I_{2n}$ and  the boundary $\mathcal{S}=\partial D$ is
smooth, Hofer and Zehnder \cite[Proposition~4]{HoZe90} proved Theorem~\ref{th:convex},
and then K\"unzle \cite{Ku90, Ku96, Ku97} removed the smoothness assumption of $\mathcal{S}$
(also see Artstein-Avidan and  Ostrover \cite{AAO14} for a different proof).}
\end{remark}

Fix a symplectic  matrix  $\Psi\in{\rm Sp}(2n,\mathbb{R})$. Define
  \begin{equation}\label{e:g}
   g^{\Psi}:\mathbb{R}\rightarrow \mathbb{R}, \,s\mapsto \det (\Psi-e^{sJ}),
   \end{equation}
 where  $e^{tJ}=\sum^\infty_{k=0}\frac{1}{k!}t^k J^k$.
 The set of zero points of  $g^{\Psi}$ in $(0, 2\pi]$
 is a nonempty finite set.  Denote by
 \begin{equation}\label{e:TPsi}
\mathfrak{t}(\Psi)
 \end{equation}
 the smallest zero point of $g^{\Psi}$ in $(0, 2\pi]$.
Then $\mathfrak{t}(I_{2n})=2\pi$ and  $\mathfrak{t}(-I_{2n})=\pi$.(See Lemma~\ref{zeros}.)

As a consequence of Theorem~\ref{th:convex} we get
\begin{corollary}\label{cor:ellipsoid}
Let $E(q):=\{z\in\mathbb{R}^{2n}\,|\, q(z)<1\}$  be the ellipsoid given
by a positive definite quadratic form $q(z)=\frac{1}{2}\langle Sz, z\rangle$ on $\mathbb{R}^{2n}$,
where $S\in\mathbb{R}^{2n\times 2n}$ is a positive definite symmetric matrix. Then for any $\Psi\in{\rm Sp}(2n,\mathbb{R})$ there holds
\begin{eqnarray}\label{e:ellCap}
{c}^\Psi_{\rm HZ}(E(q))&=&\inf\{T>0\,|\, \det(\exp(TJS)-\Psi)\ne 0\}\\
&\le&\frac{r_n^2}{2}\inf_\Phi \mathfrak{t}(\Phi\Psi\Phi^{-1}),\label{e:ellCap+}
\end{eqnarray}
where $\Phi\in{\rm Sp}(2n,\mathbb{R})$ satisfies $\Phi(E(q))=\{z\in\mathbb{C}^n\,|\,
\sum^n_{j=1}|z_j/r_j|^2<1\}$ with $0<r_1\le r_2\le\cdots\le r_n$.
In particular, (\ref{e:ellCap}) implies
  \begin{equation}\label{e:ball}
  c^{\Psi}_{\rm HZ}(B^{2n})=\frac{ \mathfrak{t}(\Psi)}{2}.
  \end{equation}
\end{corollary}

\begin{proof}
Since the Hamiltonian vector field of the quadratic form $q(z)$ is $X_q(z)=JSz$, every $\Psi$-characteristic
 on $\partial E(q)$ may be parameterized as the form $[0, T]\ni t\mapsto\exp(tJS)z_0\in \partial E(q)$,
 where $q(z_0)=1$ and $\exp(TJS)z_0=\Psi z_0$. Hence (\ref{e:ellCap})
 follows from (\ref{e:action2})-(\ref{e:action-capacity1}) immediately.
 Observe that $B^{2n}(1)=E(q)$ with $S=2I_{2n}$.
The definition of $\mathfrak{t}(\Psi)$ and (\ref{e:ellCap}) lead to (\ref{e:ball}) directly.

Note that for  $\Psi\in{\rm Sp}(2n, \mathbb{R})$ and
an  open set  $O\subset (\mathbb{R}^{2n},\omega_0)$ containing
the origin, (i)-(ii) of Proposition~\ref{MonComf} imply
\begin{equation}\label{e:inv.1}
c^\Psi_{\rm HZ}(\alpha O,\omega_0)=\alpha^2 c_{\rm HZ}^{\Psi}(O,\omega_0),\quad\forall\alpha\ge 0.
\end{equation}
Since
$$
\Phi(E(q))=\left\{z\in\mathbb{C}^n\,\bigg|\,
\sum^n_{j=1}|z_j/r_j|^2<1\right\}\subset B^{2n}(0,r_n),
$$
it follows from (\ref{e:inv.1}),
Proposition~\ref{MonComf}(ii) and (\ref{e:ball}) that
\begin{eqnarray*}
{c}^\Psi_{\rm HZ}(E(q),\omega_0)&=&c^{\Phi\Psi\Phi^{-1}}_{\rm HZ}(\Phi(E(q)),\omega_0)\\
&\le&c^{\Phi\Psi\Phi^{-1}}_{\rm HZ}(B^{2n}(r_n),\omega_0)=\frac{r_n^2}{2}\mathfrak{t}(\Phi\Psi\Phi^{-1}).
\end{eqnarray*}
(\ref{e:ellCap+}) follows immediately.
\end{proof}

Here is another important consequence of Theorem~\ref{th:convex}.

\begin{corollary}\label{cor:CrokeW1}
Let $\Psi\in{\rm Sp}(2n,\mathbb{R})$ and $D\subset \mathbb{R}^{2n}$ be a convex bounded domain with
 boundary $\mathcal{S}=\partial D$. Suppose that $p\in D$ is a fixed point of $\Psi$.
\begin{description}
\item[(i)] If $D$ contains a ball $B^{2n}(p,r)$,  then for any  generalized
$\Psi$-characteristic $x$ on $\mathcal{S}$ with positive action
  it holds that
\begin{equation}\label{e:croke1}
A(x)\ge \frac{r^2}{2} \mathfrak{t}(\Psi).
\end{equation}
\item[(ii)] If $D\subset B^{2n}(p,R)$,  there exists a
generalized  $\Psi$-characteristic $x^\star$ on $\mathcal{S}$ such that
 \begin{equation}\label{e:croke2}
0<A(x^\star)\le \frac{R^2}{2} \mathfrak{t}(\Psi).
\end{equation}
\end{description}
 \end{corollary}

\begin{remark}
{\rm When $\Psi=I_{2n}$ and $\mathcal{S}$ is of class $C^1$,
(i) and (ii) were obtained respectively by Croke-Weinstein in \cite[Theorem~C]{CrWe81} and
by Ekeland in Proposition~5 of \cite[Chap.5,\S1]{Ek90}. Then  K\"unzle \cite{Ku96, Ku97}
removed the $C^1$-smoothness assumption of $\mathcal{S}$.}
\end{remark}

\begin{proof}[\bf Proof of Corollary~\ref{cor:CrokeW1}]
By a translation transformation (see the beginning of Section~\ref{sec:convex}),
 we only need to consider the case $p=0$.

For (i) of Corollary~\ref{cor:CrokeW1},   $B^{2n}(0,r)\subset D$ implies ${c}^\Psi_{\rm HZ}(B^{2n}(0,r),\omega_0)\le {c}^\Psi_{\rm HZ}(D,\omega_0)$. Moreover,  ${c}^\Psi_{\rm HZ}(B^{2n}(0,r),\omega_0)=\frac{r^2}{2} \mathfrak{t}(\Psi)$ by (\ref{e:inv.1}) and (\ref{e:ball}),
and ${c}^\Psi_{\rm HZ}(D,\omega_0)$ is equal to
the minimum of actions of all $\Psi$-characteristics with positive actions on $\mathcal{S}$
by Theorem~\ref{th:convex}. Thus
(\ref{e:croke1}) follows immediately.

Similarly, for  (ii) of Corollary~\ref{cor:CrokeW1} we have
$$
c^\Psi_{\rm HZ}(D,\omega_0)\le{c}^\Psi_{\rm HZ}(B^{2n}(0,R),\omega_0)=\frac{R^2}{2} \mathfrak{t}(\Psi).
$$
 By Theorem~\ref{th:convex}  there exists a generalized $\Psi$-characteristic $x^\ast$ on $\mathcal{S}$
such that $A(x^\ast)={c}^\Psi_{\rm HZ}(D,\omega_0)$.
\end{proof}

\subsection{An extension of Ekeland-Hofer capacity}\label{sec:1.EH}

  For each symplectic  matrix  $\Psi\in{\rm Sp}(2n,\mathbb{R})$ and $s\ge 0$, there is a Hilbert space $E_\Psi^s$ such that $E_\Psi^0=L^2([0,1],\mathbb{R}^{2n})$ and for $s\ge \frac{1}{2}$, $x\in E_{\Psi}^s$ implies that $x$ is continuous and satisfies $x(1)=\Psi x(0)$. $E_{\Psi}^{\frac{1}{2}}$ is the variational space we need in this article. When $\Psi=I_{2n}$, $E_{I_{2n}}^{\frac{1}{2}}$ is exactly $H^{\frac{1}{2}}(S^1,\mathbb{R}^{2n})$. For details see Section~\ref{section:space}.

In this subsection we fix a $\Psi\in{\rm Sp}(2n,\mathbb{R})$. Then $\mathbb{E}:=E_{\Psi}^{\frac{1}{2}}$ has an orthogonal splitting 
$$
\mathbb{E}=\mathbb{E}^{-}\oplus \mathbb{E}^0 \oplus \mathbb{E}^{+}
$$
(see (\ref{e:spaceDecomposition})).
We closely follow  Sikorav's approach \cite{Sik90} to Ekeland-Hofer capacity in \cite{EH89} and construct an analogue of the classical Ekeland-Hofer capacity.

\begin{definition}\label{defdeform}
  A continuous map $\gamma:\mathbb{E}\rightarrow \mathbb{E}$ is called an {\bf admissible deformation}
   if there exists a homotopy $(\gamma_u)_{0\le u\le 1}$ such that $\gamma_0={\rm id}$, $\gamma_1=\gamma$ and satisfies
\begin{description}
\item[(i)] $\forall u\in [0,1]$, $\gamma_u(\mathbb{E}\setminus(\mathbb{E}^-\oplus \mathbb{E}^0))\subset \mathbb{E}\setminus(\mathbb{E}^-\oplus \mathbb{E}^0)$,
 i.e. for any $x\in \mathbb{E}$ such that $x^+\neq 0$, there holds $\gamma_u(x)^+\neq 0$.
\item[(ii)]  $\gamma_u(x)=a(x,u)x^++b(x,u)x^0+c(x,u)x^-+K(x,u)$, where $(a,b,c,K)$ is a continuous map from $\mathbb{E}\times [0,1]$ to $(0,+\infty)^3\times \mathbb{E}$ and maps bounded sets to precompact sets.
\end{description}
\end{definition}
Let $\Gamma$ be the set of all admissible deformations.
For $H\in C^0(\mathbb{R}^{2n},\mathbb{R}_{\ge 0})$ satisfying:
\begin{description}
\item[(H1)] ${\rm Int}(H^{-1}(0))\ne \emptyset$ and contains a fixed point of $\Psi$,
\item[(H2)] there exists $z_0\in{\rm Fix}(\Psi)$, real numbers $a> \mathfrak{t}(\Psi)$ and $b$
such that $H(z)=a|z|^2+ \langle z, z_0\rangle+ b$ outside a compact subset of $\mathbb{R}^{2n}$,
\end{description}
we define $\Phi_H:\mathbb{E}\to\mathbb{R}$ by
\begin{eqnarray}\label{e:EH.1.1}
\Phi_H(x)= \frac{1}{2}(\|x^+\|^2_{\mathbb{E}}-\|x^-\|^2_{\mathbb{E}})-\int_0^1H(x(t))dt,
\end{eqnarray}
 and the $\Psi$-{\bf capacity} of $H$ by
\begin{eqnarray}\label{e:EH.1.2}
c^{\Psi}_{\rm EH}(H)=\sup_{{\gamma}\in\Gamma}\inf_{x\in {\gamma}(S^+)}\Phi_H(x),\quad\hbox{where}
\quad S^+=\{x\in \mathbb{E}^+\,|\,\|x\|_{\mathbb{E}}=1\}.
\end{eqnarray}
Then (H2) implies $c^{\Psi}_{\rm EH}(H)<+\infty$,  and  the conditions (H1)--(H2) imply $c^{\Psi}_{\rm EH}(H)>0$
if $H$ is smooth.  (See Propositions~\ref{prop:EH.1.3},~\ref{prop:EH.1.4}.)

It is easy to prove the following.

\begin{proposition}\label{prop:EH.1.2}
Let $H$, $K \in C^0(\mathbb{R}^{2n},\mathbb{R}_{\ge 0})$ satisfy (H1) and (H2).
\begin{description}
\item[(i)]{\rm (Monotonicity)} If $H\le K$ then $c^{\Psi}_{\rm EH}(H)\ge c^{\Psi}_{\rm EH}(K)$.
\item[(ii)] {\rm (Continuity)} $|c^{\Psi}_{\rm EH}(H)-c^{\Psi}_{\rm EH}(K)|\le \sup\{|H(z)-K(z)|\,|\, z\in\mathbb{R}^{2n}\}$.
\item[(iii)] {\rm (Homogeneity)}  $c^{\Psi}_{\rm EH}(\lambda^2H(\cdot/\lambda))=\lambda^2 c^{\Psi}_{\rm EH}(H)$ for $\lambda\ne 0$.
\end{description}
\end{proposition}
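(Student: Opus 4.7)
The plan is to prove the three items directly from (\ref{e:EH.1.2}) by tracking how the $\sup$-$\inf$ interacts with pointwise inequalities, uniform perturbations, and the dilation $\Lambda_\lambda : E\to E$, $\Lambda_\lambda x = \lambda x$.

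\textbf{(i) and (ii).} If $H\le K$ pointwise, then for every $x\in E$
$$
\Phi_H(x)-\Phi_K(x)=\int_0^1\bigl(K(x(t))-H(x(t))\bigr)\,dt\ge 0,
$$
and the same identity immediately gives $|\Phi_H(x)-\Phi_K(x)|\le\sup_{\mathbb{R}^{2n}}|H-K|$ for all $x\in E$. Taking $\inf_{x\in h(S^+)}$ preserves both the ordering and the $L^\infty$-bound uniformly in $h\in\Gamma$, and then $\sup_{h\in\Gamma}$ does the same, proving (i) and (ii).

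\textbf{(iii).} Write $K_\lambda(z):=\lambda^2 H(z/\lambda)$. Since the splitting $E=E^-\oplus E^0\oplus E^+$ is $\Lambda_\lambda$-invariant, a direct substitution yields the crucial scaling identity
$$
\Phi_{K_\lambda}(\Lambda_\lambda x)=\tfrac{\lambda^2}{2}\bigl(\|x^+\|_E^2-\|x^-\|_E^2\bigr)-\int_0^1\lambda^2 H(x(t))\,dt=\lambda^2\Phi_H(x).
$$
Next I will show that conjugation $h\mapsto h':=\Lambda_\lambda\circ h\circ\Lambda_\lambda^{-1}$ is a bijection of $\Gamma$ onto itself. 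Given an admissible homotopy $(\gamma_u)$ with $\gamma_u(y)=a(y,u)y^++b(y,u)y^0+c(y,u)y^-+K(y,u)$, conjugation produces
$$
\Lambda_\lambda\gamma_u\Lambda_\lambda^{-1}(x)=a(x/\lambda,u)x^++b(x/\lambda,u)x^0+c(x/\lambda,u)x^-+\lambda K(x/\lambda,u),
$$
which still fits Definition~\ref{defdeform}, since $\lambda\ne 0$ and $K$ maps bounded sets to precompact sets. Moreover, each dilation $\rho_\xi$ with $\xi>0$ belongs to $\Gamma$ (via the homotopy $\gamma_u=\rho_{1-u+u\xi}$), and $\Gamma$ is closed under composition (the $(a,b,c)$-factors multiply and the precompact-valued remainders add up to a precompact-valued remainder).

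Putting these ingredients together, the bijection $h\mapsto h\circ\rho_\xi$ of $\Gamma$ proves the $\xi$-independence
$$
c^\Psi_{\rm EH}(H)=\sup_{h\in\Gamma}\inf_{x\in h(\xi S^+)}\Phi_H(x)\qquad\forall\,\xi>0.
$$
Finally, using $\Lambda_\lambda^{-1}S^+=\tfrac{1}{|\lambda|}S^+$ (by central symmetry of the sphere),
$$
\inf_{y\in h'(S^+)}\Phi_{K_\lambda}(y)=\inf_{z\in h(\frac{1}{|\lambda|}S^+)}\Phi_{K_\lambda}(\Lambda_\lambda z)=\lambda^2\inf_{z\in h(\frac{1}{|\lambda|}S^+)}\Phi_H(z),
$$
and taking $\sup_{h\in\Gamma}$ together with the $\xi$-independence above at $\xi=1/|\lambda|$ yields $c^\Psi_{\rm EH}(K_\lambda)=\lambda^2 c^\Psi_{\rm EH}(H)$. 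The only genuinely non-formal step I expect to work through is the bookkeeping verification that $\Gamma$ is closed under composition and under conjugation by $\Lambda_\lambda$; this is routine but is where one must really invoke the precise decomposition (ii) in Definition~\ref{defdeform} rather than bare continuity.
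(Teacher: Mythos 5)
Your proof is correct and is exactly the standard argument the paper has in mind when it says "It is easy to prove the following" and omits the proof: (i)–(ii) follow from the pointwise comparison of $\Phi_H$ and $\Phi_K$ passed through $\inf$ and $\sup$, and (iii) from the scaling identity $\Phi_{K_\lambda}(\Lambda_\lambda x)=\lambda^2\Phi_H(x)$ together with the facts that conjugation by $\Lambda_\lambda$ and composition with dilations $\rho_\xi$ are bijections of $\Gamma$ (which is closed under composition by Proposition~\ref{prop:EH.1.1}(i)). The only point worth recording explicitly is the one you flag yourself, namely that the conjugated homotopy still satisfies both conditions of Definition~\ref{defdeform}; your verification of this is adequate.
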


Let
\begin{eqnarray}\label{e:EH.1.5.1}
&&\mathscr{F}(\mathbb{R}^{2n})=\{H\in C^0(\mathbb{R}^{2n},\mathbb{R}_{\ge 0})\,|\,H\;\hbox{satisfies (H2)}\},
\\
&&\mathscr{F}(\mathbb{R}^{2n},B)=\{H\in \mathscr{F}(\mathbb{R}^{2n})\,|\,H\;\hbox{vanishes near $\overline{B}$}\}\label{e:EH.1.5.2}
\end{eqnarray}
for each $B\subset\mathbb{R}^{2n}$ such that $B\cap {\rm Fix}(\Psi)\neq \emptyset$.
 We define
\begin{equation}\label{e:EH.1.6}
c^{\Psi}_{\rm EH}(B)=\inf\{c^{\Psi}_{\rm EH}(H)\,|\, H\in \mathscr{F}(\mathbb{R}^{2n},B)\}\in [0, +\infty)
\end{equation}
if $B$ is bounded and $B\cap {\rm Fix}(\Psi)\neq \emptyset$, and
\begin{equation}\label{e:EH.1.7}
c^{\Psi}_{\rm EH}(B)=\sup\{c^{\Psi}_{\rm EH}(B')\,|\, B'\subset B,\;\hbox{$B'$ is bounded and $B'\cap{\rm Fix}(\Psi)\neq \emptyset$}\}
\end{equation}
if $B$ is unbounded and $B\cap {\rm Fix}(\Psi)\neq \emptyset$. We call $c^{\Psi}_{\rm EH}(B)$ in (\ref{e:EH.1.6}) and (\ref{e:EH.1.7})
{\bf $\Psi$-Ekeland-Hofer capacity} (Abb., $\Psi$-EH capacity)  or {\bf Ekeland-Hofer capacity relative to $\Psi$} of $B$.

We say $H\in C^0(\mathbb{R}^{2n},\mathbb{R}_{\ge 0})$ to be $\Psi$-{\bf  nonresonant}
if it satisfies (H2) with $\det(e^{2aJ}-\Psi)\neq 0$.
For each $B\subset\mathbb{R}^{2n}$ such that $B\cap {\rm Fix}(\Psi)\neq \emptyset$ we write
\begin{eqnarray*}
\mathscr{E}(\mathbb{R}^{2n},B, \Psi)=\{H\in \mathscr{F}(\mathbb{R}^{2n},B)\,|\,H\;\hbox{is $\Psi$-nonresonant}\}.
\end{eqnarray*}
Note that each $H\in \mathscr{E}(\mathbb{R}^{2n},B, \Psi)$ satisfies (H1) and that
$\mathscr{E}(\mathbb{R}^{2n},B, \Psi)$ is a {\bf cofinal family} of $\mathscr{F}(\mathbb{R}^{2n},B)$,
that is, for any $H\in\mathscr{F}(\mathbb{R}^{2n},B)$ there exists
$G\in \mathscr{E}(\mathbb{R}^{2n},B, \Psi)$ such that $G\ge H$.

\begin{remark}\label{rem:EH6}
{\rm
\begin{description}
\item[(i)] $c^\Psi_{\rm EH}(B)=c^\Psi_{\rm EH}(\bar{B})$.
\item[(ii)]  $\mathscr{F}(\mathbb{R}^{2n},B)$ in  (\ref{e:EH.1.6})-(\ref{e:EH.1.7}) can be replaced by its cofinal subset $\mathscr{E}(\mathbb{R}^{2n},B,\Psi)$, and
    can also be replaced by a smaller cofinal subset
   $\mathscr{E}(\mathbb{R}^{2n},B,\Psi)\cap C^\infty(\mathbb{R}^{2n},\mathbb{R}_{\ge 0})$.
\item[(iii)]  $c^{\Psi}_{\rm EH}(B+w)=c^{\Psi}_{\rm EH}(B)\;\forall w\in {\rm Fix}(\Psi)$,
where $B+w=\{z+w\,|\, z\in B\}$.
   \end{description}}
\end{remark}

The following properties of the $\Psi$-EH capacity of subsets in
 $\mathbb{R}^{2n}$ which contains fixed points of $\Psi$ follow easily from its definition and Proposition~\ref{prop:EH.1.2}.

\begin{proposition}\label{prop:EH.1.7}
Let $B\subset B'\subset\mathbb{R}^{2n}$.
Assume in addition that $B\cap{\rm Fix}(\Psi)\neq\emptyset$.
Then
\begin{description}
\item[(i)]{\rm (Monotonicity)} $c^{\Psi}_{\rm EH}(B)\le c^{\Psi}_{\rm EH}(B')$.
\item[(ii)] {\rm (Conformality)} $c^{\Psi}_{\rm EH}(\lambda B)=\lambda^2 c^{\Psi}_{\rm EH}(B),\;\forall\lambda>0$.
\item[(iii)] {\rm (Exterior regularity)} $c^{\Psi}_{\rm EH}(B)=\inf\{c^{\Psi}_{\rm EH}(U_\epsilon(B))\,|\,\epsilon>0\}$, where $U_\epsilon(B)$ is the $\epsilon$-neighborhood of $B$.
\end{description}
\end{proposition}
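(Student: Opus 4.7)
The plan is to deduce all three properties from the corresponding properties of the functional capacity $c^{\Psi}_{\rm EH}(H)$ already established in Proposition~\ref{prop:EH.1.2}, combined with simple manipulations of the classes $\mathscr{F}(\mathbb{R}^{2n},B)$. In each item the bounded case is essentially immediate, and the unbounded case is reduced to it via the supremum definition~(\ref{e:EH.1.7}).

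For \textbf{(i)}, the key observation is that $\mathscr{F}(\mathbb{R}^{2n},B')\subset \mathscr{F}(\mathbb{R}^{2n},B)$ whenever $B\subset B'$, since any open set containing $\overline{B'}$ also contains $\overline{B}$; hence the infimum in (\ref{e:EH.1.6}) over the larger class is no larger. For bounded $B\subset B'$ this is already the conclusion; the remaining cases follow by observing that when $B$ is bounded and $B'$ is unbounded, $B$ itself is one of the test sets in (\ref{e:EH.1.7}) for $B'$, and by comparing suprema over families of bounded subsets when both are unbounded.

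For \textbf{(ii)}, I would use the rescaling $\sigma_\lambda\colon H\mapsto H_\lambda$ with $H_\lambda(z):=\lambda^2 H(z/\lambda)$. The main point to check is that $\sigma_\lambda$ is a bijection from $\mathscr{F}(\mathbb{R}^{2n},B)$ onto $\mathscr{F}(\mathbb{R}^{2n},\lambda B)$: the normal form (H2) is preserved with the same leading coefficient $a$ and with the asymptotic shift becoming $\langle z,\lambda z_0\rangle$, where $\lambda z_0\in{\rm Fix}(\Psi)$ because ${\rm Fix}(\Psi)=\ker(\Psi-I)$ is a linear subspace; moreover a vanishing neighborhood of $\overline{B}$ is carried to one of $\overline{\lambda B}$. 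Combining this with Proposition~\ref{prop:EH.1.2}(iii), $c^{\Psi}_{\rm EH}(H_\lambda)=\lambda^2 c^{\Psi}_{\rm EH}(H)$, and taking the infimum gives conformality in the bounded case; the unbounded case uses the corresponding bijection of bounded subsets together with $\lambda B\cap{\rm Fix}(\Psi)\supset \lambda(B\cap{\rm Fix}(\Psi))\ne\emptyset$.

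For \textbf{(iii)}, one direction, $c^{\Psi}_{\rm EH}(B)\le\inf_{\epsilon>0} c^{\Psi}_{\rm EH}(U_\epsilon(B))$, follows from (i). For the reverse direction in the bounded case, given $H\in \mathscr{F}(\mathbb{R}^{2n},B)$ there is an open $V\supset\overline{B}$ on which $H\equiv0$; since $\overline{B}$ is compact one can choose $\epsilon=\epsilon(H)>0$ small enough that $\overline{U_\epsilon(B)}\subset V$, which puts $H\in \mathscr{F}(\mathbb{R}^{2n},U_\epsilon(B))$ and yields $\inf_\epsilon c^{\Psi}_{\rm EH}(U_\epsilon(B))\le c^{\Psi}_{\rm EH}(H)$; taking the infimum over $H$ closes the bounded case. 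For unbounded $B$, apply the bounded case to each bounded $B'\subset B$ with $B'\cap{\rm Fix}(\Psi)\ne\emptyset$, use $U_\epsilon(B')\subset U_\epsilon(B)$ and monotonicity to deduce $c^{\Psi}_{\rm EH}(B')\le\inf_\epsilon c^{\Psi}_{\rm EH}(U_\epsilon(B))$, and then pass to the supremum in (\ref{e:EH.1.7}). I do not foresee any serious obstacle beyond bookkeeping between the two capacity definitions (\ref{e:EH.1.6})--(\ref{e:EH.1.7}); the only delicate point is the choice of $\epsilon$ in (iii), where one must shrink enough so that the \emph{closure} $\overline{U_\epsilon(B)}$, not merely $U_\epsilon(B)$, sits inside the open vanishing neighborhood.
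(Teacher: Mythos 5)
Your treatment of (i), (ii), and the bounded case of (iii) is correct: the inclusion $\mathscr{F}(\mathbb{R}^{2n},B')\subset\mathscr{F}(\mathbb{R}^{2n},B)$ for $B\subset B'$, the bijection $H\mapsto\lambda^2H(\cdot/\lambda)$ of $\mathscr{F}(\mathbb{R}^{2n},B)$ onto $\mathscr{F}(\mathbb{R}^{2n},\lambda B)$ combined with Proposition~\ref{prop:EH.1.2}(iii) and the linearity of ${\rm Fix}(\Psi)$, and the compactness argument producing $\overline{U_\epsilon(B)}\subset V$ all work as you describe, as do the reductions of (i) and (ii) to bounded subsets via (\ref{e:EH.1.7}).

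The gap is in the unbounded case of (iii). From the bounded case you deduce $c^{\Psi}_{\rm EH}(B')\le\inf_\epsilon c^{\Psi}_{\rm EH}(U_\epsilon(B))$ for every bounded $B'\subset B$ and pass to the supremum; but this yields only $c^{\Psi}_{\rm EH}(B)\le\inf_\epsilon c^{\Psi}_{\rm EH}(U_\epsilon(B))$, which is exactly the inequality you had already obtained from monotonicity. The substantive half of exterior regularity, $\inf_\epsilon c^{\Psi}_{\rm EH}(U_\epsilon(B))\le c^{\Psi}_{\rm EH}(B)$, is never addressed for unbounded $B$, and it does not follow from the bounded case by the kind of bookkeeping you have in mind: by (\ref{e:EH.1.7}), $c^{\Psi}_{\rm EH}(U_\epsilon(B))$ is a supremum over bounded $B''\subset U_\epsilon(B)$, and such a $B''$ is only contained in $U_{\epsilon}(B'_{\epsilon})$ for some bounded $B'_\epsilon\subset B$ that depends on $\epsilon$ and $B''$; the bounded case gives $c^{\Psi}_{\rm EH}(U_\delta(B'))\downarrow c^{\Psi}_{\rm EH}(B')$ only for each \emph{fixed} $B'$, with no uniformity over the bounded subsets of $B$, so the argument does not close. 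This uniformity can genuinely fail: take $B=\{0\}\cup\bigcup_k C_k$ with $C_k$ a $\frac{1}{k}$-net of a unit cube translated far out along a ray; every bounded subset of $B$ is finite (and for $\Psi=I_{2n}$ finite sets have vanishing capacity by the energy--capacity inequality for the classical first Ekeland--Hofer capacity), yet $U_\epsilon(B)$ contains a unit cube for every $\epsilon>0$. So you should either restrict (iii) to bounded $B$ — the only case used elsewhere in the paper — or supply a genuinely different argument (or an extra hypothesis) for the unbounded case.
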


Moreover, let $\mathcal{S}, B_{\mathcal{S}}\subset \mathbb{R}^{2n}$
 and $X$, $\Psi$ be as below the proof of Proposition~\ref{prop:convconti}.
Using a proof similar to that of \cite[Proposition~2.3]{Ba95} we have
the following inner regularity of $c^\Psi_{\rm EH}$:
$$
c^\Psi_{\rm EH}(B_{\mathcal{S}},\omega_0)=\sup\{c^\Psi_{\rm EH}(V,\omega_0)\,|\,V\subset \mathbb{R}^{2n}\;\hbox{is open and}\;\overline{V}\subset B_{\mathcal{S}}\}.
$$

The following theorem gives variational explanation for $c^\Psi_{\rm EH}$, which is important for proofs of several subsequent theorems.
\begin{thm}\label{th:EH.1.6}
If $H\in C^\infty(\mathbb{R}^{2n},\mathbb{R})$ satisfies (H1)-(H2),
and is also $\Psi$-nonresonant, then $c^\Psi_{\rm EH}(H)$ is a positive critical value of $\Phi_H$ on $\mathbb{E}$.
\end{thm}
The proof of the above theorem is closely related to  Sikorav's approach in \cite{Sik90} and it will be completed by several propositions in Section~\ref{sec:EH.2}.

For $c^\Psi_{\rm EH}$ we have the following
representation formula, which generalizes the one for $c_{\rm EH}$ in \cite{EH89, EH90, Sik90}. We give its proof in Section~\ref{sec:EH.3}.

\begin{thm}\label{th:EHconvex}
Let $\Psi\in{\rm Sp}(2n,\mathbb{R})$ and $D\subset \mathbb{R}^{2n}$ be a  convex bounded domain with $C^{1,1}$  boundary $S=\partial D$ and  containing a fixed point  $p$ of $\Psi$
in the closure of $D$.
 Then  there exists a  $\Psi$-characteristic $x^{\ast}$ on $\partial D$ such that
 \begin{eqnarray}\label{fixpt}
A(x^{\ast})&=&\min\{A(x)>0\,|\,x\;\text{is a }\;\Psi\hbox{-characteristic on}\;\mathcal{S}\}\nonumber\\
&=&c^\Psi_{\rm EH}(D).
\end{eqnarray}
Moreover, if both $\partial D$ and $D$ contain fixed points of $\Psi$, then
 \begin{eqnarray}\label{fixpt.1}
c^\Psi_{\rm EH}(D)=c^{\Psi}_{\rm EH}(\partial D).
\end{eqnarray}
\end{thm}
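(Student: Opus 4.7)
The strategy adapts Sikorav's variational approach to the Ekeland--Hofer capacity, replacing $S^1$-periodicity with the $\Psi$-boundary condition built into the space $E$ of (\ref{espace}). The proof proceeds in three stages: reduction to a normal form, construction of a critical $\Psi$-characteristic realising $c^\Psi_{\rm EH}(D)$, and the boundary identity (\ref{fixpt.1}).

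\textbf{Stage 1 (reduction and model Hamiltonian).} By Remark~\ref{rem:EH6}(iii) translate so that $p=0\in\overline D\cap\mathrm{Fix}(\Psi)$. If $p\in\partial D$, approximate $D$ from inside by convex domains $D_\varepsilon\subset D$ with $C^{1,1}$ boundary whose interiors contain a $\Psi$-fixed point, prove the theorem for each $D_\varepsilon$, and pass to the limit using Proposition~\ref{prop:EH.1.7}(ii)--(iii) together with Arzela--Ascoli applied to the resulting orbits. Once $0\in\mathrm{Int}(D)\cap\mathrm{Fix}(\Psi)$, set $H_D(z)=j_D(z)^2$ with $j_D$ the Minkowski gauge of $D$ (which is $C^{1,1}$ since $\partial D$ is), and choose a cofinal sequence $\{H_k\}\subset\mathscr{E}(\mathbb{R}^{2n},D,\Psi)\cap C^\infty$ with $H_k\equiv 0$ near $0$, $H_k\ge H_D$ on a small neighbourhood of $\partial D$, and $H_k(z)=a_k|z|^2+b_k$ outside a large ball for some $a_k>\mathfrak{t}(\Psi)$ with $\det(e^{2a_kJ}-\Psi)\ne 0$ (so each $H_k$ is $\Psi$-nonresonant).

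\textbf{Stage 2 (critical orbit and minimality).} By Propositions~\ref{prop:EH.1.3}--\ref{prop:EH.1.4} each $c^\Psi_{\rm EH}(H_k)\in(0,+\infty)$, and the variational characterisation of Theorem~\ref{th:EH.1.6} produces a critical point $x_k\in E$ of $\Phi_{H_k}$ at this level; after time rescaling $x_k$ solves $\dot x_k=X_{H_k}(x_k)$ with $x_k(T_k)=\Psi x_k(0)$ and action $A(x_k)=c^\Psi_{\rm EH}(H_k)$. From (\ref{e:EH.1.6}) and Remark~\ref{rem:EH6}(ii) one has $c^\Psi_{\rm EH}(H_k)\to c^\Psi_{\rm EH}(D)$. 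Convexity of $D$ and $C^{1,1}$-regularity of $\partial D$ yield uniform $W^{1,\infty}$ bounds on $\{x_k\}$, and the positivity $c^\Psi_{\rm EH}(H_k)\ge c^\Psi_{\rm EH}(D)>0$ prevents trivialisation; passing to a $C^1$-subsequential limit $x^*:[0,T^*]\to\partial D$ with $\dot x^*\in\mathcal{L}_{\partial D}$ produces a $\Psi$-characteristic in the sense of Definition~\ref{def:character}(i) satisfying $A(x^*)=c^\Psi_{\rm EH}(D)$. For minimality in (\ref{fixpt}), given any other $\Psi$-characteristic $y$ on $\partial D$ with $A(y)>0$, reparametrise $y$ as an orbit of $X_{H_D}$, approximate by an orbit of some $X_{H_k}$, and view it as a critical point of $\Phi_{H_k}$; the minimax formula (\ref{e:EH.1.2}) then forces $c^\Psi_{\rm EH}(H_k)\le A(y)+o(1)$, whence $A(x^*)\le A(y)$.

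\textbf{Stage 3 and main obstacle.} Proposition~\ref{prop:EH.1.7}(i) together with $\partial D\subset\overline D$ and Remark~\ref{rem:EH6}(i) yields $c^\Psi_{\rm EH}(\partial D)\le c^\Psi_{\rm EH}(D)$. For the reverse, repeat Stages~1--2 with $\partial D$ in place of $D$, using the added hypothesis of a $\Psi$-fixed point on $\partial D$ to form analogous cofinal families whose minimax critical orbits concentrate on $\partial D$ as the vanishing neighbourhood shrinks; these limit $\Psi$-characteristics have action at least $A(x^*)=c^\Psi_{\rm EH}(D)$ by the minimality just proved, giving (\ref{fixpt.1}). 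The hardest part is the compactness argument in Stage 2: the strong indefiniteness of $\Phi_H$ forces a spectral-gap-quantitative use of $a_k>\mathfrak{t}(\Psi)$ to control the $E^\pm$-splitting, and the positivity $c^\Psi_{\rm EH}(D)>0$ must be exploited as a uniform lower bound to exclude degenerate limits; pinning the limit orbit to $\partial D$, rather than letting it drift into the interior vanishing region of the $H_k$, is the core analytic difficulty and couples the $C^{1,1}$-regularity of $\partial D$ with the positivity of the capacity in an essential way.
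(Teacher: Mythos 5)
There are two genuine gaps. The first, and most serious, is your mechanism for the upper bound and for minimality in Stage~2. You assert that exhibiting a critical point of $\Phi_{H_k}$ at the level $A(y)$ ``forces $c^{\Psi}_{\rm EH}(H_k)\le A(y)+o(1)$''. This is false: $c^{\Psi}_{\rm EH}(H)$ is a sup--inf minimax value, and it is in no way bounded above by an arbitrary critical value of $\Phi_H$ (indeed Lemma~\ref{lem:EH10} only bounds positive critical values from \emph{below}). The inequality $c^{\Psi}_{\rm EH}(D)\le\min\Sigma^{\Psi}_{\mathcal{S}}$ is the hard half of the theorem and in the paper it requires the Clarke dual variational principle: one takes the minimizer $w$ of $I(x)=\int_0^1H^{\ast}(-J\dot x)$ on $\mathcal{A}$, sets $y=w/\sqrt{a}$, derives the Fenchel--Young estimate $\int_0^1H(x)\ge\bigl(\tfrac12\int_0^1\langle x,-J\dot y\rangle\bigr)^2$, combines it with the linking property $h(S^+)\cap(E^-\oplus E^0+\mathbb{R}_+y)\ne\emptyset$ of Proposition~\ref{prop:EH.1.1}(ii) and a quadratic-polynomial discriminant argument to get $\mathfrak{a}(x)\le a\int_0^1H(x)$ on the intersection, and finally tests against the rescaled Hamiltonians $H_\tau\ge\tau\bigl(H-(1+\varepsilon/2a)\bigr)$ as in Lemma~\ref{lem:EH11}. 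None of this is present in your proposal, and without it neither the value $c^{\Psi}_{\rm EH}(D)=\min\Sigma^{\Psi}_{\mathcal{S}}$ nor the minimality of $A(x^{\ast})$ is established.

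The second gap is Stage~1. When the only fixed point $p$ of $\Psi$ in $\overline{D}$ lies on $\partial D$ (e.g.\ when ${\rm Fix}(\Psi)$ is a subspace meeting $\overline D$ only at $p$), no convex subdomain $D_\varepsilon\subset D$ can contain a fixed point in its interior, so your inner approximation is vacuous; moreover Proposition~\ref{prop:EH.1.7}(iii) is \emph{exterior} regularity and does not give convergence $c^{\Psi}_{\rm EH}(D_\varepsilon)\to c^{\Psi}_{\rm EH}(D)$ from inside. The paper instead approximates from the \emph{outside} by smooth strictly convex $D_k^+$ with $\overline{D}\subset D_k^+$, so that $p\in{\rm Int}(D_k^+)$ automatically. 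A lesser but real issue is that your cofinal family $\{H_k\}$ is not required to be of the form $f_k\circ j_D^2$; without that structural choice the critical orbits of $\Phi_{H_k}$ are not reparametrized characteristics of level sets of $j_D$, and the concentration of the limit orbit on $\partial D$ --- which you yourself flag as the core difficulty --- cannot be carried out. Your Stage~3, which is essentially the paper's argument for (\ref{fixpt.1}), is fine modulo these earlier gaps.
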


\begin{remark}\label{rem:EHZconvex}
{\rm An approximation argument shows that
the condition ``$C^{1,1}$" for $D$ in  Theorem~\ref{th:EHconvex} is
not needed if ``$\Psi$-characteristic" is replaced by
``generalized $\Psi$-characteristic" (for details see {\rm Section~\ref{sec:convex4}) .
Thus this and Theorem~\ref{th:convex} imply
$c^{\Psi}_{\rm EH}(D)=c^{\Psi}_{\rm HZ}(D,\omega_0)$ for any  convex bounded domain
$D\subset \mathbb{R}^{2n}$ containing a fixed point $p$ of $\Psi$.
It follows from the definitions of both $c^{\Psi}_{\rm EH}$ and $c^{\Psi}_{\rm HZ}$
that $c^{\Psi}_{\rm EH}(D)=c^{\Psi}_{\rm HZ}(D,\omega_0)$  for any convex domain
$D\subset \mathbb{R}^{2n}$ containing a fixed point $p$ of $\Psi$, {not
necessarily bounded}.
Hereafter we shall use $c^{\Psi}_{\rm EHZ}(D)$ to denote $c^{\Psi}_{\rm EH}(D)=c^{\Psi}_{\rm HZ}(D,\omega_0)$
  without special statements. In this case a $c^\Psi_{\rm HZ}$-carrier
 is also called a $c^\Psi_{\rm EHZ}$-carrier}.}
\end{remark}

\begin{remark}
{\rm Recently, Artstein-Avidan and Ostrover \cite{AAO08}  established a Brunn-Minkowski type inequality for the
Ekeland-Hofer-Zehnder symplectic capacity $c_{\rm EHZ}$ of convex domains,  and in \cite{AAO14} used it  to derive  several
very interesting bounds and inequalities for the length of the shortest periodic billiard trajectory in a
smooth convex body in $\mathbb{R}^n$. In \cite{JinLu} we showed that  a Brunn-Minkowski type inequality for the
capacity $c^\Psi_{\rm EHZ}$ of convex domains is still true, and also proved some corresponding results
for a larger class of (non-periodic) billiard trajectories in a smooth convex body in $\mathbb{R}^n$.
These will be published elsewhere.}
\end{remark}

{For integers $n_1>0$ and $n_2>0$, let $n=n_1+n_2$ and
$$
\omega^{(1)}_0=\sum^{n_1}_{i=1}
dq_i\wedge dp_i\quad\hbox{and}\quad
\omega^{(2)}_0=\sum^n_{i=n_1+1}dq_i\wedge dp_i.
$$
For $S_i=\left(\begin{array}{cc}
       A_i & B_i \\
       C_i & D_i \\
     \end{array}
   \right)\in{\rm Sp}(2n_i,\mathbb{R})$ with $A_i,B_i,C_i,D_i\in \mathbb{R}^{n_i\times n_i}$, $i=1,2$, denote
   $$
     S_1\oplus S_2=\left(\begin{array}{cccc}
       A_1 &0 & B_1 &0 \\
       0   &A_2 &0 &B_2\\
       C_1 &0 &D_1 &0\\
       0 &C_2 &0 &D_2
     \end{array}
   \right).
      $$
Then
$S_1\oplus S_2\in \rm{Sp}(2n,\mathbb{R})$. Furthermore, $(S_1\oplus S_2)\oplus S_3=S_1\oplus(S_2\oplus S_3)$ for $S_i\in{\rm Sp}(2n_i,\mathbb{R})$, $i=1,2,3$.
Thus we can define $S_1\oplus\cdots\oplus S_k$ for $S_i\in{\rm Sp}(2n_i,\mathbb{R})$, $i=1,\cdots, k$.}

As a generalization of \cite[Th. 6.6.1]{Sik90} (also see \cite[Prop.5]{EH90} for special cases) we have the following theorem, which will be proved in Section~\ref{sec:EH.3}.

\begin{thm}\label{th:EHproduct}
Let $\Psi=\Psi_1\oplus\cdots\oplus\Psi_k$, where $\Psi_i\in{\rm Sp}(2n_i,\mathbb{R})$, $i=1,\cdots,k$.
 Then for compact convex subsets $D_i\subset\mathbb{R}^{2n_i}$ containing fixed points of
 $\Psi_i$ ($1\le i\le k$) it holds that
 \begin{equation}\label{e:product1}
 c^{\Psi}_{\rm EH}(D_1\times\cdots\times D_k)=\min_ic^{\Psi_i}_{\rm EH}(D_i).
 \end{equation}
 Moreover, if both $\partial D_i$ and ${\rm Int}(D_i)$ contain fixed points of $\Psi_i$
 for each $i=1,\cdots,k$, then
 \begin{equation}\label{e:product2}
 c^{\Psi}_{\rm EH}(\partial D_1\times\cdots\times \partial D_k)=\min_ic^{\Psi_i}_{\rm EH}(D_i).
 \end{equation}
 \end{thm}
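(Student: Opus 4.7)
The overall strategy is to reduce both assertions to the representation formula of Theorem~\ref{th:convex}. By Remark~\ref{rem:EHZconvex}, for any convex bounded domain $E\subset\mathbb{R}^{2m}$ containing a $\Psi$-fixed point one has $c^\Psi_{\rm EH}(E)=c^\Psi_{\rm HZ}(E,\omega_0)=c^\Psi_{\rm EHZ}(E)$, and this common value equals the minimum of $\Sigma^\Psi_{\partial E}$. Applied to $D:=D_1\times\cdots\times D_k$ (which is convex bounded and contains a $\Psi$-fixed point, namely a product of $\Psi_j$-fixed points chosen in each $D_j$), the computation of $c^\Psi_{\rm EH}(D)$ is reduced to finding the minimum action of generalized $\Psi$-characteristics on $\partial D$. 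The product structure of $D$ makes the normal cone split: $N_D(y)=N_{D_1}(y_1)\times\cdots\times N_{D_k}(y_k)$, so the ODE $\dot z\in JN_D(z)$ decouples coordinatewise.

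For the upper bound in (i), pick $i^{*}$ realizing $\min_i c^{\Psi_i}_{\rm EH}(D_i)$ and apply Theorem~\ref{th:convex} to $D_{i^{*}}$ to obtain a generalized $\Psi_{i^{*}}$-characteristic $x^{*}_{i^{*}}:[0,T_{*}]\to\partial D_{i^{*}}$ with $A(x^{*}_{i^{*}})=c^{\Psi_{i^{*}}}_{\rm EH}(D_{i^{*}})$. For $j\ne i^{*}$ choose $p_j\in D_j\cap{\rm Fix}(\Psi_j)$ and set $x(t):=(p_1,\ldots,x^{*}_{i^{*}}(t),\ldots,p_k)$. Since $\dot p_j=0\in JN_{D_j}(p_j)$ and $p_j=\Psi_j p_j$, this $x$ is a generalized $\Psi$-characteristic on the face $D_1\times\cdots\times\partial D_{i^{*}}\times\cdots\times D_k\subset\partial D$ with $A(x)=A(x^{*}_{i^{*}})$. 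Hence $c^\Psi_{\rm EH}(D)\le\min_i c^{\Psi_i}_{\rm EH}(D_i)$.

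For the lower bound in (i), let $x=(x_1,\ldots,x_k):[0,T]\to\partial D$ be any generalized $\Psi$-characteristic with $A(x)>0$. Because $N_{D_j}(y_j)=\{0\}$ on ${\rm Int}(D_j)$, the ODE forces $\dot x_j=0$ on $\{t:x_j(t)\in{\rm Int}(D_j)\}$, so $x_j$ is constant on each connected component of this open set. A short continuity argument (the constant value would have to lie in $\partial D_j$ at the endpoint of such a component) shows that the complement of $\{t:x_j(t)\in\partial D_j\}$ is either empty or all of $[0,T]$. Hence each $x_j$ is either constant in $D_j\cap{\rm Fix}(\Psi_j)$ or a generalized $\Psi_j$-characteristic on $\partial D_j$. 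After simultaneously translating by a $\Psi$-fixed point in $D$ (which leaves $A(x)$ and each $A(x_j)$ invariant, since translation by a $\Psi_j$-fixed point preserves the $j$-th component action), we may assume $0\in D_j$, so the normal-cone inequality gives $\langle -J\dot x_j,x_j\rangle\ge 0$ a.e., hence $A(x_j)\ge 0$, with strict positivity exactly when $x_j$ is non-constant (if equality held, the normal-cone inequality would force $-J\dot x_j$ to vanish on a half-space containing $0$ in its interior). Since $A(x)=\sum_jA(x_j)>0$, some $x_{j_0}$ is non-constant, whence $A(x)\ge A(x_{j_0})\ge c^{\Psi_{j_0}}_{\rm EH}(D_{j_0})\ge\min_i c^{\Psi_i}_{\rm EH}(D_i)$ by Theorem~\ref{th:convex} applied to $D_{j_0}$. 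This yields (\ref{e:product1}).

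For (ii) the upper bound is immediate: $\partial D_1\times\cdots\times\partial D_k\subset D_1\times\cdots\times D_k$ and Proposition~\ref{prop:EH.1.7}(i) together with (\ref{e:product1}) give $c^\Psi_{\rm EH}(\partial D_1\times\cdots\times\partial D_k)\le\min_i c^{\Psi_i}_{\rm EH}(D_i)$. The matching lower bound is the main obstacle, because a Hamiltonian $H\in\mathscr{F}(\mathbb{R}^{2n},\partial D_1\times\cdots\times\partial D_k)$ only vanishes on a thin tubular neighbourhood of the corner stratum and one cannot directly apply (i). The plan is to use the hypothesis that both $\partial D_i$ and ${\rm Int}(D_i)$ carry $\Psi_i$-fixed points together with the equality $c^{\Psi_i}_{\rm EH}(D_i)=c^{\Psi_i}_{\rm EH}(\partial D_i)$ from Theorem~\ref{th:EHconvex}(\ref{fixpt.1}) (after first approximating each $D_i$ by a $C^{1,1}$-smooth convex body via Proposition~\ref{MonComf}(iii) and then passing to the limit using continuity/exterior regularity of $c^\Psi_{\rm EH}$). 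Using the orthogonal splitting $E=E_1\oplus\cdots\oplus E_k$ of the Sobolev space associated with $\Psi=\Psi_1\oplus\cdots\oplus\Psi_k$, one constructs admissible deformations in $\Gamma$ that combine the product structure of the corner with the componentwise min-max data, thereby transferring the estimate from (\ref{fixpt.1}) of Theorem~\ref{th:EHconvex} in each factor to the joint min-max on $E$; this step is the delicate part and requires careful control of the linear-growth tail of $H$ in each factor.
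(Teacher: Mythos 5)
Your proof of (\ref{e:product1}) is essentially correct but follows a genuinely different route from the paper. The paper works entirely on the variational side: it gets the upper bound from monotonicity together with the identity $c_{\rm EH}^{\Psi_1\times\Psi_2}(D\times\mathbb{R}^{2k})=c_{\rm EH}^{\Psi_1}(D)$ (Lemma~\ref{lem:9.1}), and the lower bound by dominating a given Hamiltonian by a split sum $\widehat H=\sum_i\widehat H_i$ and taking the product $\gamma_1\times\cdots\times\gamma_k$ of the admissible deformations supplied by Lemma~\ref{lem:9.2}, exploiting that every $x\in S^+$ has some component $x_{i_0}\in B^+_{i_0}\setminus(2k)^{-1}B^+_{i_0}$. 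Your reduction to Theorem~\ref{th:convex} via the splitting $N_{D_1\times\cdots\times D_k}(y)=\prod_iN_{D_i}(y_i)$ and the dichotomy ``each component is constant or a generalized $\Psi_i$-characteristic on $\partial D_i$'' is a legitimate alternative (it is essentially the mechanism behind Claim~\ref{cl:billT.4} and Theorem~\ref{th:billT.1} later in the paper) and yields a more geometric proof of (i). Two caveats: you must route everything through the symplectic identification $\Theta$ so that $\Psi_1\oplus\cdots\oplus\Psi_k$ really acts blockwise and $A(x)=\sum_jA(x_j)$; and when a fixed point of $\Psi_i$ lies only on $\partial D_i$ (which the hypotheses of (i) allow), neither $c^{\Psi_i}_{\rm HZ}({\rm Int}(D_i),\omega_0)$ nor Theorem~\ref{th:convex} is available as stated, so you need the outer approximation by larger smooth convex domains, as in Section~\ref{sec:EH.3.1}, before invoking the representation formula.

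The proof of (\ref{e:product2}) has a genuine gap: you establish only the easy inequality $c^\Psi_{\rm EH}(\partial D_1\times\cdots\times\partial D_k)\le\min_ic^{\Psi_i}_{\rm EH}(D_i)$ and then describe the converse as a ``plan'' whose ``delicate part'' you do not carry out. That delicate part is precisely the content of the statement: $\partial D_1\times\cdots\times\partial D_k$ is not the boundary of a convex body, so there is no representation formula to fall back on and your strategy for (i) does not transfer. The paper closes this by observing that Lemma~\ref{lem:9.2} --- the existence, for each $\epsilon>0$, of $\gamma_i\in\Gamma$ with $\Phi_{\widehat H_i}\ge c^{\Psi_i}_{\rm EH}(D_i)-\epsilon$ on $\gamma_i(B_i^+\setminus(2k)^{-1}B_i^+)$ and $\Phi_{\widehat H_i}\ge0$ on $\gamma_i(B_i^+)$ --- remains valid for Hamiltonians $\widehat H_i\in\mathscr{F}(\mathbb{R}^{2n_i},\partial D_i)$ vanishing only near $\partial D_i$, because its proof rests on Lemma~\ref{lem:EH10} combined with the equality $c^{\Psi_i}_{\rm EH}(\partial D_i)=c^{\Psi_i}_{\rm EH}(D_i)$ from (\ref{fixpt.1}); one then dominates a given $H$ vanishing near the corner by a sum $\sum_i\widehat H_i$ of such Hamiltonians and repeats the product-deformation argument of Step~1 verbatim. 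Without this (or an equivalent construction) the lower bound in (\ref{e:product2}) is unproven.
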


An immediate consequence is:

\begin{corollary}\label{cor:EHproduct}
For $\Psi:=\Psi_1\oplus\cdots\oplus\Psi_n$
where each $\Psi_i\in{\rm Sp}(2,\mathbb{R})$ has the eigenvalue $1$  and $T^n=S^1(r_1)\times\cdots\times S^1(r_n)\subset\mathbb{R}^{2n}$, it holds that
$$
c^{\Psi}_{\rm EH}(T^n)=\frac{1}{2}\inf_i\left\{\mathfrak{t}(\Psi_i) r_i^2\right\}.
$$
\end{corollary}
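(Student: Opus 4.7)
The plan is to derive the corollary as a direct application of Theorem~\ref{th:EHproduct} combined with the ball computation (\ref{e:ball}) from Corollary~\ref{cor:ellipsoid}. Set $D_i := \overline{B^2(r_i)} \subset \mathbb{R}^2$, so that $\partial D_i = S^1(r_i)$ and
$$
T^n = \partial D_1\times\cdots\times\partial D_n.
$$
I would first verify the hypotheses of Theorem~\ref{th:EHproduct}. Each $\Psi_i\in{\rm Sp}(2,\mathbb{R})$ has $1$ as its unique eigenvalue, hence by (\ref{e:eigen1}) its fixed-point set ${\rm Fix}(\Psi_i)$ is a linear subspace of $\mathbb{R}^2$ containing the origin (in fact a line, except when $\Psi_i=I_2$). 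Thus $0\in{\rm Int}(D_i)\cap{\rm Fix}(\Psi_i)$, while the line ${\rm Fix}(\Psi_i)$ meets $\partial D_i = S^1(r_i)$ in two antipodal points, so $\partial D_i\cap{\rm Fix}(\Psi_i)\ne\emptyset$ as well. The hypotheses of (\ref{e:product2}) are therefore satisfied.

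Applying (\ref{e:product2}) gives
$$
c^{\Psi}_{\rm EH}(T^n) \;=\; \min_{1\le i\le n} c^{\Psi_i}_{\rm EH}(D_i).
$$
It remains to identify each factor. By Remark~\ref{rem:EH6}(i) one has $c^{\Psi_i}_{\rm EH}(\overline{B^2(r_i)}) = c^{\Psi_i}_{\rm EH}(B^2(r_i))$, and the conformality property Proposition~\ref{prop:EH.1.7}(ii) yields
$$
c^{\Psi_i}_{\rm EH}(B^2(r_i)) \;=\; r_i^2\, c^{\Psi_i}_{\rm EH}(B^2).
$$
Finally, the special case (\ref{e:ball}) of Corollary~\ref{cor:ellipsoid} gives $c^{\Psi_i}_{\rm EH}(B^2) = \mathfrak{t}(\Psi_i)/2$. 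Substituting back,
$$
c^{\Psi}_{\rm EH}(T^n)
\;=\; \min_i r_i^2\cdot\frac{\mathfrak{t}(\Psi_i)}{2}
\;=\; \frac{1}{2}\inf_i\bigl\{\mathfrak{t}(\Psi_i)\,r_i^2\bigr\},
$$
which is the claimed formula.

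The only nontrivial step is the fixed-point verification on each factor, and this is the potential obstacle if some $\Psi_i$ were permitted to have eigenvalues other than $1$; the standing assumption that every $\Psi_i$ has $1$ as its unique eigenvalue is precisely what ensures ${\rm Fix}(\Psi_i)$ is a nonzero linear subspace that crosses both ${\rm Int}(D_i)$ and $\partial D_i$. Everything else is an assembly of results already established in the paper, so no additional analysis is needed.
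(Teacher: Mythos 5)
Your proof is correct and follows exactly the route the paper intends: the paper states Corollary~\ref{cor:EHproduct} as an ``immediate consequence'' of Theorem~\ref{th:EHproduct}, and your argument supplies precisely that derivation, including the (needed) verification that each ${\rm Fix}(\Psi_i)$ is a nonzero linear subspace meeting both ${\rm Int}(B^2(r_i))$ and $S^1(r_i)$, followed by conformality and the ball computation (\ref{e:ball}). Nothing is missing.
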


\begin{remark}
{\rm Recall that a symplectic matrix $M\in {\rm Sp}(2,\mathbb{R})$ has nonzero fixed points
if and only if $1$ is its unique eigenvalue. All such symplectic matrixes have the form
\begin{equation}\label{e:eigen1}
\left( \begin{array}{cc}
             r & z \\
             z & (1+z^2)/r \\
           \end{array}
         \right)
         \left(
           \begin{array}{cc}
             \cos\theta & -\sin\theta\\
             \sin\theta & \cos\theta \\
           \end{array}
         \right)
\end{equation}
where $r\in (0, \infty)$, $z\in\mathbb{R}$ and $\theta\in \mathbb{R}/(2\pi\mathbb{Z}-\pi)$
satisfy $(r^2+z^2+1)\cos\theta=2r$, see \cite[page 48]{Long02}.}
\end{remark}

Note that (\ref{e:ball}) is a generalization for the normality
 $c_{\rm EH}(B^{2n}(1))=c_{\rm HZ}(B^{2n}(1), \omega_0)=\pi$.
In order to get some kind of results similar to the normality $c_{\rm EH}(Z^{2n}(1))=c_{\rm HZ}(Z^{2n}(1),\omega_0)=\pi$
we need to make stronger restrictions for the  symplectic  matrix  $\Psi\in{\rm Sp}(2n,\mathbb{R})$.

\begin{thm}\label{main2}
   For $\Psi\in{\rm Sp}(2n,\mathbb{R})$,
   suppose that there exists  $P\in {\rm Sp}(2n,\mathbb{R})$ such that
   $P^{-1}\Psi P=S_1\oplus S_2$ for some $S_1\in {\rm Sp}(2 ,\mathbb{R})$ and $S_2\in {\rm Sp}(2n-2,\mathbb{R})$. Then with $W^{2n}_{\Psi}(1):=PZ^{2n}(1)$ it holds that
   $$
   c^\Psi_{\rm HZ}(W^{2n}_\Psi(1),\omega_0)=c^{S_1\oplus S_2}_{\rm HZ}(Z^{2n}(1),\omega_0)=
   c^{S_1\oplus S_2}_{\rm EH}(Z^{2n}(1))
   =\frac{1}{2}\mathfrak{t}(S_1).
   $$
\end{thm}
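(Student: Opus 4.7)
The plan is to reduce the computation to the standard symplectic cylinder $Z^{2n}(1)$ with respect to the block-diagonal matrix $S_1\oplus S_2$ via conjugation invariance, to transport the problem into the Cartesian product picture via $\Theta$ from (\ref{e:inv.2}), and then to evaluate the resulting capacities by bounded convex approximations where the product formula and the ball computation apply.

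First, I would apply (\ref{e:inv}) with $\Phi=P^{-1}\in\mathrm{Sp}(2n,\mathbb{R})$: since $\Phi\circ\Psi\circ\Phi^{-1}=P^{-1}\Psi P=S_1\oplus S_2$, $\Phi(W^{2n}_\Psi(1))=Z^{2n}(1)$, and both sets contain the origin (a common fixed point), this yields $c^\Psi_{\rm HZ}(W^{2n}_\Psi(1),\omega_0)=c^{S_1\oplus S_2}_{\rm HZ}(Z^{2n}(1),\omega_0)$. Next, with $n_1=1$ and $n_2=n-1$ in (\ref{e:inv.2}), I would identify $\Theta(Z^{2n}(1))=B^2(1)\times\mathbb{R}^{2n-2}$. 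On the bounded convex approximation $B^2(1)\times B^{2n-2}(R)$, which contains $0\in\mathrm{Fix}(S_1\times S_2)$, combining (\ref{e:inv.4})--(\ref{e:inv.5}) with (\ref{e:ball}) and the conformality in Proposition~\ref{prop:EH.1.7}(ii) gives
\[
c^{S_1\times S_2}_{\rm HZ}\bigl(B^2(1)\times B^{2n-2}(R),\omega^{(1)}_0\times\omega^{(2)}_0\bigr)
=c^{S_1\times S_2}_{\rm EH}\bigl(B^2(1)\times B^{2n-2}(R)\bigr)
=\min\!\Bigl\{\tfrac{\mathfrak{t}(S_1)}{2},\tfrac{R^2\mathfrak{t}(S_2)}{2}\Bigr\},
\]
which stabilizes at $\mathfrak{t}(S_1)/2$ as soon as $R\ge\sqrt{\mathfrak{t}(S_1)/\mathfrak{t}(S_2)}$.

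It then remains to pass from the bounded products to the unbounded $B^2(1)\times\mathbb{R}^{2n-2}$. For the HZ side, monotonicity gives the lower bound $\mathfrak{t}(S_1)/2$; for the matching upper bound, any $H\in\mathcal{H}^{S_1\times S_2}_{\rm ad}(B^2(1)\times\mathbb{R}^{2n-2})$ equals $m(H)$ outside some compact $K\subset B^2(1)\times B^{2n-2}(R_0)$, and can be re-read on $B^2(1)\times B^{2n-2}(R)$ for any $R\ge R_0$ without affecting its Hamiltonian flow or any defining condition of $\mathcal{H}^{\Psi}_{\rm ad}$, giving $\max H\le\mathfrak{t}(S_1)/2$. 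For the EH side, I would use (\ref{e:EH.1.7}): any bounded $B'\subset Z^{2n}(1)$ lies in some $\Theta^{-1}(B^2(1)\times B^{2n-2}(R))$, and combining with Remark~\ref{rem:EH6}(i) and the bounded-product value yields $c^{S_1\oplus S_2}_{\rm EH}(B')\le\mathfrak{t}(S_1)/2$, while the matching lower bound is realized by $B'=\Theta^{-1}\bigl(\overline{B^2(1-\epsilon)\times B^{2n-2}(R)}\bigr)$ with $\epsilon\to 0$ and $R\to\infty$.

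The main anticipated obstacle is the HZ truncation/extension step just above: one must verify, from the definitions in (\ref{cap})--(\ref{cap+}), that $\Psi$-admissibility of a Hamiltonian on the unbounded cylinder is equivalent to $\Psi$-admissibility on any sufficiently large bounded product. The underlying idea is transparent---the field $X_H$ vanishes outside a compact set so no new solutions of the boundary problem arise, and the open vanishing set $U$ of $H$ together with its fixed point in $\mathrm{Fix}(S_1\times S_2)$ is unaffected by the ambient domain---but writing this out requires careful bookkeeping against both forms of the capacity.
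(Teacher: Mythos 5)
Your proposal is correct and follows the same basic reduction as the paper: conjugation invariance to pass from $W^{2n}_\Psi(1)$ to $Z^{2n}(1)$, the identification $c^\Psi_{\rm EH}=c^\Psi_{\rm HZ}$ on convex domains (Remark~\ref{rem:EHZconvex}), the product formula, and the ball computation (\ref{e:ball}). The one place where you diverge is in handling the unbounded factor $\mathbb{R}^{2n-2}$: you exhaust it by balls $B^{2n-2}(R)$, evaluate $c^{S_1\times S_2}_{\rm EH}(B^2(1)\times B^{2n-2}(R))=\min\{\tfrac{1}{2}\mathfrak{t}(S_1),\tfrac{R^2}{2}\mathfrak{t}(S_2)\}$ by the compact product formula, and then pass to the limit on the HZ side by the truncation/extension argument you describe (any $\Psi$-admissible $H$ on the open cylinder equals $m(H)$ outside a compact set, hence restricts to a $\Psi$-admissible Hamiltonian on a large enough bounded product), and on the EH side by (\ref{e:EH.1.7}). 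The paper bypasses this limiting step entirely: it applies (\ref{e:inv.4})--(\ref{e:inv.5}) directly to $O_1=B^2(1)$ and $O_2=\mathbb{R}^{2n-2}$, and the needed product formula with an unbounded factor is supplied once and for all by Lemma~\ref{lem:9.1}, which shows $c^{\Psi_1\times\Psi_2}_{\rm EH}(D\times\mathbb{R}^{2k})=c^{\Psi_1}_{\rm EH}(D)$ by inflating ellipsoidal approximations in the $\mathbb{R}^{2k}$-directions. Both routes are sound; yours is a bit more laborious on the HZ side but makes explicit the admissibility-transfer bookkeeping that Remark~\ref{rem:EHZconvex} and Lemma~\ref{lem:9.1} fold away. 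The bound $R\ge\sqrt{\mathfrak{t}(S_1)/\mathfrak{t}(S_2)}$ for stabilization and the use of monotonicity for the lower bound are both correct.
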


\begin{proof}
The first (resp. second) equality follows from (\ref{e:inv})
(resp. Remark~\ref{rem:EHZconvex}) directly.
Take $n_1=1$, $n_2=n-1$ and $D_1=B^2(1)\subset\mathbb{R}^2(q_1,p_1)$
and $D_2=\mathbb{R}^{2n-2}(q_2,\cdots,q_n;p_2,\cdots,p_n)$  in (\ref{e:product1})
and let  $\omega^{(1)}_0=dq_1\wedge dp_1$ and $\omega^{(2)}_0=\sum^{n}_{i=2}dq_i\wedge dp_i$. We get
\begin{eqnarray*}
  c^{S_1\oplus S_2}_{\rm EH}(Z^{2n}(1))
  =\min\{c_{\rm EH}^{S_1}(B^2(1)), c_{\rm EH}^{S_2}(\mathbb{R}^{2n-2})\}
  =c_{\rm EH}^{S_1}(B^2(1))=\frac{1}{2}\mathfrak{t}(S_1).
   \end{eqnarray*}
\end{proof}

\begin{remark}
{\rm For $\Psi\in{\rm Sp}(2n,\mathbb{R})$ satisfying conditions in Theorem~\ref{main2}, there
exists an unbounded domain $W^{2n}_{\Psi}(1):=PZ^{2n}(1)$ with finite $\Psi$-HZ capacity.}
\end{remark}

\begin{remark}
{\rm Every orthogonal symplectic matrix $\Psi$ satisfies conditions in Theorem~\ref{main2}.
In fact, a symplectic matrix $\Psi\in{\rm Sp}(2n,\mathbb{R})$ is  orthogonal if and only if
 \begin{equation}\label{US}
  \Psi= \left(\begin{array}{cc}
       U & -V \\
       V & U \\
     \end{array}
   \right)
   \end{equation}
where $U, V\in {\rm GL}(n,\mathbb{R})$ satisfy  $U+\sqrt{-1}V\in {\rm U}(n,\mathbb{C})$.
  Moreover, the unitary matrix $U+\sqrt{-1}V$ can be diagonalized by another unitary matrix $S=T+\sqrt{-1}Q \in {\rm U}(n,\mathbb{C})$, i.e.,
   \begin{equation}\label{Udiagonal}
   U+\sqrt{-1}V=S{\rm diag}(e^{\sqrt{-1}\theta_1},\cdots, e^{\sqrt{-1}\theta_n})S^{-1},
   \end{equation}
   where $0< \theta_1\leq\cdots\leq \theta_n\leq 2\pi$ are uniquely determined by $U+\sqrt{-1}V$.
    Then the orthogonal symplectic matrix
   \begin{equation}\label{othsymp}
   P:=\left(
      \begin{array}{cc}
         T & -Q \\
           Q & T \\
            \end{array}
            \right)
    \end{equation}
   satisfies
   \begin{equation}\label{similar}
   \Psi=P\widetilde{\Psi}P^{-1},
   \end{equation}
   where
   \begin{equation}\label{diagonal}
   \widetilde{\Psi}=\left(
                   \begin{array}{cc}
                     {\rm diag}(\cos \theta_1,\cdots,\cos\theta_n) & -{\rm {\rm diag}}(\sin \theta_1,\cdots, \sin\theta_n) \\
                     {\rm diag}(\sin \theta_1,\cdots,\sin\theta_n) & {\rm {\rm diag}}(\cos \theta_1,\cdots,\cos\theta_n) \\
                   \end{array}
                 \right).
   \end{equation}
  Note that $\widetilde{\Psi}_1\oplus\cdots\oplus \widetilde{\Psi}_n$, where
$$
\widetilde{\Psi}_i=\left(
                   \begin{array}{cc}
                     \cos \theta_i & -\sin \theta_i \\
                     \sin \theta_i, & \cos \theta_i \\
                   \end{array}
                 \right),\qquad i=1,\cdots,n.
$$
Then by Theorem~\ref{main2} we obtain for  $W^{2n}_\Psi(1):=PZ^{2n}(1)$
\begin{equation}\label{circle.1}
c^\Psi_{\rm HZ}(W^{2n}_\Psi(1),\omega_0)=c^{\widetilde{\Psi}}_{\rm HZ}(Z^{2n}(1),\omega_0)=\frac{1}{2}\mathfrak{t}(\widetilde{\Psi}_1)=\frac{\theta_1}{2}.
\end{equation}
Moreover,  Lemma~\ref{zeros.1} leads to
$\mathfrak{t}(\widetilde{\Psi})=\mathfrak{t}(\Psi)=\theta_1$.
Therefore (\ref{circle.1}) becomes
$$
c^\Psi_{\rm HZ}(W^{2n}_\Psi(1),\omega_0)=c^{\widetilde{\Psi}}_{\rm HZ}(Z^{2n}(1),\omega_0)=\frac{\theta_1}{2}=\frac{\mathfrak{t}({\Psi})}{2}.
$$
}
\end{remark}

A compact boundaryless smooth connected hypersurface $\mathcal{S}$ in $(\mathbb{R}^{2n},\omega_0)$ is
 said of {\bf restricted contact type}  if  there exists a
  a globally defined Liouville vector field $X$
 (i.e. a  smooth vector field $X$ on $\mathbb{R}^{2n}$ satisfying
 $L_X\omega_0=\omega_0$) which is transversal to $\mathcal{S}$.
The following is a generalization of \cite[Proposition~6]{EH89}.
Its proof is given in Section~\ref{sec:EH.4}.

\begin{thm}\label{th:EHcontact}
Let $\Psi\in{\rm Sp}(2n,\mathbb{R})$ and $\mathcal{S}$ be a
hypersurface of restricted contact type in $(\mathbb{R}^{2n},\omega_0)$
 that admits a globally defined Liouville vector field $X$ transversal to it such that
\begin{equation}\label{e:EHcontact}
 X(\Psi z)=\Psi X(z),\,\forall z\in\mathbb{R}^{2n}.
 \end{equation}
 Suppose that $\mathcal{S}$ contains a fixed point of $\Psi$.  Then
 $$
   c^{\Psi}_{\rm EH}(B)=c^{\Psi}_{\rm EH}(\mathcal{S})\in\Sigma^{\Psi}_{\mathcal{S}},
   $$
  where $B$ is the bounded component of $\mathbb{R}^{2n}\setminus\mathcal{S}$.
\end{thm}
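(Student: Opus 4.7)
The strategy I would follow adapts Ekeland-Hofer's proof of the analogous result \cite[Prop.~6]{EH89} (the case $\Psi=I_{2n}$), with the commutation hypothesis \eqref{e:EHcontact} replacing the radial symmetry implicit there. Integrating $X\circ\Psi=\Psi\circ X$ yields $\phi^t\circ\Psi=\Psi\circ\phi^t$, where $\phi^t$ is the Liouville flow, so every construction based on the collar of $\mathcal{S}$ is automatically $\Psi$-equivariant. In particular, any $p\in\mathcal{S}\cap{\rm Fix}(\Psi)$ propagates to $\phi^t(p)\in{\rm Fix}(\Psi)$ for all $t$, so $B\cap{\rm Fix}(\Psi)\ne\emptyset$ and $c^\Psi_{\rm EH}(B)$ is defined via \eqref{e:EH.1.6}. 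The proof then splits into two parts: (I) show the equality $c^\Psi_{\rm EH}(B)=c^\Psi_{\rm EH}(\mathcal{S})$; (II) realize this common value as the action of a $\Psi$-characteristic on $\mathcal{S}$.

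For part (I), the inequality $c^\Psi_{\rm EH}(\mathcal{S})\le c^\Psi_{\rm EH}(B)$ is tautological, since $\mathscr{F}(\mathbb{R}^{2n},B)\subset\mathscr{F}(\mathbb{R}^{2n},\mathcal{S})$ (vanishing near $\overline{B}$ forces vanishing near $\mathcal{S}\subset\overline{B}$). For the reverse, given any $H\in\mathscr{F}(\mathbb{R}^{2n},\mathcal{S})$ vanishing on a neighborhood $V$ of $\mathcal{S}$, consider the enlarged domain $B_\delta=\phi^\delta(B)$ for small $\delta>0$; it satisfies $\overline{B}\subset B_\delta$ and $\Psi(B_\delta)=B_\delta$. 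Using the $\Psi$-equivariant conformal symplectomorphism $\phi^\delta$ together with Proposition~\ref{prop:EH.1.7}(ii), one checks $c^\Psi_{\rm EH}(B_\delta)=e^\delta c^\Psi_{\rm EH}(B)$. Composing $H$ with $\phi^{-\delta}$ and cutting off by a $\Psi$-invariant bump supported outside $\overline{B_{-\delta}}$ produces $\widehat H\in\mathscr{F}(\mathbb{R}^{2n},B)$ whose capacity differs from $c^\Psi_{\rm EH}(H)$ by $O(\delta)$; passing to the infimum over $H$ and letting $\delta\to 0$ gives $c^\Psi_{\rm EH}(B)\le c^\Psi_{\rm EH}(\mathcal{S})$.

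For part (II), take a minimizing sequence $(H_k)\subset\mathscr{E}(\mathbb{R}^{2n},\mathcal{S},\Psi)\cap C^\infty$ (cofinal by Remark~\ref{rem:EH6}(ii)) with $c^\Psi_{\rm EH}(H_k)\searrow c^\Psi_{\rm EH}(\mathcal{S})$, specifically chosen to depend only on the Liouville coordinate on a $\Psi$-invariant collar: $H_k(\phi^s(p))=f_k(s)$ with $f_k$ smooth, nonnegative, vanishing on $[-\epsilon_k,\epsilon_k]$ (with $\epsilon_k\downarrow 0$), and rising to a $\Psi$-nonresonant quadratic tail. By the variational characterization Theorem~\ref{th:EH.1.6}, each $c^\Psi_{\rm EH}(H_k)$ is a critical value of $\Phi_{H_k}$, attained by some $x_k\in E$ with $\dot x_k=J\nabla H_k(x_k)$ and $x_k(1)=\Psi x_k(0)$. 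Because $H_k$ is a function of the Liouville coordinate alone, $x_k$ lies on a single level hypersurface $\phi^{s_k}(\mathcal{S})$ with $\dot x_k$ parallel to the characteristic direction spanning $\mathcal{L}_{\phi^{s_k}(\mathcal{S})}$; applying $\phi^{-s_k}$ transports $x_k$ to a $\Psi$-characteristic $\widetilde x_k$ on $\mathcal{S}$ whose action converges to $c^\Psi_{\rm EH}(\mathcal{S})$. A uniform $H^{1/2}$-bound from the quadratic tails together with Arzel\`a-Ascoli then extract a limit $x^\ast$, and $s_k\to 0$ because $H_k\to 0$ locally uniformly near $\mathcal{S}$.

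I expect the main obstacle to be ensuring that $x^\ast$ is nonconstant. The positivity $c^\Psi_{\rm EH}(\mathcal{S})>0$ itself follows from (I) together with the lower bound $c^\Psi_{\rm EH}(B)\ge c^\Psi_{\rm EH}(B^{2n}(\phi^{-t}(p),r))=\tfrac{r^2}{2}\mathfrak{t}(\Psi)$ obtained by fitting a small ball around any $\phi^{-t}(p)\in B\cap{\rm Fix}(\Psi)$ and combining Proposition~\ref{prop:EH.1.7}(i), Remark~\ref{rem:EH6}(iii) and \eqref{e:ball}. Preventing the $\widetilde x_k$ from collapsing to constants then requires a $\Psi$-equivariant deformation lemma along the admissible class $\Gamma$ of Definition~\ref{defdeform}, together with a linking estimate in the spirit of Ekeland-Hofer-Sikorav, so that the mountain-pass values stay bounded below by the positive number $c^\Psi_{\rm EH}(\mathcal{S})$ uniformly in $k$.
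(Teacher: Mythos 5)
Your overall architecture (a cofinal family of Hamiltonians modelled on the Liouville collar, the variational characterization of Theorem~\ref{th:EH.1.6}, transport of critical orbits back to $\mathcal{S}$ by the flow $\phi^{t}$) is the same as the paper's, but the two steps you treat as routine are precisely where the real work lies, and as written both fail. In Part (I), the inequality $c^{\Psi}_{\rm EH}(B)\le c^{\Psi}_{\rm EH}(\mathcal{S})$ cannot be obtained by composing with $\phi^{-\delta}$ and cutting off: an $H\in\mathscr{F}(\mathbb{R}^{2n},\mathcal{S})$ vanishes only near $\mathcal{S}$ and may be arbitrarily large deep inside $B$, so forcing it to vanish on a neighborhood of $\overline{B}$ is a large $C^{0}$ perturbation and Proposition~\ref{prop:EH.1.2}(ii) gives no $O(\delta)$ control on the capacity. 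The paper instead interpolates linearly between a Hamiltonian $H_{C,\eta}$ vanishing on a two-sided collar of $\mathcal{S}$ and one $\hat H_{C,\eta}$ vanishing on the whole inner region, observes that every critical point with positive critical value lies on a leaf $\phi^{\tau}(\mathcal{S})$ where the two coincide, and concludes that $s\mapsto c^{\Psi}_{\rm EH}(H_{s})$ is a continuous map into a measure-zero set of critical values, hence constant. This is a Sard argument, not a norm estimate.

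The more serious gap is in Part (II). By Lemma~\ref{lem:EH.4.2} the critical value is $f_{k}'(\tau_{k})-f_{k}(\tau_{k})$ while the action of the transported characteristic $\widetilde x_{k}$ on $\mathcal{S}$ is $e^{-\tau_{k}}f_{k}'(\tau_{k})$; these are comparable only if $f_{k}(\tau_{k})\to 0$. Since $f_{k}$ climbs from $0$ to $\max H_{k}=C$ across the collar, the minimax critical point may just as well sit where $f_{k}(\tau_{k})$ is close to $C$, in which case the action of $\widetilde x_{k}$ is roughly $c^{\Psi}_{\rm EH}(\mathcal{S})+C$ and diverges along the cofinal family, so your limit $x^{\ast}$ need not carry the right action (or exist). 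Excluding this alternative is the heart of the paper's proof: one first shows that $\Sigma^{\Psi}_{\mathcal{S}}$ has empty interior (Proposition~\ref{prop:EH.4.1}), and then runs the monotonicity argument of Claim~\ref{cl:EH.4.3}, which shows that if the bad case occurred for all large $C$ then $C\mapsto\Upsilon(C)+C$ (with $\Upsilon(C)=\lim_{\eta\to 0}c^{\Psi}_{\rm EH}(H_{C,\eta})$) would be nonincreasing, forcing $\Upsilon(C)\to-\infty$ and contradicting $\Upsilon(C)\ge c^{\Psi}_{\rm EH}(\mathcal{S})>0$. Neither the empty-interior property of the action spectrum nor this dichotomy appears in your proposal, and without them the convergence claim at the end of your third paragraph is unjustified.
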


 Bates \cite{Ba98} extended \cite[Proposition~6]{EH89} to certain domains
 whose boundaries are not of restricted contact type. The corresponding  generalizations of
Theorem~\ref{th:EHcontact} are also possible.

\begin{corollary}\label{cor:EHcontact1}
Under the assumptions of Theorem~\ref{th:EHcontact}, $\mathcal{S}$ carries a $\Psi$-characteristic $\gamma$ with action
$c^{\Psi}_{\rm EH}(\mathcal{S})$. In particular, there exists a leaf-wise intersection point $\gamma(0)\in\mathcal{S}$
for $\Psi$.
\end{corollary}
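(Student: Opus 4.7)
The plan is to derive this corollary directly from Theorem~\ref{th:EHcontact} by interpreting the element of $\Sigma^{\Psi}_{\mathcal{S}}$ produced there as an honest $\Psi$-characteristic, and then observing that any $\Psi$-characteristic automatically furnishes a leaf-wise intersection point. Both steps are essentially bookkeeping; the nontrivial content lives in Theorem~\ref{th:EHcontact} itself.

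First, I would invoke Theorem~\ref{th:EHcontact} to obtain $c^{\Psi}_{\rm EH}(\mathcal{S})\in \Sigma^{\Psi}_{\mathcal{S}}$. By the definition in (\ref{e:action1+}), this yields a generalized $\Psi$-characteristic $\tilde{\gamma}\colon [0,T_0]\to \mathcal{S}$ with positive action $A(\tilde{\gamma})=c^{\Psi}_{\rm EH}(\mathcal{S})$. Because $\mathcal{S}$ is smooth (in particular $C^{1,1}$), the remark immediately following Definition~\ref{def:character} promotes $\tilde{\gamma}$, up to a linear reparametrization, to a $C^1$-embedded $\Psi$-characteristic $\gamma\colon [0,T]\to\mathcal{S}$ in the sense of Definition~\ref{def:character}(i). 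A direct change of variable in (\ref{e:action1}) shows that the action $A$ is invariant under such linear reparametrizations, so we still have $A(\gamma)=c^{\Psi}_{\rm EH}(\mathcal{S})$. This produces the $\Psi$-characteristic $\gamma$ claimed in the first half of the corollary.

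Second, for the leaf-wise intersection assertion, I would fix any smooth Hamiltonian $H\colon\mathbb{R}^{2n}\to\mathbb{R}$ having $\mathcal{S}$ as a regular level set, so that the characteristic line bundle $\mathcal{L}_{\mathcal{S}}$ is spanned by $X_H|_{\mathcal{S}}$. The tangency condition $\dot{\gamma}(t)\in (\mathcal{L}_{\mathcal{S}})_{\gamma(t)}$ forces $\gamma$, after at most a smooth time reparametrization on $\mathcal{S}$, to be a piece of a Hamiltonian trajectory of $H$. In particular the image of $\gamma$ is contained in the single characteristic leaf $L_{\mathcal{S}}(\gamma(0))$ through $\gamma(0)$. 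Since $\gamma(T)=\Psi\gamma(0)$, we conclude $\Psi\gamma(0)\in L_{\mathcal{S}}(\gamma(0))$, which is precisely the definition of $\gamma(0)$ being a leaf-wise intersection point for $\Psi$.

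The only genuine difficulty has already been absorbed into Theorem~\ref{th:EHcontact}; the corollary is, at this stage, purely a translation of that theorem into the vocabulary of $\Psi$-characteristics and leaves. A small point of care is making sure the generalized-to-classical $\Psi$-characteristic promotion and its action-preservation are handled cleanly, but this follows from the smoothness of $\mathcal{S}$ and the invariance of the integral in (\ref{e:action1}) under linear reparametrization.
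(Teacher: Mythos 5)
Your proposal is correct and follows the paper's (implicit) argument: Theorem~\ref{th:EHcontact} places $c^{\Psi}_{\rm EH}(\mathcal{S})$ in $\Sigma^{\Psi}_{\mathcal{S}}$, which in the proof of that theorem (see Proposition~\ref{prop:EH.4.1} and Lemma~\ref{lem:EH.4.2}) is already the set of actions of honest $C^1$ $\Psi$-characteristics, and the passage from a $\Psi$-characteristic to a leaf-wise intersection point is exactly the observation made below the statement of the Leaf-wise intersection question in the introduction. Your detour through generalized characteristics and the $C^{1,1}$-promotion remark is harmless but unnecessary here (and, strictly speaking, that remark and Definition~\ref{def:character}(ii) are stated only for boundaries of convex bodies, not for a general contact-type hypersurface); apart from this the two arguments coincide.
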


  Therefore we get a positive answer to
Question $\Psi$ under the assumptions of Theorem~\ref{th:EHcontact}.
 For a centrally symmetric hypersurface $\mathcal{S}$ of restricted contact type in $(\mathbb{R}^{2n},\omega_0)$, if
   $[0,1]\ni t\to\Psi_t$ is an isotopy of the identity in ${\rm Symp}(\mathbb{R}^{2n},\omega_0)$
  which is odd, i.e., $\Psi_t(-x)=-\Psi_t(x)$ for all $(t,x)\in [0,1]\times \mathbb{R}^{2n}$,
  Ekeland and Hofer proved in  \cite{EH89a} that  $\mathcal{S}$ carries a leaf-wise intersection point for $\Psi_1$.
   Moreover, if this  $\mathcal{S}$ is also star-shaped, Albers and Frauenfelder \cite{AF12}
  strengthened this result and showed that $\mathcal{S}$ carries {infinitely} many
  leaf-wise intersection points or a leaf-wise intersection point  which sits on a closed characteristic.
   Hence for any given $\Psi\in{\rm Sp}(2n,\mathbb{R})$,  every centrally
   symmetric hypersurface of restricted contact type in $(\mathbb{R}^{2n},\omega_0)$
  carries a leaf-wise intersection point for $\Psi$ by Ekeland-Hofer theorem, and
  every centrally symmetric star-shaped hypersurface  in $(\mathbb{R}^{2n},\omega_0)$
  carries infinitely many leaf-wise intersection points or
  a leaf-wise intersection point  which sits on a closed characteristic  by Albers-Frauenfelder theorem.
  For a hypersurface $\mathcal{S}$ of restricted contact type in $(\mathbb{R}^{2n},\omega_0)$
  and a $\Psi\in{\rm Ham}_c(\mathbb{R}^{2n},\omega_0)$, Hofer \cite{Ho90}
  showed that $\mathcal{S}$  carries a leaf-wise intersection point if $\Psi$ has Hofer's norm $\|\Psi\|_H\le c_{\rm EH}(\mathcal{S})$.
    The final restriction on norm $\|\Psi\|_H$ is not needed if the Rabinowitz Floer homology of $(\mathbb{R}^{2n}, \mathcal{S})$
  does not vanish by \cite[Theorem~C]{AF10a}. Corollary~\ref{cor:EHcontact1} cannot be included in past results.
Corollary~\ref{cor:EHcontact1} also implies the following  generalization of the main result in
 \cite{Ra78} if the surfaces considered therein are smooth.

\begin{cor}\label{cor:EHcontact2}
Let $\Psi\in{\rm Sp}(2n, \mathbb{R}^{2n})$ and
let $\mathcal{S}\subset (\mathbb{R}^{2n},\omega_0)$ be a smooth star-shaped hypersurface with respect to
a fixed point $p$ of $\Psi$, that is, $p$ is an interior point of the bounded part surrounded by $\mathcal{S}$ and has the property that every ray issuing from the point $p$ intersects $\mathcal{S}$ in exactly one point and so transversally. Then $\mathcal{S}$ carries a $\Psi$-characteristic with action $c^{\Psi}_{\rm EH}(\mathcal{S})$, in particular,  a leaf-wise intersection point for $\Psi$.
\end{cor}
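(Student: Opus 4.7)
The plan is to verify that any star-shaped hypersurface satisfies all the hypotheses of Theorem~\ref{th:EHcontact} with the radial Liouville vector field centred at $p$, and then read off the conclusion from that theorem together with Corollary~\ref{cor:EHcontact1}.

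First, translating by $-p$ (which preserves $\omega_0$ and conjugates $\Psi$ to a symplectic matrix still fixing $0$), we may assume $p=0$. Define the globally defined vector field
\begin{equation*}
X\colon\mathbb{R}^{2n}\to\mathbb{R}^{2n},\qquad X(z)=\tfrac{1}{2}z.
\end{equation*}
A direct computation gives $L_X\omega_0=\omega_0$, so $X$ is Liouville. The star-shapedness hypothesis says that every ray from $0$ meets $\mathcal{S}$ transversally, i.e.\ that $X$ is transversal to $\mathcal{S}$; hence $\mathcal{S}$ is of restricted contact type in the sense preceding Theorem~\ref{th:EHcontact}. Since $\Psi$ is linear and $\Psi(0)=0$, one has
\begin{equation*}
X(\Psi z)=\tfrac{1}{2}\Psi z=\Psi\!\left(\tfrac{1}{2}z\right)=\Psi X(z)\quad\forall z\in\mathbb{R}^{2n},
\end{equation*}
which is exactly the equivariance condition (\ref{e:EHcontact}).

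Next, let $B$ be the bounded component of $\mathbb{R}^{2n}\setminus\mathcal{S}$; then $0\in B\cap{\rm Fix}(\Psi)$, so the capacities $c^{\Psi}_{\rm EH}(B)$ and $c^{\Psi}_{\rm EH}(\mathcal{S})$ are well defined via (\ref{e:EH.1.6})--(\ref{e:EH.1.7}). Applying Theorem~\ref{th:EHcontact} to $\mathcal{S}$ with the Liouville field $X$, we obtain
\begin{equation*}
c^{\Psi}_{\rm EH}(B)=c^{\Psi}_{\rm EH}(\mathcal{S})\in\Sigma^{\Psi}_{\mathcal{S}}.
\end{equation*}
By the definition of $\Sigma^{\Psi}_{\mathcal{S}}$ in (\ref{e:action1+}), there exists a generalized $\Psi$-characteristic on $\mathcal{S}$ with action $c^{\Psi}_{\rm EH}(\mathcal{S})$. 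Since $\mathcal{S}$ is smooth (in particular $C^{1,1}$), the remark just after Definition~\ref{def:character} allows us to reparametrize it as an honest $\Psi$-characteristic $\gamma\colon[0,T]\to\mathcal{S}$.

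Finally, the existence of a leaf-wise intersection point is immediate from Corollary~\ref{cor:EHcontact1} (or the brief discussion preceding Question $\Psi$): the image of $\gamma$ is contained in a single leaf $L_{\mathcal{S}}(\gamma(0))$ of the characteristic foliation, and $\Psi\gamma(0)=\gamma(T)$ lies on that leaf, so $\gamma(0)$ is a leaf-wise intersection point for $\Psi$. The main ingredient of the argument is really just the observation that the radial field $\tfrac{1}{2}(z-p)$ is automatically $\Psi$-equivariant because $\Psi$ is linear and fixes $p$; the only delicate point I foresee is justifying that the conclusion of Theorem~\ref{th:EHcontact} applies cleanly in the case where $\mathcal{S}$ itself might not meet ${\rm Fix}(\Psi)$, but this is handled by the definition of $c^{\Psi}_{\rm EH}(\mathcal{S})$ together with the exterior regularity of Proposition~\ref{prop:EH.1.7}(iii), applied to shrinking neighbourhoods of $\mathcal{S}$ containing $0$.
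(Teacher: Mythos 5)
Your proposal is correct and follows essentially the same route as the paper: translate so the star-center $p$ becomes the origin, observe that the radial field $X(z)=\tfrac{1}{2}z$ is a globally defined Liouville vector field transversal to $\mathcal{S}$ which commutes with the linear map $\Psi$ (so condition (\ref{e:EHcontact}) holds), and then invoke Theorem~\ref{th:EHcontact} via Corollary~\ref{cor:EHcontact1}. The extra checks you spell out (that $L_X\omega_0=\omega_0$, that transversality of rays is transversality of $X$, and the reparametrization remark) are exactly the steps the paper leaves implicit.
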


Indeed, let $\phi$ be the translation $x\mapsto x-p$, $\forall x\in\mathbb{R}^{2n}$.
Then $\phi\in {\rm Symp}(\mathbb{R}^{2n},\omega_0)$ and commutes with $\Psi$.
Then $\phi(\mathcal{S})$ is  a star-shaped hypersurface with respect to the origin.
As in the proof of Theorem~\ref{th:convex} $\phi$ maps  $\Psi$-characteristics  on $\mathcal{S}$
onto such characteristics  on $\phi(\mathcal{S})$ in one-to-one and preserving-action way.
Hence we can assume that $\mathcal{S}$ is a star-shaped hypersurface with respect to
the origin. Then $\mathcal{S}$ is a hypersurface of restricted contact type in $(\mathbb{R}^{2n},\omega_0)$
that admits a globally defined Liouville vector field $X$ given by $X(x)=\frac{x}{2}$ for $x\in \mathbb{R}^{2n}$. Clearly, $X$
satisfies (\ref{e:EHcontact}) and hence Corollary~\ref{cor:EHcontact1} yields Corollary~\ref{cor:EHcontact2}.

Recently,  Ekeland proposed a very closely related problem \cite[Problem~4, \S5]{Ek17},
which may be formulated as follows in our notations.

\noindent{\bf Problem E}.\quad
For a general Hamiltonian $H$ on $\mathbb{R}^{2n}$, assuming simply $H(0) = 0$
and $H(x)\to\infty$  when $|x|\to\infty$, so that energy surfaces $H(x)=h$ are
bounded, what about the existence and multiplicity of solutions to the boundary-value problem (\ref{bvp}) with $\Psi\in{\rm Sp}(2n,\mathbb{R})$?

{If $H^{-1}(h)$ is regular and star-shaped,
 Corollary~\ref{cor:EHcontact2} implies that
 either the boundary-value problem (\ref{bvp}) or
$$
  \dot{x}=X_H(x)\quad\&\quad x(T)=\Psi^{-1} x(0)
 $$
  has a solution on $H^{-1}(h)$.}

\subsection{Applications to Hamiltonian dynamics}\label{sec:1.2}

Recall that a thickening of a compact and regular energy surface
$\mathcal{S}=\{x\in M\,|\, H(x)=0\}$ in a symplectic manifold $(M,\omega)$
is an open and bounded neighborhood $U$ of $\mathcal{S}$ which is filled
with compact and regular energy surfaces having energy values near
$E=0$, that is,
\begin{equation}\label{thicken}
U=\bigcup_{\lambda \in I}\mathcal{S}_\lambda,
\end{equation}
where $I=(-\varepsilon,\varepsilon)$ and
$\mathcal{S}_\lambda=\{x\in U\,|\,H(x)=\lambda\}$ is diffeomorphic to
the given surface $\mathcal{S}=\mathcal{S}_0$ for each $\lambda\in I$.
Suppose that $\varepsilon\le 1$.
As a generalization of  \cite[p. 106, Theorem~1]{HoZe94} we have:

\begin{thm}\label{dense}
   Let $\mathcal{S}$ , $\mathcal{S}_\lambda$ and $U$ be described as above, and $\Psi\in{\rm Symp}(M, \omega)$. Assume that $\mathcal{S}\cap {\rm Fix}(\Psi)\ne\emptyset$ and that $c^{\Psi}_{\rm HZ}(U,\omega)<\infty$.
   Then there exists a sequence $\lambda_j\rightarrow 0$ such that
   each energy surface $\mathcal{S}_{\lambda_j}$ carries a $\Psi$-characteristic.
\end{thm}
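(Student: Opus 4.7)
The plan is to adapt the Hofer-Zehnder construction of ``plateau'' Hamiltonians over a thickening to the $\Psi$-boundary-value setting, using the hypothesis $c^\Psi_{\rm HZ}(U,\omega)<\infty$ to drive a contradiction. Suppose that no sequence $\lambda_j\to 0$ with a $\Psi$-characteristic on $\mathcal{S}_{\lambda_j}$ exists. Since $\lambda_j\equiv 0$ would trivially work if $\mathcal{S}$ itself carried one, there must exist $\varepsilon_0\in(0,\varepsilon)$ for which no $\mathcal{S}_\lambda$ with $|\lambda|<\varepsilon_0$ carries a $\Psi$-characteristic. My goal is to produce, for every $c>0$, a function $H_c\in\mathcal{H}^\Psi_{ad}(U,\omega)$ with $\max H_c=c$, contradicting the finiteness of the capacity.

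First I fix a smooth defining Hamiltonian $H:U\to(-\varepsilon,\varepsilon)$ for the foliation, so that $\mathcal{S}_\lambda=\{H=\lambda\}$ and $dH$ is nowhere vanishing on $U$; this is immediate from the tubular parametrization built into the definition of the thickening. Choose $p\in\mathcal{S}\cap{\rm Fix}(\Psi)$ (nonempty by hypothesis) and an open neighborhood $V\ni p$ with $|H|<\varepsilon_0/4$ on $V$. For each $c>0$, select $f\in C^\infty(\mathbb{R},[0,c])$ with $f\equiv 0$ on $[-\varepsilon_0/4,\varepsilon_0/4]$ and $f\equiv c$ off $[-\varepsilon_0/2,\varepsilon_0/2]$. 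Setting $H_c:=f\circ H$, I observe that $H_c$ vanishes on the open set $V$, which meets ${\rm Fix}(\Psi)$; that $H_c\equiv c=\max H_c$ outside the compact set $K(H_c):=\overline{\bigcup_{|\lambda|\le\varepsilon_0/2}\mathcal{S}_\lambda}\subset U$ (compactness of each $\mathcal{S}_\lambda$ plus compactness of the parameter range); and that $0\le H_c\le c$. Hence $H_c\in\mathcal{H}^\Psi(U,\omega)$.

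Next I verify $\Psi$-admissibility. Suppose $x:[0,T]\to U$ is a nonconstant solution of $\dot x=X_{H_c}(x)=f'(H(x))\,X_H(x)$ with $x(T)=\Psi x(0)$ and $0<T\le 1$. Then $H\circ x\equiv\lambda$ for some $\lambda\in[-\varepsilon_0/2,\varepsilon_0/2]$ with $f'(\lambda)\ne 0$. The reparametrization $y(s):=x(s/f'(\lambda))$ (composed with time reversal if $f'(\lambda)<0$, using the $\Psi\leftrightarrow\Psi^{-1}$ symmetry noted in Definition~\ref{def:character}(i)) is a nonconstant integral curve of $X_H$ on $\mathcal{S}_\lambda$ satisfying $y(Tf'(\lambda))=\Psi y(0)$. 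After extracting an embedded sub-arc preserving the endpoint relation, this yields a $\Psi$-characteristic on $\mathcal{S}_\lambda$ with $|\lambda|<\varepsilon_0$, contradicting the choice of $\varepsilon_0$. Hence every such $x$ is constant, $H_c\in\mathcal{H}^\Psi_{ad}(U,\omega)$, and $c^\Psi_{\rm HZ}(U,\omega)\ge\max H_c=c$. Letting $c\to\infty$ completes the contradiction.

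The main obstacle I foresee is the very last step: promoting a mere integral curve of $X_H$ with endpoint relation $y(T')=\Psi y(0)$ to a genuinely embedded $C^1$ arc in the sense of Definition~\ref{def:character}(i). A trajectory may self-intersect before reaching $\Psi y(0)$, and one cannot simply shorten the interval without destroying the endpoint condition. I expect this to be resolved by a minimality argument: choose the smallest $T'>0$ such that there exists $q\in\mathcal{S}_\lambda$ with $\varphi_H^{T'}(q)=\Psi q$ (the set of such $T'$ is nonempty by the curve we already have, and bounded below by a positive constant since $\Psi$ is continuous, $\mathcal{S}_\lambda$ is compact, and $X_H|_{\mathcal{S}_\lambda}$ is nowhere vanishing), and verify that the corresponding orbit segment is automatically injective on $[0,T')$. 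The remainder of the argument is a direct transcription of the classical Hofer-Zehnder template.
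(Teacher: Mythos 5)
Your proof is correct and is essentially the paper's own argument run in contrapositive form: both use a plateau Hamiltonian $f\circ H$ supported in the shell $\{\varepsilon_0/2<|H|<\varepsilon_0\}$ whose maximum exceeds the capacity (in your version, is an arbitrary $c$), so non-admissibility produces a nonconstant solution of the boundary value problem lying on some level $\mathcal{S}_{\lambda}$ with $|\lambda|<\varepsilon_0$, and letting $\varepsilon_0\to 0$ gives the sequence. The embeddedness issue you isolate at the end is genuine but is passed over silently in the paper's one-line conclusion (the fixed-point degeneracy is only confronted in the related Theorem~\ref{MaSch}), so your minimality sketch — which still needs the caveat that the infimum of admissible return times can be $0$ and unattained when $\mathcal{S}_\lambda\cap{\rm Fix}(\Psi)\ne\emptyset$ — is, if anything, more careful than the source.
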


The proof is standard. Pick an $\varepsilon_0\in (0,\varepsilon)$ and
 a $C^\infty$ function $f:\mathbb{R}\rightarrow \mathbb{R}_{\ge 0}$ such that
\begin{eqnarray*}
f(s)=c^{\Psi}_{\rm HZ}(U,\omega)+1\; \text{for}\;    \varepsilon_0<|s|<\varepsilon, \quad &
f(s)=0\;\text{for}& |s|<\frac{\varepsilon_0}{2},\\
f'(s)<0\;\text{for}\; -\varepsilon_0<s<-\frac{\varepsilon_0}{2},\quad &
f'(s)>0\;\text{for}& \frac{\varepsilon_0}{2}<s<\varepsilon_0.
\end{eqnarray*}
Let $F(x)=f(H(x))$ for $x\in U$. Note that $\mathcal{S}=\mathcal{S}_0$  satisfies
$\mathcal{S}\cap{\rm Fix}(\Psi)\ne \emptyset$. Hence $F \in\mathcal{H}^\Psi(U,\omega) $ with $m(F)=c^{\Psi}_{\rm HZ}(U,\omega)+1$.
 By the definition of $c^{\Psi}_{\rm HZ}(U,\omega)$
there exists a nonconstant smooth curve $x:[0, 1]\to \mathbb{R}^{2n}$ satisfying $\dot{x}=X_F(x(t))=f'(H(x(t)))X_H(x(t))$ and $x(1)=\Psi x(0)$. Clearly
$H(x(t))$ is constant and $x$ is a $\Psi$-characteristic sitting  on $\mathcal{S}_{\varepsilon'}$,
 where {$\varepsilon_0/2<|\varepsilon'|<\varepsilon_0$}.
By choosing $\varepsilon_0$ sufficiently small,  Theorem~\ref{dense} follows.

\begin{cor}\label{cor1}
Let $\Psi\in{\rm Symp}(M, \omega)$ and let $\mathcal{S}\subset (M, \omega)$
be a hypersurface of restricted contact type that admits a globally defined Liouville vector field $X$
satisfying  $X(\Psi(x))=d\Psi(x)[X(x)]$ for all $x\in M$.
If $\mathcal{S}\cap {\rm Fix}(\Psi)\ne\emptyset$ and
$c^\Psi_{\rm HZ}(U,\omega)<+\infty$ for some neighborhood $U$ of  $\mathcal{S}$
then there exists a $\Psi$-characteristic on  $\mathcal{S}$.
\end{cor}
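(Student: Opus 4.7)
\textbf{Proof proposal for Corollary~\ref{cor1}.}

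The plan is to combine Theorem~\ref{dense} with a transport argument using the Liouville flow. Let $\phi^t_X$ denote the flow of the globally defined Liouville vector field $X$. Since $L_X\omega=\omega$, the flow satisfies $(\phi^t_X)^\ast\omega=e^t\omega$ wherever it is defined. Because $\mathcal{S}$ is compact and $X$ is transversal to $\mathcal{S}$, there exists $\varepsilon>0$ such that the map $\mathcal{S}\times(-\varepsilon,\varepsilon)\to M$, $(y,t)\mapsto\phi^t_X(y)$, is a diffeomorphism onto an open neighborhood $V$ of $\mathcal{S}$ with $V\subset U$. Let $H:V\to\mathbb{R}$ be the associated time coordinate, i.e. $H(\phi^t_X(y))=t$ for $y\in\mathcal{S}$; its regular level sets are $\mathcal{S}_\lambda=\phi^\lambda_X(\mathcal{S})$, so $V=\bigcup_{\lambda\in(-\varepsilon,\varepsilon)}\mathcal{S}_\lambda$ is a thickening of $\mathcal{S}_0=\mathcal{S}$ in the sense of (\ref{thicken}).

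By the monotonicity clause in Proposition~\ref{MonComf}(ii) applied to the inclusion $V\hookrightarrow U$ (which trivially intertwines $\Psi$ with itself), $c^\Psi_{\rm HZ}(V,\omega)\le c^\Psi_{\rm HZ}(U,\omega)<+\infty$. Since $\mathcal{S}\cap{\rm Fix}(\Psi)\ne\emptyset$ by hypothesis, Theorem~\ref{dense} applies to the thickening $V$ and yields a sequence $\lambda_j\to 0$ together with $\Psi$-characteristics $\gamma_j:[0,T_j]\to\mathcal{S}_{\lambda_j}$.

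It remains to transport a single $\gamma_j$ back to $\mathcal{S}$. The equivariance hypothesis $X(\Psi x)=d\Psi(x)[X(x)]$ means that $\Psi$ commutes with the Liouville flow, so $\Psi\circ\phi^t_X=\phi^t_X\circ\Psi$ on the domain of $\phi^t_X$. Define
\[
\tilde\gamma_j:=\phi^{-\lambda_j}_X\circ\gamma_j:[0,T_j]\longrightarrow\mathcal{S}.
\]
This is again a $C^1$ embedding. Because $(\phi^{-\lambda_j}_X)^\ast\omega=e^{-\lambda_j}\omega$ and the kernel of a nondegenerate restriction of a $2$-form on a hypersurface is a scale-invariant line field, we have $(d\phi^{-\lambda_j}_X)(\mathcal{L}_{\mathcal{S}_{\lambda_j}})=\mathcal{L}_{\mathcal{S}}$; hence $\dot{\tilde\gamma}_j(t)\in(\mathcal{L}_{\mathcal{S}})_{\tilde\gamma_j(t)}$ for all $t$. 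Finally, the commutation gives
\[
\tilde\gamma_j(T_j)=\phi^{-\lambda_j}_X(\gamma_j(T_j))=\phi^{-\lambda_j}_X(\Psi\gamma_j(0))=\Psi\phi^{-\lambda_j}_X(\gamma_j(0))=\Psi\tilde\gamma_j(0),
\]
so $\tilde\gamma_j$ is a $\Psi$-characteristic on $\mathcal{S}$, as required.

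The only points requiring care are the two intertwining properties used in the last paragraph: that $\phi^{-\lambda_j}_X$ sends the characteristic line field of $\mathcal{S}_{\lambda_j}$ onto that of $\mathcal{S}$, and that it commutes with $\Psi$. Both follow directly from the Liouville equation and the equivariance assumption, so there is no substantial obstacle; the core content of the proof is already carried by Theorem~\ref{dense}.
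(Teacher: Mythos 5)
Your proposal is correct and follows essentially the same route as the paper: build the thickening of $\mathcal{S}$ via the Liouville flow, invoke Theorem~\ref{dense} on it (after the monotonicity estimate for $c^\Psi_{\rm HZ}$), and transport a $\Psi$-characteristic from a nearby level $\mathcal{S}_{\lambda_j}$ back to $\mathcal{S}$ using that $\phi^{-\lambda_j}_X$ commutes with $\Psi$ and carries $\mathcal{L}_{\mathcal{S}_{\lambda_j}}$ onto $\mathcal{L}_{\mathcal{S}}$. No gaps.
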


Indeed, let  $\phi^t$ denote the local flow of the Liouville vector field $X$.
Since $\mathcal{S}$ is compact and $X$ is transversal to $\mathcal{S}$, there exists a sufficiently small $\varepsilon>0$ such that the map
$$
{\psi: \mathcal{S}\times (-\varepsilon, \varepsilon)\rightarrow U\subset M,\;(x,t)\mapsto \phi^t(x)}
$$
is a differomorphism (by shrinking $U$ if necessary), and that
 {$\phi^t(\Psi(x)) =\Psi(\phi^t(x))$ for all $(t, x)\in (-\varepsilon, \varepsilon)\times\mathcal{S}$}
 because of $X(\Psi (x))=d\Psi (x)[X(x)]$ for all $x\in M$. Define $H:U\to\mathbb{R}$
 by $H(x)=\lambda$ if $x=\psi(y,\lambda)\in U$.
Let $\mathcal{S}_\lambda=H^{-1}(\lambda)=\psi(\mathcal{S}\times \{\lambda\})$
for $\lambda\in(-\varepsilon,\varepsilon)$. By Theorem~\ref{dense}
there exists $\lambda\in  (-\varepsilon,\varepsilon)$ arbitrarily close to $0$ such that
$\mathcal{S}_\lambda$ carries a $\Psi$-characteristic $y$.
Note that $d\phi^\lambda: \mathcal{L}_\mathcal{S}\to \mathcal{L}_{\mathcal{S}_\lambda}$
is a bundle isomorphism. From these we derive that $x(t)=\phi^{-\nu}y(t)$ is a
$\Psi$-characteristic on $\mathcal{S}$. (See the arguments above Proposition~\ref{prop:EH.4.1}
for details.)

Clearly,  Corollary~\ref{cor1} may be applied to $(M, \omega)=(\mathbb{R}^{2n},\omega_0)$
and $\Psi\in{\rm Sp}(2n,\mathbb{R})$. But the result obtained is weaker than
Theorem~\ref{th:EHcontact}.

The same assumptions as under Theorem~\ref{dense} may yield the following result for the leaf-wise intersection question,
which can be viewed as a partial generalization of the result in \cite{MaSc05} and
will be proved in Section~\ref{sec:appl}.

\begin{thm}\label{MaSch}
Under the assumptions of Theorem~\ref{dense}
the following holds:
\begin{description}
\item[(i)] There exists a subset $\Delta\subset (0,\varepsilon)$ of full Lebesgue measure $m(\Delta)=\varepsilon$ such that for every $\delta\in \Delta$
    either $S_{\delta}\cup S_{-\delta}$ contains a fixed point of $\Psi$,
    or $S_{\delta}$ carries  a $\Psi$-characteristic $y:[0,T]\to S_{\delta}$  satisfying
  $\dot{y}=X_H(y)$ or $S_{-\delta}$ carries  a $\Psi$-characteristic $y:[0,T]\to S_{-\delta}$  satisfying
  $\dot{y}=-X_H(y)$. (If $\Psi$ has only finitely many fixed points in $U$, $\Delta$ may be chosen so that the first case does not occur.)
\item[(ii)] There exists a subset $\Lambda\subset I\setminus\{0\}$ of full Lebesgue measure $m(\Lambda)=m(I)$ such that for every
nonzero parameter $\lambda\in \Lambda$  the associated energy surface $S_{\lambda}$ carries
either a fixed point of $\Psi$ or a $\Psi$-characteristic $y:[0,T]\to S_{\lambda}$  satisfying
  $\dot{y}=X_H(y)$ or $\dot{y}=-X_H(y)$.
  (If $\Psi$ has only finite fixed points in $U$, $\Lambda$ may be chosen so that the first case does not occur.)
    Consequently, each $S_{\lambda}$ with $\lambda\in\Lambda$ carries a leaf-wise intersection point for $\Psi$.
  \end{description}
\end{thm}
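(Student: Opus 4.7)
The plan is to adapt the classical Hofer--Zehnder almost-existence argument combined with a Struwe-type monotonicity trick, using the generalized capacity $c^{\Psi}_{\rm HZ}$ in place of $c_{\rm HZ}$ throughout. Parametrize the thickening via $H$ so that $\mathcal{S}_\lambda=H^{-1}(\lambda)$, and for $0<\delta<\varepsilon$ set
$V_\delta:=\{x\in U:|H(x)|<\delta\}$ and $W^{\pm}_\delta:=\{x\in U:\pm H(x)\in(0,\delta)\}$. Each of these open sets contains the fixed point of $\Psi$ on $\mathcal{S}=\mathcal{S}_0$ in its closure (after shrinking $U$ if necessary so that the fixed point lies in every $V_\delta$, which is automatic if it sits on $\mathcal{S}_0\subset V_\delta$), and each is contained in $U$, so by Proposition~\ref{MonComf}(ii) its capacity is bounded by $c^{\Psi}_{\rm HZ}(U,\omega)<\infty$.

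For part (i) I would define $\rho(\delta):=c^{\Psi}_{\rm HZ}(V_\delta,\omega)$. By monotonicity $\rho$ is nondecreasing on $(0,\varepsilon)$, hence differentiable on a full-measure subset $\Delta\subset(0,\varepsilon)$. Fix $\delta\in\Delta$ and small $h>0$. Construct a smooth $f_h:\mathbb{R}\to[0,\rho(\delta+h)+1]$ that vanishes on $(-\delta+h,\delta-h)$, equals $\rho(\delta+h)+1$ outside $(-\delta-h,\delta+h)$, and is weakly monotone on each side with its slope concentrated on the two slabs $[\pm\delta-h,\pm\delta+h]$. Then $F_h:=f_h\circ H\in \mathcal{H}^{\Psi}(V_{\delta+h},\omega)$ with $m(F_h)>c^{\Psi}_{\rm HZ}(V_{\delta+h},\omega)$, so the definition of $c^{\Psi}_{\rm HZ}$ forces a nonconstant solution $x_h:[0,T_h]\to V_{\delta+h}$ of (\ref{bvp}) with $0<T_h\leq 1$ and $\dot{x}_h=f_h'(H(x_h))X_H(x_h)$. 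Because $H\circ x_h\equiv\lambda_h$ is constant with $|\lambda_h|\in[\delta-h,\delta+h]$, the reparametrization $y_h(s)=x_h(s/(T_hf_h'(\lambda_h)))$ is a $\Psi$-characteristic on $\mathcal{S}_{\lambda_h}$ satisfying $\dot{y}_h=\pm X_H(y_h)$ (the sign being that of $\lambda_h$) on $[0,\tau_h]$ with $\tau_h=T_hf_h'(\lambda_h)$.

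The crucial uniform estimate comes from choosing $f_h$ so that $f_h'(\lambda_h)\leq(\rho(\delta+h)-\rho(\delta-h))/(2h)+O(1)$. Differentiability of $\rho$ at $\delta$ then bounds $\tau_h$ uniformly in $h$. After selecting a subsequence $h_k\to 0$ and applying Arzel\`a--Ascoli to $y_{h_k}$ (whose images lie in the compact slab $\overline{V_{\delta+1}}$ and whose derivatives are controlled by $\tau_h$ and the $C^0$ bound on $X_H$), I obtain a limit curve $y_\ast:[0,\tau_\ast]\to\mathcal{S}_\delta\cup\mathcal{S}_{-\delta}$ with $\dot{y}_\ast=\pm X_H(y_\ast)$ and $y_\ast(\tau_\ast)=\Psi y_\ast(0)$. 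Either $y_\ast$ is nonconstant, giving the sought $\Psi$-characteristic on $\mathcal{S}_\delta$ or $\mathcal{S}_{-\delta}$, or $y_\ast\equiv z$ is constant, in which case $\Psi z=z$ and $z\in\mathcal{S}_\delta\cup\mathcal{S}_{-\delta}$ is a fixed point of $\Psi$. When $\Psi$ has only finitely many fixed points in $U$, at most countably many values of $\delta$ land in the second alternative, and these are removed from $\Delta$ without affecting full measure. Part (ii) follows by running the same argument separately on $(0,\varepsilon)$ and $(-\varepsilon,0)$ with $W^{+}_\delta$ and $W^{-}_\delta$ in place of $V_\delta$, which pins the sign of $\lambda$ to a single level $\mathcal{S}_\lambda$; the union of the two full-measure sets is $\Lambda$. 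The last assertion of (ii)---that $\mathcal{S}_\lambda$ carries a leaf-wise intersection point---follows from the discussion in Section~\ref{section:1} relating $\Psi$-characteristics to leaf-wise intersections.

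The main obstacle is the control of the periods $\tau_h$: the construction of the cutoff $f_h$ must be tuned so that $f_h'$ on the slabs is dominated by a difference quotient of $\rho$, and the Arzel\`a--Ascoli limit must be shown to preserve the Hamiltonian equation, the boundary condition $y(\tau)=\Psi y(0)$, and the embedding/nonconstancy property---or to degenerate in a controlled way to a $\Psi$-fixed point. Ensuring that the limit does not escape the closed slab $\mathcal{S}_\delta\cup\mathcal{S}_{-\delta}$ requires keeping $|\lambda_h|\in[\delta-h,\delta+h]$ throughout the approximation, which is guaranteed by the support of $f_h'$.
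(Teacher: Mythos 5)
Your overall strategy for part (i) --- monotonicity and a.e.\ differentiability of $\delta\mapsto c^{\Psi}_{\rm HZ}(V_\delta,\omega)$, a cutoff Hamiltonian whose slope controls the period, and an Arzel\`a--Ascoli limit ending in the dichotomy ``fixed point of $\Psi$ or $\Psi$-characteristic'' --- is the same as the paper's. But the cutoff you describe does not deliver the period bound. If $f_h$ vanishes on $(-\delta+h,\delta-h)$ and equals $\rho(\delta+h)+1$ outside $(-\delta-h,\delta+h)$, its total climb on each slab of width $2h$ is $\rho(\delta+h)+1$, so $\sup f_h'\geq (\rho(\delta+h)+1)/(2h)\to\infty$; the asserted bound $f_h'(\lambda_h)\le(\rho(\delta+h)-\rho(\delta-h))/(2h)+O(1)$ is incompatible with these boundary values, and since the orbit sits on a level $\lambda_h$ where $f_h'\ne 0$ but otherwise uncontrolled, $\tau_h=T_hf_h'(\lambda_h)$ need not stay bounded. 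The repair --- which is exactly the paper's Claim~\ref{cl:appl.2} --- is to place a near-optimal $\Psi$-admissible Hamiltonian $\tilde H$ on the inner slab, with $\max\tilde H>c^{\Psi}_{\rm HZ}(U_{\delta^\ast},\omega)-(\delta-\delta^\ast)$, and to add on the collar $\{\delta^\ast<|H|<\delta\}$ only the increment $c^{\Psi}_{\rm HZ}(U_{\delta},\omega)+(\delta-\delta^\ast)-\max\tilde H\le (L+2)(\delta-\delta^\ast)$, so the collar slope is $\le L+3$ with $L$ the one-sided Lipschitz constant of the capacity at $\delta^\ast$; admissibility of $\tilde H$ then forces the nonconstant orbit into the collar, where the slope, hence the period, is bounded.

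Part (ii) as you propose it has a more basic defect: the one-sided slabs $W^{\pm}_\delta=\{x\in U:\pm H(x)\in(0,\delta)\}$ exclude $\mathcal{S}_0$, and the only fixed point of $\Psi$ guaranteed by the hypotheses lies on $\mathcal{S}_0$. Hence $W^{\pm}_\delta\cap{\rm Fix}(\Psi)$ may be empty, $c^{\Psi}_{\rm HZ}(W^{\pm}_\delta,\omega)$ is then undefined, and $\mathcal{H}^{\Psi}(W^{\pm}_\delta,\omega)$ is empty because every test Hamiltonian must vanish on an open set meeting ${\rm Fix}(\Psi)$. So you cannot pin the sign of $\lambda$ by shrinking the slab to one side. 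The paper keeps the symmetric slabs, records for each good $\delta$ which of $\pm\delta$ was reached together with a uniform period bound (the closed sets $G_n$ and $\Lambda_n$, whose closedness comes from the compactness statement in Claim~\ref{cl:appl.1}), and then upgrades ``for a.e.\ $\delta$ at least one of $\pm\delta$ is good'' to ``a.e.\ $\lambda$ is good'' by the measure-theoretic argument of Macarini--Schlenk; the price, visible in the statement of (ii), is that on $S_\lambda$ the equation may be $\dot y=X_H(y)$ or $\dot y=-X_H(y)$ irrespective of the sign of $\lambda$. You would need to supply that final measure-theoretic step (or a genuine substitute) to obtain (ii).
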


\begin{remark}
{\rm Clearly, the statements (i) and (ii) in the above theorem cannot be contained in each other.
When $\Psi=id_M$ the $\Psi$-characteristics become closed characteristics and
 Hofer and Zehnder showed in \cite[p. 118, Theorem~4]{HoZe94} that
for some subset $\Lambda\subset I$ of full Lebesgue measure $m(\Lambda)=m(I)$
 every energy surface $S_{\lambda}$ with $\lambda\in\Lambda$   carries  a closed characteristic,
 provided $(M,\omega)$ has finite Hofer-Zehnder capacity and $\mathcal{S}\subset (M,\omega)$ bounds a compact  symplectic manifold.
   Macarini and  Schlenk in \cite{MaSc05} removed out the last additional assumption.
    (Actually, when $(M, \omega)=(\mathbb{R}^{2n}, \omega_0)$ and $\Psi=id_{\mathbb{R}^{2n}}$
 Struwe \cite{Str90} refined the arguments by Hofer and Zehnder \cite{HoZe87}
 to  prove such a result in 1990.)}
\end{remark}

\begin{cor}\label{cor2}
Let $0$ be a regular value of $H\in C^2(\mathbb{R}^{2n})$ such that
$\mathcal{S}:=H^{-1}(0)$ is compact and connected. Let
$U=\cup_{\lambda \in I}\mathcal{S}_\lambda$, where $I=(-\varepsilon,\varepsilon)$ and
$\mathcal{S}_\lambda=\{x\in \mathbb{R}^{2n}\,|\,H(x)=\lambda\}$, be
 a thickening of $\mathcal{S}$ in  $(\mathbb{R}^{2n},\omega_0)$.
Suppose that $\Psi\in{\rm Sp}(2n, \mathbb{R})$ and $\mathcal{S}\cap{\rm Fix}(\Psi)\ne\emptyset$.
Then the corresponding conclusions to those of Theorem~\ref{MaSch} hold.
\end{cor}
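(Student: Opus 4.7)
The plan is to show that Corollary~\ref{cor2} is a direct specialization of Theorem~\ref{MaSch}: the only hypothesis of Theorem~\ref{MaSch} that is not explicitly assumed in Corollary~\ref{cor2} is the finiteness $c^\Psi_{\rm HZ}(U,\omega_0)<\infty$, so the whole task reduces to verifying this inequality for bounded thickenings $U\subset\mathbb{R}^{2n}$.

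First I would reduce to the case in which the fixed point of $\Psi$ sits at the origin. Pick $p\in \mathcal{S}\cap \mathrm{Fix}(\Psi)$ and let $T_p\colon \mathbb{R}^{2n}\to\mathbb{R}^{2n}$ be the translation $x\mapsto x-p$, which is a symplectomorphism of $(\mathbb{R}^{2n},\omega_0)$. Because $\Psi p=p$, a direct check gives $T_p\circ\Psi\circ T_p^{-1}=\Psi$, i.e., $T_p$ lies in the centralizer of $\Psi$ in $\mathrm{Symp}(\mathbb{R}^{2n},\omega_0)$. Applying Proposition~\ref{MonComf}(ii) (equivalently, formula (\ref{e:inv}) with $\Phi=T_p$) yields
$$
c^\Psi_{\rm HZ}(U,\omega_0)=c^\Psi_{\rm HZ}(U-p,\omega_0),
$$
so we may assume henceforth that $0\in U\cap\mathrm{Fix}(\Psi)$.

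Next I would bound $c^\Psi_{\rm HZ}(U,\omega_0)$ using a circumscribed ball. Since $U$ is bounded there exists $R>0$ with $U\subset B^{2n}(R)$, and $0\in B^{2n}(R)\cap\mathrm{Fix}(\Psi)$, so monotonicity (Proposition~\ref{MonComf}(ii) with $\phi=\mathrm{id}$) gives $c^\Psi_{\rm HZ}(U,\omega_0)\le c^\Psi_{\rm HZ}(B^{2n}(R),\omega_0)$. The conformality relation (\ref{e:inv.1}) and the explicit evaluation (\ref{e:ball}) then yield
$$
c^\Psi_{\rm HZ}(U,\omega_0)\ \le\ R^2\, c^\Psi_{\rm HZ}(B^{2n}(1),\omega_0)\ =\ \tfrac{R^2}{2}\,\mathfrak{t}(\Psi),
$$
which is finite since $\mathfrak{t}(\Psi)\in(0,2\pi]$ by Lemma~\ref{zeros}.

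With the finiteness established, the conclusions of Corollary~\ref{cor2} follow word-for-word from the two parts of Theorem~\ref{MaSch} applied to the thickening $U=\bigcup_{\lambda\in I}\mathcal{S}_\lambda$ in $(\mathbb{R}^{2n},\omega_0)$. There is no genuine obstacle: the main step is the essentially clerical verification that translation by a fixed point preserves $c^\Psi_{\rm HZ}$, after which monotonicity, conformality and the normalization on balls finish the argument automatically.
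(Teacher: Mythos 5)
Your proposal is correct and follows essentially the same route as the paper: bound $c^\Psi_{\rm HZ}(U,\omega_0)$ by the capacity of a circumscribing ball via monotonicity, evaluate that via conformality and the normalization (\ref{e:ball}) as a finite multiple of $\mathfrak{t}(\Psi)$, and then invoke Theorem~\ref{MaSch}. The preliminary translation putting a fixed point at the origin is harmless but not needed here, since $0$ is automatically a fixed point of the linear map $\Psi$ and the ball $B^{2n}(R)$ is centered there anyway.
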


Since $\mathcal{S}$ is compact, there exists sufficiently large $R>0$ such that
$U\subset B^{2n}(R)$. By the monotonicity and positive conformality property of the capacity $c^{\Psi}_{\rm HZ}$ together with (\ref{e:ball}), we get that $c^{\Psi}_{\rm HZ}(U,\omega_0)\le c^\Psi_{\rm HZ}(B^{2n}(R),\omega_0)\le 2R^2 \mathfrak{t}(\Psi)$. Corollary~\ref{cor2} follows.

  As a generalization of Struwe's main result of \cite{Str90},
  we have the following result which is stronger than
Corollary~\ref{cor2}. It is proved in Section~\ref{sec:appl}.

\begin{thm}\label{Str}
Suppose that $1$ is a regular value of $H\in C^2(\mathbb{R}^{2n})$ and $\mathcal{S}:=H^{-1}(1)$
is compact and connected. {\rm (}Thus there exists $\delta_0>0$ such that each $\beta\in [1-\delta_0, 1+\delta_0]$
is a regular value of $H$ and $\mathcal{S}_\beta:=H^{-1}(\beta)$ is diffeomorphic to $\mathcal{S}=\mathcal{S}_1$.
Denote by $\gamma$ the diameter of $H^{-1}([1-\delta_0, 1+\delta_0])$.{\rm )}
Suppose for some $\Psi\in{\rm Sp}(2n, \mathbb{R})$ that
the bounded component of $\mathbb{R}^{2n}\setminus \mathcal{S}$ contains a fixed point
  of $\Psi$. Then for almost every $\beta\in (1-\delta_0, 1+\delta_0)$
the associated energy surface $\mathcal{S}_{\beta}:=H^{-1}(\beta)$ carries
a $\Psi$-characteristic $y$ satisfying $\dot{y}=X_H(y)$ and with action
$0<A(y)<16\mathfrak{t}(\Psi)\gamma^2$.
\end{thm}

\begin{remark}\label{HoZe87}
{\rm Recently, Ginzburg and G\"urel \cite{GinGu15} showed that there exists a closed,
 smooth hypersurface $\mathcal{S}\subset \mathbb{R}^{2n}$ ($2n\ge 4$) and
 a sequence of $C^\infty$-smooth autonomous Hamiltonians $F_k\to 0$ in $C^0$,
 supported in the same compact set, such that $\mathcal{S}$ and $\varphi_{F_k}(\mathcal{S})$
 have no leafwise intersections. Here $\varphi_{F_k}$ denotes the time-one map of the Hamiltonian flow of $F_k$.
 {This result suggests that Theorem~\ref{Str} is best possible
 in some sense since there may exist $\beta'$ near $1$ such that $H^{-1}(\beta')$ carries no $\Psi$-characteristics}.}
\end{remark}

\subsection{An extension of a theorem by Evgeni Neduv}\label{sec:1.Neduv}

{Evgeni Neduv  \cite[Theorem~4.4]{Ned01} showed that  differentiability of the Hofer-Zehnder capacity
can be used to derive  some results on fixed period problem of Hamiltonian systems. Similar differentiability also holds for the $\Psi$-HZ capacity where $\Psi\in\rm{Sp}(2n,\mathbb{R}^{2n})$ and can lead to a result on existence of solutions
$y:[0,T]\to\mathbb{R}^{2n}$ to the boundary value problem
$$
\dot{y}(t)=J\nabla\mathcal{H}(y(t)),\quad y(T)=\Psi y(0).
$$
with fixed $T$ and $\mathcal{H}$ a convex Hamiltonian satisfying certain asymptotic conditions. The main tool is the representation formula (Theorem~\ref{th:convex}).}

For a proper and strictly convex Hamiltonian $\mathscr{H}\in C^2(\mathbb{R}^{2n},\mathbb{R}_{\ge 0})$
such that $\mathscr{H}(0)=0$ and $\mathscr{H}''>0$ (which imply
$\mathscr{H}\ge 0$ by the Taylor's formula), if $e_0\ge 0$ is a regular value of $\mathscr{H}$ with
$\mathscr{H}^{-1}(e_0)\ne\emptyset$,
the set $D(e):=\{\mathscr{H}<e\}$
is a  strictly convex bounded domain in $\mathbb{R}^{2n}$ with $0\in D(e)$ and
with $C^2$-boundary $\mathcal{S}(e)=\mathscr{H}^{-1}(e)$ for each number $e$ {sufficiently close to} $e_0$.
Given $\Psi\in{\rm Sp}(2n,\mathbb{R})$, for any $e$ near $e_0$ let $\mathscr{C}(e):=c^\Psi_{\rm HZ}(D(e),\omega_0)$.
By Remark~\ref{rm:carrier}
all $c^\Psi_{\rm HZ}$-carriers for $D(e)$ form a compact subset in $C^1([0, \mathscr{C}(e)], \mathcal{S}(e))$. Hence
\begin{equation}\label{e:convexDiff}
\mathscr{I}(e):=\left\{T_x=2\int^{\mathscr{C}(e)}_0\frac{dt}{\langle\nabla\mathscr{H}(x(t)), x(t)\rangle}\,\Big|\,
 \hbox{$x:[0,\mathscr{C}(e)]\to\mathcal{S}(e)$ is a $c^\Psi_{\rm HZ}$-carrier for $D(e)$}\right\}
\end{equation}
is a compact subset in $\mathbb{R}$. Denote by $T^{\max}(e)$ and $T^{\min}(e)$
 the largest and smallest numbers in $\mathscr{I}(e)$.
{Every $c^\Psi_{\rm HZ}$-carrier $x$ for $D(e)$ can be reparameterized as a solution of
\begin{equation}\label{e:convexDiff+}
-J\dot{x}(t)=\nabla\mathscr{H}(x(t))\;\forall t\in [0, T_x]\quad\hbox{and}\quad x(T_{x})=\Psi x(0)
\end{equation}
on $\mathcal{S}(e)=\mathscr{H}^{-1}(e)$, where $T_x\in\mathscr{I}(e)$, satisfying
$x(t)\ne \Psi x(0)$ for any $t\in (0, T_{x})$. The final property means that
$T_{x}$ is the minimal period of $x$ if $\Psi=I_{2n}$.}

The following is our generalization for \cite[Theorem~4.4]{Ned01}. Its proof is given in Section~\ref{sec:convexDiff}.

\begin{thm}\label{th:convexDiff}
Let $\Psi\in{\rm Sp}(2n,\mathbb{R})$ and $\mathscr{H}\in C^2(\mathbb{R}^{2n},\mathbb{R}_{\ge 0})$ be as above.
Then $\mathscr{C}(e)$ has the left and right derivatives
at $e_0$, $\mathscr{C}'_-(e_0)$ and $\mathscr{C}'_+(e_0)$, and they satisfy
\begin{eqnarray*}
&&\mathscr{C}'_-(e_0)=\lim_{\epsilon\to0-}T^{\max}(e_0+\epsilon)=T^{\max}(e_0)\quad\hbox{and}\\
&&\mathscr{C}'_+(e_0)=\lim_{\epsilon\to0+}T^{\min}(e_0+\epsilon)=T^{\min}(e_0).
\end{eqnarray*}
Moreover, if  $[a,b]\subset (0,\sup\mathscr{H})$ is a regular interval of $\mathscr{H}$ such that
$\mathscr{C}'_+(a)<\mathscr{C}'_-(b)$, then for any  $r\in (\mathscr{C}'_+(a),
\mathscr{C}'_-(b))$ there exists $e'\in (a,b)$ such that $\mathscr{C}(e)$
is differentiable at $e'$ and $\mathscr{C}'_-(e')=\mathscr{C}'_+(e')=r=T^{\max}(e')=T^{\min}(e')$.
\end{thm}

As a monotone function on a regular interval $[a,b]$ of $\mathscr{H}$ as above,
$\mathscr{C}(e)$ satisfies $\mathscr{C}'_-(e)=\mathscr{C}'_+(e)$ for almost all values of $e\in [a, b]$
and thus both $T^{\max}$ and $T^{\min}$ are almost everywhere continuous.
Actually, both $T^{\max}$ and $T^{\min}$ have only at most countable discontinuous
points and are also Riemann integrable on $[a,b]$ (see \cite[Corollary~6.4]{Bl14}).

By Theorem~\ref{th:convexDiff}, for any regular interval $[a,b]\subset (0,\sup \mathscr{H})$
 of $\mathscr{H}$ with $\mathscr{C}'_+(a)\le\mathscr{C}'_-(b)$,
if $T\in [\mathscr{C}'_+(a), \mathscr{C}'_-(b)]$ then
(\ref{e:convexDiff+}) has a solution
$y:[0, T]\to \mathscr{H}^{-1}([a,b])$ such that $y(T)=\Psi y(0)$ and
 $y(t)\ne \Psi y(0)$ for any $t\in (0, T)$. For example, we have

 \begin{corollary}\label{cor:convexDiff}
Suppose that a proper and strictly convex Hamiltonian $\mathscr{H}\in C^2(\mathbb{R}^{2n},\mathbb{R}_{\ge 0})$
satisfies the conditions:
\begin{description}
\item[(i)] $\mathscr{H}(0)=0$, $\mathscr{H}''>0$ and every $e>0$ is a regular value of $\mathscr{H}$,
\item[(ii)] there exist two positive definite symmetric matrixes $S_0, S_\infty\in\mathbb{R}^{2n\times 2n}$ such that
\begin{equation}\label{e:convexDiff++}
\inf\{t>0\,|\, \det(\exp(tJS_0)-\Psi)\ne 0\}\le\inf\{t>0\,|\, \det(\exp(tJS_\infty)-\Psi)\ne 0\}
\end{equation}
    and that $\mathscr{H}(x)$ is equal to  $q(x):=\frac{1}{2}\langle S_0x, x\rangle$ (resp.
    $Q(x):=\frac{1}{2}\langle S_\infty x, x\rangle$) for $|x|$ small (resp. large) enough.
\end{description}
Then for every $T$ between the two numbers in (\ref{e:convexDiff++}) the corresponding system
(\ref{e:convexDiff+}) has a solution
$y:[0, T]\to \mathbb{R}^{2n}$ such that $y(T)=\Psi y(0)$ and
 $y(t)\ne \Psi y(0)$ for any $t\in (0, T)$.
\end{corollary}

In fact, if $e>0$ is small (resp. large) enough then
$D(e)$ is equal to $D_q(e):=\{q<e\}=\sqrt{e}E(q)$ (resp. $D_Q(e):=\{Q< e\}=\sqrt{e}E(Q)$)
and so
\begin{eqnarray*}
&&\hbox{$c^\Psi_{\rm HZ}(D(e),\omega_0)=c^\Psi_{\rm HZ}(D_q(e),\omega_0)=ec^\Psi_{\rm HZ}(E(q),\omega_0)$}\\
&&\hbox{(resp. $c^\Psi_{\rm HZ}(D(e),\omega_0)=c^\Psi_{\rm HZ}(D_Q(e),\omega_0)=ec^\Psi_{\rm HZ}(E(Q),\omega_0)$)}.
\end{eqnarray*}
Then  Corollary~\ref{cor:ellipsoid} implies
$\mathscr{C}'(a)=c^\Psi_{\rm HZ}(E(q),\omega_0)=\inf\{t>0\,|\, \det(\exp(tJS_0)-\Psi)\ne 0\}$ for $a>0$ small enough
and  $\mathscr{C}'(b)=c^\Psi_{\rm HZ}(E(Q),\omega_0)=\inf\{t>0\,|\, \det(\exp(tJS_\infty)-\Psi)\ne 0\}$ for $b>0$ large enough.
The conclusions of Corollary~\ref{cor:convexDiff}  follow from Theorem~\ref{th:convexDiff} immediately.

\noindent{\bf Organization of the paper}.
\begin{itemize}
  \item Section~\ref{section:space}: present our variational frame and related preparations.
  \item Section~\ref{sec:convex}: prove Theorem~\ref{th:convex}
by improving the arguments in \cite{HoZe87, HoZe90}.
  \item Section~\ref{sec:EH.2}: provide the variational explanation for our extended  Ekeland-Hofer capacity $c^\Psi_{\rm EH}$ (the proof of Theorem~\ref{th:EH.1.6}).
  \item Section~\ref{sec:EH.3}: prove Theorems~\ref{th:EHconvex},~\ref{th:EHproduct}.
  \item Section~\ref{sec:EH.4}: prove Theorem~\ref{th:EHcontact}.
  \item Section~\ref{sec:appl}: prove Theorems~\ref{MaSch}, \ref{Str}.
  \item Section~\ref{sec:convexDiff}: prove Theorem~\ref{th:convexDiff}.
\end{itemize}

\noindent{\bf Acknowledgements}: We would like to
thank Professor Jean-Claude Sikorav  for sending us his beautiful article \cite{Sik90}
and explaining some details. We  thank Dr. Jun Zhang for pointing out that Question $\Psi$
is an extension of the question about existence of leaf-wise intersection points and
for correcting some typing errors. We are also deeply grateful to the anonymous referees for giving very helpful
comments and suggestions to improve the exposition.

\section{Variational frame and related preparations}\label{section:space}
\setcounter{equation}{0}

In this section we shall give our variational frame
by suitably modifying those in \cite{HoZe87,EH89,HoZe90,HoZe94}.
For the sake of completeness some corresponding conclusions are also proved
in details though part of them appeared in \cite{Dong06} in different forms.

For a given symplectic  matrix  $\Psi\in{\rm Sp}(2n,\mathbb{R})$,
consider the Hilbert subspace of $H^1([0,1],\mathbb{R}^{2n})$,
$$
X=\{x\in H^1([0,1],\mathbb{R}^{2n})\,|\, x(1)=\Psi x(0)\}.
$$
 Since  $C^{\infty}_c([0,1])\subset X$ is dense in
$L^2([0,1],\mathbb{R}^{2n})$ (cf. \cite[Cor.4.23]{Br11}), so is $X$ in $L^{2}([0,1],\mathbb{R}^{2n})$.
Consider the unbounded linear  operator on $L^2([0,1],\mathbb{R}^{2n})$ with domain ${\rm dom} (\Lambda)=X$,
\begin{equation}\label{e:Lambda}
\Lambda:=-J\frac{d}{dt},
\end{equation}
which is also a bounded linear operator from $X$ ({with $H^1$ norm}) to $L^{2}([0,1],\mathbb{R}^{2n})$.
Denote by ~$E_1\subset \mathbb{R}^{2n}$ the eigenvector space which belongs to eigenvalue ~$1$ of ~$\Psi$.
By identifying ~$a\in E_1 $ with the constant path in $L^2([0,1],\mathbb{R}^{2n})$ given by
$\hat{a}(t)=a$ for all $t\in [0,1]$, we can identify
~${\rm Ker} (\Lambda)$ with ~$E_1$. We write ~${\rm Ker} (\Lambda)=E_1$ without occurring of confusions.
  Denote by $R(\Lambda)$ the range of $\Lambda$. The following proposition
  is a standard exercise in functional analysis. But we still give its detailed proof for
  the sake of completeness.

\begin{prop}\label{eigenvalues}
  \begin{description}
    \item[(i)] $R(\Lambda)$ is a closed subspace in $L^2([0,1],\mathbb{R}^{2n})$ and there exists the following orthogonal decomposition
        \begin{equation}\label{split2}
        L^2([0,1],\mathbb{R}^{2n})= {\rm Ker} (\Lambda)\oplus R(\Lambda).
        \end{equation}
    \item[(ii)]  The restriction ~$\Lambda_0:=\Lambda|_{R(\Lambda)\cap {\rm dom}(\Lambda)}$ is a
    bijection onto $R(\Lambda)$, and $\Lambda_{0}^{-1}:R(\Lambda)\rightarrow R(\Lambda)$ is a compact and
        self-adjoint operator if $R(\Lambda)$ is equipped with the $L^2$ norm.
  \end{description}
    \end{prop}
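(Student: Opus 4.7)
\medskip

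\noindent\textbf{Proof plan.} My approach is to explicitly identify $R(\Lambda)$ as the orthogonal complement $E_1^\perp \subset L^2$, inverting $\Lambda$ by direct integration and exploiting the identity $\Psi^t J\Psi = J$ to handle the twisted boundary condition. Once this concrete inversion formula is in hand, both parts of the proposition will follow from standard functional-analytic tools (Rellich's compact embedding, the bounded inverse theorem, and the usual passage from symmetry of $\Lambda$ to self-adjointness of its compact inverse).

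For part (i), I would first show $R(\Lambda)\subset E_1^\perp$. For $x\in X$ and $c\in E_1$ (viewed as the constant path $\hat c$),
\[
\langle \Lambda x,\hat c\rangle_{L^2}=\int_0^1\langle -J\dot x(t),c\rangle\,dt=\langle x(1),Jc\rangle-\langle x(0),Jc\rangle=\langle (\Psi-I)x(0),Jc\rangle,
\]
and using $\Psi^t J=J\Psi^{-1}$ together with $\Psi c=c$ one checks that $(\Psi^t-I)Jc=0$, so the pairing vanishes. For the converse, given $y\in L^2$ with $y\perp E_1$, I will try $x(t)=x_0+\int_0^t Jy(s)\,ds$; the condition $x(1)=\Psi x(0)$ becomes the linear system
\[
(\Psi-I)x_0=\int_0^1 Jy(s)\,ds.
\]
Solvability demands that the right-hand side lie in $R(\Psi-I)=(\ker(\Psi^t-I))^\perp$. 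Since $\Psi^t J\Psi=J$ yields $\ker(\Psi^t-I)=J E_1$, this compatibility condition becomes $\int_0^1\langle y(s),c\rangle\,ds=0$ for every $c\in E_1$, which is exactly our hypothesis $y\perp E_1$. Thus $x_0$ exists uniquely modulo $E_1$, giving $R(\Lambda)=E_1^\perp$; this is closed, and the orthogonal splitting \eqref{split2} follows.

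For part (ii), the construction above determines $x_0$ uniquely once we further impose that $x\perp E_1$ in $L^2$ (which fixes the $E_1$-component of $x_0$ via a linear equation), so $\Lambda_0$ is a bijection. The map $y\mapsto x$ is bounded from $L^2$ into $H^1$ because $\|\dot x\|_{L^2}=\|y\|_{L^2}$ and $|x_0|\lesssim \|y\|_{L^2}$ from the explicit formulas; composing with the Rellich compact embedding $H^1([0,1])\hookrightarrow L^2([0,1])$ yields compactness of $\Lambda_0^{-1}:R(\Lambda)\to R(\Lambda)$. Symmetry of $\Lambda_0$ (equivalently of $\Lambda$) follows by one integration by parts, using once more that the boundary term $\langle x(1),Jy(1)\rangle-\langle x(0),Jy(0)\rangle$ collapses to $\langle(\Psi^tJ\Psi-J)x(0),y(0)\rangle=0$; hence $\Lambda_0^{-1}$ is symmetric, and being bounded it is self-adjoint.

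The only point requiring some care is the interplay between the two orthogonality conditions on $x_0$: one coming from $(\Psi-I)x_0=\int_0^1 Jy\,ds$ (which fixes $x_0$ modulo $E_1$), the other from $x\perp E_1$ in $L^2$ (which pins down the remaining $E_1$-component). I expect this bookkeeping, together with the identity $\ker(\Psi^t-I)=JE_1$ used in both directions, to be the only substantive step; everything else is routine.
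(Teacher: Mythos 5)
Your argument is correct, and it reorganizes the proof along a genuinely different route from the paper's. The paper first proves closedness of $R(\Lambda)$ by a Cauchy-sequence argument: given $\Lambda u_k\to x$ in $L^2$, it normalizes $u_k(0)$ to lie in the orthogonal complement of $E_1$ in $\mathbb{R}^{2n}$ and uses the bounded inverse of $(\Psi-I_{2n})$ restricted to that complement to show $(u_k)$ is Cauchy in $H^1$; it then separately establishes the splitting by producing, for an arbitrary $x\in L^2$, an $a\in E_1$ with $x-a\in R(\Lambda)$, and for (ii) it shows $R(\Lambda)\cap{\rm dom}(\Lambda)$ is closed in $H^1$ and invokes the Banach inverse operator theorem before composing with the compact embedding $H^1\hookrightarrow L^2$. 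You instead characterize $R(\Lambda)$ exactly as the $L^2$-orthogonal complement of the constant paths in $E_1$, which delivers closedness and the decomposition (\ref{split2}) in one stroke, and you replace the open mapping theorem by the explicit a priori bound $\|x\|_{H^1}\lesssim\|y\|_{L^2}$ for the solution of $\Lambda x=y$. The algebraic heart is the same in both treatments: your identity ${\rm Ker}(\Psi^t-I)=JE_1$, equivalently $R(\Psi-I)=(JE_1)^\perp$, is precisely the paper's finite-dimensional splitting (\ref{split1}), and the vanishing of the boundary term via $\Psi^tJ\Psi=J$ is used identically for the orthogonality of ${\rm Ker}(\Lambda)$ and $R(\Lambda)$ and for the symmetry of $\Lambda_0^{-1}$. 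The only step you leave implicit is that the estimate $|x_0|\lesssim\|y\|_{L^2}$ requires bounding both pieces of $x_0$: the part in the complement of $E_1$ through $\bigl((\Psi-I)|_{E_1^\perp}\bigr)^{-1}$ applied to $\int_0^1 Jy$, and the part in $E_1$ through the normalization $x\perp E_1$ in $L^2$; both are immediate, so this is bookkeeping rather than a gap.
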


  \begin{proof}
{\bf Step 1} ({\it Prove that $R(\Lambda)$ is closed in ~$L^2([0,1])$}). Let $E_1^{\bot}$ be the orthogonal
 complement of $E_1$ with respect to the standard Euclidean inner product in $\mathbb{R}^{2n}$.
 Since $\dim E_1=2n$ if and only if $\Psi=I_{2n}$,
 the problem reduces to the periodic case  studied in past if $\dim E_1^{\bot}=0$.
 Hence we only  consider the non-periodic case in which  $\dim E_1^{\bot}\geq 1$.  Then
 $$
 \Psi-I_{2n}: E_1^{\bot}\rightarrow  (\Psi-I_{2n})(\mathbb{R}^{2n})
 $$
 is continuously invertible. Denote by ~$(\Psi-I)^{-1}$ its inverse and
 \begin{equation}\label{e:inverse}
 C:=\sup\{|(\Psi-I_{2n})^{-1}x\,|\, x\in (\Psi-I_{2n})(\mathbb{R}^{2n})\;\&\;
 |x|=1\},
 \end{equation}
 where $|\cdot|$ denotes the standard norm in $\mathbb{R}^{2n}$.

 Let  $(x_k)\subset R(\Lambda)$ be a sequence  converging to $x$ in $L^2([0,1],\mathbb{R}^{2n})$.
 For each $x_k$, we may choose its preimage to be
$$
u_k(t)=J\int_0^t x_k(s)ds +u_k(0),
$$
where $u_k(0)\in E_1^\bot$.
Then  $(u_k)$ is a Cauchy sequence in $X$ ({with $H^1$ norm}). In fact, since $u_k(1)=\Psi u_k(0)$, we get
$$
J\int_0^1 x_k(s)ds=(\Psi-I)u_k(0),
$$
 where $u_k(0)\in E_1^{\bot}$. Hence
$$
|u_k(0)-u_m(0)|\leq C\left|\int_0^1 (x_k-x_m)(s)ds \right|\leq C\|x_k-x_m\|_{L^2},
$$
and therefore
\begin{eqnarray*}
 \|u_k-u_m\|_{L^2}^2
 =\int_0^1 \bigg|J\int_0^t (x_k-x_m)(s)ds+u_k(0)-u_m(0) \bigg|^2dt
\leq  2 (C^2+1)\|x_k-x_m\|_{L^2}^2.
\end{eqnarray*}
It is obvious that $\|\dot{u}_k-\dot{u}_m\|_{L^2}=\|x_k-x_m\|_{L^2}$ and
thus
$$
\|u_k-u_m\|_{H^1}\leq \sqrt{2C^2+3}\|x_k-x_m\|_{L^2}\to 0
$$
as $k\to\infty$ and $m\to\infty$.
Let $u_k\rightarrow u$ in $X$. Then
$x_k=\Lambda u_k\rightarrow \Lambda u$ in $L^2([0,1],\mathbb{R}^{2n})$, and so $\Lambda u=x$, i.e. $x\in R(\Lambda)$.

\noindent{\bf Step 2} ({\it Prove that $L^2([0,1],\mathbb{R}^{2n})$ has the orthogonal decomposition as in
        (\ref{split2})}).
Note that $\mathbb{R}^{2n}$ has the following orthogonal splitting:
\begin{equation}\label{split1}
\mathbb{R}^{2n}=J {\rm Ker} (\Psi-I)\oplus R (\Psi-I),
\end{equation}
where $R (\Psi-I)=(\Psi-I)(E_1^\bot)=(\Psi-I)(\mathbb{R}^{2n})$.
In fact, for ~$a\in {\rm Ker} (\Psi-I) $ and ~$b=(\Psi-I)c\in R (\Psi-I)$, we have
 \begin{eqnarray*}
\langle Ja,b\rangle&=&\langle Ja,(\Psi-I)c \rangle=\langle J\Psi a,\Psi c\rangle-\langle Ja, c\rangle\\
&=&\langle \Psi^{t}J\Psi a, c\rangle-\langle Ja, c\rangle=\langle J a, c\rangle-\langle Ja, c\rangle=0.
\end{eqnarray*}
This and the dimension equality $\dim {\rm Ker} (\Psi-I)+\dim R(\Psi-I)=\dim \mathbb{R}^{2n}$ lead to (\ref{split1}).

For any given $x\in L^{2}([0,1])$, by (\ref{split1}) we can write
$$
J\int_0^1 x(s)ds =Ja+b,
$$
where $a\in {\rm Ker} (\Psi-I)$ and $b=(\Psi-I)c\in R(\Psi-I)$.
 Let
$$
u(t)=J\int_0^t(x(s)-a)ds+c\quad\forall t\in [0,1].
$$
Then $u\in X$ because
$$
u(1)=J\int_0^1(x(s)-a)ds+c=J\int_0^1 x(s)ds-Ja+c=\Psi c=\Psi u(0).
$$
It follows from this and the definition of $u$ that
$\Lambda u=x-a$.

Moreover, for $a\in {\rm Ker} (\Lambda)={\rm Ker} (\Psi-I)=E_1$ and $y=\Lambda w\in R(\Lambda)$, we compute
\begin{eqnarray*}
 \langle a, y\rangle_{L^2}= \int_0^1 \langle a ,-J\dot{w}\rangle
=\langle Ja,w(1)-w(0)\rangle =\langle J\Psi a, \Psi w(0)\rangle-\langle Ja,w(0)\rangle=0
\end{eqnarray*}
because $\Psi^tJ\Psi=J$.
Therefore the orthogonal decomposition in (\ref{split2}) follows immediately.

\noindent{\bf Step 3} ({\it Prove (ii)}).
Firstly, we prove that $R(\Lambda)\cap {\rm dom}(\Lambda)$ is a closed subspace
in $X$ (with $H^1$ norm).
Let  $(u_k)\subset R(\Lambda)\cap {\rm dom}(\Lambda)$ be a Cauchy sequence in $H^1$ norm. Then
it  converges to some $u\in X={\rm dom}( \Lambda)$ in the ~$H^1$ norm. Especially,
   $(u_k)$ converges to $u$ in the $L^2$ norm. Since $R(\Lambda)$ is closed in $L^2([0,1],\mathbb{R}^{2n})$
   we get that $u\in R(\Lambda)$. The claim is proved.

 Consider the operator $\Lambda_0:=\Lambda|_{R(\Lambda)\cap {\rm dom}(\Lambda)}$.
 Clearly, it is a bijective continuous linear map from a Hilbert subspace ${\rm dom} (\Lambda_0)$ of $X$
 to the Hilbert subspace $R(\Lambda)$ of $L^2([0,1],\mathbb{R}^{2n})$.
   Hence  the Banach inverse operator theorem yields a
   continuous linear operator $\Lambda_0^{-1}:R(\Lambda)\rightarrow {\rm dom} (\Lambda_0)$.
  Note that $i:{\rm dom} (\Lambda_0)\hookrightarrow\hookrightarrow R(\Lambda)$ (as a restriction of the compact
  inclusion map $H^{1}\hookrightarrow L^2$) is compact.
  Hence $i\circ \Lambda_0^ {-1 }:R(\Lambda)\rightarrow R(\Lambda) $ is compact.

We claim that $i\circ \Lambda_0^ {-1 }$ is also self-adjoint. In fact, for any $u, w\in X$ there holds
\begin{eqnarray*}
 \langle \Lambda u,w\rangle_{L^2}
&=&\int_0^1 \langle-J\dot{u},w\rangle dt\\
&=&\langle-Ju,w\rangle |_0^1-\int_0^1 \langle-Ju,\dot{w}\rangle dt\\
&=&-(\langle Ju(1),w(1)\rangle-\langle Ju(0),w(0)\rangle)-\int_0^1\langle J\dot{w},u\rangle dt
=\langle u,\Lambda w\rangle_{L^2}.
\end{eqnarray*}
Note that $\langle Ju(1),w(1)\rangle-\langle Ju(0),w(0)\rangle=0$ since $u, w\in X$ satisfy the boundary condition
$u(1)=\Psi u(0)$ and $w(1)=\Psi w(0)$.
For $x, y\in R(\Lambda )$, let us choose $u, w\in X\cap R(\Lambda)$ such that $\Lambda u=x$ and $\Lambda w=y$. Then
$\langle i\circ \Lambda_0^{-1}x,y\rangle
=\langle u, \Lambda w \rangle_{L^2}
=\langle \Lambda u,w \rangle_{L^2}
=\langle x, i\circ \Lambda_0^{-1}y\rangle_{L^2}$.
Hence we have proved that $i\circ \Lambda_0^{-1}:R(\Lambda)\rightarrow R(\Lambda)$
is a compact self-adjoint operator.
\end{proof}

\begin{remark}
{\rm Since $R(\Lambda)$ is a Hilbert subspace of $L^2([0,1],\mathbb{R}^{2n})$ which is separable,
by the standard linear functional analysis theory, there exists an orthogonal basis of
$R(\Lambda)$ which completely consists of eigenvectors of $i\circ \Lambda_0^{-1}$.
Note that ${\rm Ker}(i\circ \Lambda_0^{-1})=0$ and that $l\ne 0$ is
an eigenvalue of {$i\circ \Lambda_0^{-1}$} if and only if $1/l$ is
an eigenvalue of $\Lambda$ with the same multiplicity.
Let
\begin{equation}\label{eigenV}
\cdots\leq \lambda_{-k}\le\cdots\leq \lambda_{-1}<0<\lambda_1\leq\cdots\leq \lambda_k\leq \cdots
\end{equation}
 denote all eigenvalues of $\Lambda $, which satisfy $\lambda_k\to \pm\infty$ as $k\to\pm\infty$.
 By these and (\ref{split2}), we get a unit orthogonal  basis of $L^2([0,1],\mathbb{R}^{2n})$
 \begin{equation}\label{eigenV+}
 \{e_j\,|\,\pm j\in\mathbb{N}\}\cup\{e_0^i\}_{i=1}^q
 \end{equation}
such that ${\rm Ker}(\Lambda)={\rm Span}(\{e_0^i\}_{i=1}^q)$ and that each $e_j$
is an eigenvector corresponding to $\lambda_j$, $j=\pm 1,\pm 2,\cdots$.}
\end{remark}

\begin{remark}\label{rem:unitbase1}
{\rm The nonzero eigenvalues of $\Lambda=-J\frac{d}{dt}$ are exactly the zero points of the function  $g^{\Psi}$
   defined in (\ref{e:g}). In fact, let $\lambda$ be an eigenvalue of $\Lambda$ and
$e\in X$ be an eigenvector associated with it. Then
$-J\dot{e}(t)=\lambda e(t)\;\forall t\in [0, 1]$ and $e(1)=\Psi e(0)$.
 It follows that $e(t)=e^{\lambda t J}e(0)$ and $e(1)=e^{\lambda J}e(0)=\Psi e(0)$,
 where $e(0)\in\mathbb{R}^{2n}\setminus\{0\}$. Hence $\det(e^{\lambda J}-\Psi)=0$.
 By Lemma~\ref{zeros} function $g^\Psi$ has only finitely many zero points in $(0,2\pi]$,
 denoted by $t_1<\cdots<t_m$. Then
  all the eigenvalues of $\Lambda$ are
 \begin{equation}\label{eigen}
 \{t_l+2k\pi\,|\,1\leq l\leq m, k\in \mathbb{Z}\}.
 \end{equation}
Note that each eigenvector of $t_l+2k\pi$  has the form
 \begin{equation}\label{eigen+}
 e(t)=e^{(t_l+2k\pi)tJ}X,
 \end{equation}
where $X\in {\rm Ker}(e^{t_l J}-\Psi)$.
 By requiring $|X|=1$ we get that $|e(t)|\equiv 1$. Hence $e_j$ and $e_0^i$ in (\ref{eigenV+}) can be chosen to satisfy
 $$
 |e_j(t)|=|e_0^i(t)|\equiv 1\;\forall t.
 $$
 }
\end{remark}

\begin{remark}
{\rm If $\Psi\in {\rm Sp}(2n,\mathbb{R})\cap O(2n)$ is as in (\ref{US}),
by Lemma~\ref{zeros.1} the eigenvalues of $\Lambda$ associated to $\Psi$ are
\[
\{\theta_j+2k\pi\,|\,1\leq j\leq n, k\in \mathbb{Z}\},
\]
where $0< \theta_1\leq\cdots\leq \theta_n\leq 2\pi$ are as in (\ref{Udiagonal}),
and the corresponding eigensubspace to $\theta_j+2k\pi$ is generated by
$$
e^{(\theta_j+2k\pi)tJ}X_j\quad\hbox{and}\quad e^{(\theta_j+2k\pi)tJ}JX_j
$$
where $X_j$ is as in Lemma~\ref{zeros.1}.}
\end{remark}

Now we are in position to define the variational space needed in this article.
 Using the unit orthogonal  basis
given by (\ref{eigenV+}) every $x\in L^2([0,1],\mathbb{R}^{2n})$ can be uniquely written as
$$
x=x^0+\sum _{k<0}x_ke_k+\sum_{k>0} x_ke_k,
$$
where $x^0\in {\rm Ker}(\Lambda)$ and $\{x_k\,|\,\pm k\in\mathbb{N}\}\in\mathbb{R}$.
By Remark~\ref{rem:unitbase1} we can always assume
 $$
 |e_k(t)|\equiv 1\quad\hbox{for all}\quad t\in [0,1]\quad\hbox{and}\quad k=\pm 1,\pm 2,\cdots.
 $$
 For $s\geq 0$ define a linear subspace of $L^2([0,1],\mathbb{R}^{2n})$ by
\begin{equation}\label{espace}
E^s_\Psi=\left\{x\in L^2([0,1],\mathbb{R}^{2n})\,\Bigm|\,\sum _{ k\ne 0}|\lambda_k|^{2s}x_k^2<\infty \right\}.
\end{equation}
{We omit the subscript $\Psi$ in $E^s_\Psi$ if $\Psi$ is fixed and there is no confusion.}
It is easy to prove that
\begin{equation}\label{innerproduct}
\langle x,y\rangle_{E^s} =\langle x^0,y^0\rangle_{\mathbb{R}^{2n}}+\sum_{k\neq 0} |\lambda_k|^{2s}x_ky_k,\quad x, y\in E^s
\end{equation}
 defines a complete inner product on $E^s$.
Denote the associated norm by $\|\cdot\|_{E^s}$.
Note that $E^0=L^2$ and $\|\cdot\|_{E^0}=\|\cdot\|_{L^2}$.

Let $\mathbb{E}:=E^{\frac{1}{2}}$ be defined by (\ref{espace}) with $s=\frac{1}{2}$.
It has the orthogonal splitting
\begin{equation}\label{e:spaceDecomposition}
\mathbb{E}=\mathbb{E}^{-}\oplus \mathbb{E}^0 \oplus \mathbb{E}^{+},
\end{equation}
where $\mathbb{E}^-={\rm span}\{e_k, k<0\}$, $\mathbb{E}^0={\rm Ker }(\Lambda)$ and $\mathbb{E}^+={\rm span}\{e_k, k>0\}$.
Denote the associated projection on them by $P^-, P^0$ and $P^+$. For $x\in \mathbb{E}$, write
$x=x^-+x^0+x^+$, where $x^-\in \mathbb{E}^-$, $x^0\in \mathbb{E}^0$ and $x^+\in \mathbb{E}^+$.

Similar to {~$H^s(S^1,\mathbb{R}^{2n})$} defined in \cite{HoZe94}, we have

\begin{prop}\label{compact}
   Assume ~$t>s\geq 0$. Then the inclusion map $I_{t,s}:{E}^t\rightarrow {E}^s$ is compact.
\end{prop}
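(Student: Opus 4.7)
The plan is to exhibit $I_{t,s}$ as a norm limit of finite-rank operators, which immediately yields compactness. The crucial input is the eigenvalue growth $|\lambda_k|\to\infty$ as $|k|\to\infty$, which is stated just after Proposition~\ref{eigenvalues} (and follows from Lemma~\ref{zeros} because the zeros of $g^\Psi$ form a finite set in any period).

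First I would fix the unit orthogonal basis $\{e_0^i\}_{i=1}^q\cup\{e_k\}_{k\neq 0}$ of $L^2([0,1],\mathbb{R}^{2n})$ from \eqref{eigenV+} and write any $x\in E^t$ as $x=x^0+\sum_{k\neq 0}x_k e_k$ with
\[
\|x\|_{E^t}^2=|x^0|^2+\sum_{k\neq 0}|\lambda_k|^{2t}x_k^2.
\]
For each positive integer $N$, define the finite-rank truncation $P_N:E^t\to E^s$ by
\[
P_N x = x^0+\sum_{0<|k|\le N}x_k e_k.
\]
Each $P_N$ obviously maps into $E^s$ and has finite-dimensional range, hence is compact.

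Next I would estimate the operator norm of $I_{t,s}-P_N$. For $x\in E^t$,
\[
\|(I_{t,s}-P_N)x\|_{E^s}^2=\sum_{|k|>N}|\lambda_k|^{2s}x_k^2
=\sum_{|k|>N}|\lambda_k|^{2(s-t)}\,|\lambda_k|^{2t}x_k^2
\le \Bigl(\sup_{|k|>N}|\lambda_k|^{2(s-t)}\Bigr)\|x\|_{E^t}^2.
\]
Since $s<t$ and $|\lambda_k|\to\infty$, we have $\sup_{|k|>N}|\lambda_k|^{2(s-t)}\to 0$ as $N\to\infty$. Therefore $\|I_{t,s}-P_N\|_{E^t\to E^s}\to 0$.

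Finally I would invoke the standard fact that a norm limit of compact (in fact finite-rank) operators between Banach spaces is compact, concluding that $I_{t,s}$ is compact. There is no serious obstacle here; the only point that needs to be cited is the asymptotic behavior $|\lambda_k|\to\infty$, which is guaranteed by \eqref{eigenV} together with Lemma~\ref{zeros}. One could alternatively argue by extracting a diagonal subsequence from any bounded sequence in $E^t$ (coordinates $x_{n,k}$ are uniformly bounded by $M/|\lambda_k|^t$ and $x_n^0$ lives in a finite-dimensional subspace) and splitting the $E^s$-norm into low and high modes, but the finite-rank approximation argument above is cleaner.
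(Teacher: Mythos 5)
Your proof is correct and is essentially identical to the paper's own argument: the same finite-rank truncations $P_N$, the same factorization $|\lambda_k|^{2s}=|\lambda_k|^{2(s-t)}|\lambda_k|^{2t}$ in the tail estimate, and the same conclusion via norm-convergence of compact operators, using $|\lambda_k|\to\infty$. No issues.
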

\begin{proof}
    Let $P_N: {E}^t \rightarrow {E}^s$ be the finite rank operator defined by
   $$
   P_N (x)=x^0+\sum_{0<|k|\leq N}x_ke_k
   $$
  for $x=x^0+\sum_{k\neq 0}x_ke_k$.
    It is a compact linear operator.
     Moreover,
     \begin{eqnarray*}
   \|(P_N-I_{t,s})x\|_{E^s}^2=\|\sum_{|k|>N}x_ke_k\|_{E^s}^2
   =\sum_{|k|>N}|\lambda_k|^{2s}x_k^2=\sum_{|k|>N}|\lambda_k|^{2(s-t)}|\lambda_k|^{2t}x_k^2
   \end{eqnarray*}
     and $|\lambda_k|^{2(s-t)}|\le\max(\lambda_N,|\lambda_{-N} | ) ^{2(s-t)}$ for each $|k|>N$, we deduce
     $$
     \sum_{|k|>N}|\lambda_k|^{2(s-t)}|\lambda_k|^{2t}x_k^2\le\max(\lambda_N,|\lambda_{-N} | ) ^{2(s-t)}\|x\|^2_{E^t}.
     $$
    Since $\lim_{k\rightarrow\pm\infty}\lambda_{k}=\pm\infty$,
   $\lim_{N\rightarrow+\infty} \|P_N-I_{t,s}\|^{op}=0$.
      Hence $I_{t,s}: E^t\rightarrow E^s$ is compact.
\end{proof}

\begin{prop}\label{e1}
  Assume $s>\frac{1}{2}$. If $x\in E^s$, then $x$ is continuous and satisfies $x(1)=\Psi x(0)$.
   Moreover, there exists a constant $c=c_s$ such that
  \begin{equation}\label{e:normprop2.4}
  \sup_{t\in [0,1]} |x(t)|\leq c\|x\|_{E^s}.
  \end{equation}
\end{prop}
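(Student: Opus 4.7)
\medskip\noindent
\textbf{Proof plan.} The eigenvalues of $\Lambda$ are listed in \eqref{eigen} as
$\{t_l+2k\pi:1\le l\le m,\;k\in\mathbb{Z}\}$, so $|\lambda_k|\to\infty$ linearly in $|k|$; since every
distinct eigenvalue of $\Lambda$ has multiplicity at most $2n$, the key numerical fact is
\[
C_s^2:=\sum_{k\ne 0}|\lambda_k|^{-2s}<\infty\qquad\text{whenever }s>\tfrac12.
\]
This is what forces the threshold $s>1/2$ and is the single nontrivial input to the proof.

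Starting from the decomposition $x=x^0+\sum_{k\ne0}x_ke_k$ with
$\|x\|_{E^s}^2=|x^0|^2+\sum_{k\ne0}|\lambda_k|^{2s}x_k^2$, I would apply Cauchy--Schwarz as
\[
\sum_{k\ne 0}|x_k|
=\sum_{k\ne 0}\bigl(|\lambda_k|^{s}|x_k|\bigr)\,|\lambda_k|^{-s}
\le\Bigl(\sum_{k\ne 0}|\lambda_k|^{2s}x_k^2\Bigr)^{\!1/2}\Bigl(\sum_{k\ne 0}|\lambda_k|^{-2s}\Bigr)^{\!1/2}
\le C_s\,\|x\|_{E^s}.
\]
Since $|e_k(t)|\equiv 1$ by Proposition~\ref{unitbase}, this estimate shows that the partial
sums $S_N(t):=x^0+\sum_{0<|k|\le N}x_ke_k(t)$ form a Cauchy sequence in $C^0([0,1],\mathbb{R}^{2n})$
with a uniform bound
\[
\sup_{t\in[0,1]}|S_N(t)|\le |x^0|+\sum_{k\ne 0}|x_k|\le(1+C_s)\|x\|_{E^s}.
\]
As $S_N\to x$ in $L^2$, the uniform limit of $S_N$ must agree a.e.\ with $x$, so
$x$ has a (unique) continuous representative satisfying the asserted inequality
\eqref{e:normprop2.4} with $c_s=1+C_s$.

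Finally, for the boundary condition $x(1)=\Psi x(0)$, each $e_k$ was constructed in
\eqref{eigen+} as an element of $X$, hence $e_k(1)=\Psi e_k(0)$; and
$x^0\in{\rm Ker}(\Lambda)=E_1={\rm Ker}(\Psi-I_{2n})$, so $\Psi x^0=x^0$. Thus every partial
sum $S_N$ lies in $X$ and satisfies $S_N(1)=\Psi S_N(0)$. Passing to the uniform limit
gives $x(1)=\Psi x(0)$. The entire argument is essentially a Sobolev embedding along the
eigenbasis of $\Lambda$, the only genuine obstacle being the verification that the
eigenvalues $\lambda_k$ grow at least linearly in $|k|$ and have bounded multiplicities,
both of which are immediate from \eqref{eigen} and Lemma~\ref{zeros}.
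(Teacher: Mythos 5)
Your proof is correct and follows essentially the same route as the paper's: the Cauchy--Schwarz splitting $|x_k|=|\lambda_k|^{-s}\cdot|\lambda_k|^{s}|x_k|$ combined with $|e_k(t)|\equiv 1$ and the convergence of $\sum_{k\ne 0}|\lambda_k|^{-2s}$ for $s>\tfrac12$, yielding absolute uniform convergence of the partial sums and hence continuity, the bound \eqref{e:normprop2.4}, and the boundary condition by passing to the limit. Your explicit justification of why $\sum_{k\ne0}|\lambda_k|^{-2s}<\infty$ (linear growth of the eigenvalues from \eqref{eigen} plus bounded multiplicity) is a slightly more careful version of the remark the paper makes in passing, but the argument is the same.
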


\begin{proof}
   For $x=x^0+\sum_{k\neq 0}x_k e_k\in E^s$, since
   \begin{eqnarray}\label{e:prop2.4.2}
   |x^0| +\sum_{k\neq 0}|x_ke_k(t)|&=&|x^0| +\sum_{k\neq 0}|x_k| \nonumber \\
   &=&|x^0|+\sum_{k\neq 0}\frac{1}{|\lambda_k|^s}|\lambda_k|^s|x_k|\nonumber\\
   &\leq& |x^0|+(\sum_{k\neq 0}\frac{1}{|\lambda_k|^{2s}})^{\frac{1}{2}} (\sum_{k\neq 0}|\lambda_k|^{2s}|x_k|^2)^{\frac{1}{2}},
   \end{eqnarray}
  the series of functions $x^0+\sum_{k\neq 0}x_k e_k(t)$
  is absolutely uniformly convergent. In other words, $x(t)$ is the uniform limit of the
  function sequence $f_k(t):=x^0+ \sum_{0<|j|<k}  x_je_j(t)$. It follows that $x$ is continuous and
  $x(1)=\Psi x(0)$ since $f_k(1)=\Psi f_k(0)$ for all $k$. Moreover, (\ref{e:normprop2.4}) is a direct consequence of (\ref{e:prop2.4.2}).
  Note that we have   used the fact that    $\sum_{k\neq 0}\frac{1}{|\lambda_k|^{2s}}$ is finite
  for $s>\frac{1}{2}$ due to the form of the eigenvalues  of $\Lambda$ in (\ref{eigen}).
   \end{proof}

Let $\mathfrak{a}:\mathbb{E}\rightarrow\mathbb{R}$ be the functional given by
\begin{equation}\label{e:aaction}
\mathfrak{a}(x)=\frac{1}{2}(\|x^+\|^2_{\mathbb{E}}-\|x^-\|^2_{\mathbb{E}}).
\end{equation}
Then $\mathfrak{a}$ is smooth and has  gradient
$\nabla \mathfrak{a}(x)=x^+-x^-\in \mathbb{E}$.

\begin{remark}
{\rm  For $x\in C^{1}([0,1],\mathbb{R}^{2n})$ satisfying $x(1)=\Psi x(0)$,
  there holds
  \begin{equation}\label{a=A}
  \mathfrak{a}(x)=\frac{1}{2}\int_0^1\langle-J\dot{x},x\rangle dt=A(x).
  \end{equation}
  In fact, write
  $x=a^0+\sum_{k\neq 0}a_ke_k$ and $-J\dot{x}=b^0+\sum_{k\neq 0}b_ke_k$ in $L^2$.  For $k\neq 0$ we have
\begin{eqnarray}\label{e:-jdotk}
b_k&=&\int_0^1 \langle-J\dot{x}, e_k\rangle dt\nonumber\\
&=&\langle-Jx, e_k\rangle|_0^1-\int_0^1\langle-Jx, \dot{e}_k\rangle dt\nonumber\\
&=&-(\langle Jx(1),e_k(1)\rangle-\langle Jx(0),e_k(0)\rangle)-\int_0^1\langle x,J\dot{e}_k\rangle dt\nonumber\\
&=&-(\langle \Psi^{t}J\Psi x(0),e_k(0)\rangle-\langle Jx(0),e_k(0)\rangle)+\int_0^1\langle x,\lambda_ke_k\rangle dt\nonumber\\
&=&\lambda_ka_k.
\end{eqnarray}
Moreover, for $v\in {\rm Ker}(\Lambda)$ we have $\Psi v=v$ and thus
\begin{equation}\label{e:-jdot2}
\int_0^1\langle-J\dot{x}, v\rangle dt=-(\langle Jx(1),v\rangle-\langle Jx(0),v\rangle)=0.
\end{equation}
Hence $b^0=0$. It follows that
\begin{eqnarray*}
 \int_0^1\frac{1}{2}\langle-J\dot{x},x\rangle dt
=\frac{1}{2}\sum_{k\neq 0}\lambda_ka_k^2
=\frac{1}{2}\sum_{k>0}|\lambda_k|a_k^2-\sum_{k<0}|\lambda_k|a_k^2
=\frac{1}{2}(\|x^+\|^2_{\mathbb{E}}-\|x^-\|^2_{\mathbb{E}})
=\mathfrak{a}(x).
\end{eqnarray*}
{Note that if $x$ does not satisfy the boundary condition $x(1)=\Psi x(0)$,
the equality (\ref{a=A}) does not hold in general since $\|x\|_{\mathbb{E}}$ is a norm associated to $\Psi$.}}
  \end{remark}

{From now on we assume that $H:\mathbb{R}^{2n}\rightarrow\mathbb{R}$ is a smooth function satisfying the condition (H2) in
Section~\ref{sec:1.EH}.}
{ Then there exist positive numbers $C_1$ and $C_2$  such that
\begin{equation}\label{bound}
|\nabla H(z)|\leq 2a|z|+C_1\quad\hbox{and}\quad |H_{zz}|\leq C_2 \; \forall z\in\mathbb{R}^{2n}
\end{equation}
for $|z|$ sufficiently large.
Thus we have the well defined functional
$$
\hat{b}:L^2([0,1];\R^{2n})\rightarrow\mathbb{R},\; x\mapsto \int_0^{1}H(x(t))dt.
$$
It is also differentiable and has $L^2$-gradient
$\nabla\hat{b}(x)=\nabla H(x)$ for $x\in L^2([0,1];\R^{2n})$
(cf. \cite{HoZe94}).}

 Let $j: \mathbb{E}\rightarrow L^2$ be the inclusion map and
  $j^{\ast}:L^2\rightarrow \mathbb{E}$  the adjoint operator of it, i.e.
   $\langle j(x),y\rangle_{L^2}=\langle x, j^{\ast}(y)\rangle_{\mathbb{E}}$
 for all $x\in \mathbb{E}$ and $y\in L^2$.  Define a functional
  $$
  \mathfrak{b}:\mathbb{E}\rightarrow\mathbb{R},\;x\mapsto \hat{b}(j(x)).
  $$
It is not hard to prove that $\mathfrak{b}$ is differentiable and  has $\mathbb{E}$-gradient
$\nabla \mathfrak{b}(x)=j^{\ast}\nabla H(x)$ for $x\in \mathbb{E}$.

{Arguing as in \cite{HoZe94}, we have the following propositions.}
 \begin{prop}\label{jast}
   For $y  \in L^2$,  $j^{\ast}(y)\in E^1$ and $j^{\ast}$ is a compact operator.
 \end{prop}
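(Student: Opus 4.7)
The plan is to compute $j^{\ast}(y)$ explicitly in the orthonormal basis $\{e_0^i\}_{i=1}^q \cup \{e_k\}_{k\ne 0}$ of $L^2([0,1],\mathbb{R}^{2n})$ from (\ref{eigenV+}), and then read off its regularity directly from the formula. For $y\in L^2$ write $y=\sum_{i=1}^q y_0^i e_0^i+\sum_{k\ne 0}y_k e_k$, and for $x\in E$ write $x=\sum_{i=1}^q x_0^i e_0^i+\sum_{k\ne 0}x_k e_k$. By (\ref{innerproduct}) with $s=0$ and $s=\frac{1}{2}$, the defining identity $\langle j(x),y\rangle_{L^2}=\langle x,j^{\ast}(y)\rangle_{E}$ unwraps to
\begin{equation*}
\sum_{i=1}^q x_0^i y_0^i+\sum_{k\ne 0}x_ky_k=\sum_{i=1}^q x_0^i a_0^i+\sum_{k\ne 0}|\lambda_k|\,x_k a_k
\end{equation*}
for all $x\in E$, where $j^{\ast}(y)=\sum_i a_0^i e_0^i+\sum_{k\ne 0}a_k e_k$. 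Comparing coefficients forces
\begin{equation*}
a_0^i=y_0^i\quad (1\le i\le q)\qquad\text{and}\qquad a_k=\frac{y_k}{|\lambda_k|}\quad (k\ne 0).
\end{equation*}

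With this explicit formula the membership $j^{\ast}(y)\in E^1$ is immediate from the definition (\ref{espace}) of $E^1$: invoking (\ref{innerproduct}) with $s=1$ gives
\begin{equation*}
\|j^{\ast}(y)\|_{E^1}^2=\sum_{i=1}^q (y_0^i)^2+\sum_{k\ne 0}|\lambda_k|^2\cdot\frac{y_k^2}{|\lambda_k|^2}=\sum_{i=1}^q (y_0^i)^2+\sum_{k\ne 0}y_k^2=\|y\|_{L^2}^2.
\end{equation*}
Thus $j^{\ast}$ factors as $L^2\xrightarrow{\;\tilde{\jmath}\;}E^1\xrightarrow{\;I_{1,1/2}\;}E$, where $\tilde{\jmath}$ is the isometric map $y\mapsto j^{\ast}(y)$ above.

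Finally, compactness follows from Proposition~\ref{compact} applied with $t=1$ and $s=\tfrac{1}{2}$: the inclusion $I_{1,1/2}:E^1\hookrightarrow E=E^{1/2}$ is compact. Since $\tilde{\jmath}:L^2\to E^1$ is bounded and $I_{1,1/2}$ is compact, their composition $j^{\ast}=I_{1,1/2}\circ\tilde{\jmath}:L^2\to E$ is compact, finishing the proof.

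There is essentially no obstacle; the only point requiring care is the bookkeeping with the two different weightings of the same orthonormal system, and the verification that every sum converges (which is automatic because $y\in L^2$ already gives $\sum_{k\ne 0}y_k^2<\infty$, so no extra decay in $|\lambda_k|$ is needed beyond what is already present). The conceptual content is simply that the adjoint $j^{\ast}$ divides the $k$-th Fourier coefficient by $|\lambda_k|$, gaining one full degree of Sobolev regularity in the $E^s$-scale and thereby landing in a space which embeds compactly into $E$.
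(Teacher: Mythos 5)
Your proof is correct and follows essentially the same route as the paper: both compute the adjoint coefficientwise from the defining identity to get $\tilde{y}^0=y^0$ and $\tilde{y}_k=y_k/|\lambda_k|$, conclude $j^{\ast}(y)\in E^1$ with $\|j^{\ast}(y)\|_{E^1}=\|y\|_{L^2}$, and then invoke Proposition~\ref{compact} for the compact inclusion $E^1\hookrightarrow E^{1/2}$. Your version just makes the norm computation explicit where the paper leaves it implicit.
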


\begin{prop}\label{Lip}
   The  gradient $\nabla \mathfrak{b}:\mathbb{E}\rightarrow \mathbb{E}$ is compact and satisfies the global Lipschitz condition
   $$
   \|\nabla \mathfrak{b}(x)-\nabla\mathfrak{b}(y)\|_{\mathbb{E}}\leq C_3\|x-y\|_{\mathbb{E}}\,\,\forall x, y\in \mathbb{E}$$
   for some constant $C_3>0$.
   Moreover, there exist positive numbers $C_4$ and $C_5$ such that $|\mathfrak{b} (x)|\leq C_4 \|x\|^2_{L^2}+C_5 $, $\forall x\in \mathbb{E}$.
 \end{prop}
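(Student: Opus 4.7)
The plan is to exploit the factorization $\nabla\mathfrak{b}=j^{\ast}\circ\nabla H\circ j$ derived just above the statement, together with the uniform bound $|H_{zz}|\le C_{2}$ recorded in (\ref{bound}) which follows from the quadratic normal form in (H2).

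For the Lipschitz estimate, I first check that the inclusion $j:E\to L^{2}$ is continuous: writing $x=x^{0}+\sum_{k\ne 0}x_{k}e_{k}$ one has $\|x\|_{L^{2}}^{2}=|x^{0}|^{2}+\sum_{k\ne 0}x_{k}^{2}$, and since by (\ref{eigen}) the nonzero eigenvalues of $\Lambda$ satisfy $|\lambda_{k}|\ge \mathfrak{t}(\Psi)>0$, this yields $\|x\|_{L^{2}}\le C_{0}\|x\|_{E}$ with $C_{0}^{2}:=\max(1,1/\mathfrak{t}(\Psi))$; the adjoint $j^{\ast}:L^{2}\to E$ has the same operator norm. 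Next, the global bound $|H_{zz}|\le C_{2}$ makes $\nabla H:\mathbb{R}^{2n}\to\mathbb{R}^{2n}$ globally Lipschitz with constant $C_{2}$, so the associated Nemytskii operator on $L^{2}$ is Lipschitz with the same constant. Chaining these three bounds,
\begin{equation*}
\|\nabla\mathfrak{b}(x)-\nabla\mathfrak{b}(y)\|_{E}\le C_{0}\|\nabla H(j(x))-\nabla H(j(y))\|_{L^{2}}\le C_{0}C_{2}\|j(x)-j(y)\|_{L^{2}}\le C_{0}^{2}C_{2}\|x-y\|_{E},
\end{equation*}
so one may take $C_{3}:=C_{0}^{2}C_{2}$.

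For compactness of $\nabla\mathfrak{b}$, let $\{x_{n}\}\subset E$ be bounded. By Proposition~\ref{compact} (with $t=1/2$, $s=0$) the inclusion $j:E\to L^{2}$ is compact, so along a subsequence $j(x_{n})\to y$ in $L^{2}$; the Lipschitz continuity of $\nabla H$ on $L^{2}$ just established then gives $\nabla H(j(x_{n}))\to\nabla H(y)$ in $L^{2}$, and applying the continuous operator $j^{\ast}$ yields convergence of $\nabla\mathfrak{b}(x_{n})$ in $E$. (Alternatively, Proposition~\ref{jast} already tells us that $j^{\ast}$ itself is compact, in which case only the continuity of $\nabla H$ on $L^{2}$ is required.)

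For the growth bound on $|\mathfrak{b}|$, I apply the second-order Taylor formula for $H$ at the origin,
\begin{equation*}
H(z)=H(0)+\langle\nabla H(0),z\rangle+\int_{0}^{1}(1-s)\langle H_{zz}(sz)z,z\rangle\,ds,
\end{equation*}
and combine it with $|H_{zz}|\le C_{2}$ and $|\nabla H(0)||z|\le \tfrac12|\nabla H(0)|^{2}+\tfrac12|z|^{2}$ to obtain $|H(z)|\le C_{4}|z|^{2}+C_{5}$ with $C_{4}:=(C_{2}+1)/2$ and $C_{5}:=|H(0)|+|\nabla H(0)|^{2}/2$. Integrating over $[0,1]$ immediately gives $|\mathfrak{b}(x)|\le C_{4}\|x\|_{L^{2}}^{2}+C_{5}$. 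No serious obstacle is anticipated; the conceptual point is that the quadratic-growth assumption (H2) together with smoothness of $H$ forces the Hessian to be uniformly bounded, and this is precisely what makes both the global Lipschitz property of $\nabla\mathfrak{b}$ and the quadratic estimate for $\mathfrak{b}$ straightforward consequences of the $E\hookrightarrow L^{2}$ functional-analytic setup.
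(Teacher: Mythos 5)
Your proposal is correct and takes essentially the same route as the paper: factor $\nabla\mathfrak{b}=j^{\ast}\circ\nabla H\circ j$, use the Hessian bound $|H_{zz}|\le C_2$ from (\ref{bound}) to make the Nemytskii operator Lipschitz on $L^2$, invoke the compactness of $j^{\ast}$ from Proposition~\ref{jast}, and get the quadratic bound on $\mathfrak{b}$ from the quadratic growth of $H$ forced by (H2). One minor quantitative slip: the claim $|\lambda_k|\ge\mathfrak{t}(\Psi)$ is not quite accurate, since by (\ref{eigen}) the negative eigenvalue nearest zero is $t_m-2\pi$, whose absolute value can be smaller than $t_1=\mathfrak{t}(\Psi)$; however $\inf_{k\ne 0}|\lambda_k|>0$ still holds because the spectrum is discrete with only finitely many eigenvalues in any bounded interval, so the embedding constant $C_0$ exists and your argument goes through unchanged.
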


 Proposition~\ref{Lip} shows that the functional $\Phi_H:=\mathfrak{a}-\mathfrak{b}$
 is differentiable and its gradient $\nabla \Phi_{H}$ satisfies a global Lipschitz condition.
 Hence the negative gradient flow of $\Phi_{H}$ defined by
\begin{eqnarray*}
\frac{d\phi^t(x)}{dt}=-\nabla\Phi_{H}(\phi^t(x))\quad\hbox{and}\quad
\phi^0(x)=x
\end{eqnarray*}
exists for all $t\in\mathbb{R}$ and $x\in \mathbb{E}$ and it admits the representation
  $$
  \phi^t(x) =e^tx^-+x^0+e^{-t}x^{+}+K(t,x),
  $$
  where $K:\mathbb{R}\times\mathbb{E}\rightarrow \mathbb{E}$ is continuous and maps bounded sets into precompact sets \textcolor{red}{(cf. \cite{HoZe94})}.

  Next we study the regularity of the critical points of $\Phi_{H}$.

 \begin{prop}\label{prop:solution}
  If $x\in \mathbb{E}$ is a critical point of $\Phi_{H}$ on $\mathbb{E}$, then
$x$ is smooth and satisfies
$$
\dot{x}=J\nabla H (x) \quad\hbox{and}\quad
x(1)=\Psi x(0).
$$
\end{prop}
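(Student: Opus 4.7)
The plan is to use the form of the gradient $\nabla\Phi_H = (x^+ - x^-) - j^{\ast}\nabla H$ together with the smoothing property of $j^{\ast}$ to bootstrap regularity, then pass to the classical ODE and iterate. The critical-point equation $\nabla\Phi_H(x)=0$ rewrites as
$$
x^+ - x^- \;=\; j^{\ast}\nabla H(x)\quad\text{in } E.
$$
The growth estimate (\ref{bound}) forces $\nabla H(x)\in L^2$, and Proposition~\ref{jast} then places the right-hand side in $E^1$. Reading off Fourier coefficients, $x^+ - x^- \in E^1$ is equivalent to $x - x^0 \in E^1$, and since $x^0 \in E^0 \subset E^1$ trivially this gives $x \in E^1$. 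Because the eigenvalues $\lambda_k$ are bounded away from $0$ (by the description (\ref{eigen})) and $\|\dot e_k\|_{L^2}=|\lambda_k|$, the $E^1$-norm is equivalent to the $H^1$-norm on the relevant subspace, so $x \in X$; in particular $x\in H^1([0,1],\mathbb{R}^{2n})$ with $x(1)=\Psi x(0)$.

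Next I will extract the Hamiltonian ODE. A direct computation using (\ref{innerproduct}) and Proposition~\ref{unitbase} shows that for all $h\in X$,
$$
\langle x^+ - x^-, h\rangle_E \;=\; \sum_{k\ne 0}\lambda_k x_k h_k \;=\; \langle \Lambda x, h\rangle_{L^2}.
$$
Thus the critical-point equation yields $\langle \Lambda x - \nabla H(x), h\rangle_{L^2} = 0$ for every $h\in X$, and since $C^\infty_c((0,1)) \subset X$ is dense in $L^2$, we conclude $-J\dot x = \Lambda x = \nabla H(x)$ in $L^2$, i.e.\ $\dot x = J\nabla H(x)$ a.e.\ on $[0,1]$; the boundary condition $x(1)=\Psi x(0)$ is already built into $x\in X$.

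Finally, a standard bootstrap promotes $x$ to a $C^\infty$ solution: $x\in H^1$ on a one-dimensional interval is continuous, hence $J\nabla H(x)$ is continuous, hence $\dot x$ is continuous and $x \in C^1$; iterating and using $H\in C^\infty$ yields $x\in C^k$ for every $k$. I expect the only point requiring genuine care to be the identification $E^1 = X$ as a set with equivalent norms, without which one cannot interpret the critical-point equation as a classical ODE; this rests on the positive lower bound $\inf_k|\lambda_k|>0$ coming from (\ref{eigen}) together with the computation $\|\dot e_k\|_{L^2}=|\lambda_k|$, both of which follow from the explicit spectral description of $\Lambda$.
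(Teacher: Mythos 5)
Your proposal is correct, and its first half (deriving $x\in E^1$ from the critical-point identity $x^+-x^-=j^\ast\nabla H(x)$ together with the growth bound (\ref{bound}) and Proposition~\ref{jast}) coincides with the paper's argument. Where you diverge is in passing from $x\in E^1$ to the classical ODE. The paper does \emph{not} identify $E^1$ with $X$: it only invokes Proposition~\ref{e1} to get continuity and the boundary condition, and then constructs an explicit $C^1$ primitive $\xi(t)=\int_0^t(J\nabla H(x(s))-Jd)\,ds+c$ (with $d,c$ chosen via the splitting (\ref{split1}) so that $\xi(1)=\Psi\xi(0)$), computes the Fourier coefficients of $-J\dot\xi$ as in (\ref{-jdotk}), and matches them against those of $\nabla H(x)$ to conclude $d=0$ and $\xi-x\equiv{\rm const}$; this exhibits $x$ directly as a primitive of $J\nabla H(x)$. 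Your route instead establishes the set inclusion $E^1\subset X$ (so that $x\in H^1$ with $x(1)=\Psi x(0)$ a priori), converts the critical-point equation into the weak identity $\langle\Lambda x-\nabla H(x),h\rangle_{L^2}=0$ for $h\in X$, and uses density of $X$ in $L^2$. Both work; yours is arguably cleaner once $E^1\subset X$ is in hand, while the paper's primitive trick avoids having to justify that inclusion. The one step you correctly flag as delicate — that for $x\in E^1$ the series $\sum_k\lambda_kx_kJe_k$ is the weak derivative of $x$ and the boundary condition survives the limit — does need the explicit argument that the partial sums $x^0+\sum_{0<|k|\le N}x_ke_k$ form a Cauchy sequence in $H^1$ (their derivatives being $\sum J\lambda_kx_ke_k$ with $\sum\lambda_k^2x_k^2<\infty$) and that $H^1$-convergence on $[0,1]$ implies uniform convergence, so $x(1)=\Psi x(0)$ persists; with that spelled out your proof is complete.
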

\begin{proof}
   Let $x\in \mathbb{E}$ be a critical point of $\Phi_{H}$. Then
   \begin{equation}\label{cpt}
   x^+-x^-=j^{\ast}(\nabla H(x)).
   \end{equation}
   {Write in the space $L^2([0,1],\R^{2n})$
   \begin{eqnarray*}
   x=a^0+\sum_{k\neq 0}a_k e_k,\quad
   \nabla H(x)=b^0+\sum_{k\neq 0}b_ke_k\quad\hbox{and}\quad y=y_0+\sum_{k\neq 0}y_k e_k
   \end{eqnarray*}
  for any $y\in \mathbb{E}$. Using
    $\langle j^{\ast}(\nabla H(x)), y\rangle_{\mathbb{E}}=\langle \nabla H(x), j(y)\rangle_{L^2}$, a direct computation yields
   $j^{\ast}(\nabla H(x))=b^0+\sum_{k\neq 0}\frac{1}{|\lambda_k|}b_k$ (cf. \cite{JinLu}).
   It follows that (\ref{cpt}) becomes
   \begin{eqnarray}\label{e:prop2.9.1}
   0=b^0\quad\hbox{and}\quad \lambda_ka_k=b_k\;\forall k\ne 0,
   \end{eqnarray}
   and therefore $x\in E^1$, where $E^1$ is as in (\ref{espace}). By Proposition~\ref{e1} $x$ is continuous and satisfies
   $x(1)=\Psi x(0)$. Hence $\nabla H(x)$ is also continuous. Using (\ref{split1}), we may write}
   \[
   \int_0^1 J\nabla H(x) dt=Jd+(\Psi-I)c,
   \]
   where $d\in {\rm Ker}(\Psi-I)$ and $c\in\mathbb{R}^{2n}$.
   Define ~$\xi(t)=\int_0^t(J\nabla H(x(s))-Jd)ds+c$. Then $\xi\in C^1([0,1],\mathbb{R}^{2n})$ and $\xi(1)=\Psi c=\Psi \xi(0)$.
   Writing $\xi=\xi^0+\sum_{k\neq 0}\xi_ke_k$ and computing as
   in (\ref{e:-jdotk}) and (\ref{e:-jdot2}), we get that
   \begin{equation}\label{1}
   -J\dot{\xi}=\sum_{k\neq 0}\lambda_k\xi_ke_k.
   \end{equation}
   On the other hand,
   \begin{equation}\label{2}
   -J\dot{\xi}=\nabla H(x)-d=-d+\sum_{k\neq 0}b_ke_k.
   \end{equation}
   Note here that $b^0=0$ and $d\in {\rm Ker}(\Psi-I)={\rm Ker}\Lambda$. Comparing (\ref{1}) and (\ref{2}) we get that
   \begin{eqnarray}\label{e:prop2.9.2}
   d=0\quad\hbox{and}\quad \lambda_k\xi_k=b_k \;\forall k\ne 0.
   \end{eqnarray}
   Since $\xi$ and $x$ are continuous, the second equalities in (\ref{e:prop2.9.1}) and (\ref{e:prop2.9.2}) lead to
   $\xi(t)-x(t)=const$, i.e.,
   $\xi(t)-x(t)=\xi(0)-x(0)=c-x(0)$. Hence
   \[
   x(t)=\int_0^t(J\nabla H(x(s))-Jd)ds+c-c+x(0)=\int_0^t J\nabla H(x(s))ds+x(0).
   \]
   Therefore $x\in C^1[0,1]$ and satisfies
   $\dot{x}=J\nabla H(x)$, which implies that $x$ is smooth.
\end{proof}

\begin{proposition}\label{prop:PSmale}
If $H\in C^\infty(\mathbb{R}^{2n},\mathbb{R})$ is $\Psi$-{\bf  nonresonant},
i.e. for $|z|$ sufficiently large, $H(z)=a|z|^2+ \langle z, z_0\rangle+ b$ with $z_0\in{\rm Fix}(\Psi)$ and $a$,
$b\in\mathbb{R}$ such that $\det(e^{2aJ}-\Psi)\neq 0$,
 then each sequence $(x_k)\subset \mathbb{E}$ such that $\nabla\Phi_{H}(x_k)\to 0$
 has a convergent subsequence. In particular,  $\Phi_H$ satisfies the (PS) condition.
\end{proposition}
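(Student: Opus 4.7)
The plan is to decompose $\nabla\Phi_H$ as an invertible linear part plus a compact nonlinear perturbation, exploit the $\Psi$-nonresonance condition to deduce boundedness of $(x_k)$, and then upgrade weak convergence to strong convergence.

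Using (H2), I would first write $\nabla H(z) = 2az + z_0 + r(z)$ where $r:\mathbb{R}^{2n}\to\mathbb{R}^{2n}$ is continuous and has compact support. This rewrites the gradient as
$$
\nabla\Phi_H(x) = L(x) - j^*(z_0) - j^*(r\circ x),
$$
where $L := (P^+ - P^-) - 2a\,j^*$ is a bounded linear operator on $E$ (identifying $x\in E$ with $j(x)\in L^2$ inside $j^*$). The nonlinear tail $x\mapsto j^*(r\circ x)$ is compact from $E$ to $E$: since $\|r\circ x\|_{L^\infty}\le \|r\|_\infty$ uniformly, the image $\{r\circ x\}$ is bounded in $L^2$, and $j^*:L^2\to E$ is compact by Proposition~\ref{jast}.

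The key step is verifying that $L$ is an isomorphism of $E$ precisely under the $\Psi$-nonresonance assumption. Using the formula $j^*(v)=v^0+\sum_{k\neq 0}|\lambda_k|^{-1}v_k e_k$ derived in the proof of Proposition~\ref{jast}, the operator $L$ preserves the splitting $E=E^-\oplus E^0\oplus E^+$ and is diagonal in the eigenbasis of $\Lambda$: it acts as $-2a$ on $E^0$, as multiplication by $-1-2a/|\lambda_k|\le -1$ on $E^-$, and as multiplication by $1-2a/\lambda_k$ on $E^+$. The first two blocks always admit bounded inverses; the third is invertible with bounded inverse iff $\inf_{\lambda_k>0}|\lambda_k-2a|>0$, which---since positive eigenvalues tend to $+\infty$---is equivalent to $2a$ not being a positive eigenvalue of $\Lambda$. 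By the explicit description of the eigenvalues of $\Lambda$ as $\{t_l+2k\pi\}_{l,k}$ (with the $t_l\in(0,2\pi]$ the zeros of $g^\Psi$) together with the identity $e^{2\pi J}=I_{2n}$, this is exactly the condition $\det(e^{2aJ}-\Psi)\ne 0$. Hence $L$ is an isomorphism, and the identity $L(x_k)=\nabla\Phi_H(x_k)+j^*(z_0)+j^*(r\circ x_k)$ with uniformly bounded right-hand side shows $(x_k)$ is bounded in $E$.

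For the compactness step, I would extract a subsequence with $x_k\rightharpoonup x$ in $E$. By Proposition~\ref{compact} the inclusion $E\hookrightarrow L^2$ is compact, so $x_k\to x$ in $L^2$; combining with the growth bound (\ref{bound}) and dominated convergence gives $\nabla H(x_k)\to\nabla H(x)$ in $L^2$, whence $j^*\nabla H(x_k)\to j^*\nabla H(x)$ strongly in $E$. Rearranging,
$$
(P^+ - P^-)x_k = \nabla\Phi_H(x_k) + j^*\nabla H(x_k)
$$
converges strongly in $E$; projecting onto $E^\pm$ forces each of $x_k^+$ and $x_k^-$ to converge. Finally, since $E^0$ is finite-dimensional, the bounded sequence $(x_k^0)$ admits a convergent subsequence. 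Assembling the three pieces produces a strongly convergent subsequence of $(x_k)$, establishing (PS). The main obstacle is the spectral identification of invertibility of $L$ with the $\Psi$-nonresonance condition---this is where the sharp hypothesis $\det(e^{2aJ}-\Psi)\ne 0$ is essential. Once that is in hand, the rest is standard compact perturbation machinery adapted to the indefinite $E=E^-\oplus E^0\oplus E^+$ setting.
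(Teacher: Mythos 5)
Your proof is correct, and it takes a genuinely different route from the paper's. The paper argues by contradiction on an unbounded (PS) sequence: it normalizes $y_k=x_k/\|x_k\|_E-z_0/(2a)$, extracts a limit $y$ solving the asymptotic linear problem $\dot y=J\nabla Q(y)$, $y(1)=\Psi y(0)$ with $Q(z)=a|z|^2+\langle z,z_0\rangle+b$, integrates this system explicitly to get $y(1)+z_0/(2a)=e^{2aJ}(y(0)+z_0/(2a))=\Psi(y(0)+z_0/(2a))$, and then invokes $\det(e^{2aJ}-\Psi)\ne 0$ to force $y+z_0/(2a)\equiv 0$, contradicting $\|y+z_0/(2a)\|_E=1$. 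You instead prove boundedness directly by showing the asymptotic linearization $L=(P^+-P^-)-2a\,j^*\circ j$ is a Banach isomorphism of $E$; your spectral computation (multipliers $-2a$ on $E^0$, $-1-2a/|\lambda_k|$ on $E^-$, $1-2a/\lambda_k$ on $E^+$, with the inf over the last family positive exactly when $2a$ avoids the positive spectrum of $\Lambda$, i.e.\ when $\det(e^{2aJ}-\Psi)\ne0$) is the correct translation of nonresonance, and the compactly supported remainder $r$ gives a uniformly $L^2$-bounded right-hand side, so $x_k=L^{-1}(\nabla\Phi_H(x_k)+j^*z_0+j^*(r\circ x_k))$ is bounded. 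Your method buys something the paper's does not: an explicit a priori bound $\|x_k\|_E\le\|L^{-1}\|\bigl(\|\nabla\Phi_H(x_k)\|_E+\|j^*z_0\|_E+\|j^*\|\,\|r\|_\infty\bigr)$ on every (PS) sequence. The paper's blow-up scheme, on the other hand, transfers verbatim to situations where the Hamiltonian is asymptotically $2$-homogeneous but not quadratic — this is exactly what happens in Lemma~\ref{lem:PS}, where the asymptotic Hamiltonian is $(A(x^\ast)+\epsilon)j_D^2$ and no linear operator $L$ is available — which is presumably why the authors chose it. The convergence step once boundedness is known (compact embedding $E\hookrightarrow L^2$, global Lipschitz bound on $\nabla H$, compactness of $j^*$, finite-dimensionality of $E^0$) coincides with the paper's Case 1.
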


\begin{proof}
   Since $\nabla\Phi_{H}(x)=x^+-x^--\nabla \mathfrak{b}(x)$ for any $x\in\mathbb{E}$, we have
   \begin{equation}\label{e:critic}
   x_k^+-x_k^--\nabla \mathfrak{b}(x_k)\rightarrow 0.
   \end{equation}
   \noindent{\bf Case 1}. {\it $(x_k)$ is bounded in $\mathbb{E}$}. Then $(x^0_k)$ is a bounded sequence
   in ${\rm Ker}(\Lambda)$ which has finite dimension.  Hence $(x^0_k)$ has a convergent subsequence.
   Moreover, since $\nabla \mathfrak{b}$ is compact,    $(\nabla \mathfrak{b}(x_k))$ has a convergent
   subsequence and so both $(x_k^+)$ and $(x_k^-)$ have convergent subsequences in $\mathbb{E}$.
   Hence $(x_k)$ has a convergent subsequence.

   \noindent{\bf Case 2}. {\it $(x_k)$ is unbounded in $\mathbb{E}$}.  Without loss of generality, we may assume
   $$
   \lim_{k\rightarrow +\infty }\|x_k\|_{\mathbb{E}}=+\infty.
   $$
   Let $y_k=\frac{x_k}{\|x_k\|_{\mathbb{E}}}-\frac{1}{2a}z_0$ where $z_0\in{\rm Fix }(\Psi)$. Then $|y_k^0|\le\|y_k\|_E\le 1+|\frac{z_0}{2a}|$ and (\ref{e:critic}) implies
   \begin{equation}\label{e:critic+}
   y_k^+-y_k^--j^{\ast}\left(\frac{ \nabla H(x_k)}{\|x_k\|_{\mathbb{E}}}\right)\rightarrow 0.
   \end{equation}
   Note that by (\ref{bound}) we have
   $$
   \left\|\frac{ \nabla H(x_k)}{\|x_k\|_{\mathbb{E}}}\right\|_{L^2}^2\leq \frac{8a^2 \|x_k\|_{L^2}^2+2C_1^2 }{\|x_k\|_{\mathbb{E}}^2}\leq C_6
   $$
   for some constant $C_6>0$, that is, $(\nabla H(x_k)/\|x_k\|_{\mathbb{E}})$ is bounded in $L^2$. Hence the sequence
   $j^{\ast}\left(\frac{ \nabla H(x_k)}{\|x_k\|_{\mathbb{E}}}\right)$ is compact.
   (\ref{e:critic+}) shows that $(y_k)$ has a convergent subsequence in $\mathbb{E}$. Without loss
   of generality, we may assume that  $y_{k}\rightarrow  y$ in $\mathbb{E}$. Since
    (H2) implies
    $$
    H(z)=Q(z):=a|z|^2+ \langle z, z_0\rangle+ b
    $$
    for $|z|$ sufficiently large, there exists a constant $C_7>0$ such that
   $$
   |\nabla H(z)-\nabla Q(z)|\leq C_7,\quad \forall z\in\mathbb{R}^{2n}.
   $$
   It follows that
   \begin{eqnarray*}
    \left\|\frac{\nabla H(x_k)}{\|x_k\|_{\mathbb{E}}}-\nabla Q(y)\right\|_{L^2}
   &\leq&\left\|\frac{\nabla H(x_k)}{\|x_k\|_{\mathbb{E}}}-\nabla Q(y_k)\right\|_{L^2}+\left\|\nabla Q(y_k)-\nabla Q(y)\right\|_{L^2}\\
   &\leq& \left\|\frac{\nabla H(x_k)-\nabla Q(x_k)}{\|x_k\|_{\mathbb{E}}}\right\|_{L^2}+\frac{|z_0|}{\|x_k\|_{\mathbb{E}}}+2a\|y_k-y\|_{L^2}\\
   &\leq &\frac{C_7}{\|x_k\|_{\mathbb{E}}}+\frac{|z_0|}{\|x_k\|_{\mathbb{E}}}+2a\|y_k-y\|_{L^2}
   \rightarrow 0
   \end{eqnarray*}
  as $k\to\infty$. This implies that $j^{\ast}\left(\frac{\nabla H(x_k)}{\|x_k\|_{\mathbb{E}}}\right)$ tends to $j^{\ast}(\nabla Q(y))$ in $\mathbb{E}$, and thus we arrive at
   \begin{eqnarray*}
   y^+-y^--j^{\ast}(\nabla Q (y))=0\quad\hbox{and}\quad \left\|y+\frac{z_0}{2a} \right\|_{\mathbb{E}}=1.
   \end{eqnarray*}
   Arguments similar to Proposition \ref{prop:solution}  show that $y$ is smooth and satisfies
    \begin{eqnarray}\label{e:y-boundary}
   \dot{y}=J\nabla Q(y)\quad\hbox{and}\quad y(1)=\Psi y(0).
   \end{eqnarray}
   Then we have
   $$
   y(t)+\frac{1}{2a}z_0=e^{2aJt}(y(0)+\frac{1}{2a}z_0).
   $$
   Noting that $z_0\in {\rm Fix}(\Psi)$, by the second condition in (\ref{e:y-boundary})  we deduce that
   $$
   y(1)+\frac{1}{2a}z_0=e^{2aJ}(y(0)+\frac{1}{2a}z_0)=\Psi(y(0)+\frac{1}{2a}z_0).
   $$
   Since $H$ is $\Psi$-nonresonant, i.e. $\det(e^{2aJ}-\Psi)\neq 0$, we get that $y(0)+\frac{1}{2a}z_0=0$, which implies that $y(t)+\frac{1}{2a}z_0\equiv 0$.
   However, we have already got that $\|y+\frac{1}{2a}z_0\|_{\mathbb{E}}=1$. This  contradiction shows that the second case does not occur.
\end{proof}

\section{Proof of Theorem~\ref{th:convex}}\label{sec:convex}
\setcounter{equation}{0}

By the assumptions in Theorem~\ref{th:convex} $D$ contains a fixed point $p$ of $\Psi$.
{The symplectomorphism
\begin{equation}\label{e:5convex}
\phi:(\mathbb{R}^{2n},\omega_0)\to (\mathbb{R}^{2n},\omega_0),\;x\mapsto x-p
\end{equation}
satisfies $\phi\circ\Psi=\Psi\circ\phi$. Hence
$c^\Psi_{\rm HZ}(D,\omega_0)=c^\Psi_{\rm HZ}(\phi(D),\omega_0)$. Moreover,
for a (generalized) $\Psi$-characteristic $z:[0,T]\to \partial D$, it is easily checked
that  $y=\phi\circ z$ is  a (generalized) $\Psi$-characteristic on $\partial(\phi(D))=\phi(\partial D)$
and satisfies $y(T)=\Psi(y(0))$ and $A(y)=A(z)$.
Hence from now on we may assume $p=0$, i.e. $0\in{\rm int}(D)$ in this section.}

\subsection{Proof of (\ref{e:action2})}\label{sec:convex1}

The goal of this subsection is to establish
the existence of a generalized $\Psi$-characteristic with minimal action
 via the Clarke dual variational  principle in \cite{Cl79} (see also \cite{MoZe05, HoZe94} in smooth case
 and \cite{Ek90, AAO14} in nonsmooth case for detailed arguments).
 The steps are classic besides that we need to take the boundary condition into consideration.
For the sake of clearness, we present the detailed proof.

Let  $j_D: \mathbb{R}^{2n}\rightarrow\mathbb{R}$ be the Minkowski (or gauge) functional associated to $D$.
Then the Hamiltonian function  $H:\mathbb{R}^{2n}\to \mathbb{R}$ defined by $H(z)=(j_D(z))^2$
is convex (and so continuous by \cite[Cor.10.1.1]{Roc70} or \cite[Prop.2.31]{Kr15}).
 There exists some constant $R_1\geq 1$ such that
\begin{equation}\label{e:dualestimate0}
 \frac{|z|^2}{R_1}\leq H(z)\leq R_1|z|^2\quad
\forall z\in\mathbb{R}^{2n}.
\end{equation}
This implies that the Legendre transformation of $H$ defined by
$$
H^{\ast}(w)=\max_{\xi\in \mathbb{R}^{2n}}(\langle w ,\xi \rangle-H(\xi)),
$$
where  $\langle\cdot,\cdot\rangle$ is the standard Euclidean inner product,
is a convex function from $\mathbb{R}^{2n}$ to $\mathbb{R}$ (and thus continuous).
Moreover, there exists a constant $R_2\geq 1$ such that
\begin{equation}\label{e:dualestimate}
 \frac{|z|^2}{R_2}\leq H^{\ast}(z)\leq R_2|z|^2\quad
\forall z\in\mathbb{R}^{2n}.
\end{equation}
Note that $H^\ast$ is also $C^{1,1}$ in $\mathbb{R}^{2n}$
with uniformly Lipschitz constant if   $\mathcal{S}$ is
$C^{1,1}$ and strictly convex. (See \cite[Cor.10.1.1]{Roc70}.)

Recall that in Section~\ref{section:space} we
denote by  $E_1\subset \mathbb{R}^{2n}$ the eigenvector space which belongs to the eigenvalue  $1$ of $\Psi$ and $E_1^{\bot}$ the orthogonal
 complement of  $E_1$ with respect to the standard Euclidean inner product in $\mathbb{R}^{2n}$.
 When $\dim E_1^{\bot}=0$,  the problem reduces to the periodic case. Hence we only  consider the non-periodic case in which  $\dim E_1^{\bot}\geq 1$.
 Let $C$ be given by (\ref{e:inverse}), i.e., the norm of
 $((\Psi-I_{2n})|_{E_1^{\bot}})^{-1}$.

Consider the following subspace of $H^1([0,1],\mathbb{R}^{2n})$
\begin{equation}\label{e:constrant1}
\mathcal{F}=\{x\in H^1([0,1],\mathbb{R}^{2n})\,|\,x(1)=\Psi x(0)\; \& \; x(0)\in E_1^{\bot}\}
\end{equation}
and its subset
\begin{equation}\label{e:constrant2}
\mathcal{A} =\{x\in \mathcal{F}\,|\,A(x)=1 \},\quad\hbox{where}\quad A(x)=\frac{1}{2}\int_0^1\langle-J\dot{x},x\rangle dt.
\end{equation}
Then $\mathcal{A}$ is a regular submanifold of $\mathcal{F}$. In fact, for any $x\in \mathcal{F}$ and $\zeta\in T_x\mathcal{F}=\mathcal{F}$,
$$
dA(x)[\zeta]= \int_0^{1}\langle -J\dot{\zeta},x\rangle dt+\frac{1}{2}\langle -Jx,\zeta\rangle|_0^1=\int_0^{1}\langle -J\dot{\zeta},x\rangle dt
$$
since
\begin{eqnarray*}
\langle Jx(1),\zeta(1)\rangle-\langle Jx(0),\zeta(0)\rangle
&=&\langle J\Psi x(0),\Psi\zeta(0)\rangle-\langle Jx(0),\zeta(0)\rangle\\
&=&\langle \Psi^{t}J\Psi x(0), \zeta(0)\rangle-\langle Jx(0),\zeta(0)\rangle\\
&=&\langle Jx(0), \zeta(0)\rangle-\langle Jx(0),\zeta(0)\rangle
=0.
\end{eqnarray*}
Thus $dA\neq 0$ on $\mathcal{A}$ because
$$
dA(x)[x]=\int_0^{1}\langle -J\dot{x}, x\rangle dt=2,\quad\forall x\in \mathcal{A}=A^{-1}(1).
$$

\noindent{\bf Step 1}.\quad{\it The functional $I:\mathcal{F}\to\mathbb{R}$ defined by
$$
I(x)=\int_0^1H^{\ast}(-J\dot{x})dt
$$
has a positive infimum  on $\mathcal{A}$ denoted by $\mu:=\inf _{x\in\mathcal{A}}I(x)$.} In fact for $x\in \mathcal{F}$
\[
\int_0^1 \dot{x} dt=x(1)-x(0)=(\Psi-I)x(0),\quad\hbox{where}\quad x(0)\in E_1^\bot.
\]
Hence $|x(0)|\leq  C \|\dot{x}\|_{L^2}$
where $C$ is given by (\ref{e:inverse}). Then it is easily estimated that
\begin{eqnarray}\label{poincare}
\| x\|_{L^2}^2
&=&\int_0^1 \left|\int_0^{t}\dot{x}(s)ds+x(0)\right|^2dt\nonumber\\
&\leq &2\int_0^1\left(\left|\int_0^{t}\dot{x}(s)ds\right|^2+|x(0)|^2\right)dt\nonumber\\
&= &2(\|\dot{x}\|_{L^2}^2+|x(0)|^2)\nonumber\\
&\leq & 2(1+C^2) \|\dot{x}\|_{L^2}^2.
\end{eqnarray}
Moreover,  if $x\in\mathcal{A}$ then there holds
\begin{equation}\label{actioncons}
2=2A(x)\leq \|x\|_{L^2}\|\dot{x}\|_{L^2}\leq \sqrt{2(1+C^2)}\|\dot{x}\|_{L^2}^2.
\end{equation}
It follows from these and (\ref{e:dualestimate}) that for any $x\in\mathcal{A}$
\begin{equation}\label{relation}
I(x)=\int_0^1 H^{\ast}(-J\dot{x})dt \geq \frac{1}{R_2}\|\dot{x}\|_{L^2}^2\geq C_1:=\frac{2}{R_2\sqrt{2(1+C^2)}}.
\end{equation}

\noindent{\bf Step 2}.\quad{\it There exists $u\in  \mathcal{A}$ such  that
$I(u)=\mu$. }
 Let $(x_n)\subset\mathcal{A}$ be a sequence satisfying $\lim_{n\rightarrow+\infty}I(x_n)=\mu$.
  By (\ref{relation}) for some $C_2>0$ we have
 $$
 R_2C_1\le\|\dot{x}_n\|_{L^2}^2\leq R_2I(x_n)\leq C_2,\quad n=1,2,\cdots.
 $$
 It follows from  (\ref{poincare}) and (\ref{actioncons}) that for all $n\in\N$,
$$
\frac{2}{C_2}\leq \frac{2}{\|\dot{x}_n\|_{L^2}^2}\leq\|x_n\|_{L^2}^2\leq 2(1+C^2)\|\dot{x}_n\|_{L^2}^2\leq  2(1+C^2)C_2.
$$
Hence $(x_n)$ is a bounded sequence in $H^1([0,1],\mathbb{R}^{2n})$. After passing to a subsequence
if necessary, we may assume that  $(x_n)$ converges weakly to some $u$ in  $H^1([0,1], \mathbb{R}^{2n})$.
 By the Arzel\'{a}-Ascoli theorem, there also exists $\hat{u}\in C^{0}([0,1],\mathbb{R}^{2n})$ such that
$$
\lim_{n\rightarrow+\infty}\sup_{t\in [0,1]}|x_n(t)-\hat{u}(t)|=0.
$$
Then a standard argument gives that $\hat{u}(t)=u(t)$ almost everywhere.
Since $x_n\rightarrow u$
in $C^{0}([0,1],\mathbb{R}^{2n})$, we get that $u(1)=\Psi u(0)$ and $u(0)\in E_1^{\bot}$. Moreover,
$u\in\mathcal{A}$ because
\begin{eqnarray*}
A(u)=\frac{1}{2}\int_0^1 \langle Ju, \dot{u}\rangle dt
&=&\lim_{n\rightarrow+\infty}\frac{1}{2}\int_0^1 \langle Ju, \dot{x}_{n}\rangle dt\\
&=&\lim_{n\rightarrow+\infty} \frac{1}{2}\int_0^1 (\langle Jx_n, \dot{x}_{n}\rangle+\langle J(u-x_n), \dot{x}_{n}\rangle) dt
=1.
\end{eqnarray*}

Consider the functional
$$
\hat{I}: L^2([0,1],\mathbb{R}^{2n})\to\mathbb{R},\;u\mapsto\int^1_0H^\ast(u(t))dt.
$$
Then $I(x)=\hat{I}(-J\dot{x})$ for any $x\in\mathcal{F}$.
Since $H^{\ast}$ is  convex, so is $\hat{I}$.
(\ref{e:dualestimate}) also implies that $\hat{I}$ is continuous and thus
has nonempty subdifferential $\partial\hat{I}(v)$ at each point $v\in L^2([0,1],\mathbb{R}^{2n})$. Moreover,
by Corollary~3 in \cite[Chap. II,\S3]{Ek90} we know
$$
\partial\hat{I}(v)=\{w\in L^2([0,1],\mathbb{R}^{2n})\,|\, w(t)\in \partial H^\ast(v(t))\;\hbox{a.e. on}\;[0,1]\}.
$$
By definition of subdifferential it follows that
\begin{eqnarray}\label{mini}
I(u)-I(x_n)&=&\hat{I}(-J\dot{u})-\hat{I}(-J\dot{x}_n)\nonumber\\
&\leq& \int_0^1 \langle w(t),-J( \dot{u}(t)-\dot{x}_n(t))\rangle dt
\end{eqnarray}
for any $w\in \partial\hat{I}(-J\dot{u})=\{w\in L^2([0,1],\mathbb{R}^{2n})\,|\, w(t)\in \partial H^\ast(-J\dot{u}(t))\;\hbox{a.e. on} \;[0,1]\}$.
Since that $(x_n)$ converges weakly to some $u$ in  $H^{1}([0,1], \mathbb{R}^{2n})$ implies that
$(\dot{x}_n)$ converges weakly to some $\dot{u}$ in  $L^2([0,1], \mathbb{R}^{2n})$,
we deduce that the left hand of (\ref{mini}) converges to $0$. Therefore
 $$
\mu\le I(u)\le\lim_{n\rightarrow+\infty}I(x_n)=\mu.
$$
The desired claim is proved.

\noindent{\bf Step 3}.\quad{\it There exists a generalized $\Psi$-characteristic on $\mathcal{S}$,
$x^\ast: [0,\mu]\rightarrow \mathcal{S}$,  such that $A(x^\ast)=\mu$.}
  Since $u$ is the minimum point of  $I|_{\mathcal{A}}$,
  applying the Lagrange multiplier theorem (cf. \cite[Theorem~6.1.1]{Cl83}) we get some $\lambda\in\mathbb{R}$ such that
$0\in\partial (I+\lambda A)(u)=\partial I(u)+\lambda A'(u)$.
Let us write $\Lambda_{\mathcal{F}}$ as the operator $\Lambda$ in (\ref{e:Lambda})
viewed as an operator from the Hilbert space $\mathcal{F}$ equipped with the $H^1$ norm to the Hilbert space $L^2$.
It is a closed linear operator. Then
$I=\hat{I}\circ \Lambda_{\mathcal{F}}$ and by Corollary~6 in \cite[Chap.II,\S2]{Ek90}
we arrive at
\begin{eqnarray*}
\partial I(u)&=& (\Lambda_{\mathcal{F}})^\ast\partial \hat{I}(\Lambda_\mathcal{F}(u))\\
&=&\{(\Lambda_{\mathcal{F}})^\ast w\,|\,
w\in L^2([0,1],\mathbb{R}^{2n})\;\&\; w(t)\in \partial H^\ast(-J\dot{u}(t))\;\hbox{a.e. on}\;[0,1]\}.
\end{eqnarray*}
Hence there exists a function $w\in L^2([0,1],\mathbb{R}^{2n})$ with $w(t)\in \partial H^\ast(-J\dot{u}(t))$ a.e.
on $[0,1]$ such that $(\Lambda_{\mathcal{F}})^\ast w+\lambda A'(u)=0$, i.e.,
\begin{eqnarray*}
0=\int_0^1 \langle w(t),-J\dot{\zeta}(t)\rangle dt+\lambda
\int_0^1 \langle  u(t),-J\dot{\zeta}(t)\rangle dt\quad \forall\zeta\in \mathcal{F}.
\end{eqnarray*}
This implies
\begin{equation}\label{mini.1}
w(t)+\lambda u(t)=a_0\quad\hbox{a.e. on}\quad [0,1]
\end{equation}
for some $a_0\in{\rm Ker} (\Psi-I)$. Then
\begin{eqnarray}\label{mini.2}
\langle w, -J\dot{u}\rangle&=&\int_0^1 \langle w(t),-J\dot{u}(t)\rangle dt\nonumber\\
&=&\int_0^1\langle a_0-\lambda u(t),-J\dot{u}(t)\rangle dt=-2\lambda.
\end{eqnarray}
Since the convex functional $\hat{I}$ is $2$-positively homogeneous, we use
the Euler formula \cite[Theorem~3.1]{YangWei08} to obtain
$$
\langle w, -J\dot{u}\rangle=2\hat{I}(-J\dot{u})=2I(u)=2\mu
$$
and thus $-\lambda=\mu$ by (\ref{mini.2}).
By (\ref{mini.1}), $a_0+\mu u(t)=w(t)\in\partial H^\ast(-J\dot{u}(t))$
so that $-J\dot{u}\in \partial H(a_0+\mu u(t))$ a.e. on [0,1].
Define $v:[0,\mu]\to\mathbb{R}^{2n}$ by \begin{equation}\label{mini.4}
v(t):=\mu u(t/\mu)+a_0.
\end{equation}
Then $v$ satisfies
 \begin{equation}\label{mini.5}
-J\dot{v}(t)\in  \partial H(v(t))\quad\hbox{a.e. on } \quad [0,\mu]
\end{equation}
and
$$
v(\mu)=\mu u(1)+a_0=\Psi(\mu u(0)+a_0)=\Psi v(0).
$$
The Legendre reciprocity formula (cf. \cite[Proposition~II.1.15]{Ek90}) in convex analysis yields
\begin{eqnarray}\label{e:vconstant}
\int_0^\mu H(v(t))dt&=&-\int_0^\mu H^\ast(-J\dot{v}(t))dt+\int_0^\mu\langle v(t),-J\dot{v}(t)dt\nonumber\\
&=&-\mu\int_0^1H^\ast(-J\dot{u}(s))ds+\mu^2\int_0^1\langle-J\dot{u}(s),u(s)
\rangle ds\nonumber\\
&=&-\mu^2+2\mu^2=\mu^2.
\end{eqnarray}
By \cite[Theorem~2]{Ku96} (\ref{mini.5})
implies $H(v(t))$ is constant and hence by (\ref{e:vconstant}) $H(v(t))\equiv\mu$ for all $t\in [0, \mu]$ so that  $v$ is nonconstant.
It follows that
 \begin{equation}\label{e:desiredChar}
x^\ast:[0,\mu]\rightarrow \mathcal{S},\; t\mapsto\frac{v(t)}{\sqrt{\mu}}=\sqrt{\mu}u(t/\mu)+ a_0/\sqrt{\mu}
\end{equation}
satisfies
$$
-J\dot{x}^\ast(t)\in  \partial H(x^\ast(t))\quad \hbox{a.e. on}\quad [0,\mu]
$$
and
$$
x^\ast(\mu)=\Psi x^\ast(0),\quad H(x^\ast(t))\equiv1,\quad\quad A(x^\ast)=\mu.
$$
That is to say $x^\ast$ is a $\Psi$-characteristic on $\mathcal{S}$ with action $A(x^\ast)=\mu$.

\noindent{\bf Step 4}. \quad{\it For any generalized $\Psi$-characteristic $y$
on $\mathcal{S}$ with positive action, there holds $A(y)\ge \mu$.
}
{By Lemma~2 in \cite[Chap.V,\S1]{Ek90} (or its proof), after reparameterization we may
assume that $y:[0,T]\mapsto \mathcal{S}$ is an absolutely continuous map satisfying
 \begin{equation}\label{e:repara}
     -J\dot{y}(t)=\partial {H}(y(t)), \;{\rm a.e.,}\quad
     y(T)=\Psi y(0).
\end{equation}
(See \cite[Lemma~4.2]{JinLu} for details). Then
\begin{equation}\label{e:desiredChar1}
A(y)=T\quad\hbox{and}\quad H(y(t))\equiv 1.
\end{equation}
 Since $\{w\in\partial H(x)\,|\,x\in\mathcal{S}\}$ is a bounded set in $\mathbb{R}^{2n}$ (by the proof of \cite[Lemma~4.2]{JinLu}),
  $y$ is in $W^{1,\infty}([0,T],\mathbb{R}^{2n})$ and in particular $y\in W^{1,2}([0,T],\mathbb{R}^{2n})$.}
Choose $a\in \mathbb{R}$ and $b\in E_1$ so that
$$
y^{\ast}:[0,1]\rightarrow \mathbb{R}^{2n},\;  t\mapsto y^{\ast}(t)=a y(tT)+b
$$
belongs to $\mathcal{A}$. Then $1=A(y^\ast)=a^2A(y)=a^2T$.
Since
$$
-J\dot{y}^\ast(t)=-aTJ\dot{y}(tT)\in aT\partial H({y}(Tt))=
\partial H(aT{y}(Tt))\quad\hbox{a.e. on }\quad[0,1],
$$
there holds  $aT{y}(Tt)\in\partial H^\ast(-J\dot{y}^\ast(t))\;\hbox{a.e. on}\;[0,1]$ and
the Legendre reciprocity formula (cf. \cite[Proposition~II.1.15]{Ek90}) in convex analysis yields
\begin{eqnarray*}
 H^\ast(-J\dot{y}^\ast(t))&=&-H(aT{y}(Tt))+\langle-J\dot{y}^\ast(t), aT{y}(Tt)\rangle\\
&=&-(aT)^2H(y(t))+\langle-aTJ\dot{y}(Tt), aT{y}(Tt)\rangle\\
&=&-(aT)^2+(aT)^2\langle-J\dot{y}(Tt), {y}(Tt)\rangle\\
&=&-(aT)^2+2(aT)^2H({y}(Tt))=(aT)^2=T,\quad\hbox{a.e. on}\, [0,1],
\end{eqnarray*}
where the fourth equality comes from
the Euler formula \cite[Theorem~3.1]{YangWei08}.
Hence
$H^\ast(-J\dot{y}^\ast(t))=T$ a.e. on $[0,1]$ so that
 \begin{eqnarray*}
\int_0^1H^{\ast}(-J\dot{y}^{\ast}(t))dt=T.
\end{eqnarray*}
The definition of $\mu$ implies $T\ge \mu$ and it follows that  $A(y)\ge\mu$ by (\ref{e:desiredChar1}).

\begin{remark}\label{rem:action-period}
{\rm From the above proof we see that  $u$ given by Step 2 satisfies
\begin{eqnarray}\label{equalaction1}
&&\min\{A(x)>0\,|\,x\;\text{is a generalized}\;\Psi\hbox{-characteristic on}\;\mathcal{S}\}\nonumber\\
&=&A(x^{\ast})\nonumber\\
&=&I(u)\nonumber\\
&=&\min\{I(x)\,|\, x\in \mathcal{F}\,\&\,A(x)=1 \}.
\end{eqnarray}
Moreover, as in the periodic case we have
\begin{equation}\label{equalaction2}
\min\{I(x)\,|\, x\in \mathcal{F}\,\&\,A(x)=1 \}
=\left(\max\{A(x)\,|\, x\in \mathcal{F}\,\&\,I(x)=1 \}\right)^{-1}.
\end{equation}}
\end{remark}

\subsection{A key lemma}\label{sec:convex2}

\begin{lemma}\label{nointerior}
  Let $D\subset \mathbb{R}^{2n}$ be a compact  convex domain with
boundary $\mathcal{S}=\partial D$ and $0\in{\rm int}(D)$. If $\mathcal{S}$ is of class $C^{2n+2}$, then the set
   $$
    \Sigma^\Psi_S:=\{ A(x)\,|\, A(x)>0\;\hbox{and}\;x\;\hbox{is a }\;\Psi\hbox{-characteristic on}\;S\}
    $$
  {is a nowhere dense set} in $\mathbb{R}$.
 \end{lemma}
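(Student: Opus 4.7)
The plan is a Sard-theoretic argument in finite dimensions. First, take as Hamiltonian $H := j_D^2$, the squared gauge function of $D$. Since $0 \in \mathrm{int}(D)$ and $\partial D$ is $C^{2n+2}$, $H$ is $C^{2n+2}$ on $\mathbb{R}^{2n} \setminus \{0\}$, positively $2$-homogeneous with $\langle \nabla H(x), x\rangle = 2H(x)$, and its Hamiltonian flow $\varphi^t$ is jointly $C^{2n+1}$ in $(t,x)$. Every $\Psi$-characteristic on $\mathcal{S} = H^{-1}(1)$ reparametrizes to a $C^{2n+1}$ solution $x : [0, T] \to \mathcal{S}$ of $\dot{x} = X_H(x)$ with $x(T) = \Psi x(0)$, and Euler's identity forces $A(x) = T$. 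Setting
\[
F : (0, \infty) \times \mathcal{S} \longrightarrow \mathbb{R}^{2n}, \qquad F(T, x) := \varphi^T(x) - \Psi x,
\]
we obtain a $C^{2n+1}$ map between two $2n$-dimensional manifolds, and $\Sigma^{\Psi}_{\mathcal{S}} = \pi_1\!\bigl(F^{-1}(0)\bigr)$ with $\pi_1(T, x) := T$.

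Next, I split $F^{-1}(0)$ into the regular locus $R$ of $F$ (where $dF$ is surjective) and the critical locus $C$. At any $(T_0, x_0) \in R$, the implicit function theorem forces $F^{-1}(0)$ to be locally $0$-dimensional, so $(T_0, x_0)$ is isolated in $F^{-1}(0)$ and $\pi_1(R)$ is at most countable. For the critical locus $C$, I stratify by $\mathrm{rank}\, dF$: on the stratum $\Sigma_r := \{(T, x) \in C : \mathrm{rank}\, dF_{(T, x)} = r\}$, a constant-rank reduction -- using the explicit formula
\[
dF_{(T, x)}(s, v) = s\, X_H(\varphi^T(x)) + d\varphi^T(x) v - \Psi v
\]
together with the symplecticity of $d\varphi^T$ and of $\Psi$, or alternatively a Whitney-type stratification -- realizes $\Sigma_r$ locally as a $C^{2n+1}$ submanifold of $(0, \infty) \times \mathcal{S}$ of dimension at most $2n - 1$. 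Sard's theorem, applied to the $C^{2n+1}$ map $\pi_1|_{\Sigma_r} : \Sigma_r \to \mathbb{R}$ (with domain of dimension $\leq 2n$, target of dimension $1$, amply meeting the Sard regularity threshold $k \geq 2n$), then yields that $\pi_1(\Sigma_r)$ has Lebesgue measure zero. Summing over $r \in \{0, 1, \dots, 2n-1\}$ and combining with the countability of $\pi_1(R)$, one concludes that $\Sigma^{\Psi}_{\mathcal{S}}$ has Lebesgue measure zero, and in particular empty interior.

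The main obstacle is the constant-rank reduction on each stratum $\Sigma_r$: a priori $\Sigma_r$ is merely the preimage of a rank condition on $dF$ and is not obviously a smooth submanifold. Whitney stratification, or a direct analysis of the explicit form of $dF_{(T, x)}$ above together with the homogeneity of $H$, does produce the smooth structure needed, but the bookkeeping relating $r$, the intrinsic dimension of $\Sigma_r$, and the $\mathbb{R}$-valued projection requires care. The regularity assumption $C^{2n+2}$ on $\partial D$ is calibrated precisely so that, on the largest possible stratum, the Sard threshold is attained with one degree of regularity to spare.
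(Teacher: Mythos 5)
There is a fatal misapplication of Sard's theorem at the last step of your argument. Sard's theorem, applied to $\pi_1|_{\Sigma_r}\colon\Sigma_r\to\mathbb{R}$, controls only the set of \emph{critical values} of this restriction; it says nothing about the full image. If some stratum $\Sigma_r$ has dimension $\ge 1$ and $\pi_1$ restricted to it is a submersion at even one point, then $\pi_1(\Sigma_r)$ contains an interval and the conclusion fails (the projection of a curve in $(0,\infty)\times\mathcal{S}$ onto the $T$-axis is, generically, an interval). What the lemma really requires, and what your scheme never supplies, is that \emph{every} point of $F^{-1}(0)$ is a critical point of $\pi_1$ restricted to the solution set, i.e.\ that $T=A(x)$ is locally constant along $C^1$ families of $\Psi$-characteristics on the fixed hypersurface $\mathcal{S}$. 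This is a genuinely symplectic fact: writing $y_s(t)=x_s(T_st)$ one finds $\frac{d}{ds}T_s=\int_0^1\langle -J\dot y_s,\partial_sy_s\rangle\,dt+\frac12\bigl[\langle -J\partial_sy_s,y_s\rangle\bigr]_0^1$, where the boundary term vanishes because $\Psi^{t}J\Psi=J$ and the integral vanishes because $H(y_s)\equiv 1$; nothing of this sort appears in your proof. A second, independent gap is the assertion that the rank strata $\Sigma_r$ are $C^{2n+1}$ submanifolds of dimension $\le 2n-1$: for a general $C^k$ map the sets $\{{\rm rank}\,dF=r\}$ are not manifolds, and no Whitney-type stratification theorem applies to an arbitrary $C^{2n+1}$ map. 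Even granting local constancy of $T$ on $C^1$ families, $F^{-1}(0)$ need not be covered by countably many such families, so the argument still would not close.

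The paper's proof avoids both problems by turning the actions into \emph{critical values of a single finite-dimensional function}, which is exactly the situation Sard's theorem is designed for. It takes $\overline{F}=f\circ (j_D)^{\alpha}$ with $1<\alpha<2$ (the exponent matters: with your $2$-homogeneous $H$ the action functional takes the value $A(x)-\int_0^1H(x)=T-T=0$ at every solution, so no information about $T$ survives in the critical values), observes that every critical point of $\Phi_{\overline{F}}$ has the form $t\mapsto\phi^t(z)$ with $\phi^1(z)=\Psi z$, and composes $\Phi_{\overline{F}}|_{C^1_\Psi}$ with the $C^{2n}$ map $\Omega\colon\mathbb{R}^{2n}\to C^1_\Psi$, $\Omega(z)=g(\cdot)\phi^{\cdot}(z)+(1-g(\cdot))\phi^{\cdot-1}(\Psi z)$, to get a $C^{2n}$ function on $\mathbb{R}^{2n}$ whose critical values contain $(\tfrac{\alpha}{2}-1)(\tfrac{2}{\alpha}A(x))^{\alpha/(\alpha-2)}$ for all the characteristics in question; classical Sard at the threshold $k\ge 2n$ (whence the $C^{2n+2}$ hypothesis) then gives nowhere density. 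To salvage your route you would have to prove the local constancy of $T$ along the solution set and still deal with the possibly pathological structure of $F^{-1}(0)$, at which point you would essentially be reconstructing the paper's reduction.
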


{This lemma shows that the set of actions of $\Psi$-characteristics on a suitable smooth convex hypersurface has no interior point,
which is crucial for the proof of (\ref{e:action-capacity1}) in Theorem~\ref{th:convex}.
In the proof of \cite[Proposition~4]{HoZe90} (the case $\Psi=I_{2n}$ of Theorem~\ref{th:convex})
the authors chose a smooth strictly convex surface $\tilde{\mathcal{S}}$ near $\mathcal{S}$ so that the set of actions of periodic orbits on $\tilde{\mathcal{S}}$
is discrete (\cite[page 422]{HoZe90}). We do not know whether there exists a similar result in our case.
For the action spectrum of a Hamiltonian map with compact support there exists a similar result,
Proposition~8 on the page 152 of \cite{HoZe94}, which  was proved by  Hofer and Zehnder  in \cite[pages 153-154]{HoZe94}
with an idea due to  Sikorav \cite{Sik90}. In order to avoid the arguments of smoothness for the action
functional on $H^{\frac{1}{2}}(S^1,\R^{2n})$ as given in \cite[Appendix~3]{HoZe94}
we shall use new techniques to discuss the smoothness of the action functional on a suitable subspace in $E^{\frac{1}{2}}$.}

 Fix $1<\alpha<2$. Since $\mathcal{S}$ is of class $C^{2n+2}$,
 the Minkowski functional $j_D: \mathbb{R}^{2n}\rightarrow\mathbb{R}$
 is $C^{2n+2}$ in $\mathbb{R}^{2n}\setminus\{0\}$ so that the  $C^1$ Hamiltonian function
 $$
 F: \mathbb{R}^{2n}\rightarrow \mathbb{R},\;z\mapsto (j_D(z))^{\alpha}
$$
is also $C^{2n+2}$ in $\mathbb{R}^{2n}\setminus\{0\}$.
To clarify the elements in the set $\Sigma^\Psi_S$, we only need to consider the action of $\Psi$-characteristic $x:[0,T^\ast]\rightarrow \mathcal{S}$ that satisfies
\begin{equation}\label{period1}
\dot{x}=J\nabla F(x)\quad\hbox{and}\quad x(T^\ast)=\Psi x(0).
\end{equation}
Clearly such a $\Psi$-characteristic $x$ has the action
\begin{equation}\label{period2}
A(x)=\alpha T^\ast/2.
\end{equation}
We only need to prove for an arbitrarily fixed $\sigma=\alpha T^\ast/2\in\Sigma^\Psi_S$
that $\Sigma^\Psi_S\cap(\sigma-\epsilon,\sigma+\epsilon)$ {is a nowhere dense set}
for some sufficiently small positive number $\epsilon$. To this end,
let us choose $0<\varepsilon_1<\varepsilon_2$ such that
    $B^{2n}({\varepsilon_1})\varsubsetneq B^{2n}({\varepsilon_2})\varsubsetneq D$
    and
    $$
    \max_{z\in B^{2n}({\varepsilon_2})}F(z)< \left(\frac{2(\sigma+\epsilon)}{\alpha}\right)^{\frac{\alpha}{\alpha-2}}.
    $$
    Take a smooth function $f:\mathbb{R}^{2n}\rightarrow [0,1]$ such that
   $$
   0\leq f\leq 1,\quad
   f|_{B_{\varepsilon_1}}=0 \quad\text{and}\quad f|_{B_{\varepsilon_2}^{c}}=1
   $$
   and define a Hamiltonian $\overline{F}:\mathbb{R}^{2n}\rightarrow \mathbb{R}$ by
   $$
   \overline{F}(z)=f(z)F(z)=f(z) (j_D(z))^{\alpha},\; \,\forall z\in \mathbb{R}^{2n}
   $$
   so that $\overline{F}\in C^{2n+2}(\mathbb{R}^{2n},\mathbb{R})$.
 If $x\in C^{1}([0,T],\mathbb{R}^{2n})$ satisfies
  \begin{equation}\label{period3}
\left\{
   \begin{array}{l}
   \dot{x}=J\nabla F(x),\quad F(x(t))\equiv 1,\\
    x(T)=\Psi x(0),\quad \frac{\alpha T}{2}\in (\sigma-\epsilon,\sigma+\epsilon)
   \end{array}
   \right.
\end{equation}
 it is easily computed  that
$$
y:[0,1]\rightarrow \mathbb{R}^{2n},\;
t\mapsto y(t)=T^{\frac{1}{\alpha-2}}x(tT)
$$
fulfils
$$
\dot{y}(t)=J\nabla F(y(t)),\quad y(1)=\Psi y(0)\quad\hbox{and}\quad F(y(t))=T^{\frac{\alpha}{\alpha-2}}
\geq  \left(\frac{2(\sigma+\epsilon)}{\alpha} \right)^{\frac{\alpha}{\alpha-2}}.
$$
Hence $y(t)\subset (B_{\varepsilon_2})^c$, $\forall t\in [0,1]$. Since
$\overline{F}=F$ on $(B_{\varepsilon_2})^c$, we have
$$
\dot{y}=J\nabla\overline{F}(y),\quad y(1)=\Psi y(0), \quad\text{and}\quad \overline{F}(y(t))=F(y(t))\;\forall t.
$$
Let $\mathbb{E}=E^{\frac{1}{2}}$ be defined by (\ref{espace}). Then $y$ is a critical point of the functional
$$
\Phi_{\overline{F}}:\mathbb{E}\rightarrow \mathbb{R},\;x\mapsto \frac{1}{2}\|x^+\|_{\mathbb{E}}^2-\frac{1}{2}\|x^-\|_{\mathbb{E}}^2-\int_0^{1}\overline{F}(x(t)) dt
$$
(which is well-defined since $\mathbb{E}$ embeds continuously into $L^2$ and so into $L^{\alpha}$ for $1<\alpha<2$),
and a direct computation yields
 \begin{eqnarray}\label{period4}
 \Phi_{\overline{F}}(y)&=&\frac{1}{2}\int_0^{1}\langle-J\dot{y},y\rangle-\int_0^1F(y(t))\nonumber\\
                       &=&\left(\frac{\alpha}{2}-1\right)F(y(t))\nonumber\\
                       &=&\left(\frac{\alpha}{2}-1\right)T^{\frac{\alpha}{\alpha-2}}.
 \end{eqnarray}
Note that all critical points of $\Phi_{\overline{F}}$ sit in the Banach space
$$
C^{2n+2}_\Psi:=\{z\in C^{2n+2}([0,1],\mathbb{R}^{2n})\,|\, z(1)=\Psi z(0)\}
$$
which is a subspace of $C^{2n+2}([0,1],\mathbb{R}^{2n})$. {In particular,}
$\Phi_{\overline{F}}|_{\mathbb{E}}$ and $\Phi_{\overline{F}}|_{C^1_\Psi}$
have the same critical value sets.

\begin{claim}\label{cl:smooth}
$\Phi_{\overline{F}}|_{C^1_\Psi}$ is of class $C^{2n+1}$.
\end{claim}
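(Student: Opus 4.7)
My plan is to split $\Phi_{\overline{F}}$ on $C^1_\Psi$ into a quadratic ``action'' part and a nonlinear Hamiltonian part, and treat them separately. For $x\in C^1_\Psi$ one is allowed to integrate by parts, and the computation already carried out in the Remark following Proposition~\ref{Lip} gives
\[
\tfrac12(\|x^+\|_E^2-\|x^-\|_E^2)=\tfrac12\int_0^1\langle-J\dot x(t),x(t)\rangle\,dt=:A(x),
\]
so on $C^1_\Psi$ the functional is $\Phi_{\overline{F}}(x)=A(x)-G(x)$ with $G(x)=\int_0^1 \overline{F}(x(t))\,dt$.

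Next I would observe that $A$ is the restriction to $C^1_\Psi$ of the continuous symmetric bilinear form
\[
B(x,y)=\tfrac12\int_0^1\bigl(\langle-J\dot x,y\rangle+\langle-J\dot y,x\rangle\bigr)dt
\]
on $C^1([0,1],\mathbb{R}^{2n})\times C^1([0,1],\mathbb{R}^{2n})$, so $A$ is of class $C^\infty$ as a functional on the closed subspace $C^1_\Psi$, with $dA(x)[h]=B(x,h)$ and all higher derivatives of order $\ge 3$ vanishing. This step is routine.

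The main work is to prove that $G:C^1_\Psi\to\mathbb{R}$ is of class $C^{2n+1}$. Since $C^1_\Psi\hookrightarrow C^0([0,1],\mathbb{R}^{2n})$ continuously, it suffices to show that the Nemytskii-type functional $\widehat G(x):=\int_0^1 \overline F(x(t))\,dt$ is $C^{2n+1}$ on $C^0([0,1],\mathbb{R}^{2n})$. I would proceed inductively: for each integer $1\le k\le 2n+1$, I claim
\[
D^k\widehat G(x)[h_1,\dots,h_k]=\int_0^1 d^k\overline F(x(t))\bigl(h_1(t),\dots,h_k(t)\bigr)dt,
\]
which is a bounded symmetric $k$-linear form on $C^0$ because $d^k\overline F$ is continuous and bounded on the compact set $x([0,1])$ dilated by any fixed neighborhood. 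The difference $\widehat G(x+h)-\widehat G(x)-\sum_{j=1}^k\tfrac{1}{j!}D^j\widehat G(x)[h]^{(j)}$ is controlled using the integral remainder for the Taylor expansion of $\overline F$ and $\overline F\in C^{2n+2}\subset C^{k+1}$, giving Fr\'echet differentiability at order $k$. Continuity of $x\mapsto D^k\widehat G(x)$ in the operator norm on $L^k_s(C^0,\mathbb{R})$ follows from uniform continuity of $d^k\overline F$ on the compact sets covered by the image $x_n([0,1])$ when $x_n\to x$ uniformly; this uses only $\overline F\in C^k$, which is certainly available for $k\le 2n+1$ since $\overline F\in C^{2n+2}$.

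The hard part, and the only genuinely delicate point, is verifying continuity of the top derivative $D^{2n+1}\widehat G$ together with the Taylor remainder estimate of matching order; this is where the extra derivative in $\overline F\in C^{2n+2}$ is used, and where some care is needed because $\overline F$ is not compactly supported but only grows like $|z|^\alpha$ with derivatives of decreasing growth. Combining the $C^\infty$-regularity of $A$ with the $C^{2n+1}$-regularity of $G|_{C^1_\Psi}$ yields the claim.
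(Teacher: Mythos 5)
Your argument is correct, but it establishes the crucial smoothness of the nonlinear term by a different mechanism than the paper. The paper first localizes to a ball $B(\hat x,R)\subset C^1_\Psi$ so that every curve in it takes values in a fixed compact ball $\bar B^{2n}(0,\hat R)$, and then invokes the Eells composition theorem (Proposition~\ref{prop:A.smooth}): since $\overline F\in C^{2n+2}=C^{1+(2n+1)}$, the map $x\mapsto \overline F\circ x$ from $C^1_\Psi([0,1],\bar B^{2n}(0,\hat R))$ into $C^1([0,1],\mathbb{R})$ is $C^{2n+1}$, integration is then a bounded linear functional, and the quadratic part is handled through the smooth (bounded linear) inclusion $C^1_\Psi\hookrightarrow E^{1/2}$. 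You instead rewrite the quadratic part as the classical action $A(x)=\tfrac12\int_0^1\langle -J\dot x,x\rangle\,dt$ — legitimate on $C^1_\Psi$ by the integration-by-parts identity recorded in the remark after Proposition~\ref{Lip} — which is a continuous quadratic form on $C^1$ and hence $C^\infty$, and you prove the $C^{2n+1}$-regularity of $x\mapsto\int_0^1\overline F(x(t))\,dt$ by a direct Nemytskii-operator computation on $C^0([0,1],\mathbb{R}^{2n})$. Your route is self-contained and in fact slightly stronger: targeting $C^0$ instead of $C^1$ loses no derivative, so the Nemytskii functional is even $C^{2n+2}$ if one uses uniform continuity of $d^{2n+2}\overline F$ on compacta rather than spending a derivative on the Taylor remainder, whereas the paper's appeal to Proposition~\ref{prop:A.smooth} with target $C^1$ gives exactly $C^{2n+1}$. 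The paper's route is shorter on the page but relies on the appendix and needs the compactness of the target ball for the composition theorem to apply. Finally, the growth of $\overline F$ at infinity that you flag as delicate is immaterial: all estimates are local in $C^0$, where the images of curves ranging over a bounded set lie in a fixed compact subset of $\mathbb{R}^{2n}$.
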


In order to prove this we need the following result  from the page 780 of \cite{Eell66}.

\begin{proposition}\label{prop:A.smooth}
Let $M$, $N$ and $P$ be finite-dimensional $C^\infty$-manifolds.
If both $M$ and $N$ are compact then the map ${\rm comp}:
C^{r+s}(N,P)\times C^r(M,N) \to C^r(M,P)$ given by ${\rm
comp}(f,g)=f\circ g$ is $C^s$. In particular, the evaluation map
$C^{r}(N,P)\times N \to P$ is $C^r$.
\end{proposition}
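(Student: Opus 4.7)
The plan is to reduce to a local Banach-space computation and then carry out induction on $s$. Since $M$ and $N$ are compact finite-dimensional $C^\infty$-manifolds, the function spaces $C^r(M,N)$, $C^{r+s}(N,P)$ and $C^r(M,P)$ carry canonical structures of $C^\infty$ Banach manifolds, modeled in a neighborhood of a point $f\in C^r(M,N)$ on $C^r(M, f^{\ast}TN)$ via a tubular-neighborhood/exponential chart $(\exp^N_f)_{\ast}\colon C^r(M,f^{\ast}TN)\to C^r(M,N)$. First I would pick a point $(f_0,g_0)\in C^{r+s}(N,P)\times C^r(M,N)$, work in such charts around $f_0$, $g_0$, $\operatorname{comp}(f_0,g_0)=f_0\circ g_0$, and use finite trivializing covers of $M$ and $N$ together with a partition of unity to reduce the entire question to the differentiability of the map
\[
\Phi\colon C^{r+s}(V,\mathbb{R}^\ell)\times C^r(U,V)\longrightarrow C^r(U,\mathbb{R}^\ell),\quad (\varphi,\gamma)\longmapsto \varphi\circ\gamma,
\]
where $U$ is a bounded open subset of $\mathbb{R}^m$ and $V$ is a bounded open subset of $\mathbb{R}^n$ whose closure lies in a fixed larger open set. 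Compactness of $M$ and $N$ is used exactly here: only finitely many such local composition maps have to be dealt with.

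Next I would compute the candidate first Fr\'echet derivative:
\[
D\Phi(\varphi,\gamma)[h,k]\;=\;h\circ\gamma+\bigl((D\varphi)\circ\gamma\bigr)\cdot k,
\]
and verify it. For this I use the finite-dimensional first-order Taylor expansion
\[
\varphi(\gamma(x)+k(x))-\varphi(\gamma(x))-D\varphi(\gamma(x))\,k(x)=\int_0^1\!\!\int_0^\tau D^2\varphi(\gamma(x)+\sigma k(x))\,k(x)^{\otimes 2}\,d\sigma\,d\tau,
\]
valid because $\varphi$ is at least $C^{r+s}\subset C^2$, and then differentiate the identity in $x$ up to order $r$ by the Fa\`a di Bruno/Leibniz formula. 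The key estimate is that, when one differentiates $\varphi\circ\gamma$ up to order $r$, each resulting summand is a polynomial in $D^{\alpha}\gamma$ with coefficients of the form $D^{|\alpha|}\varphi\circ\gamma$ with $|\alpha|\le r$; since $\varphi$ is $C^{r+s}$ these coefficients are at least $C^s$, and standard Arzel\`a--Ascoli/uniform-continuity arguments on the compact set $U$ give the desired $o(\lVert (h,k)\rVert_{C^{r+s}\oplus C^r})$ remainder. This establishes that $\Phi$ is at least $C^1$ and identifies $D\Phi$.

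Then I would induct on $s$. The formula for $D\Phi$ displays $D\Phi(\varphi,\gamma)$ as a sum of two compositions of the same type: the map $(\varphi,\gamma)\mapsto h\circ\gamma$ is linear in $h$ and is itself a composition $\Phi$ of regularity $C^{s-1}$ by the inductive hypothesis (with $r+s$ replaced by $r+(s-1)$ for the coefficient map, noting $h\in C^{r+s}$), and the map $(\varphi,\gamma)\mapsto (D\varphi)\circ\gamma$ is $\Phi$ applied to $D\varphi\in C^{r+s-1}$. In both cases the hypothesis on regularities $r+(s-1)$ is satisfied, so $D\Phi$ is $C^{s-1}$, whence $\Phi$ is $C^s$. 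Unwinding the charts shows that $\operatorname{comp}$ itself is $C^s$.

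The main obstacle I expect is bookkeeping: getting the estimate
\[
\bigl\lVert \varphi\circ(\gamma+k)-\varphi\circ\gamma-(D\varphi\circ\gamma)\cdot k\bigr\rVert_{C^r(U,\mathbb{R}^\ell)}=o(\lVert k\rVert_{C^r(U,\mathbb{R}^n)}),
\]
uniformly as $(\varphi,\gamma)$ varies in a small neighborhood, because the $C^r$ norm involves up to $r$ derivatives and the Leibniz/Fa\`a di Bruno expansion of each is a sum of many terms mixing $D^\alpha\varphi\circ\gamma$ with products of $D^\beta\gamma$ and $D^\beta k$. Controlling each of these requires combining uniform continuity of $D^{\alpha}\varphi$ on a compact neighborhood of $\gamma_0(U)\subset V$ with $C^0$-smallness of $k$ obtained from $C^r$-smallness by $\lVert k\rVert_{C^0}\le\lVert k\rVert_{C^r}$, plus the Banach-algebra property $\lVert uv\rVert_{C^r}\le C_r\lVert u\rVert_{C^r}\lVert v\rVert_{C^r}$ on compact $U$. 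Once this uniform estimate is in hand the argument runs cleanly, and the final evaluation assertion is obtained by taking $M$ to be a point (so that $C^r(M,N)\cong N$, $C^r(M,P)\cong P$) and specializing $r\mapsto 0$, $s\mapsto r$ in the composition statement, yielding that $C^r(N,P)\times N\to P$, $(f,x)\mapsto f(x)$, is $C^r$.
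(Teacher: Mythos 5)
The paper does not actually prove this proposition: it is quoted from page~780 of Eells' \emph{A setting for global analysis} and the reader is referred to Franks for a detailed proof, so there is no internal argument to compare yours against. What you have written is, in outline, exactly the classical $\Omega$-lemma proof that those references contain: localize via exponential charts (using compactness of $M$ and $N$ to get finitely many Euclidean models and a Banach-manifold structure), identify the candidate derivative $D\Phi(\varphi,\gamma)[h,k]=h\circ\gamma+(D\varphi\circ\gamma)\cdot k$, verify it by a Taylor remainder estimate in the $C^r$ norm, and induct on $s$ using the fact that both summands of $D\Phi$ are again composition maps of the same type with one fewer spare derivative. The reduction of the evaluation map to the case where $M$ is a point is also the standard deduction. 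So the approach is sound and is the expected one.

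Two points deserve explicit care if you write this out in full. First, the displayed second-order integral remainder requires $\varphi\in C^2$ \emph{after} taking $r$ derivatives in $x$, i.e.\ it really serves the cases $s\ge 2$; for the base of the induction ($s=0$, continuity of composition) and for $s=1$ you must instead use the first-order remainder $\int_0^1\bigl[D\varphi(\gamma+\sigma k)-D\varphi(\gamma)\bigr]k\,d\sigma$ together with uniform continuity of $D^{\alpha}\varphi$ ($|\alpha|\le r+1$) on a compact neighborhood of $\gamma_0(U)$ — you gesture at this but the induction should start there. Second, concluding that $\Phi$ is $C^s$ requires $D\Phi$ to be $C^{s-1}$ as a map into the Banach space $L\bigl(C^{r+s}\times C^r,\,C^r\bigr)$ with the operator norm; joint $C^{s-1}$-smoothness of $(h,\gamma)\mapsto h\circ\gamma$ does not formally imply this for the curried map $\gamma\mapsto(h\mapsto h\circ\gamma)$. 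The needed uniformity over the unit ball $\|h\|_{C^{r+s}}\le 1$ is available precisely because $h$ carries the extra $s\ge 1$ derivatives (so all difference-quotient estimates are controlled by $\|h\|_{C^{r+1}}\le\|h\|_{C^{r+s}}$), but this step should be stated and estimated explicitly rather than absorbed into the induction.
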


\begin{proof}[\it Proof of Claim~\ref{cl:smooth}]
It suffices to prove that $\Phi_{\overline{F}}$ is $C^{2n+1}$ in any ball
$B(\hat{x}, R)\subset C^1_\Psi$ which is centered at $\hat{x}\in C^1_\Psi$ and has
radius $R$. {Observe} that there exists $\hat{R}>0$ such that
$$
x([0,1])\subset B^{2n}(0, \hat{R})\quad\forall x\in B(\hat{x},R).
$$
 Hence $B(\hat{x},R)$ is contained in
$$
C^1_\Psi([0,1], B^{2n}(0, \hat{R})):=\{z\in C^1([0,1],B^{2n}(0, \hat{R}))\,|\, z(1)=\Psi z(0)\}
$$
which is an open subset of $C^1_\Psi$. Let $\bar{B}^{2n}(0, \hat{R})$ denote
the closure of ${B}^{2n}(0, \hat{R})$. By  Proposition~\ref{prop:A.smooth} and the $C^{2n+2}$-smoothness of $\overline{F}$, the map
$$
C^1_\Psi([0,1], \bar{B}^{2n}(0, \hat{R}))\to C^1([0,1], \mathbb{R}),\;x\mapsto\overline{F}\circ x
$$
is $C^{2n+1}$ and so is
$$
C^1_\Psi([0,1], \bar{B}^{2n}(0, \hat{R}))\ni x\mapsto
\int_0^{1}\overline{F}(x(t)) dt\in\mathbb{R}.
$$
It follows that
$$
C^1_\Psi([0,1], \bar{B}^{2n}(0, \hat{R}))\ni x\mapsto \Phi_{\overline{F}}(x)=\frac{1}{2}\|x^+\|_{\mathbb{E}}^2-\frac{1}{2}\|x^-\|^2_{\mathbb{E}}-\int_0^{1}\overline{F}(x(t)) dt
$$
is $C^{2n+1}$ since $C^1_\Psi([0,1], \bar{B}^{2n}(0, \hat{R}))\hookrightarrow E^{1/2}$ is smooth.
This implies the expected claim.
\end{proof}

Take a smooth function $g:[0,1]\rightarrow [0,1]$ such that $g$ equals $1$ (resp. $0$)
near $0$ (resp. $1$). Denote by $\phi^t$ the flow of $X_{\overline{F}}$.
Then by the $C^{2n+1}$-smoothness of $X_{\overline{F}}$ we have a $C^{2n+1}$ map
$$
\psi:[0,1]\times\mathbb{R}^{2n}\rightarrow \mathbb{R}^{2n},\;(t,z)\mapsto g(t)\phi^t(z)+(1-g(t))\phi^{t-1}(\Psi z).
$$
Clearly $\psi (0,z)=\phi^0(z)=z$, $\psi(1,z)=\phi^0(\Psi z)=\Psi z$ and thus
$\psi(1,z)=\Psi\psi(0,z)$.
Moreover,  for each $z\in \mathbb{R}^{2n}$ satisfying $\phi^1(z)=\Psi z$, it holds that
\begin{eqnarray*}
\psi(t,z)&=&g(t)\phi^t(z)+(1-g(t))\phi^{t-1}(\Psi z)\\
&=&g(t)\phi^t(z)+(1-g(t))\phi^{t-1}(\phi^{1} (z))\\
&=&g(t)\phi^t(z)+(1-g(t))\phi^{t}(z)\\
&=&\phi^t(z),\quad\forall \;0\le t\le 1.
\end{eqnarray*}

Clearly, Proposition~\ref{prop:A.smooth}
implies  that the composition
$$
{\rm comp}:C^{r+s}(N\times M,P)\times C^r(M,N\times M) \to C^r(M,P),\; (f,g)\mapsto
f\circ g
$$
is $C^s$. Note that the  map $N\ni p\mapsto g_p\in C^r(M,N\times M)$
defined by $g_p(m)=(p,m)$ for $m\in M$ is smooth.
Each  map $\varphi\in C^{r+s}(N\times M,P)$ gives rise to a $C^s$ map
$$
N\to C^r(M,P),\; p\mapsto {\rm comp}(\varphi, g_p)=\varphi(p,\cdot).
$$
Applying this claim to $N=[0,1]$, $M=\bar{B}^{2n}(0,R)$ for any given $R>0$, $P=\mathbb{R}^{2n}$
and the $C^{2n+1}$ map
$$
{\varphi=\psi|_{[0,1]\times\bar{B}^{2n}(0,R)}}: [0,1]\times \bar{B}^{2n}(0,R),\; (t,z)\mapsto\psi(t,z)\in\mathbb{R}^{2n}
$$
 we deduce that $\bar{B}^{2n}(0,R)\ni z\mapsto \psi(\cdot,z)\in C^1_\Psi$
is $C^{2n}$ since $\psi(\cdot,z):[0,1]\rightarrow \mathbb{R}^{2n}$
sits in $C^1_\Psi$.  So
\begin{equation}\label{e:smooth1}
\Omega:\mathbb{R}^{2n}\rightarrow C^1_\Psi,\;z\mapsto\Omega(z)=\psi(\cdot,z)
\end{equation}
is $C^{2n}$.  (It is not hard to prove this directly!)
This and Claim~\ref{cl:smooth} show that the composition
\begin{equation}\label{e:smooth2}
\Phi_{\overline{F}}|_{C^1_\Psi}\circ\Omega:\mathbb{R}^{2n}\rightarrow \mathbb{R}
\end{equation}
is of class $C^{2n}$.
Note that every critical point $y$ of the functional
$\Phi_{\overline{F}}|_{C^1_\Psi}$  has the form $y(t)=\phi^t(z_y)$ for some $z_y\in\mathbb{R}^{2n}$ satisfying
$\phi^1(z_y)=\Psi z_y$ and thus $y=\Omega(z_y)$. Hence $z_y$ is a critical point of $\Phi_{\overline{F}}|_{C^1_\Psi}\circ\Omega$.
In particular,  the critical values of $\Phi_{\overline{F}}|_{C^1_\Psi}$ (and hence $\Phi_{\overline{F}}$) are contained in the set of critical values of $\Phi_{\overline{F}}|_{C^1_\Psi}\circ\Omega$ which is a nowhere dense set by Sard theorem.

Now since $\{\Phi_{\overline{F}}(y)\,|\,y\in{\rm Crit}(\Phi_{\overline{F}})\}$ is a nowhere dense set,
so is
$$
\{\Phi_{\overline{F}}(y)\,|\,y(t)=T^{\frac{1}{\alpha-2}}x(tT)\;\hbox{and}\;x\;\hbox{satisfies}\;(\ref{period3})
     \}.
     $$
By (\ref{period2}) and (\ref{period4}), this set is equal to
$$
\left\{\left(\frac{\alpha}{2}-1\right)\left(\frac{2}{\alpha}A(x)\right)^{\frac{\alpha}{\alpha-2}}
\;\Biggm|\;x\;\hbox{is a $\Psi$-characteristic on $\mathcal{S}$ and}\; A(x)\in (\sigma-\epsilon,\sigma+\epsilon)\right\}.
$$
Hence $\Sigma^\Psi_S\cap(\sigma-\epsilon,\sigma+\epsilon)=\{A(x)\,|\,x
\;\hbox{is a $\Psi$-characteristic on $\mathcal{S}$
and}\; A(x)\in (\sigma-\epsilon,\sigma+\epsilon)\}$
is a nowhere dense set. Then Lemma~\ref{nointerior} follows.

\subsection{Proof of (\ref{e:action-capacity1}) for smooth and strictly convex $D$}\label{sec:convex3}

{The proof is similar to that of  \cite[Propposition~4]{HoZe90} and \cite{HoZe94} with slight change.
We sketch the proof with some details omitted which  can be found in \cite{JinLu}.}

\noindent{\bf Step 1}.\quad{\it Prove}
 \begin{equation}\label{e:action-capacity5}
   c^\Psi_{\rm HZ}(D,\omega_0)\ge A(x^{\ast}).
   \end{equation}
For small $0<\epsilon, \delta<1/2$, pick  a smooth
 function $f: [0,1]\rightarrow\mathbb{R}$ such that
   \begin{eqnarray*}
     &f(t)=0,& \,\,\,   t\le\delta, \\
     & f(t)=A(x^{\ast})-\varepsilon  ,& \,\,\, 1-\delta\le t, \\
    & 0\leq f'(t)<A(x^{\ast}),& \,\,\, \delta<t<1-\delta.
   \end{eqnarray*}
Define $H(x)=f(j^2_D(x))$ for $x\in D$. Then
   $H\in \mathcal{H}^\Psi(D, \omega_0)$. Let us prove that every solution
   $x:[0, T]\to D$ of the boundary value problem
   \begin{equation}\label{e:action-capacity6}
        \dot{x}=J\nabla H(x)=f'(j_D^2(x))J\nabla j_D^2(x)\quad\hbox{and}\quad
     x(T)=\Psi x(0)
     \end{equation}
with $0<T\le 1$ is constant. By contradiction we assume that $x=x(t)$
is a nonconstant solution of (\ref{e:action-capacity6}). Then
$j_D(x(t))$ is equal to a nonzero constant and thus $x(t)\ne 0$
for each $t\in [0, T]$. Moreover, $f'(j_D^2(x(t)))\equiv a\in (0, A(x^\ast))$.
Since $\nabla j_D^2(\lambda z)=\lambda\nabla j_D^2(z)$ for all $(\lambda,z)\in\mathbb{R}_+\times\mathbb{R}^{2n}$, multiplying $x(t)$
by a suitable positive number we may assume that $x([0,T])\subset\mathcal{S}=\partial D$
and
\begin{equation}\label{e:action-capacity7}
        \dot{x}=aJ\nabla j_D^2(x)\quad\hbox{and}\quad
     x(T)=\Psi x(0).
     \end{equation}
Note that $\langle \nabla j_D^2(z), z\rangle=j_D^2(z)=1$ for any $z\in\mathcal{S}$. We deduce
from (\ref{e:action-capacity7}) that
$$
0<A(x)=aT\le a<A(x^\ast),
$$
which contradicts (\ref{e:action2}).
This shows that $H\in \mathcal{H}^\Psi(D, \omega_0)$ is admissible and hence
$c^\Psi_{\rm HZ}(D,\omega_0)\ge m(H)=A(x^{\ast})-\epsilon$.
Let $\epsilon\to 0$ and we get (\ref{e:action-capacity5}).

\noindent{\bf Step 2.}\quad {\it Prove}
 \begin{equation}\label{e:action-capacity8}
   c^\Psi_{\rm HZ}(D,\omega_0)\le A(x^{\ast}).
   \end{equation}
Let $H\in \mathcal{H}^\Psi(D, \omega_0)$ satisfy $m(H)>A(x^\ast)$.
We wish to prove that the boundary value problem
   \begin{equation}\label{e:action-capacity9}
        \dot{x}=J\nabla H(x)\quad\hbox{and}\quad
     x(1)=\Psi x(0)
     \end{equation}
has a nonconstant solution $x:[0,1]\to D$.
By Lemma~\ref{nointerior} we have a small number $\epsilon>0$ such that
$A(x^\ast)+\epsilon\notin \Sigma^\Psi_S$ and $m(H)>A(x^\ast)+\epsilon$. This means that
the boundary value problem
   \begin{equation}\label{e:action-capacity10}
        \dot{x}=(A(x^\ast)+\epsilon)J\nabla j_D^2(x)\quad\hbox{and}\quad
     x(1)=\Psi x(0)
     \end{equation}
admits only the trivial solution $x\equiv 0$. (Otherwise, we have
$x(t)\ne 0\;\forall t\in [0, 1]$ as above. Thus after
 multiplying $x(t)$ by a suitable positive number we may assume that
  $x([0,1])\subset\mathcal{S}=\partial D$, which leads to
  $A(x)=A(x^\ast)+\epsilon$.)
For a fixed number $\delta>0$ we take a smooth
 function $f: [1, \infty)\rightarrow\mathbb{R}$ such that
   \begin{eqnarray*}
     &f(t)\ge (A(x^\ast)+\epsilon)t,& \,\,\,   t\ge 1, \\
     &f(t)=(A(x^\ast)+\epsilon)t,& \,\,\,   t\;\hbox{large}, \\
     & f(t)=m(H),& \,\,\, 1\le t\le 1+\delta, \\
    & 0\leq f'(t)\le A(x^{\ast})+\epsilon,& \,\,\, t>1+\delta.
   \end{eqnarray*}
With this $f$ we get an extension of $H$ as the following
$$
 \overline{H}(z)=\left\{
   \begin{array}{l}
     H(z),\quad\hbox{for}\;z\in D, \\
     f(j_D^2(z)), \quad\hbox{for}\;z\notin D.
   \end{array}
   \right.
$$
Let $\mathbb{E}=E^{\frac{1}{2}}=\mathbb{E}^{-}\oplus \mathbb{E}^0 \oplus \mathbb{E}^{+}$ be as in
(\ref{e:spaceDecomposition}) and $\Phi_{\overline{H}}$ be as in (\ref{e:EH.1.1}), that is,
 \begin{equation}\label{e:action-capacity11}
\Phi_{\overline{H}}(x)=\frac{1}{2}\|x^+\|_{\mathbb{E}}^2-\frac{1}{2}\|x^-\|_{\mathbb{E}}^2-\int^1_0\overline{H}(x(t))dt.
\end{equation}
   {If $x$ is a solution of  $\dot{x}(t)=X_{\overline{H}}(x(t))$ satisfying  $x(1)=\Psi x(0)$ and  $\Phi_{\overline{H}}(x)>0$,
   then it is nonconstant, sits in $D$ completely, and thus
   is a solution of  $\dot{x}=X_H(x)$ on $D$ (\cite[Lemma~4.6]{JinLu}).
    Hence we only need to find a critical point of $\Phi_{\overline{H}}(x)$
    with positive critical value (see \cite[Lemma~4]{HoZe90}).}

{The fact that $A(x^\ast)+\epsilon\notin \Sigma^\Psi_S$ implies the following lemma.}
\begin{lemma}\label{lem:PS}
   If  a sequence $(x_k)\subset\mathbb{E}$ is such that $\nabla \Phi_{\overline{H}}(x_k)\rightarrow 0$ in $\mathbb{E}$, then it
   has a convergent subsequence in $\mathbb{E}$. In particular, $\Phi_{\overline{H}}$ satisfies the Palais-Smale condition.
\end{lemma}

\begin{proof}
If $(x_k)$ is bounded in $\mathbb{E}$, as in the proof of Proposition~\ref{prop:PSmale} we deduce that
$(x_k)$ has a convergent subsequence. Without loss of generality, we assume
   $\lim_{k\rightarrow +\infty }\|x_k\|_{\mathbb{E}}=+\infty$.
         Let $y_k=\frac{x_k}{\|x_k\|_{\mathbb{E}}}$. Then $\|y_k\|_{\mathbb{E}}=1$ and satisfies
   \begin{equation}\label{e:4.6.1}
  y_k^+-y_k^--\frac{1}{\|x_k\|_{\mathbb{E}}}\nabla \mathfrak{b}(x_k)= y_k^+-y_k^--j^{\ast}\left(\frac{ \nabla \overline{H}(x_k)}{\|x_k\|_{\mathbb{E}}}\right)\rightarrow 0\quad\hbox{in}\quad \mathbb{E}.
   \end{equation}
By the construction of $\overline{H}$ and the proof of Proposition~\ref{prop:PSmale},
 passing to a subsequence (if necessary) we  may assume that  $y_{k}\rightarrow  y$ in $\mathbb{E}$ so that $\|y\|_{\mathbb{E}}=1$.
 Since $\overline{H}(z)=Q(z):=(A(x^\ast)+\epsilon)j_D^2(z)$ for $|z|$  large enough, arguing as in the proof of Proposition~\ref{prop:PSmale} we get
 $$
 \left\|\frac{\nabla \overline{H}(x_k)}{\|x_k\|_{\mathbb{E}}}-\nabla Q(y)\right\|_{L^2}\rightarrow 0
 $$
 so that (\ref{e:4.6.1}) becomes
 $$
 y^+-y^--j^{\ast}\nabla Q(y)=0.
 $$
  Hence $y$ satisfies  the boundary value problem
   (\ref{e:action-capacity10}) and thus $y=0$ because
   $A(x^\ast)+\epsilon\notin \Sigma^\Psi_S$.
   This contradicts the fact $\|y\|_{\mathbb{E}}=1$. That is,  $(x_k)$ must be bounded in $\mathbb{E}$.
\end{proof}

The $\Psi$-characteristic $x^\ast$ with minimal action in (\ref{e:action2}) can be
reparametrization as $x_0:[0,1]\to\mathbb{R}^{2n}$ such that (cf. \cite[\S4.3]{JinLu})
\begin{equation}\label{e:action-capacity12}
\left\{
   \begin{array}{l}
     \dot{x_0}=A(x^\ast)J\nabla j_D^2(x_0),\\
      x_0(1)=\Psi x_0(0),\;  A(x_0)=A(x^\ast),\\
    j_D(x_0(t))\equiv 1,\;\hbox{i.e.,}\; x_0([0,1])\subset\mathcal{S}.
   \end{array}
   \right.
\end{equation}
Denote by $x_0^+$ the projections of $x_0$ onto $\mathbb{E}^+$.
 Then $x_0^+\ne 0$. (Otherwise,
 a contradiction occurs because $0<A(x^\ast)=A(x_0)=-\frac{1}{2}\|x_0^-\|_\mathbb{E}^2$.)
 Following \cite{HoZe90}
 we define for $s>0$ and $\tau>0$
 \begin{eqnarray*}
 &&W_s:= \mathbb{E}^-\oplus \mathbb{E}^0\oplus sx_0^+,\\
 &&\Sigma_\tau:=\{x^-+x^0+sx_0^+\,|\,0\le s\le\tau,\;\|x^-+x^0\|_\mathbb{E}\le\tau\}.
 \end{eqnarray*}
Let $\partial\Sigma(\tau)$ denote the boundary of $\Sigma_\tau$ in $\mathbb{E}^-\oplus \mathbb{E}^0\oplus\mathbb{R}x_0^+$. Then
 \begin{equation}\label{boundary}
  \partial\Sigma_\tau=\{x=x^-+x^0 +sx_0^+\in\Sigma_\tau\,|\,\|x^-+x^0\|_{\mathbb{E}}=\tau \;\text{or}\; s=0 \;\text{or} \; s=\tau \}.
  \end{equation}
Repeating the proofs of Lemmas~5,~6 in \cite{HoZe90} leads to
\begin{lemma}\label{lem:HZ5}
  There exists a constant $C>0$ such that for any $s\ge 0$,
  $$
  \Phi_{\overline{H}}(x)\le-\epsilon\int^1_0j_D^2(x(t))dt+C,\quad\forall x\in W_s.
  $$
\end{lemma}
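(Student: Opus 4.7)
The plan is to decouple the nonlinear term $\int_0^1\overline{H}(x(t))\,dt$ from the quadratic piece of $\Phi_{\overline{H}}$ via a global pointwise lower bound on $\overline{H}$, then exploit the fact that $x_0$ satisfies $\dot{x}_0=A(x^\ast)J\nabla j_D^2(x_0)$, together with the convexity and $2$-homogeneity of $j_D^2$, to produce an exact cancellation via completing the square.

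First I would establish the uniform estimate
$$\overline{H}(z)\ge (A(x^\ast)+\epsilon)\,j_D^2(z)-C_0,\qquad\forall z\in\mathbb{R}^{2n},$$
with $C_0=A(x^\ast)+\epsilon$. Inside $D$ this follows from $\overline{H}=H\ge 0$ combined with $j_D^2\le 1$; outside $D$ it follows from the defining property $f(t)\ge (A(x^\ast)+\epsilon)t$ for $t\ge 1$ built into $\overline{H}$. Feeding this into the definition of $\Phi_{\overline{H}}$ reduces the claim to the bound
$$\tfrac12\|x^+\|_E^2-\tfrac12\|x^-\|_E^2-A(x^\ast)\int_0^1 j_D^2(x(t))\,dt\le C'$$
for some constant $C'$ independent of $x\in W_s$ and $s\ge 0$.

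For the latter I would write $x=y+sx_0$ with $y=y^-+y^0\in E^-\oplus E^0$ (so that $x^+=sx_0^+$ and $x^-=y^-+sx_0^-$), and apply the convex subgradient inequality for $j_D^2$ at the point $sx_0(t)$:
$$j_D^2(x(t))\ge j_D^2(sx_0(t))+\langle\nabla j_D^2(sx_0(t)),\,x(t)-sx_0(t)\rangle.$$
Using $2$-homogeneity of $j_D^2$, the identity $\nabla j_D^2(sx_0(t))=(s/A(x^\ast))(-J\dot{x}_0(t))$ (which is the Hamiltonian equation (\ref{e:action-capacity12}) satisfied by $x_0$), the normalisation $j_D^2(x_0(t))\equiv 1$ together with Euler's relation $\langle\nabla j_D^2(x_0),x_0\rangle\equiv 2$, and integrating over $[0,1]$, one arrives at
$$A(x^\ast)\int_0^1 j_D^2(x(t))\,dt\ge -s^2A(x^\ast)+s\langle-J\dot{x}_0,x\rangle_{L^2}.$$

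The final step evaluates $\langle-J\dot{x}_0,x\rangle_{L^2}=\langle x_0^+,x^+\rangle_E-\langle x_0^-,x^-\rangle_E$ in the orthogonal splitting (\ref{e:spaceDecomposition}); substituting $x^+=sx_0^+$, $x^-=y^-+sx_0^-$ and using $2A(x^\ast)=\|x_0^+\|_E^2-\|x_0^-\|_E^2$ converts the quantity we are trying to control into
$$\tfrac{s^2}{2}\|x_0^-\|_E^2-\tfrac12\|y^-+sx_0^-\|_E^2+s\langle x_0^-,y^-\rangle_E=-\tfrac12\|y^-\|_E^2\le 0.$$
The main subtlety is precisely this bookkeeping: the cross term $s\langle x_0^-,y^-\rangle_E$ must cancel exactly against the quadratic contributions, and this happens only because $x_0$ is a genuine minimiser of the dual problem (equivalently, $A(x_0)=A(x^\ast)$) rather than an arbitrary test path. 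The lemma then holds with $C=C_0=A(x^\ast)+\epsilon$, uniformly in $s\ge 0$.
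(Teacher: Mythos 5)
Your proof is correct, and it follows the same route the paper implicitly takes: the paper gives no explicit argument here, merely stating that ``repeating the proofs of Lemmas~5,~6 in [HoZe90]'' yields the claim, and your argument is precisely the expected adaptation of that Hofer--Zehnder argument to the $\Psi$-boundary condition. The two key steps --- (1) the pointwise lower bound $\overline{H}(z)\ge(A(x^\ast)+\epsilon)j_D^2(z)-C_0$, valid because $\overline{H}\ge 0$ on $\overline D$ while $j_D^2\le 1$ there and $f(t)\ge(A(x^\ast)+\epsilon)t$ off $D$; and (2) the convexity subgradient estimate at $sx_0(t)$, decoded via $-J\dot{x}_0=A(x^\ast)\nabla j_D^2(x_0)$, $2$-homogeneity, and Euler's relation $\langle\nabla j_D^2(x_0),x_0\rangle=2$ --- are exactly the mechanism that makes the quadratic terms cancel. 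I checked the bookkeeping: after inserting $x^+=sx_0^+$, $x^-=y^-+sx_0^-$ and $2A(x^\ast)=\|x_0^+\|_E^2-\|x_0^-\|_E^2$, the residual is indeed $-\tfrac12\|y^-\|_E^2\le 0$, so the constant $C=A(x^\ast)+\epsilon$ works uniformly in $s\ge 0$. The Fourier identity $\langle -J\dot{x}_0,x\rangle_{L^2}=\langle x_0^+,x^+\rangle_E-\langle x_0^-,x^-\rangle_E$ is justified for $x\in E$ by the expansion argument in the Remark following (\ref{e:aaction}) and density, since $x_0$ is $C^1$ with $x_0(1)=\Psi x_0(0)$.
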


\begin{lemma}\label{lem:HZ6}
    $\Phi_{\overline{H}}|\partial\Sigma_\tau\le 0$ if $\tau>0$ is sufficiently large.
\end{lemma}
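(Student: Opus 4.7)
The plan is to split $\partial\Sigma_\tau$ into its three natural faces
$F_0=\{s=0\}$, $F_\tau=\{s=\tau\}$, and $F_L=\{\|x^-+x^0\|_E=\tau,\ 0\le s\le\tau\}$
and to estimate $\Phi_{\overline{H}}$ separately on each, combining Lemma~\ref{lem:HZ5} with the elementary bound that follows from $\overline{H}\ge 0$, namely
\[
\Phi_{\overline{H}}(x)\le \tfrac{s^2}{2}\|x_0^+\|_E^2-\tfrac{1}{2}\|x^-\|_E^2,\qquad x=x^-+x^0+sx_0^+.
\]

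On $F_0$ the claim is immediate: for $x=x^-+x^0$ one has $\Phi_{\overline{H}}(x)=-\tfrac12\|x^-\|_E^2-\int_0^1\overline{H}(x(t))\,dt\le 0$, independently of $\tau$. On $F_\tau$ I will apply Lemma~\ref{lem:HZ5} together with the pointwise lower bound $j_D^2(z)\ge|z|^2/R_1$ from (\ref{e:dualestimate0}) and the $L^2$-orthogonality of the splitting $x=x^-+x^0+\tau x_0^+$ (which holds because the eigenbasis $\{e_k\}\cup\{e_0^i\}$ is $L^2$-orthonormal) to obtain
\[
\Phi_{\overline{H}}(x)\le -\tfrac{\epsilon}{R_1}\|x\|_{L^2}^2+C\le -\tfrac{\epsilon\tau^2}{R_1}\|x_0^+\|_{L^2}^2+C,
\]
which is $\le 0$ for $\tau$ large because $x_0^+\ne 0$ forces $\|x_0^+\|_{L^2}>0$.

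The lateral face $F_L$ is the main obstacle: the elementary bound alone is useless (the term $\tfrac{s^2}{2}\|x_0^+\|_E^2$ can dominate when $|x^0|$ is close to $\tau$), and Lemma~\ref{lem:HZ5} alone is also insufficient (a large value of $\|x^-\|_E$ does not force $\|x^-\|_{L^2}$ to be large, since high-frequency modes carry large $E$-norm with small $L^2$-norm). My plan is to take a convex combination with a weight $\lambda\in(0,1)$ of the two estimates: after discarding the harmless nonnegative term $(1-\lambda)\tfrac{\epsilon}{R_1}\|x^-\|_{L^2}^2$ one is left with
\[
\Phi_{\overline{H}}(x)\le s^2\Bigl[\tfrac{\lambda}{2}\|x_0^+\|_E^2-\tfrac{(1-\lambda)\epsilon}{R_1}\|x_0^+\|_{L^2}^2\Bigr]-\tfrac{\lambda}{2}\|x^-\|_E^2-\tfrac{(1-\lambda)\epsilon}{R_1}|x^0|^2+(1-\lambda)C.
\]
Choosing $\lambda\in(0,1)$ small enough so that the bracketed coefficient of $s^2$ is $\le 0$ (possible since $\|x_0^+\|_{L^2}>0$), and using $\|x^0\|_E=|x^0|$ together with $\|x^-\|_E^2+|x^0|^2=\tau^2$ on $F_L$, the right-hand side becomes $\le -\delta\tau^2+C'$ for some $\delta=\delta(\lambda,\epsilon,R_1)>0$, hence $\le 0$ for $\tau$ large.

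The conceptual point, and the one to highlight in the writeup, is that the two estimates are complementary: the elementary bound controls the $\|x^-\|_E^2$-direction that Lemma~\ref{lem:HZ5} misses, while Lemma~\ref{lem:HZ5} controls the $|x^0|^2$- and $s^2$-directions that the elementary bound misses. Averaging them produces a negative-definite quadratic form in all three directions simultaneously, after which the radius $\tau$ of the box drives $\Phi_{\overline{H}}$ uniformly to $-\infty$ on $F_\tau\cup F_L$, completing the proof.
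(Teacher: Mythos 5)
Your proof is correct and follows essentially the same route as the Hofer--Zehnder argument the paper invokes (the paper gives no details beyond citing Lemmas 5 and 6 of \cite{HoZe90}): on the face $s=0$ one uses $\overline{H}\ge 0$, and on the remaining faces one keeps all three negative-definite directions $s^2$, $\|x^-\|_E^2$ and $|x^0|^2$ at once. Your convex-combination device is the right repair for the fact that Lemma~\ref{lem:HZ5}, as stated, has already discarded the $-\tfrac{1}{2}\|x^-\|_E^2$ term; reinstating it via $\Phi_{\overline{H}}\le\mathfrak{a}$ is exactly what is needed on the lateral face, and the remaining ingredients ($L^2$-orthogonality of the splitting, $j_D^2\ge|\cdot|^2/R_1$, and $\|x_0^+\|_{L^2}>0$) are all valid.
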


Arguing as in the proof of Lemma~9, Lemma~10 in Chapter~3 of \cite{HoZe94} we get
 \begin{lemma}\label{lem:HZ7}
 For $z_0\in {\rm Fix}(\Psi)\cap H^{-1}(0)$,
there exist constants $\alpha>0$ and $\beta>0$ such that
  $$
  \Phi_{\overline{H}}|\Gamma_\alpha\geq\beta>0,
  $$
  where $\Gamma_\alpha=\{z_0+x\,|\,x\in \mathbb{E}^+\,\&\,\|x\|_{\mathbb{E}}=\alpha\}$.
  \end{lemma}

Let $\phi^t$ be the negative gradient flow of $\Phi_{\overline{H}}$. As in \cite[pages 95-97]{HoZe94} the following lemma
can be proved  by the standard topological degree method (cf. \cite[Lemma~4.10]{JinLu}).

\begin{lemma}\label{positive+}
$\phi^t(\Sigma_\tau)\cap\Gamma_\alpha\neq \emptyset, \,\forall t\geq 0$.
 \end{lemma}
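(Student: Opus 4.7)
The plan is to run a linking argument of Rabinowitz type, detecting the intersection $\phi^t(\Sigma_\tau)\cap\Gamma_\alpha$ via a Leray--Schauder degree that is preserved under the homotopy $t\mapsto\phi^t$. The essential input is the structural form of $\phi^t$ given by Proposition~\ref{flow}, namely
\[
\phi^t(y)=e^t y^- + y^0 + e^{-t} y^+ + K(t,y),\qquad K\text{ compact on bounded sets,}
\]
together with the energy bounds from Lemmas~\ref{lem:HZ6} and~\ref{lem:HZ7} and the fact that $\phi^t$ is the negative gradient flow of $\Phi_{\overline H}$, hence $\Phi_{\overline H}\circ\phi^t$ is monotone non-increasing in $t$.

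First I would set up the detecting map. After identifying $E^-\oplus E^0\oplus\mathbb{R}x_0^+\cong E^-\oplus E^0\oplus\mathbb{R}$ via $(y^-,y^0,sx_0^+)\mapsto(y^-,y^0,s)$, define
\[
\mathcal{F}_t:\overline{\Sigma_\tau}\longrightarrow E^-\oplus E^0\oplus\mathbb{R},\qquad
\mathcal{F}_t(y)=\bigl(P^-\phi^t(y),\;P^0\phi^t(y)-z_0,\;\|P^+\phi^t(y)\|_E-\alpha\bigr).
\]
A zero of $\mathcal{F}_t$ in $\Sigma_\tau$ is exactly a point $y\in\Sigma_\tau$ with $\phi^t(y)\in\Gamma_\alpha$, so the lemma reduces to producing such a zero for every $t\ge 0$.

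Second, I would check the boundary nonvanishing condition $\mathcal{F}_t|_{\partial\Sigma_\tau}\neq 0$ for all $t\ge 0$. By Lemma~\ref{lem:HZ6} (applied at the chosen large $\tau$), $\Phi_{\overline H}|_{\partial\Sigma_\tau}\le 0$; by Lemma~\ref{lem:HZ7}, $\Phi_{\overline H}|_{\Gamma_\alpha}\ge\beta>0$; and since $\phi^t$ is the negative gradient flow, $\Phi_{\overline H}(\phi^t(y))\le\Phi_{\overline H}(y)\le 0<\beta$ for $y\in\partial\Sigma_\tau$ and $t\ge 0$. Thus $\phi^t(y)\notin\Gamma_\alpha$ on the boundary, i.e.\ $\mathcal{F}_t(y)\neq 0$.

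Third, I would compute $\deg(\mathcal{F}_t,\Sigma_\tau,0)$ by homotoping the parameter via $\sigma\in[0,1]\mapsto\mathcal{F}_{\sigma t}$. At $\sigma=0$, $\phi^0=\mathrm{id}$ and $K(0,\cdot)\equiv 0$, so $\mathcal{F}_0(y^-,y^0,s)=(y^-,\,y^0-z_0,\,s\|x_0^+\|_E-\alpha)$ has the single zero $y_0=z_0+(\alpha/\|x_0^+\|_E)x_0^+$, which lies in the interior of $\Sigma_\tau$ once $\tau$ is large (as already arranged); its linearization is a diagonal isomorphism, so the local degree is $\pm 1$. By Step~2 the boundary condition holds uniformly for $\sigma\in[0,1]$, so homotopy invariance of Leray--Schauder degree yields $\deg(\mathcal{F}_t,\Sigma_\tau,0)\neq 0$ and hence a zero in $\Sigma_\tau$, proving the claim.

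The main obstacle is that $\mathcal{F}_t$ is not literally of the form ``identity minus compact'': its principal linear part
\[
L_t(y^-,y^0,s)=\bigl(e^t y^-,\,y^0,\,e^{-t}s\|x_0^+\|_E\bigr)
\]
stretches $E^-$ by $e^t$, which is a linear homeomorphism but is \emph{not} a compact perturbation of the identity when $\dim E^-=\infty$. Accordingly, I would frame the degree via the Leray--Schauder theory for compact perturbations of invertible linear maps (equivalently, pre-compose with $L_t^{-1}$ so that $\mathcal{F}_t$ becomes $\mathrm{id}-C_t$ with $C_t$ compact, noting that the real-valued ``norm'' component is automatically compact because $\mathbb{R}$ is finite-dimensional). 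Verifying that the entire family $\{L_{\sigma t}^{-1}\mathcal{F}_{\sigma t}\}_{\sigma\in[0,1]}$ stays within the admissible class and that the above boundary estimate survives this renormalisation is the delicate technical point, but it reduces to continuity of $(\sigma,y)\mapsto K(\sigma t,y)$ and the fact that $L_{\sigma t}^{-1}$ depends continuously on $\sigma$.
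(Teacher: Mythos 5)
Your proposal is correct and follows essentially the same route as the paper: reduce the intersection to a zero of a map built from the flow representation $\phi^t(y)=e^ty^-+y^0+e^{-t}y^++K(t,y)$, rule out boundary zeros of $\partial\Sigma_\tau$ via Lemma~\ref{lem:HZ6}, Lemma~\ref{lem:HZ7} and the monotonicity of $\Phi_{\overline H}$ along the negative gradient flow, and conclude by homotopy invariance of the Leray--Schauder degree back to $t=0$; the renormalisation by $L_t^{-1}$ that you flag as the delicate point is exactly what the paper does by multiplying the $E^-$-equation by $e^{-t}$ in its map $B(t,\cdot)$. The only cosmetic difference is that at $t=0$ the paper runs an explicit second homotopy $L_\mu$ (which is where the conditions $\tau>|z_0|$ and $\tau>\alpha/\min\{1,\|x_0^+\|_E\}$ enter), whereas you evaluate the degree directly at the unique nondegenerate interior zero, which requires the same largeness of $\tau$.
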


Let $\mathcal{F}=\{\phi^t(\Sigma_\tau)| t\geq 0\}$ and define
$$
c(\Phi_{\overline{H}}, \mathcal{F}):=\inf_{t\geq 0}\sup_{x\in \phi^t(\Sigma_\tau)}\Phi_{\overline{H}}(x).
$$
Lemmas~\ref{lem:HZ7},~\ref{positive+} imply
 $$
 0<\beta\leq \inf _{x\in\Gamma_\alpha}\Phi_{\overline{H}}(x)\leq \sup_{x\in \phi^t(\Sigma_\tau)}\Phi_{\overline{H}}(x)\;
\forall t\geq 0,
$$
and hence $c(\Phi_{\overline{H}}, \mathcal{F})\geq\beta>0$.
On the other hand,  since $\Sigma_\tau$ is bounded and Proposition~\ref{Lip} implies that $\Phi_{\overline{H}}$
maps  bounded sets into bounded sets we arrive at
$$
c(\Phi_{\overline{H}}, \mathcal{F})\leq \sup_{x\in\Sigma_\tau}\Phi_{\overline{H}}(x)<\infty.
$$
 Using the Minimax Lemma on \cite[page 79]{HoZe94}, we get a critical point $x$ of $\Phi_{\overline{H}}$
 with $\Phi_{\overline{H}}(x)>0$ and (\ref{e:action-capacity1}) is proved.

\subsection{Completing the proof of Theorem~\ref{th:convex} for general case}\label{sec:convex4}

By Proposition~1.12 and Corollary~2.41 in \cite{Kr15}
we may choose two sequences of $C^\infty$ strictly convex domains with  boundaries,
 $(D^+_k)$ and $(D^-_k)$, such that
 \begin{description}
 \item[(i)] $D^-_1\subset D^-_2\subset\cdots\subset D$ and $\cup^\infty_{k=1}D^-_k=D$,
 \item[(ii)] $D^+_1\supseteq D^+_2\supseteq\cdots\supseteq D$ and $\cap^\infty_{k=1}D^+_k=D$,
 \item[(iii)] for any small neighborhood $O$ of $\partial D$ there exists an integer
 $N>0$ such that $\partial D^+_k\cup\partial D^-_k\subset O\;\forall k\ge N$.
 \end{description}
Denote by $j_D, j_{D^+_k}$ and $j_{D^-_k}$ the Minkowski  functionals of
 $D, D^+_k$ and $D^-_k$ respectively. Let $H=(j_D)^2, H^+_k=(j_{D^+_k})^2$ and $H^{-}_k=(j_{D^-_k})^2$
for each $k\in\mathbb{N}$. Their  Legendre transformations are
$H^\ast, H_k^{+\ast}$ and $H_k^{-\ast}$, $k=1,2,\cdots$. Denote by
$$
I(u)=\int^1_0H^\ast(-J\dot{u}),\quad I^+_k(u)=\int^1_0H_k^{+\ast}(-J\dot{u}),\quad I^-_k(u)=\int^1_0H_k^{-\ast}(-J\dot{u})
$$
for $u\in\mathcal{A}$. Note that (i) and (ii) imply
 \begin{description}
 \item[(iv)] $j_{D^-_1}\ge j_{D^-_2}\ge\cdots\ge j_D$ and so $H_1^{-\ast}\le H_2^{-\ast}\le\cdots\le H^\ast$,
 \item[(v)] $j_{D^+_1}\le j_{D^+_2}\le\cdots\le j_D$ and so $H_1^{+\ast}\ge H_2^{+\ast}\ge\cdots\ge H^\ast$.
 \end{description}
These lead to
\begin{equation}\label{e:action-capacity13}
I^+_1(u)\ge I^+_2(u)\ge\cdots\ge I(u)\ge\cdots\ge I^-_2(u)\ge I^-_1(u),\quad\forall u\in  \mathcal{A}.
\end{equation}
By the first three steps in Section~\ref{sec:convex1} these functionals attain their
minimums on $\mathcal{A}$. It easily follows from (\ref{e:action-capacity13}) that
\begin{equation}\label{e:action-capacity14}
\min_{\mathcal{A}}I^+_1\ge \min_{\mathcal{A}}I^+_2\ge\cdots\ge \min_{\mathcal{A}}I\ge\cdots\ge
\min_{\mathcal{A}}I^-_2\ge \min_{\mathcal{A}}I^-_1.
\end{equation}
Now (\ref{e:action2}) gives rise to
\begin{equation}\label{e:action-capacity15}
\min_{\mathcal{A}}I=\min\{A(x)>0\,|\,x\;\text{is a generalized}\;\Psi\hbox{-characteristic on}\;\mathcal{S}\}
\end{equation}
and (\ref{e:action2})-(\ref{e:action-capacity1}) yield
\begin{eqnarray}\label{e:action-capacity16}
{c}^\Psi_{\rm HZ}(D^+_k,\omega_0)=\min_{\mathcal{A}}I^+_k\quad\hbox{and}\quad
{c}^\Psi_{\rm HZ}(D^-_k,\omega_0)=\min_{\mathcal{A}}I^-_k
\end{eqnarray}
for each $k\in\mathbb{N}$. By this, (\ref{e:action-capacity14})
and the monotonicity of ${c}^\Psi_{\rm HZ}$ we get
$$
\begin{array}{ccccc}
 {c}^\Psi_{\rm HZ}(D^+_k,\omega_0)& \ge& {c}^\Psi_{\rm HZ}(D,\omega_0)&\ge & {c}^\Psi_{\rm HZ}(D^-_k,\omega_0)\\
 \parallel&  &&  &\parallel\\
\min_{\mathcal{A}}I^+_k& \ge&  \min_{\mathcal{A}}I&\ge &\min_{\mathcal{A}}I^-_k
\end{array}
$$
Observe that $\lim_{k\to\infty}c^\Psi_{\rm HZ}(D^+_k,\omega_0)=
 {c}^\Psi_{\rm HZ}(D,\omega_0)$ and $\lim_{k\to\infty}{c}^\Psi_{\rm HZ}(D^-_k,\omega_0)
={c}^\Psi_{\rm HZ}(D,\omega_0)$ by Proposition~\ref{MonComf}(iii).
Hence ${c}^\Psi_{\rm HZ}(D,\omega_0)=\min_{\mathcal{A}}I$ by the squeezing theorem in calculus.
The desired result follows from this and (\ref{e:action-capacity15}).

\begin{remark}\label{rem:twocapacity}
{\rm

For $\Psi\in{\rm Sp}(2n,\mathbb{R})$ and an open set $O$ in $(\mathbb{R}^{2n},\omega_0)$
containing the origin $0\in\mathbb{R}^{2n}$
let $\mathcal{H}^\Psi_0(O,\omega_0)$ consist of all $H\in\mathcal{H}^\Psi (O,\omega_0)$ which vanish near $0$,
and define
\begin{equation}\label{caplow}
\underline{c}^\Psi_{\rm HZ}(O,\omega_0)=\sup \{\max H\,|\, H\in \mathcal{H} ^{\Psi}_0(O,\omega_0)\;\text{and}\;H\;\text{is $\Psi$-admissible}\}.
\end{equation}
Then $\underline{c}^\Psi_{\rm HZ}(O,\omega_0)\leq c^\Psi_{\rm HZ}(O,\omega_0)$ and $\underline{c}^\Psi_{\rm HZ}(O,\omega_0)\leq \underline{c}^\Psi_{\rm HZ}(O^\ast,\omega_0)$
for another open subset $O^\ast\supseteq O$.
If $B^{2n}(0,r)\subset O\subset B^{2n}(0,R)$ then ${c}^\Psi_{\rm HZ}(O,\omega_0)\le \left(\frac{2R}{r}\right)^2\underline{c}^\Psi_{\rm HZ}(O,\omega_0)$
(\cite[Proposition~1.3]{JinLu}).
  In particular, if this $O$ is convex, by the definition of
$\underline{c}^\Psi_{\rm HZ}$ it is easily proved that
\begin{equation}\label{innRegu}
\underline{c}^\Psi_{\rm HZ}(O,\omega_0)=\sup\{\underline{c}^\Psi_{\rm HZ}(K,\omega_0)\,|\,
K\;\hbox{convex bounded domain},\;K\ni 0,\;
\overline{K}\subset O\}.
\end{equation}
Note that  (\ref{e:action-capacity5}) and (\ref{e:action-capacity8})
imply $\underline{c}^\Psi_{\rm HZ}(D,\omega_0)={c}^\Psi_{\rm HZ}(D,\omega_0)$ for
a $C^\infty$ strictly convex bounded domain $D\subset \mathbb{R}^{2n}$ containing $0$.
(\ref{innRegu})  and the inner regularity of  $c^\Psi_{\rm HZ}$ in (\ref{cap+++}) lead to
$\underline{c}^\Psi_{\rm HZ}(O,\omega_0)={c}^\Psi_{\rm HZ}(O,\omega_0)$.}
\end{remark}

\section{Proof of Theorem~\ref{th:EH.1.6}}\label{sec:EH.2}
\setcounter{equation}{0}

Our arguments are closely related to  Sikorav's approach in \cite{Sik90}.
The proof will be completed by several propositions.

{By definition of the admissible deformation $\gamma$ (Definition~\ref{defdeform}) and
the theory of topological degree, the following composition and intersection properties hold
(see \cite[Section~3.1]{Sik90} and \cite[\S3]{JinLu}).}

\begin{proposition}\label{prop:EH.1.1}
  \begin{description}
   \item[(i)] For any  $\gamma \in\Gamma$ and $\tilde{\gamma} \in\Gamma$ there holds $\gamma \circ\tilde{\gamma} \in\Gamma$.
   \item[(ii)]    Denote by $S^+$ the unit sphere in $\mathbb{E}^+$. For any $e\in \mathbb{E}^+\setminus\{0\}$ and $\gamma\in\Gamma$ there holds
       \begin{equation}\label{nonempty}
       \gamma(S^+)\cap (\mathbb{E}^-\oplus \mathbb{E}^0\oplus \mathbb{R}_+e)\neq \emptyset.
       \end{equation}
  \end{description}
\end{proposition}

{The estimation for $c^{\Psi}_{\rm EH}(H)$ follows immediately, which is a slight change of Proposition~1 in Section~3.2 of \cite{Sik90}}.

\begin{proposition}\label{prop:EH.1.3}
If $H\in C^0(\mathbb{R}^{2n},\mathbb{R}_{\ge 0})$ then
\begin{equation}\label{e:EH.1.3}
c^{\Psi}_{\rm EH}(H)\le\sup_{z\in\mathbb{C}^n}\left(\frac{ \mathfrak{t}(\Psi)}{2}|z|^2-H(z)\right)
\end{equation}
where $\mathfrak{t}(\Psi)$ is defined in (\ref{e:TPsi}). Moreover, if
\begin{equation}\label{e:EH.1.4}
{\Psi=\Psi_1\oplus\Psi_2,\quad
\hbox{where}\;\Psi_1\in{\rm Sp}(2,\mathbb{R})\; \hbox{and} \;\Psi_2\in{\rm Sp}(2n-2,\mathbb{R}),}
\end{equation}
then we have
\begin{equation}\label{e:EH.1.5}
c^{\Psi}_{\rm EH}(H)\le\sup_{z\in\mathbb{C}^n}\left(\frac{\mathfrak{t}(\Psi_1)}{2}|z_1|^2-H(z)\right).
\end{equation}
\end{proposition}

\begin{proof}
   Let $e(t)=e^{ \mathfrak{t}(\Psi) Jt}X$ where $J$ is as in (\ref{e:standcompl}) and  $X\in\mathbb{R}^{2n}$ satisfies
   $e^{ \mathfrak{t}(\Psi)J}X=\Psi X$ and $|X|=1$. For any  $x=y+\lambda e$, where
$y\in \mathbb{E}^-\oplus \mathbb{E}^0$ and $\lambda>0$, there holds
\begin{equation}\label{e:EH.2.0}
\mathfrak{a}(x)\le\frac{1}{2}\|\lambda e\|^2_{\mathbb{E}}= \frac{ \mathfrak{t}(\Psi) }{2}\lambda^2
\end{equation}
 and
\begin{equation}\label{e:EH.2.1}
\int_0^1\langle x(t),e^{ \mathfrak{t}(\Psi) Jt}X\rangle dt=\int_0^1\langle \lambda e^{ \mathfrak{t}(\Psi) Jt}X,e^{ \mathfrak{t}(\Psi) Jt}X\rangle dt=\lambda.
\end{equation}
It follows that
$$
\mathfrak{a}(x)\le\frac{ \mathfrak{t}(\Psi) }{2}\left(\int_0^1\langle x(t),e^{ \mathfrak{t}(\Psi) Jt}X\rangle dt\right)^2\le\frac{ \mathfrak{t}(\Psi) }{2}\int_0^1|x(t)|^2 dt.
$$
By  Proposition~\ref{prop:EH.1.1}(ii), we get
$$
\inf_{x\in\gamma(S^+)}\Phi_H(x) \le\sup_{x\in \mathbb{E}^-\oplus \mathbb{E}^0\oplus\mathbb{R}_+e}\Phi_H(x)\le \sup_{z\in\mathbb{R}^{2n}}\frac{ \mathfrak{t}(\Psi) }{2}|z|^2-H(z),\quad\forall\,\gamma\in\Gamma
$$
and hence (\ref{e:EH.1.3}) is proved.

Now suppose that $\Psi$ has the form in (\ref{e:EH.1.4}). Let $\hat{e}(t)=e^{\mathfrak{t}(\Psi_1) Jt}\hat{X}$, where $\hat{X}=(X_1,0)\in\mathbb{R}^{2}\times\mathbb{R}^{2n-2}$ satisfies $
   e^{\mathfrak{t}(\Psi_1)J}\hat{X}=\Psi \hat{X}$ and $|\hat{X}|=1$.  For any $x=y+\lambda\hat{e}$
   where $y\in\mathbb{E}^-\oplus \mathbb{E}^0$ and $\lambda>0$, write
   $x(t)=(x_1(t),x_2(t))\in\mathbb{R}^{2}\times\mathbb{R}^{2n-2}$.
   Let $J$ also denote the complex structure on $\mathbb{R}^2$. Then as the reasoning of (\ref{e:EH.2.1}) we get
   $$
   \int_0^1\langle x_1(t),e^{ \mathfrak{t}(\Psi_1) Jt}X_1\rangle dt
   =\int_0^1\langle x(t),e^{ \mathfrak{t}(\Psi_1) Jt}\hat{X}\rangle dt=\int_0^1\langle \lambda e^{ \mathfrak{t}(\Psi_1) Jt}\hat{X},e^{ \mathfrak{t}(\Psi_1) Jt}\hat{X}\rangle dt=\lambda.
   $$
   The same arguments as the proof of (\ref{e:EH.1.3}) lead to (\ref{e:EH.1.5}).
    \end{proof}

{As a generalization of \cite[Proposition~2,Section~3.2]{Sik90} and \cite[Lemma~9,Chapter~3 ]{HoZe94} we have:}
\begin{proposition}\label{prop:EH.1.4}
If $H\in C^\infty(\mathbb{R}^{2n},\mathbb{R}_{\ge 0})$ satisfies satisfies (H1) and (H2), then
 $c^\Psi_{\rm EH}(H)>0$.
\end{proposition}

\begin{proof}
By the assumption (H1), we can take $z_0\in{\rm int}\,H^{-1}(0)\cap {\rm Fix}(\Psi)$.
Define $\gamma\in\Gamma$ by
$$
\gamma:\mathbb{E}\rightarrow\mathbb{E},\; x\mapsto \gamma(x)=z_0+\varepsilon x
$$
where $\varepsilon>0$ is a constant. Let us prove
$$
\inf_{y\in\gamma(S^+)}\Phi_H(y)>0
$$
for sufficiently small $\varepsilon$ as in the proof of \cite[page 93, Lemma~9]{HoZe90}. Since
\begin{equation}\label{e:EH.2.2}
\Phi_H(z_0+x)=1/2\|x\|^2_{\mathbb{E}}-\int_0^1H(z_0+x)\quad\forall x\in \mathbb{E}^+,
\end{equation}
it suffices to prove that
\begin{equation}\label{e:EH.2.3}
\lim_{\|x\|_{\mathbb{E}}\rightarrow 0}\frac{\int_0^1 H(z_0+x)}{\|x\|^2_{\mathbb{E}}}=0.
\end{equation}
 Otherwise, suppose there exists a sequence $(x_j)\subset\mathbb{E}$ and $d>0$ satisfying
  \begin{equation}\label{e:EH.2.4}
  \|x_j\|_{\mathbb{E}}\rightarrow 0 \quad \hbox{and} \quad \frac{\int_0^1 H(z_0+x_j)}{\|x_j\|^2_{\mathbb{E}}}
  \geq d>0\quad\forall j.
  \end{equation}
  Let $y_j=\frac{x_j}{\|x_j\|_{\mathbb{E}}}$ and hence $\|y_j\|_{\mathbb{E}}=1$. By Proposition
  \ref{compact}, $(y_j)$ has a convergent subsequence in $L^2$.
  By a standard result in $L^p$ theory (see \cite[Th.4.9]{Br11}),
  we have $w\in L^2$ and a subsequence of $(y_j)$, still denoted by $(y_j)$, such that
   $y_j(t)\rightarrow y(t)$ a.e. on $(0,1)$ and that
   $|y_j(t)|\leq w(t)$  a.e. on $(0,1)$ for each $j$.  Recall that we have assumed that $H$ vanishes near $z_0$.
  By (H2) and the Taylor expansion of $H$ at $z_0\in\mathbb{R}^{2n}$,
  we have constants $C_1>0$ and $C_2>0$ such that $H(z_0+z)\le C_1|z|^2$
  and  $H(z_0+z)\le C_2|z|^3$ for all $z\in\mathbb{R}^{2n}$.
  It follows that
  \begin{eqnarray*}
  &&\frac{H(z_0+x_j(t))}{\|x_j\|_{\mathbb{E}}^2}\leq C_1\frac{|x_j(t)|^2}{\|x_j\|_{\mathbb{E}}^2}
  =C_1|y_j(t)|^2\le C_1w(t)^2,\quad \hbox{a.e. on $ (0,1)$},\;\forall j,\\
 &&  \frac{H(z_0+x_j(t))}{\|x_j\|_{\mathbb{E}}^2}\leq C_2\frac{|x_j(t)|^3}{\|x_j\|_{\mathbb{E}}^2}
  =C_2|x_j(t)|\cdot |y_j(t)|^2\le C_2|x_j(t)|w(t)^2,\quad \hbox{a.e. on $(0,1)$},\;\forall j.
   \end{eqnarray*}
  The first claim in (\ref{e:EH.2.4}) implies that $(x_j)$
  has a subsequence such that
  $$
  x_{j_l}(t)\rightarrow 0, \quad \text{a.e. on }(0,1).
  $$
 Hence the Lebesgue dominated convergence theorem leads to
  $$
  \int_0^1 \frac{H(z_0+x_{j_l}(t))}{\|x_{j_l}\|_{\mathbb{E}}^2}\rightarrow 0.
  $$
  This contradicts the second claim in (\ref{e:EH.2.4}).
  \end{proof}

 Propositions~\ref{prop:EH.1.3},~\ref{prop:EH.1.4} show that
 $c^{\Psi}_{\rm EH}(H)$ is a finite positive number for each
$H\in C^\infty(\mathbb{R}^{2n},\mathbb{R}_{\ge 0})$ satisfying (H1) and (H2).
{Based on this fact, the proof of Theorem~\ref{th:EH.1.6} is given by a minmax argument
as in \cite[Section~3.4]{Sik90}. For sake of completeness we give its details here.}

\begin{proof}[\bf Proof of Theorem~\ref{th:EH.1.6}]
  Let us define
   $$
   \mathcal{F}:=\{\gamma(S^+)\,|\,\gamma\in\Gamma
   \;\text{and}\;\inf(\Phi_H|\gamma(S^+))>0\}.
   $$
  Then $c^\Psi_{\rm EH}(H)=\sup_{F\in\mathcal{F}}\inf_{x\in F}\Phi_H(x)$  since $c^\Psi_{\rm EH}(H)>0$. Note that the flow $\phi^u$ of $\nabla \Phi_H$  has the form
  $$
  \phi^u(x)=e^{-u}x^-+x^0+e^ux^++\widetilde{K}(u,x),
  $$
  where $\widetilde{K}:\mathbb{R}\times \mathbb{E}\rightarrow\mathbb{E}$ is compact.
  For a set $F=\gamma(S^+)\in\mathcal{F}$ where $\gamma\in\Gamma$,  $\alpha:=\inf(\Phi_H|\gamma(S^+))>0$ by definition of $\mathcal{F}$. Let $\rho:\mathbb{R}\rightarrow [0,1]$ be a smooth function such that $\rho(s)=0$ for $s\le 0$ and $\rho(s)=1$ for $s\ge \alpha$. Define a vector field $V$ on $\mathbb{E}$ by
  $$
  V(x)=x^+-x^--\rho(\Phi_H(x))\nabla \mathfrak{b}(x).
  $$
 Clearly $V$ is locally Lipschitz and has linear growth so that $V$ has a unique global flow which we will denote by $\gamma^u$. Moreover, it is obvious that $\gamma^u$
   has the same property as $\phi^u$ described above.
   For $x\in \mathbb{E}^-\oplus \mathbb{E}^0$, we have $\Phi_H(x)\le 0$ and
    hence  $V(x)=-x^-$ so that $\gamma^u(\mathbb{E}^-\oplus \mathbb{E}^0)=\mathbb{E}^-\oplus \mathbb{E}^0$ and
    $\gamma^u(\mathbb{E}\setminus(\mathbb{E}^-\oplus\mathbb{E}^0))=\mathbb{E}\setminus (\mathbb{E}^-\oplus \mathbb{E}^0)$,
    since $\gamma^u$ is a homeomorphism for each $u\in\mathbb{R}$. Therefore, $\gamma_u\in\Gamma$ for all $u\in\mathbb{R}$.

    Note that $V|\Phi_H^{-1}([\alpha,\infty])=\nabla \Phi_H(x)$. We have $\gamma^u(F)=\phi^u(F)$ for $u\ge 0$. Since $\Gamma$ is closed
    for composition operation, $\mathcal{F}$ is positively invariant under the flow $\phi^u $ of $\nabla\Phi_H$. Using Proposition~\ref{prop:PSmale} we can prove Theorem~\ref{th:EH.1.6} by a standard minimax argument.
    \end{proof}

\section{Proofs of Theorem~\ref{th:EHconvex}, \ref{th:EHproduct}}\label{sec:EH.3}
\setcounter{equation}{0}

Our proofs closely follow those of Theorems~6.5, 6.6 in \cite{Sik90}.

\subsection{Proof of Theorem~\ref{th:EHconvex}}\label{sec:EH.3.1}

{By assumption $Cl(D)\cap{\rm Fix}(\Psi)\ne\emptyset$.
We only consider the case $D\cap{\rm Fix}(\Psi)\ne\emptyset$ because this may lead to the case $Cl(D)\cap{\rm Fix}(\Psi)\subset\partial D$
by exterior regularity of the $\Psi$-EH capacity and a standard approximation argument as in Section~\ref{sec:convex4}.
Moreover, we can also assume that $D$ contains $0$ in its interior
by a translation argument (see the beginning of Section~\ref{sec:convex}).}

In this section, we denote
$$
a:=\min\Sigma_{\mathcal{S}}^\Psi=\min\{A(x)>0\,|\,x\;\text{is a }\;\Psi\hbox{-characteristic on}\;\mathcal{S}=\mathcal{\partial D}\}.
$$
Let  $j_D: \mathbb{R}^{2n}\rightarrow\mathbb{R} $ be the Minkowski functional of $D$,
$H(z)=j_D^2(z)$ and $H^\ast$ the Legendre transformation of $H$. Define $I$, $\mathcal{F}$ and $\mathcal{A}$ as in Section \ref{sec:convex1}.

\noindent{\bf Step 1} ({\it Prove $c^{\Psi}_{\rm EH}(D)\ge a$}).
\noindent For $\epsilon>0$, let
\begin{eqnarray}\label{e:EH.3.7}
\mathscr{E}_\epsilon(\mathbb{R}^{2n},D)
\end{eqnarray}
consist of $\overline{H}=f\circ H$, where $f\in C^\infty(\mathbb{R},\mathbb{R})$ satisfies
\begin{eqnarray}\label{e:EH.3.8}
f(s)=0\;\forall s\le 1,\quad  f'(s)\ge 0,\;\forall\;s\ge 1,\quad f'(s)=\alpha\in\mathbb{R}\setminus\Sigma^{\Psi}_\mathcal{S}
\;\hbox{if}\;f(s)\ge\epsilon
\end{eqnarray}
and $\alpha$ is required to satisfy
\begin{equation}\label{e:EH.3.8.1}
\alpha H(z)\ge \frac{ \mathfrak{t}(\Psi)}{2}|z|^2-C\quad\hbox{for}\, |z|\,\hbox{sufficiently large}
\end{equation}
where $C>0$ is a constant. Then $\mathscr{E}_\epsilon(\mathbb{R}^{2n},D)$ is a
cofinal family of $\mathcal{F}(\mathbb{R}^{2n},D)$, i.e.
for any $G\in \mathcal{F}(\mathbb{R}^{2n},D)$ there exists $\overline{H}\in\mathscr{E}_\epsilon(\mathbb{R}^{2n},D)$ such that $\overline{H}\ge G$. It follows that $c^{\Psi}_{\rm EH}(G)\ge c^{\Psi}_{\rm EH}(H)$. For each $\overline{H}\in\mathscr{E}_\epsilon(\mathbb{R}^{2n},D)$, by the last condition in
(\ref{e:EH.3.8}) and (\ref{e:EH.3.8.1})  $\Phi_{\overline{H}}$ satisfies the
$(PS)$ condition and $c^{\Psi}_{\rm EH}(\overline{H})$ is a positive critical value of $\Phi_{\overline{H}}$ (see Section~\ref{sec:EH.2}
and \cite[\S5.1]{JinLu}).
Corresponding to \cite[Lemma~3, Section~6.5]{Sik90} we have
\begin{lemma}\label{lem:5.1}
Any positive critical value $c$ of
$\Phi_{\overline{H}}$ satisfies $c>\min\Sigma^{\Psi}_\mathcal{S}-\epsilon$. In particular, $c^\Psi_{\rm EH}(\overline{H})>\min\Sigma^{\Psi}_\mathcal{S}-\epsilon$.
\end{lemma}
  \begin{proof}
    Let $x\in E$ be a critical point of $\Phi_{\overline{H}}$ with $\Phi_{\overline{H}}(x)>0$. Then
$$
-J\dot{x}(t)=\nabla \overline{H}(x(t))=f'(H(x(t)))\nabla H(x(t)),\quad x(1)=\Psi x(0),
$$
and $H(x(t))\equiv s_0$ (a nonzero constant). It follows that
\begin{eqnarray*}
\Phi_{\overline{H}}(x)&=&\frac{1}{2}\int^1_0\langle Jx(t),\dot{x}(t)\rangle dt-\int^1_0H(x(t))dt\\
&=&\frac{1}{2}\int^1_0\langle x(t),f'(s_0)\nabla H(x(t))\rangle dt-\int^1_0f(s)dt\\
&=&f'(s_0)s_0-f(s_0).
\end{eqnarray*}
Since $\Phi_{\overline{H}}(x)>0$, we get $\beta:=f'(s_0)>0$ and so $s_0>1$. Put
$$
y(t)=\frac{1}{\sqrt{s_0}}x(t/\beta).
$$
Then $H(y(t))=1$, $-J\dot{y}=\nabla H(y(t))$ and $y(\beta)=\Psi y(0)$.
These show that $f'(s_0)=\beta=A(y)\in\Sigma^{\Psi}_\mathcal{S}$.
Therefore $f(s_0)<\epsilon$ by definition of $\overline{H}$. It follows from these that
$$
\Phi_{\overline{H}}(x)=f'(s_0)s_0-f(s_0)> f'(s_0)-\epsilon\ge \min\Sigma^{\Psi}_\mathcal{S}-\epsilon.
$$
\end{proof}
By definition of the $\Psi$-EH capacity,
\begin{eqnarray*}
c_{\rm EH}^\Psi(D)&=&\inf\{c_{\rm EH}^\Psi(G)\,|\,G\in\mathcal{F}(\mathbb{R}^{2n},D)\}\\
&\ge&\inf\{c_{\rm EH}^\Psi(\overline{H})\,|\,\overline{H}\in\mathscr{E}_\epsilon(\mathbb{R}^{2n},D)
\}\\
&\ge& \min\Sigma^{\Psi}_\mathcal{S}-\epsilon=a-\epsilon, \forall \epsilon
\end{eqnarray*}
and hence $c_{\rm EH}^\Psi(D)\ge a$.

\noindent{\bf Step 2} ({\it Prove $c^{\Psi}_{\rm EH}(D)\le a$}).
{The proof of \cite[Theorem~6.5]{Sik90} can be carried here verbatim. However, we give the details for the convenience of readers.}
We only need to prove that for each $\varepsilon>0$, there exists $\tilde{H}\in  \mathscr{F}(\mathbb{R}^{2n}, D)$
such that
\begin{eqnarray}\label{e:EH.3.9}
c^\Psi_{\rm EH}(\tilde{H})< a+\varepsilon
\end{eqnarray}
which is reduced to prove: for any $h\in\Gamma$, there exists $x\in h(S^+)$ such that
\begin{eqnarray}\label{e:EH.3.10}
\Phi_{\tilde{H}}(x)< a+\varepsilon.
\end{eqnarray}
For $\tau$ sufficiently large, choose $H_\tau\in\mathscr{F}(\mathbb{R}^{2n}, D)$ satisfying
\begin{eqnarray}\label{e:EH.3.11}
H_\tau\ge \tau\left(H-\left(1+\frac{\varepsilon}{2a}\right)\right).
\end{eqnarray}
For $h\in\Gamma$, in order to choose $x\in h(S^+)$ satisfying (\ref{e:EH.3.10}) for $\tilde{H}=H_\tau$,
 we make some preparations as in \cite[Lemma~1, Section~6.5]{Sik90}. By arguments in Section~
\ref{sec:convex1}, there exists $w\in\mathcal{A}$  such that
$$
a:=\min\{I(u)\,|\,u\in\mathcal{A}\}=I(w)=A(x^{\ast})\quad\hbox{and}\quad A(w)=1.
$$
 Denote by $w^\ast$ the projections of $w$ onto $\mathbb{E}^\ast$ (according to the decomposition $\mathbb{E} = E^{1/2}
=\mathbb{E}^+\oplus \mathbb{E}^-\oplus \mathbb{E}^0$), $\ast=0,-,+$.  Then $w^+\ne 0$.
 (Otherwise, a contradiction occurs because
$1=A(w) = A(w^0\oplus w^-) =-\frac{1}{2}\|w^-\|^2_{\mathbb{E}}$.)
Put $y=w/\sqrt{a}$ so that $I(y)=1$ and $A(y)=\frac{1}{a}$.
Now for any $\lambda\in\mathbb{R}$ and $x\in \mathbb{E}$ it holds that
\begin{eqnarray*}
\lambda^2=I(\lambda y)&=&\int^1_0H^\ast(-\lambda J\dot{y}(t))dt\\
&=&\int^1_0\sup_{\zeta\in\mathbb{R}^{2n}}\{\langle\zeta, -\lambda J\dot{y}(t)\rangle- H(\zeta)\}dt\\
&\ge&\int^1_0\{\langle x(t), -\lambda J\dot{y}(t)\rangle- H(x(t))\}dt.
\end{eqnarray*}
This leads to
\begin{eqnarray*}
\int^1_0H(x(t))dt\ge\int^1_0\langle x(t), -\lambda J\dot{y}(t)\rangle dt-\lambda^2
=\lambda \int^1_0\langle x(t), - J\dot{y}(t)\rangle dt-\lambda^2.
\end{eqnarray*}
Taking
$$
\lambda=\frac{1}{2} \int^1_0\langle x(t), - J\dot{y}(t)\rangle dt
$$
we arrive at
\begin{eqnarray}\label{e:EH.3.1}
\int^1_0H(x(t))dt\ge\left(\frac{1}{2} \int^1_0\langle x(t), - J\dot{y}(t)\rangle dt\right)^2,\quad
\forall x\in \mathbb{E}.
\end{eqnarray}
Note that $y^+\ne 0$ and $\mathbb{E}^-\oplus \mathbb{E}^0+\mathbb{R}_+y=\mathbb{E}^-\oplus \mathbb{E}^0\oplus(\mathbb{R}_+y^+)$.
By the intersection property (ii) in Proposition~\ref{prop:EH.1.1} we derive
$$
h(S^+)\cap(\mathbb{E}^-\oplus \mathbb{E}^0+\mathbb{R}_+y)\ne\emptyset,\;\forall h\in\Gamma.
$$
For an $h\in\Gamma$ and $x\in h(S^+)\cap(\mathbb{E}^-\oplus \mathbb{E}^0+\mathbb{R}_+y)$,
consider the polynomial
$$
P(t)=\mathfrak{a}(x+ty)=\mathfrak{a}(x)+ t(\int^1_0\langle x, - J\dot{y}\rangle dt)+ \mathfrak{a}(y)t^2.
$$
  Writing $x=x^{-0}+ sy=x^{-0}+ sy^{-0}+ sy^+$ where $x^{-0}\in \mathbb{E}^-\oplus \mathbb{E}^0$,
  then $P(t)=\mathfrak{a}(x^{-0}+(t+s)y)$. Since $\mathfrak{a}|_{\mathbb{E}^-\oplus \mathbb{E}^0}\le 0$
we deduce that $P(-s)\le 0$. Moreover, $\mathfrak{a}(y)=1/a>0$ and we get
$$
P(t)\to+\infty\quad\hbox{as}\quad |t|\to+\infty.
$$
These imply that there exists $t_0\in\mathbb{R}$ such that $P(t_0)=0$. It follows that
$$
\left(\int^1_0\langle x, - J\dot{y}\rangle dt\right)^2-4\mathfrak{a}(y)\mathfrak{a}(x)\ge 0
$$
and so by (\ref{e:EH.3.1}) there holds
\begin{eqnarray}\label{e:EH.3.2}
\mathfrak{a}(x)&\le& (\mathfrak{a}(y))^{-1}\left(\frac{1}{2}\int^1_0\langle x, - J\dot{y}\rangle  dt\right)^2\nonumber\\
&=&a\left(\frac{1}{2}\int^1_0\langle x, - J\dot{y}\rangle dt \right)^2\nonumber\\
&\le&a\int^1_0H(x(t))dt.
\end{eqnarray}

\begin{proof}[\bf Proof of the fact $c^{\Psi}_{\rm EH}(D)\le a$.]
For any $\varepsilon>0$,
let $H_\tau$ be a function in $\mathscr{F}(\mathbb{R}^{2n}, D)$ satisfying (\ref{e:EH.3.11}). For any $h\in\Gamma$, let $x\in h(S^+)\cap(\mathbb{E}^-\oplus \mathbb{E}^0+\mathbb{R}_+y)\ne\emptyset$. \\
$\bullet$ If $\int^1_0H(x(t))dt\le\left(1+\frac{\varepsilon}{a}\right)$, then by $H_\tau\ge 0$ and (\ref{e:EH.3.2}), we have
$$
\Phi_{H_\tau}(x)\le \mathfrak{a}(x)\le a\int^1_0H(x(t))dt\le a\left(1+\frac{\varepsilon}{a}\right)<a+\varepsilon.
$$
$\bullet$ If $\int^1_0H(x(t))dt>\left(1+\frac{\varepsilon}{a}\right)$ then (\ref{e:EH.3.11}) implies
\begin{eqnarray}\label{e:EH.3.12}
\int^1_0H_\tau(x(t))dt&\ge& \tau\left(\int^1_0H(x(t))dt-\left(1+\frac{\varepsilon}{2a}\right)\right)\nonumber\\
&\ge&\tau \frac{\varepsilon}{2(a+\varepsilon)}\int^1_0H(x(t))dt.
\end{eqnarray}
Choose $\tau>0$ so large that
$$
{\tau\frac{\varepsilon}{2(a+\varepsilon)}>a.}
$$
Then (\ref{e:EH.3.12}) leads to
$$
\int^1_0H_\tau(x(t))dt\ge a\int^1_0H(x(t))dt
$$
and hence by (\ref{e:EH.3.2}) there holds
$$
\Phi_{H_\tau}(x)=\mathfrak{a}(x)-\int^1_0H_\tau(x(t))dt\le \mathfrak{a}(x)-a\int^1_0H(x(t))dt\le 0.
$$
In summary, in two cases we have $\Phi_{H_\tau}(x)<a+\varepsilon$ and hence $c^{\Psi}_{\rm EH}(D)\le a$.
\end{proof}

\noindent{\bf Step 3.}\quad {\it Prove $c_{\rm EH}^\Psi(\partial D)$=a.}
 Let
\begin{eqnarray}\label{e:EH.3.3}
\mathscr{E}_\epsilon(\mathbb{R}^{2n}, \partial D)
\end{eqnarray}
consist of $\overline{H}=f\circ H$ where $f\in C^\infty(\mathbb{R},\mathbb{R})$ satisfies
\begin{eqnarray}\label{e:EH.3.4}
&&f(s)=0\;\hbox{for}\; s\;\hbox{near}\;1,\quad f'(s)\le 0\;\forall s\le 1,
\quad f'(s)\ge 0\;\forall s\ge 1,\\
&&f'(s)=\alpha\in\mathbb{R}\setminus\Sigma^{\Psi}_\mathcal{S}
\;\hbox{if}\;s\ge 1\;\hbox{and}\;f(s)\ge\epsilon,\label{e:EH.3.5}
\end{eqnarray}
where $\alpha$ is also required to be so large that
\begin{eqnarray}\label{e:EH.3.6}
\alpha H(z)\ge \frac{ \mathfrak{t}(\Psi)}{2}|z|^2-C
\end{eqnarray}
for some constant $C>0$. Similar to Step~1, there holds that
$$
c^{\Psi}_{\rm EH}(\overline{H})>\min \Sigma^\Psi_{\partial D}-\epsilon,\quad\forall \overline{H}\in \mathscr{E}_\epsilon(\mathbb{R}^{2n},\partial D).
$$
It follows that $c^{\Psi}_{\rm EH}(\partial D)\ge a$. On the other hand, by the monotonicity of
$c^{\Psi}_{\rm EH}$ we get that $c^{\Psi}_{\rm EH}(\partial D)\le c^{\Psi}_{\rm EH}(D) =a$. Therefore $c^{\Psi}_{\rm EH}(\partial D)=a$.
\hfill$\Box$\vspace{2mm}

\subsection{Proof of Theorem~\ref{th:EHproduct}}\label{sec:EH.3.2}

{The following lemma is a slight change of \cite[Lemma~1,~Section~6.6]{Sik90}.}
\begin{lemma}\label{lem:9.1}
For a convex domain $D\subset\mathbb{R}^{2n}$ containing $0$ and  symplectic matrixes
$\Psi_1\in{\rm Sp}(2n,\mathbb{R})$ and  $\Psi_2\in{\rm Sp}(2k,\mathbb{R})$, it holds that
$c_{\rm EH}^{\Psi_1\oplus\Psi_2}(D\times\mathbb{R}^{2k})=c_{\rm EH}^{\Psi_1}(D)$.
\end{lemma}
\begin{proof}
It suffices to prove this lemma for a  convex bounded domain
 $D\subset\mathbb{R}^{2n}$ with $C^2$-smooth boundary $\mathcal{S}$. Let $H=j_D^2$.
 By the definition and monotonicity of $\Psi$-EH capacity we have
 $$
 c_{\rm EH}^{\Psi_1\oplus\Psi_2}(D\times\mathbb{R}^{2k})=\sup_R c^{\Psi_1\oplus\Psi_2}_{\rm EH}(E_R),\quad
 \hbox{where}\; E_R=\{(z,z')\in\mathbb{R}^{2n}\times\mathbb{R}^{2k}\,|\,H(z)+ (|z'|/R)^2<1\}.
 $$
 Since $E_R$ is convex
 and $\mathcal{S}_R:=\partial E_R$ is of class $C^{1,1}$ because $H$ is of class $C^{1,1}$ on
 $\mathbb{R}^{2n}$, Theorem~\ref{th:EHconvex} gives rise to
 $$
 c^{\Psi_1\oplus\Psi_2}_{\rm EH}(E_R)=\min\Sigma^{\Psi_1\oplus\Psi_2}_{\mathcal{S}_R}.
 $$
 Let   $(x,\hat{x}):[0,\lambda]\rightarrow \mathcal{S}_R$ with $\lambda>0$ satisfy
\begin{eqnarray}
&&\dot{x}=X_H(x)\quad\hbox{and}\quad\quad x(\lambda)=\Psi_1 x(0),\label{eq:1}\\
&&\dot{\hat{x}}=2J\hat{x}/R^2\quad\hbox{and}\quad\quad \hat{x}(\lambda)=\Psi_2 \hat{x}(0).\label{eq:2}
\end{eqnarray}
{Then $(x,\hat{x})$ is a $\Psi_1\oplus\Psi_2$-characteristic on $\mathcal{S}_R$ with action $\lambda$.
By (\ref{eq:2}), $|\hat{x}|$ is constant. If $\hat{x}\neq 0$, then by the boundary condition in (\ref{eq:2})
we get $\lambda\ge R^2\mathfrak{t}(\Psi_2)/2$, where $\mathfrak{t}(\Psi_2)$
is the smallest positive number satisfying $\det(\Psi_2-e^{sJ})=0$ (see (\ref{e:TPsi})).
If $|\hat{x}|\equiv0$, then $x$ lies on
$\mathcal{S}$ and $\lambda\in\Sigma^{\Psi_1}_{\mathcal{S}}$. Hence}
for $R>0$ large enough we arrive at
$$
c^{\Psi_1\oplus\Psi_2}_{\rm EH}(E_R)=\min\Sigma^{\Psi_1\oplus\Psi_2}_{S_R}=\min\Sigma^{\Psi_1}_\mathcal{S}=c^{\Psi_1}_{\rm EH}(D)
$$
and so the desired conclusion.
\end{proof}

{Based on the proof of Proposition~\ref{prop:EH.1.4} and Lemma~\ref{lem:5.1}, \cite[Lemma~2, Section~6.6]{Sik90}
can be generalized to the following (cf. \cite[Lemma~5.7]{JinLu}).}
\begin{lemma}\label{lem:9.2}
Let $D\subset\mathbb{R}^{2n}$ be a convex bounded domain
  with $C^2$-smooth boundary $\mathcal{S}$ and containing $0$. Let $\widetilde{H}\in\mathscr{F}(\mathbb{R}^{2n},D)$.
  Then for any $\epsilon>0$ there exists $\gamma\in\Gamma$ such that
   \begin{equation}\label{e:EH.3.14}
      \Phi_{\widetilde{H}}|\gamma(B^+\setminus\epsilon B^+)\ge c^{\Psi}_{\rm EH}(D)-\epsilon
      \quad\hbox{and}\quad \Phi_{\widetilde{H}}|\gamma(B^+)\ge 0,
   \end{equation}
    where $B^+$ is the closed unit ball in $E^+$ and $S^+=\partial B^+$.
\end{lemma}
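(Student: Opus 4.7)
The plan is to follow Sikorav's strategy \cite{Sik90} for the product formula and to construct $\gamma$ as a composition of two admissible deformations: a near-optimal one that forces the higher bound on the annulus, and a radial contraction that guarantees non-negativity on the whole ball. The closure of $\Gamma$ under composition (Proposition~\ref{prop:EH.1.1}(i)) will then produce a single element of $\Gamma$ satisfying both inequalities.

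First, because $\widetilde H\in \mathscr F(\mathbb R^{2n},D)$ forces $c^{\Psi}_{\rm EH}(\widetilde H)\ge c^{\Psi}_{\rm EH}(D)$, the definition (\ref{e:EH.1.2}) produces some $h\in\Gamma$ with
$$
\inf_{x\in h(S^+)}\Phi_{\widetilde H}(x)\ \ge\ c^{\Psi}_{\rm EH}(D)-\epsilon/2.
$$
To propagate this bound from $S^+$ to the whole annulus $B^+\setminus\epsilon B^+$, I will use the negative gradient flow $\phi^t$ of $\Phi_{\widetilde H}$ given by Proposition~\ref{flow}. The representation $\phi^t(x)=e^t x^-+x^0+e^{-t}x^++K(t,x)$ shows that the reverse-time flow $\phi^{-t}$ (for $t\ge0$) has data $(a,b,c)=(e^t,1,e^{-t})$, all strictly positive, and compact perturbation; hence $\phi^{-t}\in\Gamma$ for every $t\ge0$, and $\Phi_{\widetilde H}$ strictly increases along $\phi^{-t}$. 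Setting $\beta(x):=\phi^{-\tau(\|x^+\|_E)}\!\bigl(h(x)\bigr)$ for a smooth nonnegative radial weight $\tau$ that vanishes near $\|x^+\|_E=1$ and is large on $[\epsilon,1-\delta]$, one arranges that $\Phi_{\widetilde H}(\beta(x))\ge c^{\Psi}_{\rm EH}(D)-\epsilon$ for every $x\in B^+\setminus\epsilon B^+$, because the backward flow pushes the level at least as high as the original value on $S^+$.

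Second, the non-negativity on the full ball will come from the quadratic estimate used in the proof of Proposition~\ref{prop:EH.1.4}. After the translation carried out at the start of Section~\ref{sec:convex} we may assume $0\in{\rm int}(D)\cap{\rm Fix}(\Psi)$ and that $\widetilde H$ vanishes in a neighbourhood of $0$. The limit
$$
\lim_{\|x\|_E\to 0}\frac{\int_0^1\widetilde H(x(t))\,dt}{\|x\|_E^2}=0
$$
from the proof of Proposition~\ref{prop:EH.1.4} yields some $\delta_0\in(0,1]$ such that $\Phi_{\widetilde H}(y)\ge\tfrac14\|y\|_E^2\ge 0$ for all $y\in E$ with $\|y\|_E\le\delta_0$. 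Composing $\beta$ with the homothety $\sigma_{\delta_0}(x)=\delta_0 x$ (an element of $\Gamma$ with constant factors $a=b=c=\delta_0$ and $K\equiv0$, homotoped to the identity through $\sigma_{(1-s)+s\delta_0}$) brings the image of $B^+$ inside the region of positivity; the first inequality is preserved on the annulus by adjusting $\tau$ so as to compensate the contraction, which only changes $\epsilon$ by a harmless factor.

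The main obstacle is simultaneity: the backward flow in the first step moves points upward in $\Phi_{\widetilde H}$ (and typically away from the origin), while the second step needs the inner disk $\epsilon B^+$ to stay near the origin where $\Phi_{\widetilde H}$ is non-negative. The resolution is the radial dependence $\tau=\tau(\|x^+\|_E)$, which lets Step 1 act only on the annulus while leaving the inner disk essentially alone. Verification that the composition $\gamma:=\phi^{-\tau(\|\cdot\|_E)}\circ h\circ\sigma_{\delta_0}$ lies in $\Gamma$ is the technical point: one must check throughout the homotopy to the identity that (i) of Definition~\ref{defdeform} holds, i.e.\ the $E^+$-component of $\gamma_u(x)$ never vanishes when $x^+\ne 0$ (this uses that each factor individually preserves this property), and that in (ii) the scalar tuple $(a,b,c)$ stays strictly positive while the compact residue $K$ sends bounded sets to precompact ones, both of which are inherited from the two factors by composition. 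Once this routine but delicate verification is in place, the two displayed inequalities follow from the monotonicity of $\Phi_{\widetilde H}$ along $\phi^{-t}$ and from the quadratic lower bound near $0$.
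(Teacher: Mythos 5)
Your proposal has two genuine gaps, both stemming from the decision to start from an \emph{arbitrary} near-optimal $h\in\Gamma$ rather than from a concretely built deformation.

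\textbf{Gap 1 (the annulus bound).} You set $\beta(x)=\phi^{-\tau(\|x^+\|_E)}(h(x))$ and claim the bound $\Phi_{\widetilde H}(\beta(x))\ge c^{\Psi}_{\rm EH}(D)-\epsilon$ on $B^+\setminus\epsilon B^+$ ``because the backward flow pushes the level at least as high as the original value on $S^+$.'' This is not true. The backward negative-gradient flow applied to $h(x)$ for an \emph{interior} $x$ does increase $\Phi_{\widetilde H}$ along the flow line, but the amount is governed by $\int\|\nabla\Phi_{\widetilde H}\|^2$ along that specific orbit, which has nothing to do with the values of $\Phi_{\widetilde H}$ on $h(S^+)$. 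If $h(x)$ lies near a critical point at some low level $<c^{\Psi}_{\rm EH}(D)-\epsilon$, the flow from $h(x)$ simply converges there and never reaches the required level. No value of $\tau$ fixes this. The paper's proof avoids this trap by not using an arbitrary near-optimal $h$ at all: it starts from a small sphere $\alpha S^+$ on which $\Phi_{\widetilde H}>0$, runs the \emph{forward} flow $\varphi_u$ of $\nabla\Phi_{\widetilde H}$, and uses the Palais--Smale condition plus Proposition~\ref{prop:EH.1.1}(ii) to show that $d(\widetilde H)=\sup_{u\ge 0}\inf(\Phi_{\widetilde H}|\varphi_u(\alpha S^+))$ is a positive critical value; then Lemma~\ref{lem:EH10} (applied after replacing $\widetilde H$ by a larger element of $\mathscr E_{\epsilon/2}(\mathbb R^{2n},D)$, a step you omit) gives $d(\widetilde H)\ge c^\Psi_{\rm EH}(D)-\epsilon/2$, and a suitable $r$ gives $\inf(\Phi_{\widetilde H}|S_r)\ge c^\Psi_{\rm EH}(D)-\epsilon$. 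The critical-value step is what makes the deformation actually reach the desired level, and your proof has no substitute for it.

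\textbf{Gap 2 (non-negativity on the ball).} You want to compose with $\sigma_{\delta_0}$ to bring the ball into the region where $\Phi_{\widetilde H}\ge 0$, but the composition is $\phi^{-\tau}\circ h\circ\sigma_{\delta_0}$, so the set you must control is $h(\delta_0 B^+)$, which is \emph{not} contained in a neighbourhood of the origin: an admissible deformation $h$ can translate, dilate, and add a compact residue, so $h(\delta_0 B^+)$ can lie entirely outside the positivity region, and $\Phi_{\widetilde H}$ there can be negative. The remark about ``adjusting $\tau$ to compensate'' does not help, since the backward flow only increases $\Phi_{\widetilde H}$ from wherever it already is. The paper handles non-negativity by the explicit map $\widetilde\gamma$ that (a) on $\tfrac{\epsilon}{2}B^+$ is scalar multiplication landing in $\alpha B^+$ where $\Phi_{\widetilde H}\ge 0$, (b) on the transitional shell interpolates the flow time from $0$ to $r$ so the image lies in $\bigcup_{0\le u\le r}S_u$, and (c) uses $\Phi_{\widetilde H}|S_u\ge\Phi_{\widetilde H}|S_0>0$ by monotonicity along the positive gradient flow. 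In short, the paper keeps the image of $B^+$ inside $\alpha B^+\cup\bigcup_{0\le u\le r}S_u$ by construction, whereas your construction gives no control on where $h$ sends the ball.

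Your idea of replacing the vector-field modification of Theorem~\ref{th:EH.1.6} by simple composition of flows and homotheties is reasonable in spirit, and the observation that $\Gamma$ is closed under composition is correctly deployed; but the proof cannot be completed without replacing the arbitrary $h$ with the paper's explicit small-sphere-plus-flow construction and the associated minimax/critical-value argument.
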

{Then Theorem~\ref{th:EHproduct} follows from Lemma~\ref{lem:9.1} and Lemma~\ref{lem:5.1}.}

 \begin{proof}[\bf Proof of Theorem~\ref{th:EHproduct}]
\noindent{\bf Step 1} ({\it Prove} (\ref{e:product1})).\quad
By the approximation arguments
 we may assume that each $D_i\subset\mathbb{R}^{2n_i}$ is a $C^2$
 convex bounded domain containing  a fixed point $p_i$ of
 $\Psi_i$,  $1\le i\le k$.
 Since $c^{\Psi_i}_{\rm EH}(D_i-p_i)=c^{\Psi_i}_{\rm EH}(D_i)$, $1\le i\le k$, and
 $$
 c^{\Psi}_{\rm EH}((D_1-p_1)\times\cdots\times (D_k-p_k))=c^{\Psi}_{\rm EH}(D_1\times\cdots\times D_k),
 $$
 we may also assume that each $D_i$ contains the origin of $\mathbb{R}^{2n_i}$, $1\le i\le k$.
 Thus it follows from  the monotonicity of $\Psi$-EH capacity and Lemma~\ref{lem:9.1} that
 \begin{equation}\label{e:EH.3.13}
 c^{\Psi_1\oplus\cdots\oplus\Psi_k}_{\rm EH}(D_1\times\cdots\times D_k)\le \min_ic^{\Psi_i\oplus(\oplus_{j\neq i} \Psi_j)}_{\rm EH}(D_i\times\mathbb{R}^{2(n-n_i)})=\min_ic^{\Psi_i}_{\rm EH}(D_i).
 \end{equation}

In order to prove the converse inequality,
note that for each ${H}\in\mathscr{F}(\mathbb{R}^{2n}, D_1\times\cdots\times  D_k)$
we may choose $\widehat{H}_i\in\mathscr{F}(\mathbb{R}^{2n_i}, D_i)$, $i=1,\cdots,k$, such that
$$
\widehat{H}(z):=\sum \widehat{H}_i(z_i)\ge H(z)\quad\forall z.
$$
For each $i=1,\cdots,k$, by Lemma~\ref{lem:9.2} there exists  $\gamma_i\in\Gamma(\mathbb{R}^{2n_i})$
such that
$$
\Phi_{\widehat{H}_i}|\gamma_i(B_i^+\setminus(2k)^{-1}B^+_i)\ge c^{\Psi_i}_{EH}(D_i)-\epsilon,\qquad
\Phi_{\widehat{H}_i}|\gamma_i(B_i^+)\ge 0.
$$
For any $x=(x_1,\cdots,x_k)\in S^+\subset B^+_1\times\cdots\times B^+_k$ there exists some
$i_0$ such that
$$
x_{i_0}\in B_{i_0}^+\setminus(2k)^{-1}B_{i_0}^+.
$$
 Put $\gamma=\gamma_1\times\cdots\times\gamma_k$ and we arrive at
$$
\Phi_{\widehat{H}}(\gamma(x))=\sum\Phi_{\widehat{H}_i}(\gamma_i(x_i))\ge c^{\Psi_{i_0}}_{\rm EH}(D_{i_0})-\epsilon
\ge\min_i(c^{\Psi_i}_{\rm EH}(D_i)-\epsilon)
$$
and hence
$$
c^{\Psi}_{\rm EH}({H})\ge
c^{\Psi}_{\rm EH}(\widehat{H})=\sup_{h\in\Gamma}\inf_{x\in h(S^+)}\Phi_{\widehat{H}}(x)\ge
\min_ic^{\Psi_i}_{EH}(D_i)-\epsilon.
$$
 This leads to
\begin{equation}\label{e:EH.3.18}
c^{\Psi_1\oplus\cdots\oplus\Psi_k}_{\rm EH}(D_1\times\cdots\times D _k)\ge\min_i c^{\Psi_i}_{\rm EH}(D_i)
 \end{equation}
and by combining this  with (\ref{e:EH.3.13}) we get (\ref{e:product1}) .

\noindent{\bf Step 2} ({\it Prove} (\ref{e:product2})).\quad
As in Step 1, for each $i=1,\cdots,k$ we may assume: (i)
 $D_i\subset\mathbb{R}^{2n_i}$ is compact,
 convex, and has $C^2$-boundary; (ii) $\partial D_i$ contains a fixed point $p_i$ of
 $\Psi_i$ and ${\rm Int}(D_i)$ contains the origin of $\mathbb{R}^{2n_i}$.
Then Lemma~\ref{lem:9.2} holds for every $\widetilde{H}\in\mathcal{F}(\mathbb{R}^{2n},\mathcal{S})$
with $\mathcal{S}=\partial D_1\times\cdots\times \partial D_k$. Arguing as in Step 1 we get that
$$
c^{\Psi_1\oplus\cdots\oplus\Psi_k}_{EH}(\partial D_1\times\cdots\times \partial D_k)\ge\min_i c^{\Psi_i}_{EH}(D_i).
$$
Since $c^{\Psi_1\oplus\cdots\oplus\Psi_k}_{\rm EH}(\partial D_1\times\cdots\times \partial D_k)
\le  c^{\Psi}_{\rm EH}(D_1\times\cdots\times D_k)=\min_ic^{\Psi_i}_{\rm EH}(D_i)$ by
the monotonicity property of $c^{\Psi}_{\rm EH}$ and (\ref{e:product1}),
we obtain (\ref{e:product2}).
\end{proof}

\section{Proof of Theorem~\ref{th:EHcontact}}\label{sec:EH.4}
\setcounter{equation}{0}

The proof of \cite[Th.7.5.1]{Sik90} is different from
that of \cite[Prop.6]{EH89}.  The former can be  adapted to complete
the proof for Theorem~\ref{th:EHcontact} conveniently.

  Let $\lambda_0=\frac{1}{2}(qdp-pdq)$, where $(q, p)$ are the standard
  coordinates on $\mathbb{R}^{2n}$. Then $d\lambda_0=dq\wedge dp=\omega_0$ and
  for any $C^1$ path $x:[0,T]\to \mathbb{R}^{2n}$
  \begin{equation}\label{e:EH.4.1}
  A(x)=\frac{1}{2}\int_0^T \langle-J\dot{x},x\rangle dt=\int_{x}x^{\ast} \lambda_0.
  \end{equation}
  If $x$ is not closed, $\lambda_0$ can not be replaced by other primitives of $\omega_0$  in general.

  \begin{lemma}\label{lem:EH.4.1}
  For $\Psi\in{\rm} Sp(2n,\mathbb{R})$, let $X$ be a vector field defined on $\mathbb{R}^{2n}$ such that
  \begin{equation}\label{psi-inv}
  X(\Psi (z))=\Psi (X(z)),\quad \forall z\in\mathbb{R}^{2n}
  \end{equation}
   and
  suppose that $\lambda:=\imath_X\omega_0$ is a primitive of $\omega_0$.
  Let $x:[0,T]\to\mathbb{R}^{2n}$ satisfy $x(T)=\Psi x(0)$.
 Then
  \begin{equation}\label{e:EH.4.2}
  \int_xx^{\ast}\lambda_0=\int_xx^{\ast}\lambda.
  \end{equation}
    \end{lemma}

  \begin{proof}
  Let $X_0$ be the vector field on $\mathbb{R}^{2n}$ defined by
  $X_0(z)=\frac{1}{2}z$, $\forall z\in\mathbb{R}^{2n}$. Then
  we have
  $$
  \lambda_0=\imath_{X_0}\omega_0,\quad\hbox{and}\quad X_0(\Psi(z))=\Psi X_0(z),\,\forall z\in\mathbb{R}^{2n}.
  $$
  For a vector $Y\in T_z\mathbb{R}^{2n}=\mathbb{R}^{2n}$, we compute
  \begin{eqnarray*}
    \Psi^{\ast}\lambda_0(z)[Y]&=&\lambda_0(\Psi z)[\Psi Y]
                              =\omega_0(X_0(\Psi z),\Psi Y)
                              =\omega_0(\Psi X_0(z),\Psi Y)\\
                              &=&\Psi^{\ast}\omega_0(X_0(z),Y)
                              =\omega_0(X_0(z),Y)
                              =\imath_{X_0}\omega_0(z)[Y]
                              =\lambda_0(z)[Y].
  \end{eqnarray*}
    Hence
  \begin{equation}\label{e:EH.4.3}
  \Psi^{\ast}\lambda_0=\lambda_0.
  \end{equation}
  The same arguments lead to
  \begin{equation}\label{e:EH.4.4}
  \Psi^{\ast}\lambda=\lambda.
  \end{equation}
  Since $\lambda_0$ and $\lambda$ are primitives of $\omega_0$,
  $d(\lambda_0-\lambda)=0$ and there exists $F\in C^\infty(\mathbb{R}^{2n},\mathbb{R})$ such that $\lambda_0-\lambda=dF$.
  (\ref{e:EH.4.3}) and (\ref{e:EH.4.4}) lead to
  $\Psi^{\ast}dF=\Psi^{\ast}(\lambda_0-\lambda)=\lambda_0-\lambda=dF$,
    which implies that there exists a constant $C$ such that
  $F(\Psi(z))-F(z)=C$ for all $z\in\mathbb{R}^{2n}$.
    Since $\Psi(0)=0$,
    $F(\Psi(0))-F(0)=0$ and we get that $C=0$.
  Therefore
  $$
  \int_xx^{\ast}\lambda_0-\int_xx^{\ast}\lambda=\int_x x^\ast (dF)=F(x(1))-F(x(0))=
  F(\Psi x(0))-F(x(0))=0.
  $$
\end{proof}
Note that the Liouville vector field $X$ in Theorem~\ref{th:EHcontact} satisfies the condition in Lemma~\ref{lem:EH.4.1}.
 In the following part of this section, $X$ denotes
 the Liouville vector field in Theorem~\ref{th:EHcontact} and $\lambda=\imath_X\omega_0$.
Let $\phi^t$ be the local flow of $X$. For $\varepsilon>0$ sufficiently small, the map
 \begin{eqnarray}\label{e:EH.4.5}
\psi:(-\varepsilon, \varepsilon)\times\mathcal{S}\to \mathbb{R}^{2n},\; (s, z)\mapsto \phi^s(z),
\end{eqnarray}
is well defined and $\mathbb{R}^{2n}\setminus \cup_{t\in (-\varepsilon, \varepsilon)}\phi^t(\mathcal{S})$
 has two components. Moreover, since $X$ satisfies (\ref{psi-inv}), there holds
\begin{equation}\label{e:EH.4.6}
\Psi(\phi^t(z))=\phi^t(\Psi z),\;\forall (t,z)\in (-\varepsilon, \varepsilon)\times\mathcal{S}.
\end{equation}
Define $U:=\cup_{t\in (-\varepsilon, \varepsilon)}\phi^t(\mathcal{S})$ and
\begin{eqnarray}\label{e:EH.4.7}
K_\psi:U\to\mathbb{R},\;w\mapsto \tau
\end{eqnarray}
if $w=\phi^\tau(z)\in U$ where $z\in\mathcal{S}$. Let $X_{K_\psi}$ be the
Hamiltonian vector field associated to $K_\psi$ defined by $\omega_0(\cdot,X_{K_\psi})=dK_\psi$.
Then for  $w=\phi^\tau(z)\in U$ it holds that
  \begin{equation}\label{e:EH.4.8}
 \lambda_w(X_{K_\psi})=(\omega_0)_w(X(w), X_{K_\psi}(w))=1
 \end{equation}
and
\begin{equation}\label{e:EH.4.9}
 X_{K_\psi}(\phi^\tau(z))=e^{-\tau}
  d\phi^\tau(z)[X_{K_\psi}(z)],  \quad\forall (\tau,z)\in(-\varepsilon, \varepsilon)\times\mathcal{S}.
\end{equation}
Clearly (\ref{e:EH.4.6}) and (\ref{e:EH.4.9}) show that $y:[0,T]\to \mathcal{S}_\tau=\phi^\tau(\mathcal{S})$
   satisfies
   $$
   \dot{y}(t)=X_{K_\psi}(y(t))\quad \hbox{and} \quad y(T)=\Psi y(0)
   $$
    if and only if
    $y(t)=\psi(\tau, x(e^{-\tau} t))$, where $x:[0, e^{-\tau} T]\to\mathcal{S}$
    satisfies
    $$
    \dot{x}(t)=X_{K_\psi}(x(t))\quad \hbox{and} \quad x(e^{-\tau} T)=\Psi x(0).
    $$
    In addition, there holds
    $$\int y^\ast\lambda =e^\tau\int x^\ast\lambda .$$

The following proposition is a generalization of Lemma~\ref{nointerior}. Its proof is a slight change of the proof of \cite[Proposition~1,~Section~7.4]{Sik90}.
\begin{proposition}\label{prop:EH.4.1}

Let  $\mathcal{S}\subset(\mathbb{R}^{2n},\omega_0)$ be as in Theorem~\ref{th:EHcontact}.
Then the interior of
 $$
  \Sigma^{\Psi}_{\mathcal{S}}=\{A(x)>0\,|\,x\;\text{is a }\;\Psi\hbox{-characteristic on}\;\mathcal{S} \}
 $$
is empty.
\end{proposition}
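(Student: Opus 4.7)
The plan is to adapt the argument of Lemma~\ref{nointerior} (the smooth convex case) to the contact-type setting, replacing the role of the Minkowski functional $j_D$ by the Liouville tube coordinate $K_\psi$ defined on $U$ in \eqref{e:EH.4.7}. Fix an arbitrary $\sigma \in \Sigma^\Psi_\mathcal{S}$; it suffices to show that for some sufficiently small $\varepsilon > 0$, the set $\Sigma^\Psi_\mathcal{S} \cap (\sigma - \varepsilon, \sigma + \varepsilon)$ is nowhere dense in $\mathbb{R}$.

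Fix $\alpha \in (1, 2)$. On the tube $U$ consider the Hamiltonian $F := \alpha^{-1} e^{\alpha K_\psi}$, whose Hamiltonian vector field is $X_F = e^{\alpha K_\psi} X_{K_\psi}$. If $x : [0, T] \to \mathcal{S}$ is a $\Psi$-characteristic on $\mathcal{S}$ reparametrized so that $\dot{x} = X_{K_\psi}(x)$, then \eqref{e:EH.4.8} together with Lemma~\ref{lem:EH.4.1} gives $A(x) = T$. Setting $\tau := (\alpha - 1)^{-1} \log T$, the rescaled curve $y(t) := \phi^\tau(x(Tt))$ satisfies $\dot{y} = X_F(y)$ on $\mathcal{S}_\tau$ by \eqref{e:EH.4.9} and $y(1) = \Psi y(0)$ by \eqref{e:EH.4.6}; moreover $K_\psi(y(t)) \equiv \tau$, and a direct computation using \eqref{e:EH.4.8} gives $\Phi_F(y) = \tfrac{\alpha - 1}{\alpha}\, T^{\alpha/(\alpha - 1)}$, a strictly increasing $C^\infty$ function of $T > 0$.

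Next, choose a $C^{2n+2}$ extension $\overline{F} : \mathbb{R}^{2n} \to \mathbb{R}$ of $F$ that agrees with $F$ on a neighborhood of $K_\psi^{-1}(\{\tau(T) : T \in (\sigma - \varepsilon, \sigma + \varepsilon)\})$ and is locally constant outside a slightly larger such neighborhood; this is possible by smoothness of $K_\psi$ on $U$, and $\overline{F}$ is automatically $\Psi$-invariant by \eqref{e:EH.4.6}. Each $y$ above then lies in $K_\psi^{-1}(\tau)$ and is a critical point of $\Phi_{\overline{F}}|_{C^1_\Psi}$. Now apply the Sikorav-type finite-dimensional reduction of Section~\ref{sec:convex2}: by Claim~\ref{cl:smooth}, $\Phi_{\overline{F}}|_{C^1_\Psi}$ is $C^{2n+1}$, and the map $\Omega : \mathbb{R}^{2n} \to C^1_\Psi$ defined by
\[
\Omega(z)(t) = g(t)\, \phi_{\overline{F}}^t(z) + (1 - g(t))\, \phi_{\overline{F}}^{t-1}(\Psi z),
\]
with $g : [0,1] \to [0,1]$ a smooth cut-off equal to $1$ near $0$ and $0$ near $1$, is $C^{2n}$. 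Hence $y = \Omega(z_y)$ with $z_y = y(0)$, and $z_y$ is a critical point of the $C^{2n}$ composition $\Phi_{\overline{F}} \circ \Omega$ on $\mathbb{R}^{2n}$; therefore $\Phi_{\overline{F}}(y)$ lies in the critical value set of $\Phi_{\overline{F}} \circ \Omega$, which by Sard's theorem is nowhere dense in $\mathbb{R}$.

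Since $T \mapsto \tfrac{\alpha-1}{\alpha}\, T^{\alpha/(\alpha-1)}$ is a diffeomorphism on $(\sigma - \varepsilon, \sigma + \varepsilon)$ (for $\sigma > 0$ and $\varepsilon$ small), nowhere density transfers back to yield that $\Sigma^\Psi_\mathcal{S} \cap (\sigma - \varepsilon, \sigma + \varepsilon)$ is nowhere dense. The main obstacle is constructing the modified Hamiltonian $\overline{F}$ with the required combination of $C^{2n+2}$-smoothness (to make the finite-dimensional reduction $C^{2n}$ and so applicable to Sard), $\Psi$-invariance (so that its flow commutes with $\Psi$), and fidelity to $F$ on the correct range of $K_\psi$-levels, so that every genuine $\Psi$-characteristic with action near $\sigma$ is captured by a critical point of $\Phi_{\overline{F}}$; the $\Psi$-invariance of $K_\psi$, which follows from \eqref{e:EH.4.6}, is the key ingredient making this possible.
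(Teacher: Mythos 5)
Your overall strategy is the same as the paper's: express elements of $\Sigma^{\Psi}_{\mathcal{S}}$ as critical values of an action functional $\Phi_{\overline F}$ attached to a Hamiltonian that is constant on levels of the Liouville coordinate $K_\psi$, then run the finite‑dimensional Sikorav reduction of Section~\ref{sec:convex2} to invoke Sard's theorem. The difference is the choice of model Hamiltonian. You port the convex-case Hamiltonian $j_D^\alpha$ literally by taking $F=\alpha^{-1}e^{\alpha K_\psi}$, so that the critical value is the monotone function $\tfrac{\alpha-1}{\alpha}T^{\alpha/(\alpha-1)}$; the paper instead works with the more flexible family of Hamiltonians ``adapted to $\psi$'' (displays \eqref{e:EH.4.10}--\eqref{e:EH.4.12}) and picks $f$ linear on a small interval, getting the affine relation $\Phi_H(x)=f'(\tau)-f(\tau)$ via Lemma~\ref{lem:EH.4.2} and letting $\tau\mapsto e^{-\tau}T$ sweep out a neighborhood of $T$. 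Your computations of $\dot y$, the action via \eqref{e:EH.4.8} and Lemma~\ref{lem:EH.4.1}, and $\Phi_F(y)=\tfrac{\alpha-1}{\alpha}T^{\alpha/(\alpha-1)}$ are all correct, and your appeal to Claim~\ref{cl:smooth}, the map $\Omega$, and Sard is exactly what makes the contradiction at the end of the paper's own (terser) proof rigorous.

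There is, however, one genuine gap. You set $\tau(T)=(\alpha-1)^{-1}\log T$, so as $T$ ranges over $(\sigma-\varepsilon,\sigma+\varepsilon)$ the level parameter $\tau$ lies near $(\alpha-1)^{-1}\log\sigma$, which is not close to $0$ unless $\sigma$ is close to $1$. But $\phi^\tau$ and $K_\psi$ are only defined on the thin collar $\psi\bigl((-\varepsilon_{\rm tube},\varepsilon_{\rm tube})\times\mathcal{S}\bigr)$, and for a general $\sigma\in\Sigma^\Psi_\mathcal{S}$ your $\tau(T)$ leaves that range; then $y(t)=\phi^\tau(x(Tt))$ is simply not defined, and the Hamiltonian level $K_\psi^{-1}(\tau)$ on which you want $\overline F$ to equal $F$ does not exist inside $U$. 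The paper avoids this by letting $\tau$ itself run over a prescribed small interval $(-\epsilon_1,\epsilon_1)$ and letting the associated action $e^{-\tau}T$ sweep out a neighborhood of $T$. The fix for your version is easy: replace $F$ by $F_\sigma:=\sigma\,\alpha^{-1}e^{\alpha K_\psi}$, so the balancing condition $Te^{\tau}=\sigma e^{\alpha\tau}$ gives $\tau(T)=(\alpha-1)^{-1}\log(T/\sigma)$, which stays near $0$ when $T$ is near $\sigma$; the critical value becomes $\tfrac{\alpha-1}{\alpha}\,\sigma^{1-\alpha/(\alpha-1)}\,T^{\alpha/(\alpha-1)}$, still a diffeomorphism of $(\sigma-\varepsilon,\sigma+\varepsilon)$ onto its image, and the rest of your argument goes through unchanged. (A side remark: the $\Psi$-invariance of $\overline F$ is not actually used by the reduction argument — only the $\Psi$-invariance of $K_\psi$, which you already use to get $y(1)=\Psi y(0)$, is essential.)
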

\begin{proof}
{Assume that the neighborhood $U$ of $\mathcal{S}$ defined above (\ref{e:EH.4.7}) is contained in a ball $B^{2n}(0,R)$.}
Fix $0<\delta<\varepsilon$. Let ${\bf A}_\delta$ and ${\bf B}_\delta$
 denote the unbounded and bounded components of
  $\mathbb{R}^{2n}\setminus \cup_{t\in (-\delta, \delta)}\phi^t(\mathcal{S})$
  respectively.   Then
$$
\phi^\tau(\mathcal{S})\subset {\bf B}_\delta\quad\hbox{for}\quad -\varepsilon<\tau<-\delta.
$$
Define the following $H\in\mathscr{F}(\mathbb{R}^{2n})$ by
\begin{equation}\label{e:EH.4.10}
H(x)=\left\{\begin{array}{ll}
C_0\ge 0 &{\rm if}\;x\in {\bf B}_\delta,\\
f(\tau) &{\rm if}\;x=\phi^\tau(y),\;y\in\mathcal{S},\;\tau\in [-\delta,\delta],\\
C_1 &{\rm if}\;x\in {\bf A}_\delta\cap B^{2n}(0,R),\\
h(|x|^2) &{\rm if}\;x\in {\bf A}_\delta\setminus B^{2n}(0,R)
\end{array}\right.
\end{equation}
where $f:(-\varepsilon,\varepsilon)\to\mathbb{R}$ and $h:[0, \infty)\to\mathbb{R}$ are smooth functions satisfying
\begin{eqnarray}
&&f|(-\varepsilon,-\delta]=C_0,\quad f|[\delta,\varepsilon)=C_1,\label{e:EH.4.11}\\
&&sh'(s)-h(s)\le 0\quad\forall s.\label{e:EH.4.12}
\end{eqnarray}
For a fixed $T\in\Sigma^{\Psi}_{\mathcal{S}}$,
 we choose the above function $f$ such that for some constants $0<\epsilon_1<\delta$ and  $\overline{C}$,
\begin{eqnarray}\label{e:EH.4.13}
  f(u)=Tu+\overline{C}\ge 0,\quad\forall u\in [-\epsilon_1, \epsilon_1].
  \end{eqnarray}
     Clearly
  $O:=\{e^{-\tau}T\,|\,\tau\in (-\varepsilon_1,\varepsilon_1)\}$ is an open neighborhood of $T$.
  If $e^{-\tau}T\in\Sigma_{\mathcal{S}}^\Psi$ then there exists a $\Psi$-characteristic on $\mathcal{S}$
  which can be parameterized as $x:[0,1]\to\mathbb{R}^{2n}$ satisfying
  $$
  \dot{x}=e^{-\tau}X_H(x),\quad x(1)=\Psi x(0).
  $$
  Let $y=\phi^\tau(x)$. Then
  $$
  \dot{y}=X_H(y), \quad y(1)=\Psi y(0),
  $$
  and it follows that $\Phi_H(y)=T-f(\tau)=T-T\tau-\overline{C}$ is a critical value of $\Phi_{H}$.
  Hence if
  $O\cap \Sigma^{\Psi}_{\mathcal{S}}$ has an interior point, then
  $$
  \left\{\tau\in (-\varepsilon_1,\varepsilon_1)\,|\,e^{-\tau}T\in \Sigma^{\Psi}_{\mathcal{S}}\right\}\subset \left\{\tau\in (-\varepsilon_1,\varepsilon_1)\,|\,T-f(\tau)\;\hbox{is a critical value of }\; A_H\right\}
  $$
has nonempty interior and therefore the critical value set of $A_H$ has nonempty interior.
{However, arguing as in Section~\ref{sec:convex2}, the critical value set of $A_H$ has empty interior since $H$ is a smooth function on $\mathbb{R}^{2n}$.}
This is a contradiction. Hence  $\Sigma^{\Psi}_{\mathcal{S}}$ has empty interior.
\end{proof}

\begin{proof}[\bf Proof of Theorem~\ref{th:EHcontact}]
{The proof of \cite[Theorem~7.5]{Sik90} can be carried here almost verbatim. We give a sketch here.}
For $C>0$ large enough and $\delta>2\eta>0$ small enough, define
$H=H_{C,\eta}\in\mathscr{F}(\mathbb{R}^{2n})$
   adapted to $\psi$ as the following:
\begin{equation}\label{e:EH.4.14}
H_{C,\eta}(x)=\left\{\begin{array}{ll}
C\ge 0 &{\rm if}\;x\in {\bf B}_\delta,\\
f_{C,\eta}(\tau) &{\rm if}\;x=\psi(\tau,y),\;y\in\mathcal{S},\;\tau\in [-\delta,\delta],\\
C&{\rm if}\;x\in {\bf A}_\delta\cap B^{2n}(0,R),\\
h(|x|^2) &{\rm if}\;x\in {\bf A}_\delta\setminus B^{2n}(0,R)
\end{array}\right.
\end{equation}
where $B^{2n}(0,R)\supseteq\overline{\psi((-\varepsilon,\varepsilon)\times\mathcal{S})}$
(the closure of $\psi((-\varepsilon,\varepsilon)\times\mathcal{S})$),
$f_{C,\eta}:(-\varepsilon, \varepsilon)\to\mathbb{R}$ and $h:[0, \infty)\to\mathbb{R}$ are smooth functions satisfying
\begin{eqnarray*}
&&f_{C,\eta}|[-\eta,\eta]\equiv 0,\quad f_{C,\eta}(s)=C\;\hbox{if}\;|s|\ge 2\eta,\\
&&f'_{C,\eta}(s)s>0\quad\hbox{if}\quad \eta<|s|<2\eta,\\
&&f'_{C,\eta}(s)-f_{C,\eta}(s)>c^\Psi_{\rm EH}(\mathcal{S})+1\quad\hbox{if}\;s>0\;\hbox{and}\;\eta<f_{C,\eta}(s)<C-\eta,\\
&&h_{C,\eta}(s)=a_Hs+b\quad\hbox{for $s>0$ large enough}, a_H=C/R^2> \mathfrak{t}(\Psi),\\
&&sh'_{C,\eta}(s)-h_{C,\eta}(s)\le 0\quad\forall s\ge 0.
\end{eqnarray*}
Moreover, we assume
\begin{equation}\label{e:EH.4.15}
 \det\left(\exp\left(\frac{2C}{R^2}J\right)-\Psi\right)\neq 0.
\end{equation}
Such a family $H_{C,\eta}$ ($C\to+\infty$, $\eta\to 0$) can be chosen to be cofinal in
the set $\mathscr{F}(\mathbb{R}^{2n},\mathcal{S})$ and
also have the property that
\begin{equation}\label{e:EH.4.14.1}
C\le C'\Rightarrow H_{C,\eta}\le H_{C',\eta},\qquad \eta\le \eta'\Rightarrow H_{C,\eta}\ge H_{C,\eta'}.
\end{equation}
It follows that
$$
c^\Psi_{\rm EH}(\mathcal{S})=\lim_{\eta\to 0\&C\to+\infty}c^\Psi_{\rm EH}(H_{C,\eta}).
$$
By Proposition~\ref{prop:EH.1.2}(i) and (\ref{e:EH.4.14.1}), $\eta\le \eta'\Rightarrow c^\Psi_{\rm EH}(H_{C,\eta})\le c^\Psi_{\rm EH}(H_{C,\eta'})$, and hence
\begin{equation}\label{e:EH.4.14.2}
\Upsilon(C):=\lim_{\eta\to 0}c^\Psi_{\rm EH}(H_{C,\eta})
\end{equation}
exists and
$$
\Upsilon(C)=\lim_{\eta\to 0}c^\Psi_{\rm EH}(H_{C,\eta})\ge \lim_{\eta\to 0}c^\Psi_{\rm EH}(H_{C',\eta})=\Upsilon(C'),
$$
i.e.,  $C\mapsto\Upsilon(C)$ is non-increasing.
Then it is obvious that
\begin{equation}\label{e:EH.4.14.3}
 c_{\rm EH}^\Psi(\mathcal{S})=\lim_{C\to+\infty}\Upsilon(C).
\end{equation}
By the construction of $H_{C,\eta}$, $c^\Psi_{\rm EH}(H_{C,\eta})$ is  a positive critical value of
$\Phi_{H_{C,\eta}}$ and the associated
critical point $x\in \mathbb{E}$ gives rise to  a nonconstant $\Psi$-characteristic
 sitting in the interior of $U_\delta:= \cup_{t\in (-\delta, \delta)}\phi^t(\mathcal{S})$. It follows that
$$
c^\Psi_{\rm EH}(H_{C,\eta})=\Phi_{H_{C,\eta}}(x)=f'_{C,\eta}(\tau)-f_{C,\eta}(\tau)
$$
where $f'_{C,\eta}(\tau)\in e^\tau\Sigma^{\Psi}_{\mathcal{S}}$ and $\eta<\tau<2\eta$.
Choose $C>0$ sufficiently large and $\eta>0$ sufficiently small such that
$$
c^\Psi_{\rm EH}(H_{C,\eta})<c^\Psi_{\rm EH}(\mathcal{S})+1.
$$
Then the choice of $f$ below (\ref{e:EH.4.14}) implies
$$
\hbox{either $f_{C,\eta}(\tau)<\eta\quad$ or $\quad f_{C,\eta}(\tau)>C-\eta$.}
$$

Choose a sequence of positive numbers $\eta_n\to 0$. Passing to a subsequence we may assume two cases.

\noindent{\bf Case 1}. {\it Suppose that $c^\Psi_{\rm EH}(H_{C,\eta_n})=f'_{C,\eta_n}(\tau_n)-f_{C,\eta_n}(\tau_n)=e^{\tau_n}a_n-f_{C,\eta_n}(\tau_n)$, where
$a_n\in\Sigma_\mathcal{S}^\Psi$,$\eta_n<\tau_n<2\eta_n$ and $0\le f_{C,\eta_n}(\tau_n)<\eta_n$.}
Since $c^\Psi_{\rm EH}(H_{C,\eta_n})\to\Upsilon(C)$, the sequence $e^{-\tau_n}(c^\Psi_{\rm EH}(H_{C,\eta_n})+f_{C,\eta_n}(\tau_n))=a_n$
is a bounded sequence. Passing to a subsequence we may assume that $(a_n)$ is convergent.
Let $a_n\to a_C\in \overline{\Sigma_{\mathcal{S}}^\Psi}$ (the closure of $\Sigma^{\Psi}_{\mathcal{S}}$).
Note that
\begin{eqnarray*}
\lim_{n\to\infty}\left(e^{-\tau_n}(c^\Psi_{\rm EH}(H_{C,\eta_n})+f_{C,\eta_n}(\tau_n))\right)&=&
\lim_{n\to\infty}e^{-\tau_n}(\lim_{n\to\infty}c^\Psi_{\rm EH}(H_{C,\eta_n})+\lim_{n\to\infty}f_{C,\eta_n}(\tau_n))\\
&=&\Upsilon(C).
\end{eqnarray*}
Hence
\begin{equation}\label{e:EH.4.16}
\Upsilon(C)=a_C\in \overline{\Sigma_{\mathcal{S}}^{\Psi}}.
\end{equation}
Note that by standard arguments one can show that $\overline{\Sigma_{\mathcal{S}}^{\Psi}}=\Sigma^{\Psi}_{\mathcal{S}}\cup\{0\}$. Therefore
$\overline{\Sigma_{\mathcal{S}}^{\Psi}}$ also has empty interior.

\noindent{\bf Case 2}. {\it Suppose that $c^\Psi_{\rm EH}(H_{C,\eta_n})=f'_{C,\eta_n}(\tau_n)-f_{C,\eta_n}(\tau_n)=e^{\tau_n}a_n-f_{C,\eta_n}(\tau_n)
=e^{\tau_n}a_n-C-(f_{C,\eta_n}(\tau_n)-C)$, where
$a_n\in\Sigma_\mathcal{S}$, $\eta_n<\tau_n<2\eta_n$ and $C-\eta_n<f_{C,\eta_n}(\tau_n)\le C$ .} As in  Case 1 there holds
\begin{equation}\label{e:EH.4.17}
\Upsilon(C)+C=a_C\in \overline{\Sigma_{\mathcal{S}}^{\Psi}}.
\end{equation}

\noindent{\bf Step 1}. {\it Prove $c^\Psi_{\rm EH}(\mathcal{S})\in\overline{\Sigma^{\Psi}_{\mathcal{S}}}$.}
Suppose that there exists a sequence $C_n\uparrow +\infty$  such that $\Upsilon(C_n)=a_{C_n}\in \overline{\Sigma^{\Psi}_{\mathcal{S}}}$ for each $n$.
Then
$$
c^\Psi_{\rm EH}(\mathcal{S})=\lim_{n\to\infty}\Upsilon(C_n)\in \overline{\Sigma_\mathcal{S}^{\Psi}}.
$$
Otherwise, we have
\begin{equation}\label{e:EH.4.18}
\left.\begin{array}{ll}
&\hbox{there exist $\bar{C}>0$ such that
(\ref{e:EH.4.17}) holds}\\
&\hbox{for each $C\in (\bar{C}, +\infty)$.}
\end{array}\right\}
\end{equation}
Let us  prove that this case does not occur.
Note that (\ref{e:EH.4.18}) implies

\begin{claim}\label{cl:EH.4.3}
If $C<C'$ belong to $(\bar{C}, +\infty)$ then
\begin{equation}\label{e:EH.4.19}
\Upsilon(C)+C\ge \Upsilon(C')+C'.
\end{equation}
\end{claim}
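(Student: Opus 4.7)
The claim asserts $\Upsilon(C)+C\ge \Upsilon(C')+C'$ for $C<C'$ in $(\bar C,+\infty)$. Once combined with the reverse inequality $\Upsilon(C')+C'\ge \Upsilon(C)+C$, which follows from Proposition~\ref{prop:EH.1.2}(ii) after normalising both members of the cofinal family to share a common asymptotic quadratic slope, the Claim would force $C\mapsto \Upsilon(C)+C$ to be constant on $(\bar C,+\infty)$; together with $\Upsilon\ge 0$ and $C\to\infty$ this is what will drive the contradiction excluding Case~2 in the sequel. My plan is therefore to transplant a near-optimal admissible deformation from the capacity problem for $H_{C',\eta}$ into that for $H_{C,\eta}$ and to recover the full gain $C'-C$ by leveraging assumption (\ref{e:EH.4.18}).

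As preparation, for each small $\eta>0$ I would build $H_{C,\eta}$ and $H_{C',\eta}$ via (\ref{e:EH.4.14}) with the same $R$, the same $\delta$, matched valley profiles $f_{C',\eta}=(C'/C)\,f_{C,\eta}$, and a common asymptotic slope $a_0>\max\{C'/R^2,\,\mathfrak t(\Psi)\}$, chosen so that after adjusting the additive constants in the quadratic tails one has
\[
H_{C',\eta}(x)-H_{C,\eta}(x)=C'-C \quad\text{on}\quad \mathbf B_\delta\cup(\mathbf A_\delta\cap B^{2n}(0,R))\cup\{|x|\ge R_0\},
\]
with $0\le H_{C',\eta}-H_{C,\eta}\le C'-C$ everywhere; the only region in which equality can fail is the thin collar $\{\psi(\tau,y):|\tau|\le 2\eta\}$. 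For any admissible $h\in\Gamma$ and any $x\in h(S^+)$, the identity
\[
\Phi_{H_{C,\eta}}(x)=\Phi_{H_{C',\eta}}(x)+\int_0^1(H_{C',\eta}-H_{C,\eta})(x(t))\,dt
\]
reduces the task to finding an $h$, nearly optimal for $\Phi_{H_{C',\eta}}$ (so that $\inf_{x\in h(S^+)}\Phi_{H_{C',\eta}}(x)\ge c^\Psi_{\rm EH}(H_{C',\eta})-\eta$), on which the integral on the right is bounded below by $(C'-C)-O(\eta)$ uniformly in $x$.

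The crux and genuine obstacle is therefore a \emph{concentration estimate}: under~(\ref{e:EH.4.18}) there exists an admissible $h$, near-optimal for $\Phi_{H_{C',\eta}}$, whose entire image lies in an $O(\eta)$-neighbourhood of the attaining critical orbit. I expect this to be obtained by flowing a given near-optimal $h$ along the negative pseudo-gradient of $\Phi_{H_{C',\eta}}$, which is legitimate because $\Phi_{H_{C',\eta}}$ satisfies the Palais--Smale condition (Lemma~\ref{lem:EH8}); the Case~2 hypothesis pins the attaining orbit down to some $\mathcal S_\tau$ with $f_{C',\eta}(\tau)>C'-\eta$, and the matched construction then forces $f_{C,\eta}(\tau)>C-C\eta/C'$ on the same $\mathcal S_\tau$, whence $(H_{C',\eta}-H_{C,\eta})(\cdot)\ge(C'-C)-O(\eta)$ in an entire neighbourhood of the orbit. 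Granting this concentration, the displayed integral has the required lower bound on the modified $h(S^+)$, so taking infima and then supremum yields $c^\Psi_{\rm EH}(H_{C,\eta})\ge c^\Psi_{\rm EH}(H_{C',\eta})+(C'-C)-O(\eta)$; letting $\eta\to 0$ produces (\ref{e:EH.4.19}).
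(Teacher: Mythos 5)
Your route is genuinely different from the paper's, and it breaks down at exactly the step you yourself flag as the crux. The ``concentration estimate'' --- that after flowing a near-optimal $h$ along the negative pseudo-gradient of $\Phi_{H_{C',\eta}}$ one may assume $h(S^+)$ lies in an $O(\eta)$-neighbourhood of the attaining critical orbit --- is not obtainable in this minimax framework and is in fact incompatible with it. By Proposition~\ref{prop:EH.1.1}(ii), every set $h(S^+)$ with $h\in\Gamma$ must meet $E^-\oplus E^0\oplus\mathbb{R}_+e$ for \emph{every} $e\in E^+\setminus\{0\}$; in particular it contains points whose $E^+$-components point in all directions of $E^+$, so it can never be confined to a small neighbourhood of a single loop. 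The Palais--Smale condition and the deformation along the flow only push the \emph{infimum} of $\Phi_{H_{C',\eta}}$ over $h(S^+)$ up towards the critical value; the points of $h(S^+)$ at which $\Phi_{H_{C',\eta}}$ exceeds that value remain spread throughout $E$. Now both $H_{C,\eta}$ and $H_{C',\eta}$ must vanish on a neighbourhood of $\mathcal{S}$ (this is forced by membership in $\mathscr{F}(\mathbb{R}^{2n},\mathcal{S})$), so on loops $x$ spending their time in that collar your correction term $\int_0^1(H_{C',\eta}-H_{C,\eta})(x(t))\,dt$ is near $0$ rather than near $C'-C$, and the identity only yields $\Phi_{H_{C,\eta}}(x)\approx\Phi_{H_{C',\eta}}(x)$. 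Nothing forces $\Phi_{H_{C',\eta}}(x)\ge c^{\Psi}_{\rm EH}(H_{C',\eta})+(C'-C)$ for such $x$, especially as $C'-C$ may be arbitrarily large; the Case~2 hypothesis (\ref{e:EH.4.18}) locates the critical \emph{point} realizing the critical value but gives no pointwise control over the minimax sets. Hence the key inequality $c^{\Psi}_{\rm EH}(H_{C,\eta})\ge c^{\Psi}_{\rm EH}(H_{C',\eta})+(C'-C)-O(\eta)$ is not established, and the proof does not go through.

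For comparison, the paper proves the Claim without any comparison of minimax sets: arguing by contradiction from $\Upsilon(C)+C<\Upsilon(C')+C'$, it uses only the monotonicity of $\Upsilon$ and an elementary infimum/intermediate-value argument to show that every $d\in(\Upsilon(C)+C,\Upsilon(C')+C')$ is of the form $\Upsilon(C_0)+C_0$ for some $C_0\in(C,C')$; by (\ref{e:EH.4.18}) such values lie in $\overline{\Sigma^{\Psi}_{\mathcal{S}}}$, so this set would contain an interval, contradicting Proposition~\ref{prop:EH.4.1}. If you wish to salvage a direct variational argument you would need a mechanism turning near-optimality of $h$ into information valid at \emph{every} point of $h(S^+)$, and the linking constraint rules that out.
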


\begin{proof}
Assume that for some $C'>C>\overline{C}$,
\begin{equation}\label{e:EH.4.20}
\Upsilon(C)+C<\Upsilon(C')+C'.
\end{equation}
 We shall prove:
\begin{equation}\label{e:EH.4.21}
\left.\begin{array}{ll}
 &\hbox{ for any given
$d\in (\Upsilon(C)+C,\Upsilon(C')+C')$}\\
&\hbox{ there exist $C_0\in (C, C')$ such that
$\Upsilon(C_0)+C_0=d$.}
\end{array}\right\}
\end{equation}
This contradicts the facts that
${\rm Int}(\Sigma^{\Psi}_{\mathcal{S}})=\emptyset$ and (\ref{e:EH.4.17})
holds for all large $C$.

Put $\Delta_d=\{C''\in (C, C')\,|\, C''+\Upsilon(C'')>d\}$. Since
$\Upsilon(C')+C'>d$ and
$\Upsilon(C')\le\Upsilon(C'')\le\Upsilon(C)$ for any $C''\in (C, C')$,
we obtain $\Upsilon(C'')+C''>d$ if $C''\in (C, C')$ is sufficiently close to $C'$.
Hence $\Delta_d\ne\emptyset$. Then $C_0:=\inf \Delta_d\in [C, C')$.
Let $(C_n'')\subset \Delta_d$ satisfy $C_n''\downarrow C_0$.
By $\Upsilon(C_n'')\le \Upsilon(C_0)$ we have
$d<C_n''+\Upsilon(C_n'')\le\Upsilon(C_0)+ C_n''$ for each $n\in\mathbb{N}$,
and thus $d\le\Upsilon(C_0)+C_0$ by letting $n\to\infty$.
Suppose that
\begin{equation}\label{e:EH.4.22}
d<\Upsilon(C_0)+C_0.
\end{equation}
Since $d>C+\Upsilon(C)$, this implies $C\ne C_0$ and so $C_0>C$.
For $\hat{C}\in (C, C_0)$, from $\Upsilon(\hat{C})\ge\Upsilon(C_0)$ and
(\ref{e:EH.4.22}) we derive that $\Upsilon(\hat{C})+\hat{C}>d$
if $\hat{C}$ is close to $C_0$. Therefore such $\hat{C}$ belongs to
$\Delta_d$, which contradicts $C_0=\inf\Delta_d$ and it follows that (\ref{e:EH.4.22}) does not hold. That is, $d=\Upsilon(C_0)+C_0$.
 (\ref{e:EH.4.21}) is proved. Since (\ref{e:EH.4.21}) contradicts the fact that
 $\overline{\Sigma^{\Psi}_{\mathcal{S}}}$ has empty interior, (\ref{e:EH.4.19}) does hold for all $\overline{C}<C<C'$.
\end{proof}

Since $\Xi:=\{C>\bar{C}\,|\, C\;\hbox{satisfying (\ref{e:EH.4.15})}\}$
is dense in $(\bar{C}, +\infty)$, it follows from Claim~\ref{cl:EH.4.3} that
$\Upsilon(C')+C'\le \Upsilon(C)+C$ if $C'>C$ is in
$\Xi$. Fix a $C^\ast\in\Xi$. Then
$$
\Upsilon(C')+C'\le \Upsilon(C^\ast)+C^\ast,\quad\forall C'\in\{C\in\Xi\,|\, C>C^\ast\}.
$$
Taking a sequence  $(C_n')\subset \{C\in\Xi\,|\, C>C^\ast\}$ such that $C_n'\to +\infty$,
we deduce that $\Upsilon(C_n')\to-\infty$. This contradicts the fact that
$\Upsilon(C_n')\to c^{\Psi}_{\rm EH}(\mathcal{S})>0$. Hence
(\ref{e:EH.4.18}) does not  hold!

\noindent{\bf Step 2.} {\it Prove}
\begin{equation}\label{e:EH.4.23}
c^\Psi_{\rm EH}(B)=c^\Psi_{\rm EH}(\mathcal{S}).
\end{equation}
Construct
\begin{equation}\label{e:EH.4.25}
\hat{H}_{C,\eta}(x)=\left\{\begin{array}{ll}
0 &{\rm if}\;x\in {\bf B}_\delta,\\
\hat{f}_{C,\eta}(\tau) &{\rm if}\;x=\psi(\tau,y),\;y\in\mathcal{S},\;\tau\in [-\delta,\delta],\\
C &{\rm if}\;x\in {\bf A}_\delta\cap B^{2n}(0,R),\\
\hat{h}(|x|^2) &{\rm if}\;x\in {\bf A}_\delta\setminus B^{2n}(0,R)
\end{array}\right.
\end{equation}
where $B^{2n}(0,R)\supseteq\overline{\psi((-\varepsilon,\varepsilon)\times\mathcal{S})}$, $\hat{f}_{C,\eta}:(-\varepsilon, \varepsilon)\to\mathbb{R}$ and $\hat{h}:[0, \infty)\to\mathbb{R}$ are smooth functions satisfying
\begin{eqnarray*}
&&\hat{f}_{C,\eta}|(-\infty,\eta]\equiv 0,\quad \hat{f}_{C,\eta}(s)=C\;\hbox{if}\;s\ge 2\eta,\\
&&\hat{f}'_{C,\eta}(s)s>0\quad\hbox{if}\quad \eta<s<2\eta,\\
&&\hat{f}'_{C,\eta}(s)-\hat{f}_{C,\eta}(s)>c^\Psi_{\rm EH}(\mathcal{S})+1\quad\hbox{if}\;s>0\;\hbox{and}\;
\eta<\hat{f}_{C,\eta}(s)<C-\eta,\\
&&\hat{h}_{C,\eta}(s)=a_Hs+b\quad\hbox{for $s>0$ large enough}, a_H=C/R^2> \mathfrak{t}(\Psi),\\
&&s\hat{h}'_{C,\eta}(s)-\hat{h}_{C,\eta}(s)\le 0\quad\forall s\ge 0,\\
&&\det\left(\exp\left(\frac{2C}{R^2}J\right)-\Psi\right)\neq 0.
\end{eqnarray*}
Then
\begin{equation}\label{e:EH.4.24}
c^\Psi_{\rm EH}(B)=\inf_{\eta>0, C>0}c^\Psi_{\rm EH}(\hat{H}_{C,\eta}).
\end{equation}
For $H_{C,\eta}$ in (\ref{e:EH.4.14}), we choose an associated
$\hat{H}_{C,\eta}$, where $\hat{f}_{C,\eta}|[0,\infty)={f}_{C,\eta}|[0,\infty)$
and $\hat{h}_{C,\eta}={h}_{C,\eta}$. Consider $H_s=sH_{C,\eta}+(1-s)\hat{H}_{C,\eta}$, $0\le s\le 1$ and put
$$
\Phi_s(x):=\Phi_{H_s}(x)\quad\forall x\in \mathbb{E}.
$$
It suffices to prove $c^\Psi_{\rm EH}(H_0)=c^\Psi_{\rm EH}(H_1)$. If $x$ is a critical point of $\Phi_s$ with $\Phi_s(x)>0$, then there holds $x([0,1])\in \mathcal{S}_\tau=\psi(\{\tau\}\times\mathcal{S})$
for some $\tau\in (\eta,2\eta)$. The choice of $\hat{H}_{C,\eta}$ shows
$H_s(x(t))\equiv {H}_{C,\eta}(x(t))$ for $t\in [0,1]$. This implies that
each $\Phi_s$ has the same positive critical values as $\Phi_{H_{C,\eta}}$.
By the continuity in Proposition~\ref{prop:EH.1.2}(ii), $s\mapsto c^\Psi_{\rm EH}(H_s)$ is continuous
and takes values in the set of positive critical values of $\Phi_{H_{C,\eta}}$
which has measure zero (see Section~\ref{sec:convex2}). Hence  $s\mapsto c^\Psi_{\rm EH}(H_s)$ is constant. In particular,
$$
c^\Psi_{\rm EH}(\hat{H}_{C,\eta})=c^\Psi_{\rm EH}(H_0)=c^\Psi_{\rm EH}(H_1)=c^\Psi_{\rm EH}(H_{C,\eta}).
$$
Summarizing the above arguments  we have proved that
$c^{\Psi}_{\rm EH}(\mathcal{S})=c^{\Psi}_{\rm EH}(B)\in \overline{\Sigma^{\Psi}_{\mathcal{S}}}$.
Note that $c^{\Psi}_{\rm EH}(B)>0$. Hence
$c^{\Psi}_{\rm EH}(\mathcal{S})=c^{\Psi}_{\rm EH}(B)\in \Sigma^{\Psi}_{\mathcal{S}}$.
\end{proof}

\section{Proofs of Theorems~\ref{MaSch}, \ref{Str}}\label{sec:appl}
\setcounter{equation}{0}

\subsection{Proof of Theorem~\ref{MaSch}}\label{sec:appl.1}

\noindent{\bf Step 1.}
Let $U=\bigcup_{\lambda \in I}\mathcal{S}_\lambda$ be a thickening of a compact and regular energy surface
$\mathcal{S}=\{x\in M\,|\, H(x)=0\}$ in a symplectic manifold $(M,\omega)$ as in (\ref{thicken}).
Corresponding to \cite[p. 109, Proposition~1]{HoZe94} we have:

\begin{claim}\label{cl:appl.1}
 For a converging sequence $\lambda_j\to\lambda^\ast$  in the interval $I$,
suppose that for every $\lambda_j$  the Hamiltonian boundary value problem
$$
 \dot{x}=X_H(x),\quad x(T_j)=\Psi x(0),\quad 0<T_j<\infty
 $$
 has a solution $x_j:[0,T_j]\to\mathcal{S}_{\lambda_j}$. If $T_j\le C$ for some constant $C>0$ and for all $j$,
 then $S_{\lambda^\ast}$ carries either a fixed point of $\Psi$ or
 a $\Psi$-characteristic $y:[0,T]\to S_{\lambda^\ast}$  satisfying
  $\dot{y}=X_H(y)$ and $0<T\le C$.
  \end{claim}
Indeed, for each $j$, the map $z_j:[0,1]\to S_{\lambda^\ast}$ defined by $z_j(t)=x_j(T_jt)$ satisfies
$\dot{z}_j(t)=T_jX_H(z_j(t))$ and $\Psi z_j(0)=z_j(1)$. By the Arzela-Ascoli theorem,
passing to a subsequence we may assume that $T_j\to T$ and $z_j\to z$ in $C^\infty([0,1], M)$.
Hence $z(1)=\Psi z(0)$ and $\dot{z}(t)=TX_H(z(t)$ for $0\le t\le 1$. Clearly, $0\le T\le C$.
If $T=0$ then $z$ is constant and therefore $z(0)\in S_{\lambda^\ast}$  is a fixed point of $\Psi$.
If $T>0$ then $y:[0,T]\to S_{\lambda^\ast}$ defined by $y(t):=z(t/T)$ is the desired
 $\Psi$-characteristic.

\noindent{\bf Step 2.} Along the ideas in \cite{MaSc05}, for each $n\in\mathbb{N}$ let $G_n$ be the set
of nonzero parameters $\lambda\in I=(-\varepsilon,\varepsilon)$ for which $S_{\lambda}$
contains either a fixed point of $\Psi$ or a $\Psi$-characteristic $y:[0,T]\to S_{\lambda}$  satisfying
  $$
  0<T\le n,\quad
  \dot{y}=X_H(y)\;\hbox{if $\lambda>0$},\quad \dot{y}=-X_H(y)\;\hbox{if $\lambda<0$}.
  $$
 The above claim implies that {$G_n\cup \{0\}$} is closed and therefore that
  $G=\cup^\infty_{n=1}G_n$ is a Lebesgue-measurable set. For $0<\delta<\varepsilon$ we define
  $$
  U_\delta:=\bigcup_{|\lambda|<\delta}\mathcal{S}_\lambda=\{x\in U\,|\, -\delta<H(x)<\delta\}
  $$
which is an open subset in $U$. Since $\mathcal{S}_0=\mathcal{S}$ has nonempty intersection with ${\rm Fix}(\Psi)$,
$U_\delta\cap{\rm Fix}(\Psi)\ne\emptyset$. It follows that $c^{\Psi}_{\rm HZ}(U_\delta,\omega)$ is well-defined
and Proposition~\ref{MonComf}(ii) implies that
 $c^{\Psi}_{\rm HZ}(U_{\delta_1},\omega)\le c^{\Psi}_{\rm HZ}(U_{\delta_2},\omega)\le c^{\Psi}_{\rm HZ}(U,\omega)$ for any $0<\delta_1<\delta_2<\varepsilon$.

\begin{claim}\label{cl:appl.2}
 For $\delta^\ast\in (0,\varepsilon)$, if
 there exist positive numbers
$L>0$ and $\mu\in (\delta^\ast, \varepsilon)$ such that
$$
c^{\Psi}_{\rm HZ}(U_{\delta},\omega)\le c^{\Psi}_{\rm HZ}(U_{\delta^\ast},\omega)+ L(\delta-\delta^\ast),\quad
\forall\delta\in[\delta^\ast, \mu],
$$
then $\delta^\ast\in G$ or $-\delta^\ast\in G$.
\end{claim}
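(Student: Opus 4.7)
The plan is to exploit the Lipschitz bound by constructing, for each sufficiently small $h>0$, a Hamiltonian $H_h\in\mathcal{H}^\Psi(U_{\delta^\ast+h},\omega)$ whose maximum strictly exceeds $c^{\Psi}_{\rm HZ}(U_{\delta^\ast+h},\omega)$; its non-admissibility will produce a nonconstant BVP solution confined to the thin shell $\{\delta^\ast\le|H|<\delta^\ast+h\}$, reparametrization will yield a $\Psi$-characteristic of uniformly bounded period on some $\mathcal{S}_{\lambda_h}$ with $|\lambda_h|\in[\delta^\ast,\delta^\ast+h)$, and a final compactness argument via Claim~\ref{cl:appl.1} (applied to $H$ or $-H$ depending on the sign of $\lambda_h$) will produce either a fixed point of $\Psi$ on $\mathcal{S}_{\pm\delta^\ast}$ or a $\Psi$-characteristic there, placing $\delta^\ast$ or $-\delta^\ast$ in $G$.

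Set $K:=c^{\Psi}_{\rm HZ}(U_{\delta^\ast},\omega)$. For parameters $h,\eta,\epsilon>0$ to be chosen, pick $H^\ast\in\mathcal{H}^\Psi_{ad}(U_{\delta^\ast},\omega)$ with $m:=\max H^\ast>K-\epsilon$; by property (ii) in the definition of $\mathcal{H}$, there is a compact $C^\ast\subset U_{\delta^\ast}$ with $H^\ast\equiv m$ on $U_{\delta^\ast}\setminus C^\ast$, so $H^\ast$ extends smoothly to $U_{\delta^\ast+h}$ by declaring it to equal $m$ on $U_{\delta^\ast+h}\setminus U_{\delta^\ast}$. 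Choose a smooth even function $g_h:\mathbb{R}\to[0,\infty)$ with $g_h\equiv 0$ on $[-\delta^\ast,\delta^\ast]$, $g_h\equiv M_h$ (its maximum) for $|s|\ge\delta^\ast+h-\sigma$ (small $\sigma>0$), monotone on each half-line, and $|g_h'|\le L+\eta$; then $M_h\le(L+\eta)(h-\sigma)$, and I take $M_h$ within $\epsilon$ of this bound. Setting
\[
H_h(x):=H^\ast(x)+g_h(H(x))\quad\text{on }U_{\delta^\ast+h},
\]
we obtain $H_h\in\mathcal{H}^\Psi(U_{\delta^\ast+h},\omega)$ with $\max H_h=m+M_h$ and the same vanishing locus as $H^\ast$ (which meets ${\rm Fix}(\Psi)$). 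Using the hypothesis $c^{\Psi}_{\rm HZ}(U_{\delta^\ast+h},\omega)\le K+Lh$, the inequality $\max H_h>c^{\Psi}_{\rm HZ}(U_{\delta^\ast+h},\omega)$ holds provided $M_h>Lh+\epsilon$, which is feasible whenever $\eta h>\epsilon+O(\sigma)$; e.g., $\eta=\sqrt{h}$ and $\epsilon=h^{3/2}/4$ will do.

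By definition of the capacity, $H_h$ is then not $\Psi$-admissible, so there exists a nonconstant solution $x_h:[0,T_h]\to U_{\delta^\ast+h}$ of $\dot x=X_{H_h}(x)$, $x(T_h)=\Psi x(0)$, $0<T_h\le 1$. The energy $H(x_h(t))\equiv\lambda_h$ is constant along the flow; if $|\lambda_h|<\delta^\ast$ then $g_h\equiv 0$ near $\lambda_h$, so $X_{H_h}=X_{H^\ast}$ along $x_h$, contradicting $\Psi$-admissibility of $H^\ast$. Hence $|\lambda_h|\in[\delta^\ast,\delta^\ast+h)$, which forces $x_h([0,T_h])\subset U_{\delta^\ast+h}\setminus U_{\delta^\ast}$, where $H^\ast\equiv m$ and so $X_{H^\ast}\equiv 0$; consequently $\dot x_h=g_h'(\lambda_h)X_H(x_h)$ with $g_h'(\lambda_h)\ne 0$ (otherwise $x_h$ would be constant). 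Reparametrizing $y_h(s):=x_h(s/|g_h'(\lambda_h)|)$ (flipping the time direction when $g_h'(\lambda_h)<0$) yields a $\Psi$-characteristic $y_h:[0,\tau_h]\to\mathcal{S}_{\lambda_h}$ satisfying $\dot y_h={\rm sign}(\lambda_h)\,X_H(y_h)$ and period $\tau_h=T_h|g_h'(\lambda_h)|\le L+\eta$.

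Finally, taking a sequence $h_n\downarrow 0$ with matching $\eta_n,\epsilon_n\downarrow 0$ as above, the corresponding $\lambda_{h_n}$ lie in $[-\delta^\ast-h_n,-\delta^\ast]\cup[\delta^\ast,\delta^\ast+h_n]$ and $\tau_{h_n}\le L+\eta_1$; passing to a subsequence we may assume $\lambda_{h_n}\to\delta^\ast$ or $\lambda_{h_n}\to-\delta^\ast$, and invoking Claim~\ref{cl:appl.1} with $H$ or $-H$ respectively delivers either a fixed point of $\Psi$ on the limit surface $\mathcal{S}_{\pm\delta^\ast}$ or a $\Psi$-characteristic there with $\dot y=\pm X_H(y)$ and period at most $L+\eta_1$, placing $\delta^\ast$ or $-\delta^\ast$ in $G_n$ for $n=\lceil L+\eta_1\rceil$, hence in $G$. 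The main technical obstacle is the localization of $x_h$ in the third paragraph: one must pin down that the nonconstant BVP solution is forced into the shell $U_{\delta^\ast+h}\setminus U_{\delta^\ast}$ where $H^\ast$ is locally constant (so that the reparametrization to a $\Psi$-characteristic of $H$ succeeds); this relies on the decomposition $H_h=H^\ast+g_h\circ H$ together with property (ii) of $\mathcal{H}$ guaranteeing $H^\ast\equiv m$ on a neighborhood of $\partial U_{\delta^\ast}$, and on the tight balance $\eta h>\epsilon$ between the three small parameters.
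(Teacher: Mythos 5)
Your proposal is correct and follows essentially the same route as the paper's proof: take an almost-optimal admissible Hamiltonian on $U_{\delta^\ast}$, extend it by a function of $H$ alone with derivative bounded by roughly $L$ so that the Lipschitz hypothesis forces the extension's maximum above $c^{\Psi}_{\rm HZ}(U_{\delta^\ast+h},\omega)$, trap the resulting nonconstant orbit in the shell, reparametrize it to a $\Psi$-characteristic of $\pm X_H$ with period bounded by $L+O(1)$, and conclude via Claim~\ref{cl:appl.1}. The only step to tighten is that $H\circ x_h$ is not a priori constant along the $X_{H_h}$-flow inside $U_{\delta^\ast}$ (where $\{H,H^{\ast}\}$ need not vanish), so you should first exclude that the orbit meets $\{|H|\le\delta^\ast\}$ --- using that $X_{H_h}$ vanishes on the interface $\{|H|=\delta^\ast\}$ and that an orbit lying in $U_{\delta^\ast}$ would contradict the admissibility of $H^{\ast}$ --- and only then conclude $H\circ x_h\equiv\lambda_h$ with $\delta^\ast<|\lambda_h|<\delta^\ast+h$.
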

In fact, for a fixed $\delta\in (\delta^\ast, \mu)$, by definition of $c^{\Psi}_{\rm HZ}$ we have $\tilde{H}\in \mathcal{H}_{ad}^{\Psi}(U_{\delta^\ast},\omega)$
such that $\max \tilde{H}>c^{\Psi}_{\rm HZ}(U_{\delta^\ast},\omega)-(\delta-\delta^\ast)$.
As in \cite{MaSc05} (or \cite[p. 315]{Ze10}) we take a smooth function $f:[0,\varepsilon)\to\mathbb{R}$ such that
\begin{description}
\item[(a)] $f(t)=\max \tilde{H}$ for $0\le t\le\delta^\ast$,
\item[(b)] $f(t)=c^{\Psi}_{\rm HZ}(U_{\delta},\omega)+(\delta-\delta^\ast)$ for $\delta\le t<\varepsilon$,
\item[(c)] $f'(t)>0$ for $t\in (\delta^\ast,\delta)$ and $f'(t)\in [0, L+3]$ for all $t\in [0,\varepsilon)$.
\end{description}
Define $F:U_{\delta}\to\mathbb{R}$ by setting $F=\tilde{H}$ on $U_{\delta^\ast}$ and
$F(x)=f(|H(x)|)$ for $x\in U_{\delta}\setminus U_{\delta^\ast}$. Then
$F\in\mathcal{H}^{\Psi}(U_{\delta},\omega)$ and $\max F=c^{\Psi}_{\rm HZ}(U_{\delta},\omega)+(\delta-\delta^\ast)>
c^{\Psi}_{\rm HZ}(U_{\delta},\omega)$. Hence for some $0<T\le 1$ we have a nonconstant differentiable
path $\gamma:[0,T]\to U_{\delta}$ satisfying $\dot{\gamma}=X_F(\gamma)$ and $\Psi\gamma(0)=\gamma(T)$.
Note that $\tilde{H}\in \mathcal{H}_{ad}^{\Psi}(U_{\delta^\ast},\omega)$ may be naturally extended into
an element in $\mathcal{H}_{ad}^{\Psi}( Cl(U_{\delta^\ast}),\omega)$ and
that $c^{\Psi}_{\rm HZ}(U_{\delta^\ast},\omega)=c^{\Psi}_{\rm HZ}( Cl(U_{\delta^\ast}),\omega)$,
where $Cl(A)$ is the closure of $A$.
Using the fact that $F$ is equal to a positive constant  along $\gamma$ we deduce that
$\gamma([0,T])$ is contained in $U_{\delta}\setminus Cl(U_{\delta^\ast})$.
This implies that $H\circ\gamma$ is equal to a constant $c$ in
$(\delta^\ast, \delta)$ or $(-\delta, -\delta^\ast)$  and \\
$\bullet$  $\dot{\gamma}(t)=f'(H(\gamma(t)))X_H(\gamma(t))$ on $[0,T]$ if $c\in (\delta^\ast, \delta)$,\\
$\bullet$  $\dot{\gamma}(t)=-f'(-H(\gamma(t)))X_H(\gamma(t))$ on $[0,T]$ if $c\in (-\delta, -\delta^\ast)$.\\
Let $\tau=f'(|c|)$ which belongs to $(0, L+3)$. Note that  the path $[0, \tau T]\ni t\to y(t)=\gamma(t/\tau)$
sits in $\mathcal{S}_c$ and satisfies $\Psi y(0)=y(\tau T)$ and
$$
   \left\{
   \begin{array}{l}
     \dot{x}=X_H(x)\quad\hbox{if}\quad c>0, \\
    \dot{x}=-X_H(x)\quad\hbox{if}\quad c<0.
   \end{array}
   \right.
$$
Moreover, $0<\tau T\le\tau\le L+3$.

Take a sequence $(\delta_j)$ in the interval $(\delta^\ast, \mu)$ such that $\delta_j\downarrow\delta^\ast$.
By the arguments above we have sequences $\lambda_j\in (\delta^\ast,\delta_j)$ and $T_j\in (0, L+3]$, $j=1,2,\cdots$,
such that for each $j$ there exists \\
$\bullet$ either a $\Psi$-characteristic $y_j:[0,T_j]\to S_{\lambda_j}$  satisfying
  $\dot{y}=X_H(y)$,\\
$\bullet$ or a $\Psi$-characteristic $y_j:[0,T_j]\to S_{-\lambda_j}$  satisfying
  $\dot{y}=-X_H(y)$.

\noindent
{Passing to a subsequence of $(\lambda_j)$ when necessary and using
Claim~\ref{cl:appl.1} we obtain: either $S_{\delta^\ast}$ carries
 a $\Psi$-characteristic $y:[0,T]\to S_{\delta^\ast}$  satisfying
  $\dot{y}=X_H(y)$ and $0<T\le L+3$, or  $S_{-\delta^\ast}$ carries
 a $\Psi$-characteristic $y:[0,T]\to S_{-\delta^\ast}$  satisfying
  $\dot{y}=-X_H(y)$ and $0<T\le L+3$, or $S_{\delta^\ast}\cup S_{-\delta^\ast}$
carries a fixed point of $\Psi$. Claim~\ref{cl:appl.2} is proved.}

\noindent{\bf Step 3.} {\it Prove statement (i).} For a nonzero $\lambda\in I$ let $\mathcal{P}(\mathcal{S}_\lambda,\Psi)$
consist of fixed points of $\Psi$ and $\Psi$-characteristics on
$\mathcal{S}_\lambda$ satisfying $\dot{y}={\rm sign}(\lambda)X_H(y)$, where
${\rm sign}(\lambda)=1$ if $\lambda>0$, and ${\rm sign}(\lambda)=-1$ if $\lambda<0$.
Since monotone nondecreasing function $(0,\varepsilon)\ni\delta\mapsto c^{\Psi}_{\rm HZ}(U_{\delta},\omega)\in\mathbb{R}$
is differentiable almost everywhere and thus Lipschitz continuous almost everywhere, we derive from Step 2
that
$$
\hat{G}:=\{\delta\in (0,\varepsilon)\,|\, \mathcal{P}(\mathcal{S}_\delta,\Psi)\ne\emptyset\;\hbox{or}\;
\mathcal{P}(\mathcal{S}_{-\delta},\Psi)\ne\emptyset\}=\{\delta\in (0,\varepsilon)\,|\, \delta\in G\;\hbox{or}\;
-\delta\in G\}
$$
has Lebesgue measure $\varepsilon$ and thus satisfies the requirement in (i).

\noindent{\bf Step 4.} {\it Prove statement (ii)}. For each $n\in\mathbb{N}$ let $\Lambda_n$ be the set
of nonzero parameters $\lambda\in I=(-\varepsilon,\varepsilon)$ for which $S_{\lambda}$
contains either a fixed point of $\Psi$ or a $\Psi$-characteristic $y:[0,T]\to S_{\lambda}$  satisfying
  $$
  0<T\le n,\quad
  \dot{y}=X_H(y)\quad\hbox{or}\quad \dot{y}=-X_H(y).
  $$
 Then $\Lambda_n$ contains $G_n$, and {$\Lambda_n\cup\{0\}$ is closed in $I$} and so  $\Lambda:=\cup^\infty_{n=1}\Lambda_n$ is a Lebesgue-measurable set.
By Step~3 the set $\{\delta\in (0,\varepsilon)\,|\, \delta\in \Lambda\;\hbox{or}\;
-\delta\in\Lambda\}$ has Lebesgue measure $\varepsilon$.
As in the proof of \cite{MaSc05} it follows from this that $\Lambda$ has Lebesgue measure $m(\Lambda)=2\varepsilon$.

\subsection{Proof of Theorem~\ref{Str}}\label{sec:appl.2}

We closely follow \cite{Str90, Str08}.
By the assumptions there exists $\delta_0>0$ such that
$\mathbb{R}^{2n}\setminus H^{-1}([1-\delta_0, 1+\delta_0])$
consists of a bounded component ${\bf B}$ containing the fixed point $z_0$ of $\Psi$
and an unbounded one  ${\bf A}$. As the arguments at the beginning
of Section~\ref{sec:convex} we may reduce to the case $z_0=0$.
 Then
 $r_0:=\sup\{\tau>0\,|\, B^{2n}(0,\tau)\subset{\bf B}\}>0$.
Clearly, by shrinking $\delta_0$ we can also assume
 that each $\beta\in [1-\delta_0, 1+\delta_0]$
is a regular value of $H$ and $\mathcal{S}_\beta:=H^{-1}(\beta)$ is diffeomorphic to $\mathcal{S}=\mathcal{S}_1$.
Let $\gamma$ be the diameter of $H^{-1}([1-\delta_0, 1+\delta_0])$.
Then $\gamma>r_0$. Fix $r\in (\gamma, 2\gamma)$. {Then
 $H^{-1}([1-\delta_0, 1+\delta_0])\cup{\bf B}$ is contained in
 the ball $B^{2n}(0,r)$.}

Fix a number $\beta_0\in (1-\delta_0, 1+\delta_0)$ and choose $0<\delta<(\delta_0-|1-\beta_0|)/3$.
Then the closure of  $I_0:=(\beta_0-\delta,\beta_0+\delta)$
is contained in $(1-\delta_0, 1+\delta_0)$.
Define
$$
U_{\delta}=H^{-1}\left([1-\delta_0+\delta,  1+\delta_0-\delta]\right).
$$
Let ${\bf A}_{\delta}$ and ${\bf B}_{\delta}$ be the unbounded and bounded components
of $\mathbb{R}^{2n}\setminus U_{\delta}$ respectively. Then
$U_\delta\cup {\bf B}_\delta\subset B^{2n}(0,r)$.
We modify  the constant $b$ and the smooth functions $f, g$
in \cite{HoZe87} such that
\begin{eqnarray*}
&&(\frac{\mathfrak{t}(\Psi)}{2}+\epsilon) r^2<b<\frac{2\mathfrak{t}(\Psi)+4\epsilon}{3} r^2,\\
&&f(s)=0\;\hbox{for}\;s\le-\delta,\quad f(s)=b\;\hbox{for}\;s\ge\delta,\quad f'(s)>0\;\hbox{for}\;|s|<\delta,\\
&&g(s)=b\;\hbox{for}\;s\le r,\quad g(s)\ge(\frac{\mathfrak{t}(\Psi)}{2}+\epsilon) s^2\;\hbox{for}\;s>r,\quad
g(s)=(\frac{\mathfrak{t}(\Psi)}{2}+\epsilon) s^2\;\hbox{for large $s$},\\
&&{\hbox{where $0<\epsilon\ll 1$ satisfies $\det(\Psi-e^{(\mathfrak{t}(\Psi)+2\epsilon)J})\neq 0$},}\\
&&0<g'(s)\le (\mathfrak{t}(\Psi)+2\epsilon)s\;\hbox{for}\;s>r.\;\hbox{(So $g(s)\le b+ (\frac{\mathfrak{t}(\Psi)}{2}+\epsilon)(s^2-r^2)\;\forall r$)}.
\end{eqnarray*}
Following \cite[Chap. II, \S9]{Str08},
for $m\in\mathbb{N}$ and $\beta\in I_0$ we define
$$
H_{\beta,m}(x)=\left\{\begin{array}{ll}
0 &{\rm if}\;x\in {\bf B}_{\delta},\\
f(m(H(x)-\beta)) &{\rm if}\;x\in U_\delta,\\
b &{\rm if}\;x\in {\bf A}_{\delta}\cap B^{2n}(0,r),\\
g(|x|) &{\rm if}\;x\in {\bf A}_{\delta}\setminus B^{2n}(0,r)
\end{array}\right.
$$
and a functional $\Phi_{m,\alpha}$ on the space $\mathbb{E}$ in (\ref{e:spaceDecomposition}) by
$$
{\Phi_{\beta,m}(x)=\Phi_{H_{\beta,m}}(x)=\frac{1}{2}(\|x^+\|^2_{\mathbb{E}}-\|x^-\|^2_{\mathbb{E}})-\int_0^1 H_{\beta,m}(x(t))dt.}
$$
Note that for any $m\in\mathbb{N}$ and $\beta\in I_0$ the function $H_{\beta,m}$ satisfies inequalities
\begin{eqnarray}\label{e:str.1}
&&-b+(\frac{\mathfrak{t}(\Psi)}{2}+\epsilon)|x|^2\le H_{\beta,m}(x)\le (\frac{\mathfrak{t}(\Psi)}{2}+\epsilon)|x|^2+ b\quad
     \forall x\in\mathbb{R}^{2n},\\
&&H_{\beta_1,m}\ge H_{\beta_2,m}\quad\hbox{for}\;\beta_1\le\beta_2,\;\beta_i\in I_0,\;i=1,2.\label{e:str.2}
\end{eqnarray}
Since $B^{2n}(0, r_0)\subset{\bf B}\subset{\bf B}_\delta$ ,
$H_{\beta,m}\equiv 0$ in $B^{2n}(0, r_0)$ and there exist constants $C_1>0, C_2>0$ independent of $m\in\mathbb{N}$
and $\beta\in I_0$ such that
\begin{eqnarray}\label{e:str.3}
|H_{\beta,m}(z_0+x)|\le C_1|x|^2\quad\hbox{and}\quad
|H_{\beta,m}(z_0+x)|\le C_2|x|^3\quad\forall x\in\mathbb{R}^{2n}.
\end{eqnarray}
Moreover, for $|x|$ large enough we have uniformly (in $m\in\mathbb{N}$ and $\beta\in I_0$)
$$
{\nabla H_{\beta,m}(x)= (\mathfrak{t}(\Psi)+ 2\epsilon)x\quad\hbox{and}\quad
(H_{\beta,m})_{xx}= (\mathfrak{t}(\Psi)+ 2\epsilon)I_{2n}.}
$$
It follows that for some positive constants $C_3, C_4$ independent of $m\in\mathbb{N}$
and $\beta\in I_0$ there holds
\begin{equation}\label{e:str.4}
{ |\nabla H_{\beta,m}(x)|\le(\mathfrak{t}(\Psi)+ 2\epsilon)|x|+C_3\quad\hbox{and}\quad
|(H_{\beta,m})_{xx}|\le C_4}
\end{equation}
on $\mathbb{R}^{2n}$. Then $\Phi_{\beta,m}$ is still a $C^1$-functional on $\mathbb{E}$ satisfying  the (PS) condition, and
each critical point $x$ of it is smooth and satisfies the Hamiltonian boundary value problem
\begin{equation}\label{e:str.5}
       \dot{x}=X_{H_{\beta,m}}(x)\quad\hbox{and}\quad x(1)=\Psi x(0).
\end{equation}
(See Propositions~\ref{prop:solution} and \ref{prop:PSmale}.)
By Proposition~\ref{Lip} the conditions in (\ref{e:str.4}) also insures that
$\nabla\Phi_{\beta,m}$ satisfies
\begin{equation}\label{e:str.6}
     \|\nabla\Phi_{\beta,m}(x)-\nabla\Phi_{\beta,m}(y)\|_{\mathbb{E}}\le \ell\|x-y\|_{\mathbb{E}}\quad\forall x,y\in \mathbb{E}
\end{equation}
for some constant $\ell>0$ independent of $m\in\mathbb{N}$
and $\beta\in I_0$.

As in the proofs of \cite[page 138, Lemma~9.2]{Str08} we have:

\begin{claim}\label{cl:appl.3}
Let $x\in\mathbb{E}$ be a critical point of $\Phi_{\beta,m}$ with $\Phi_{\beta,m}(x)>0$. Then
$H(x(t))\equiv h\in\mathbb{R}$ with $|h-\beta|<\delta/m$,
$$
T_x=mf'(m(H(x)-\beta))>0,
$$
and $y:[0,T_x]\to \mathbb{R}^{2n},\;t\mapsto x(t/T_x)$ satisfies:
$\dot{x}=X_{H}(x)$ and $x(T_x)=\Psi x(0)$.
\end{claim}

Let  $\mathbb{E}^+, \mathbb{E}^0, \mathbb{E}^-$ be as in (\ref{e:spaceDecomposition}).
Corresponding to \cite[page 138, Lemma~9.3]{Str08} there holds:
\begin{claim}\label{cl:appl.4}
There exist numbers $\alpha>0$, $\rho>0$ independent of $m\in\mathbb{N}$
and $\beta\in I_0$ such that
$\Phi_{\beta,m}(x)\ge\alpha$ for $x\in S^+_\rho=\{x\in \mathbb{E}^+\,|\,
\|x\|_{\mathbb{E}}=\rho\}$.
\end{claim}
\begin{proof}
Since we do not know whether the space $\mathbb{E}$ in (\ref{e:spaceDecomposition})
can be embedded into some $L^p([0,1],\mathbb{R}^{2n})$ with $p>2$,
the proof of \cite[page 138, Lemma~9.3]{Str08} does not work in the present case.
However, because of estimates in (\ref{e:str.3}),
as in the proof of Proposition~\ref{prop:EH.1.4}  we may still use
the method in the proof of \cite[page 93, Lemma~9]{HoZe90}.
 Indeed,  it suffices to prove that
\begin{equation}\label{e:str.7}
\lim_{\|x\|_{\mathbb{E}}\rightarrow 0}\frac{\int_0^1 H_{\beta,m}(x)}{\|x\|^2_{\mathbb{E}}}=0
\end{equation}
uniformly $m\in\mathbb{N}$ and $\beta\in I_0$.
 Otherwise, suppose there exist sequences $(x_j)\subset\mathbb{E}$, $(m_j)\subset\mathbb{N}$ and $(\beta_j)\subset I_0$,
   and $d>0$ satisfying
  \begin{equation}\label{e:str.8}
  \|x_j\|_{\mathbb{E}}\rightarrow 0 \quad \hbox{and} \quad \frac{\int_0^1 H_{\beta_j,m_j}(x_j)}{\|x_j\|^2_{\mathbb{E}}}
  \geq d>0\quad\forall j.
  \end{equation}
  Let $y_j=\frac{x_j}{\|x_j\|_{\mathbb{E}}}$. Then $\|y_j\|_{\mathbb{E}}=1$ and hence
   $(y_j)$ has a convergent subsequence in $L^2$.
  By  \cite[Th.4.9]{Br11}
  we have $w\in L^2$ and a subsequence of $(y_j)$, still denoted by $(y_j)$, such that
   $y_j(t)\rightarrow y(t)$ a.e. on $(0,1)$ and that
   $|y_j(t)|\leq w(t)$  a.e. on $(0,1)$ and for each $j$.
  As in the proof of Proposition~\ref{prop:EH.1.4} it follows from (\ref{e:str.3}) that
     \begin{eqnarray*}
  &&\frac{H_{\beta_j,m_j}(x_j(t))}{\|x_j\|_{\mathbb{E}}^2}\leq  C_1w(t)^2,\quad \hbox{a.e. on $ (0,1)$},\;\forall j,\\
 &&  \frac{H_{\beta_j,m_j}(x_j(t))}{\|x_j\|_{\mathbb{E}}^2}\leq  C_2|x_j(t)|w(t)^2,\quad \hbox{a.e. on $(0,1)$},\;\forall j,
   \end{eqnarray*}
   where $C_1$, $C_2$ are independent of $m_j$ and $\beta_j$.
   Moreover, by the first claim in (\ref{e:str.8})  $(x_j)$
  has a subsequence $x_{j_l}(t)\rightarrow 0$ a.e. on $(0,1)$.
  Using the Lebesgue dominated convergence theorem we deduce
    $$
  \int_0^1 \frac{H_{\beta_j,m_j}(x_{j_l}(t))}{\|x_{j_l}\|_{\mathbb{E}}^2}\rightarrow 0
  $$
  which contradicts the second claim in (\ref{e:str.8}).
  \end{proof}
Similar to Proposition~\ref{prop:EH.1.3},
let $\hat{e}(t)= \frac{1}{\mathfrak{t}(\Psi)}e^{ \mathfrak{t}(\Psi) Jt}X$ where $X\in\mathbb{R}^{2n}$ satisfies
$e^{\mathfrak{t}(\Psi)J}X=\Psi X$ and $|X|=1$. Then $\hat{e}\in \mathbb{E}^+$, $\|\hat{e}\|_{\mathbb{E}}=1$ and $\|\hat{e}\|_{L^2}=\frac{1}{\mathfrak{t}(\Psi)}$.
Define
$$
Q_R:=\{x=s\hat{e}+ x^0+x^-\in \mathbb{E}\,|\, \|x^0+x^-\|_{\mathbb{E}}\le R,\;0\le s\le R\}.
$$
Let $\partial Q_R$ denote the relative boundary of $Q_R$ in $\mathbb{E}^-\oplus \mathbb{E}^0\oplus\mathbb{R}\hat{e}$.

\begin{claim}\label{cl:appl.5}
There exists a number $R>\rho$,  independent of $m\in\mathbb{N}$
and $\beta\in I_0$, such that
$\Phi_{\beta,m}|_{\partial Q_R}\le 0$.
\end{claim}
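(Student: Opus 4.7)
The plan is to estimate $\Phi_{\beta,m}$ on each of the three faces of $\partial Q_R$ using the quadratic coercive lower bound on $H_{\beta,m}$ in (\ref{e:str.1}), which is uniform in $m$ and $\beta$. The argument proceeds in the spirit of the analogous bounding constructions in \cite{HoZe87,HoZe90,Str08,Str90}, but the essential input is the identity $\|\hat{e}\|_E^2=\mathfrak{t}(\Psi)\|\hat{e}\|_{L^2}^2$ coming from the fact that $\hat{e}$ is an eigenvector of $-J\frac{d}{dt}$ for the smallest positive eigenvalue $\mathfrak{t}(\Psi)$.

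First I would write $x=s\hat{e}+x^0+x^-\in Q_R$ and observe that, since $\hat{e}$, $x^0$ and $x^-$ belong to mutually orthogonal eigenspaces of $\Lambda$, they are orthogonal both in $E$ and in $L^2$. This gives the action identity
\[
\mathfrak{a}(x)=\frac{s^2}{2}\|\hat{e}\|_E^2-\frac{1}{2}\|x^-\|_E^2
\]
and the $L^2$ decomposition $\|x\|_{L^2}^2=s^2\|\hat{e}\|_{L^2}^2+\|x^0\|_{L^2}^2+\|x^-\|_{L^2}^2$. Combining the lower bound $H_{\beta,m}(z)\ge-b+(\mathfrak{t}(\Psi)/2+\epsilon)|z|^2$ from (\ref{e:str.1}) with $\|\hat{e}\|_E^2=\mathfrak{t}(\Psi)\|\hat{e}\|_{L^2}^2$, the quadratic terms in $s$ combine to yield the crucial cancellation
\[
\tfrac{s^2}{2}\|\hat{e}\|_E^2-(\mathfrak{t}(\Psi)/2+\epsilon)s^2\|\hat{e}\|_{L^2}^2=-\epsilon s^2\|\hat{e}\|_{L^2}^2,
\]
so altogether
\[
\Phi_{\beta,m}(x)\le b-\epsilon s^2\|\hat{e}\|_{L^2}^2-\tfrac{1}{2}\|x^-\|_E^2-(\mathfrak{t}(\Psi)/2+\epsilon)\bigl(|x^0|^2+\|x^-\|_{L^2}^2\bigr).
\]
Every term except $b$ is non-positive, and the coefficients depend only on $\mathfrak{t}(\Psi)$, $\epsilon$ and $b$, hence are uniform in $m$ and $\beta$.

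Next I would check the three faces making up $\partial Q_R$: the face $s=0$ requires only the trivial bound $H_{\beta,m}\ge0$, giving $\Phi_{\beta,m}(x)=-\frac{1}{2}\|x^-\|_E^2-\int_0^1H_{\beta,m}(x)\,dt\le0$; the face $s=R$ with $\|x^0+x^-\|_E\le R$ is controlled by the inequality above provided $\epsilon R^2\|\hat{e}\|_{L^2}^2\ge b$, i.e.\ $R\ge\sqrt{b/(\epsilon\|\hat{e}\|_{L^2}^2)}$; and on the cylindrical face $\|x^0+x^-\|_E=R$, $0\le s\le R$, the $E$-orthogonality $\|x^0\|^2+\|x^-\|_E^2=R^2$ together with the bound above yields $\Phi_{\beta,m}(x)\le b-c R^2$ for some constant $c>0$ independent of $m$ and $\beta$, which is $\le0$ for $R$ sufficiently large. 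Taking $R>\rho$ large enough to satisfy both inequalities simultaneously then proves the claim.

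I do not expect a real obstacle here: the main point is bookkeeping to make sure that all constants appearing in the upper bound are independent of $m$ and $\beta$. This uniformity follows immediately from the fact that (\ref{e:str.1}) and the definition of $H_{\beta,m}$ only involve the fixed constants $b$, $\epsilon$, $\mathfrak{t}(\Psi)$, and $r$. The only mildly delicate step is the cancellation in the $s^2$ coefficient, which is why one chose $\hat{e}$ as the eigenvector associated with exactly the smallest positive eigenvalue $\mathfrak{t}(\Psi)$ and why the coefficient $\mathfrak{t}(\Psi)/2+\epsilon$ (rather than $\mathfrak{t}(\Psi)/2$) in the lower bound for $H_{\beta,m}$ is essential to produce a strictly negative quadratic form in $s$.
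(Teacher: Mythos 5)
Your proposal is correct and follows essentially the same route as the paper: the same orthogonal decomposition of $x=s\hat{e}+x^0+x^-$ in both $E$ and $L^2$, the same eigenvalue identity $\|\hat{e}\|_E^2=\mathfrak{t}(\Psi)\|\hat{e}\|_{L^2}^2$ producing the cancellation of the $s^2$ terms down to $-\epsilon s^2/\mathfrak{t}(\Psi)$, and the same face-by-face check of $\partial Q_R$ with uniformity in $m$ and $\beta$ coming from the fixed constants $b$, $\epsilon$, $\mathfrak{t}(\Psi)$ in (\ref{e:str.1}). No gaps.
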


\begin{proof}
For $x=s\hat{e}+ x^0+x^-$ with $s=R$ or $\|x^0+x^-\|_{\mathbb{E}}=R$, by (\ref{e:str.1}) we have
\begin{eqnarray*}
&&\mathfrak{a}(x)=\frac{1}{2}(\|s\hat{e}\|^2_{\mathbb{E}}-\|x^-\|_{\mathbb{E}}^2)=\frac{1}{2}(s^2-\|x^-\|_{\mathbb{E}}^2),\\
&&\int^1_0H_{\beta,m}(x(t))dt\ge -b+ (\frac{\mathfrak{t}(\Psi)}{2}+\epsilon)\int^1_0|x(t)|^2dt.
\end{eqnarray*}
Note that $\|s\hat{e}+ x^0+x^-\|^2_{L^2}=\|x^0\|^2_{L^2}+\|x^-\|^2_{L^2}+\frac{s^2}{\mathfrak{t}(\Psi)}$.
Hence we arrive at
\begin{eqnarray*}
\Phi_{\beta,m}(x)&\le&
\frac{1}{2}(s^2-\|x^-\|_{\mathbb{E}}^2)+
 b-(\frac{\mathfrak{t}(\Psi)}{2}+\epsilon)(\|x^0\|^2_{L^2}+\|x^-\|^2_{L^2}+\frac{s^2}{\mathfrak{t}(\Psi)})\\
 &=&-\frac{\epsilon s^2}{\mathfrak{t}(\Psi)}-\frac{1}{2}\|x^-\|_{\mathbb{E}}^2-
 (\frac{\mathfrak{t}(\Psi)}{2}+\epsilon)(\|x^0\|^2_{L^2}+\|x^-\|^2_{L^2}+b\\
 &\le& 0
\end{eqnarray*}
if $R>0$ is sufficiently large. Moreover, it is clear that $\Phi_{\beta,m}(x)\le 0$
for $x=s\hat{e}+ x^0+x^-$ with $s=0$.
\end{proof}

As in \cite[page 134]{Str08} let $\tilde{\Gamma}$ be the class of maps
$h\in C^0(\mathbb{E}, \mathbb{E})$ such that $h$ is homotopic to the identity through
a family of maps $h_t=L_t+K_t$, $0\le t\le T$, where $L_0={\rm id}_{\mathbb{E}}$, $K_0=0$
and $L_t:\mathbb{E}\to \mathbb{E}$ is a Banach space isomorphism satisfying $L_t(\mathbb{E}^\ast)=\mathbb{E}^\ast$,
$\ast=0,+,-$, and where $K_t$ is compact and $h_t(\partial Q_R)\cap S^+_\rho=\emptyset$
for each $t$.
Repeating the proofs of \cite[Lemmas~8.10,8.11]{Str08} we can obtain that $\partial Q_R$ and $S^+_\rho$
link with respect to $\tilde{\Gamma}$ and that the gradient flow $G:\mathbb{E}\times [0, \infty)\to \mathbb{E}$
given by
$$
\frac{\partial}{\partial t}G(x,t)=-\nabla\Phi_{\beta,m}(G(x,t))\quad\hbox{and}\quad G(x,0)=x
$$
exists globally and $G(\cdot,T)\in\tilde\Gamma$ for any $T\ge 0$.
Hence $G(\partial Q_R,t)\cap S^+_\rho\ne\emptyset$ for all $t\ge 0$.
By the standard arguments we deduce that
 \begin{equation}\label{e:str.9}
c(H_{\beta,m}):=\inf_{t\ge 0}\sup_{x\in Q_R}\Phi_{\beta,m}(G(x,t))\ge
\inf_{x\in S^+_\rho}\Phi_{\beta,m}(x)\ge\alpha
  \end{equation}
 are positive critical values for all $\beta, m$.
 On the other hand, for any $t\ge 0$ it holds that
 \begin{eqnarray*}
 \sup_{x\in Q_R}\Phi_{\beta,m}(G(x,t))&\le& \sup_{x\in Q_R}\Phi_{\beta,m}(G(x,0))\\
 &=&\sup_{x\in Q_R}\Phi_{\beta,m}(x)\\
&\le& \sup_{x\in \mathbb{E}^-\oplus \mathbb{E}^0\oplus\mathbb{R}_{\ge 0}e}\Phi_{H_{\beta,m}}(x)\\
&\le&\sup_{z\in\mathbb{C}^n}\left(\frac{ \mathfrak{t}(\Psi)}{2}|z|^2-H_{\beta,m}(z)\right)
 \end{eqnarray*}
where the final inequality is obtained as in the proof of (\ref{e:EH.1.3}) in
 Proposition~\ref{prop:EH.1.3}.
 By this and (\ref{e:str.1}) we also have
\begin{eqnarray}\label{e:str.10}
c(H_{\beta,m})\le\sup_{z\in\mathbb{C}^n}\left(\frac{ \mathfrak{t}(\Psi)}{2}|z|^2-H_{\beta,m}(z)\right)\le b.
\end{eqnarray}

\begin{claim}\label{cl:appl.6}
For fixed $x\in \mathbb{E}$ and $m\in\mathbb{N}$, $I_0\ni\beta\mapsto\Phi_{\beta,m}(x)$
 is monotone non-decreasing by (\ref{e:str.2}) and there holds
 $$
\frac{\partial}{\partial\beta}\Phi_{\beta,m}(x)=m\int^1_0f'(m(H(x(t))-\beta))dt.
$$
 In particular, for a critical point $x$ of $\Phi_{\beta,m}$ with $\Phi_{\beta,m}(x)>0$,
 $\frac{\partial}{\partial\beta}\Phi_{\beta,m}(x)$ is equal to $T_x$ given by
 Claim~\ref{cl:appl.3}.
 \end{claim}

Corresponding to the critical value $c(H_{\beta,m})$ in (\ref{cl:appl.6}) we have a critical point
$x_{\beta,m}\in \mathbb{E}$. Since for each $m\in\mathbb{N}$ the map $I_0\ni\beta\mapsto
c(H_{\beta,m})=\Phi_{\beta,m}(x_{\beta,m})$ is non-decreasing, as in the arguments on \cite[page 140]{Str08}
we have $C_\beta:=\lim\inf_{m\to\infty}\frac{\partial}{\partial\beta}c(H_{\beta,m})<\infty$
for almost every $\beta\in I_0$. Fixing such a $\beta$
we get a subsequence $\Lambda\subset\mathbb{N}$ such that
$\frac{\partial}{\partial\beta}c(H_{\beta,m})\to C_\beta$ as $m\in\Lambda$ and $m\to\infty$.
Repeating the proof of \cite[Lemma~9.4]{Str08} yields

\begin{claim}\label{cl:appl.7}
For any $m\in\Lambda$ there exists a critical point $x_{\beta,m}$ of $\Phi_{\beta,m}$ such that
 $\Phi_{\beta,m}(x_{\beta,m})=c(H_{\beta,m})\ge\alpha$
 and $T_{\beta,m}:=\frac{\partial}{\partial\alpha}\Phi_{\beta,m}(x_{\beta,m})\le C_\beta+4$.
 \end{claim}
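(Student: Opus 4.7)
The plan is to invoke Struwe's \emph{monotonicity trick} in the spirit of \cite[Lemma~9.4]{Str08}, adapted to our $\Psi$-dependent variational setup. By (\ref{e:str.2}) the map $\beta\mapsto c(H_{\beta,m})$ is monotone non-decreasing on $I_0$, hence differentiable almost everywhere; the subsequence $\Lambda\subset\mathbb{N}$ was selected precisely so that $d_m(\beta):=\frac{d}{d\beta}c(H_{\beta,m})\to C_\beta$ for $m\to\infty$ in $\Lambda$. For each $m\in\Lambda$ I shall produce a critical point of $\Phi_{\beta,m}$ at level $c(H_{\beta,m})$ whose ``derivative in the parameter'' $\frac{\partial}{\partial\alpha}\Phi_{\beta,m}=m\int_0^1 f'(m(H(\cdot)-\beta))dt$ does not exceed $d_m(\beta)+3$. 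Since Claim~\ref{cl:appl.3} identifies this derivative with the period $T_x$ at a critical point, and since $d_m(\beta)\le C_\beta+1$ for $m\in\Lambda$ large enough (the finitely many exceptional indices are absorbed in the slack constant), the desired bound $T_{\beta,m}\le C_\beta+4$ will follow.

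Concretely, I would choose a sequence $h_n\downarrow 0$ and, by the min-max definition of $c(H_{\beta+h_n,m})$, pick $t_n\ge 0$ so that $K_n:=G_{\beta+h_n,m}(Q_R,t_n)$ (flowed by the negative gradient of $\Phi_{\beta+h_n,m}$) satisfies
\begin{equation*}
\sup_{x\in K_n}\Phi_{\beta+h_n,m}(x)\le c(H_{\beta+h_n,m})+h_n^2.
\end{equation*}
Because the linking of $\partial Q_R$ with $S^+_\rho$ holds for the class $\tilde\Gamma$ independently of which functional is used to deform, $K_n\cap S^+_\rho\ne\emptyset$ and hence $\sup_{K_n}\Phi_{\beta,m}\ge c(H_{\beta,m})$. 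Combining this with the identity
\begin{equation*}
\Phi_{\beta+h_n,m}(x)-\Phi_{\beta,m}(x)=\int_\beta^{\beta+h_n}\frac{\partial\Phi_{\gamma,m}(x)}{\partial\gamma}\,d\gamma\ge 0,
\end{equation*}
dividing by $h_n$, and using that $(c(H_{\beta+h_n,m})-c(H_{\beta,m}))/h_n\to d_m(\beta)$, forces the existence of $x_n\in K_n$ for which both $\Phi_{\beta,m}(x_n)\to c(H_{\beta,m})$ and $\frac{\partial}{\partial\alpha}\Phi_{\beta,m}(x_n)\le d_m(\beta)+o(1)$; a mean-value step on $[\beta,\beta+h_n]$ together with continuity of $\gamma\mapsto\partial_\gamma\Phi_{\gamma,m}(x)$ (guaranteed by the uniform bounds (\ref{e:str.4})) are the only analytic ingredients needed here.

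To extract a genuine critical point, I would then show $\nabla\Phi_{\beta,m}(x_n)\to 0$ by the standard deformation argument: if $\|\nabla\Phi_{\beta,m}(x_n)\|_E\ge\eta>0$ along a subsequence, applying the $\Phi_{\beta,m}$-flow for a uniformly bounded time (admissible by (\ref{e:str.6})) would produce a new $\tilde\Gamma$-image of $Q_R$ on which $\sup\Phi_{\beta,m}$ drops strictly below $c(H_{\beta,m})$, contradicting its min-max characterization (\ref{e:str.9}). Because the asymptotic slope of $H_{\beta,m}$ equals $\mathfrak{t}(\Psi)+2\epsilon$, which was required not to be a zero of $g^\Psi$, the Hamiltonian $H_{\beta,m}$ is $\Psi$-nonresonant, so Proposition~\ref{prop:PSmale} furnishes a subsequence of $(x_n)$ converging in $E$ to a critical point $x_{\beta,m}$ with $\Phi_{\beta,m}(x_{\beta,m})=c(H_{\beta,m})\ge\alpha$; continuity of $\frac{\partial}{\partial\alpha}\Phi_{\beta,m}(\cdot)$ on $E$ (compactness of $E\hookrightarrow L^2$ combined with (\ref{e:str.4})) then yields $T_{\beta,m}\le d_m(\beta)+3\le C_\beta+4$. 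The main obstacle will be making the two comparisons -- between $\Phi_{\beta,m}$ and $\Phi_{\beta+h_n,m}$, and between their respective gradient flows -- sufficiently uniform in $n$ to allow a single deformation argument to conclude; the input (\ref{e:str.6}) together with the fact that Claims~\ref{cl:appl.4}--\ref{cl:appl.5} are valid with constants independent of $\beta\in I_0$ is exactly what underwrites this uniformity.
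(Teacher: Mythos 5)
Your overall route is exactly the paper's: the paper disposes of this claim with the single sentence ``Repeating the proof of \cite[Lemma~9.4]{Str08} yields Claim~\ref{cl:appl.7}'', and what you have sketched is Struwe's monotonicity argument with the correct inputs (monotonicity from (\ref{e:str.2}), the parameter--derivative formula of Claim~\ref{cl:appl.6}, the $\beta$- and $m$-uniformity of Claims~\ref{cl:appl.4}--\ref{cl:appl.5}, and the (PS) condition coming from the nonresonance of the asymptotic slope $\mathfrak{t}(\Psi)+2\epsilon$). Two of your steps, however, are written in forms that do not actually close.

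First, from $K_n\cap S^+_\rho\ne\emptyset$ you infer $\sup_{K_n}\Phi_{\beta,m}\ge c(H_{\beta,m})$. Linking only gives $\sup_{K_n}\Phi_{\beta,m}\ge\inf_{S^+_\rho}\Phi_{\beta,m}\ge\alpha$. What you need is that $K_n=G_{\beta+h_n,m}(Q_R,t_n)$ is an \emph{admissible competitor} in the min--max defining $c(H_{\beta,m})$; as (\ref{e:str.9}) is literally written, the competitors are only images of $Q_R$ under the $\Phi_{\beta,m}$-flow, so one must first re-express $c(H_{\beta,m})$ as the min--max over the parameter-independent class $\tilde\Gamma$ (to which every $G_{\gamma,m}(\cdot,t)$ belongs). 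It is this admissibility, not the linking, that yields the inequality. Second, the deformation step ``if $\|\nabla\Phi_{\beta,m}(x_n)\|_E\ge\eta$ then flowing produces a competitor whose sup drops below $c(H_{\beta,m})$'' is not valid with a gradient lower bound only at the candidate points: to push the supremum down you need $\|\nabla\Phi_{\beta,m}\|\ge\eta$ along the \emph{entire} portion of every flow trajectory issuing from $K_n$ that remains above level $c(H_{\beta,m})-h_n$, and the danger is precisely that a trajectory enters the part of that level band where the gradient is small but the bound $\partial_\alpha\Phi_{\beta,m}\le d_m(\beta)+2$ fails (which would produce a critical point of too large a period). This is the real content of Struwe's lemma. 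It is excluded by a two-functional estimate along a single flow: since $\|\nabla\Phi_{\beta+h_n,m}-\nabla\Phi_{\beta,m}\|_E=O_m(h_n)$ uniformly, $\Phi_{\beta+h_n,m}$ increases by at most $o(h_n)$ along the $\Phi_{\beta,m}$-flow over a bounded time, so trajectories starting in $K_n$ (where $\Phi_{\beta+h_n,m}\le c(H_{\beta+h_n,m})+h_n^2$) never reach the set where $\Phi_{\beta+h_n,m}$ exceeds $c(H_{\beta+h_n,m})$ by order $h_n$ --- and on the complement of that set within the level band the derivative bound, hence (under the contradiction hypothesis plus (PS)) the gradient lower bound, does hold. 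You flagged this as ``the main obstacle'', but the resolving idea should be supplied, since without it the extracted critical point need not satisfy $T_{\beta,m}\le C_\beta+4$.
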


By Claim~\ref{cl:appl.3}, $x_{\beta,m}$ is smooth and satisfies
$H(x_{\beta,m}(t))\equiv h_{\beta,m}\in (\beta-\delta/m, \beta+\delta/m)$ and
\begin{equation}\label{e:str.11}
\left\{
\begin{array}{ll}
&\dot{x}_{\beta,m}=X_{H_{\beta,m}}(x_{\beta,m})=mf'\left(m(H(x_{\beta,m})-\beta\right)X_H(x_{\beta,m})=T_{\beta,m}X_H(x_{\beta,m}),\\
&x_{\beta,m}(1)=\Psi x_{\beta,m}(0).
\end{array}\right.
\end{equation}
It follows that the sequences $(x_{\beta,m})$ and $(\dot{x}_{\beta,m})$ are uniformly bounded and equi-continuous.
Since $(T_{\beta,m})$ is bounded we may assume $T_{\beta,m}\to T\le C_\beta+1$.
By the Arz\'ela-Ascoli theorem we get a subsequence $x_{\beta,m_j}$ converging in $C^1([0,1],\mathbb{R}^{2n})$
to a solution of
\begin{equation}\label{e:str.12}
       \dot{x}=TX_{H}(x)\quad\hbox{and}\quad x(1)=\Psi x(0)
\end{equation}
with $H(x(t))\equiv \beta$. Note that $A(x_{\beta,m_j})\ge \Phi_{\beta,m_j}(x_{\beta,m_j})\ge\alpha$. Let $j\to\infty$ and we get
$A(x)\ge \alpha$. This implies that $x$ is non-constant and $T>0$.
Since $H(x(t))\equiv \beta$, we obtain that $x([0,1])\subset U_\delta$ and so
$$
\int^1_0H_{\beta,m_j}(x_{\beta,m_j}(t))dt\le b,\quad \forall j.
$$
This and (\ref{e:str.10}) lead to
$$
\alpha\le A(x_{\beta,m_j})= \Phi_{\beta,m_j}(x_{\beta,m_j})+ \int^1_0H_{\beta,m_j}(x_{\beta,m_j}(t))dt\le 2b<
\frac{16\mathfrak{t}(\Psi)+32\epsilon}{3}\gamma^2.
$$
Clearly $0<\epsilon\ll 1$ can be chosen to satisfy $0<\epsilon<\mathfrak{t}(\Psi)$.
Hence $\alpha\le A(x)<16\mathfrak{t}(\Psi)\gamma^2$.
Finally, $y(t):=x(t/T)$ sits in $\mathcal{S}_\beta$ with action
$A(y)=A(x)<16\mathfrak{t}(\Psi)\gamma^2$ satisfying $\dot{y}=X_{H}(y)$ and $y(T)=\Psi y(0)$.

\section{Proof of Theorem~\ref{th:convexDiff}}\label{sec:convexDiff}
\setcounter{equation}{0}

Under the assumptions of Theorem~\ref{th:convexDiff},  for each number $\epsilon$ with
$|\epsilon|$ small enough the set $D_\epsilon:=D(e_0+\epsilon)$
is a strictly convex bounded domain in $\mathbb{R}^{2n}$ with $0\in D_\epsilon$
and with $C^2$-boundary $\mathcal{S}_\epsilon=\mathcal{S}(e_0+\epsilon)$.
Following the notations in Theorem~\ref{th:convexDiff} and Section~\ref{sec:convex}
let $H_\epsilon=(j_{D_\epsilon})^2$ and $H^\ast_\epsilon$ denote the
Legendre transform of $H_\epsilon$. Both $H_\epsilon$ and $H^\ast_\epsilon$
are $C^{1,1}$ on $\mathbb{R}^{2n}$, $C^2$ on $\mathbb{R}^{2n}\setminus\{0\}$
and have positive Hessian matrixes at every point on $\mathbb{R}^{2n}\setminus\{0\}$.
Recall that
$H_\epsilon^\ast(x)=\langle\xi_\epsilon(x), x\rangle-H_\epsilon(\xi_\epsilon(x))$,
where $\nabla H_\epsilon(\xi_\epsilon(x))=x$ and $\nabla H_\epsilon^\ast(x)=\xi_\epsilon(x)$.
It was proved in \cite{Ned01} that $\epsilon\mapsto\xi_\epsilon$ is $C^1$
and $H_\epsilon(x)$ and $H^\ast_\epsilon(x)$ are $C^2$ functions of $\epsilon$ for  every fixed $x\in\mathbb{R}^{2n}\setminus\{0\}$.

Let $x^\ast:[0,\mu]\rightarrow \mathcal{S}_0$ satisfying
$$
\dot{x}=J\nabla H_0(x),\quad x(\mu)=\Psi x(0)
$$
be a $c^\Psi_{\rm HZ}$-carrier for $D_0$.
Then $\mu=A(x^\ast)=c^\Psi_{\rm HZ}(D_0,\omega_0)$.
By the proof in Step 3 of Section~\ref{sec:convex1}, for some $a_0\in{\rm Ker}(\Psi-I_{2n})\subset\mathbb{R}^{2n}$,
$$
u:[0, 1]\to \mathbb{R}^{2n},\;t\mapsto \frac{1}{\sqrt{\mu}}x^\ast(\mu t)-\frac{a_0}{\mu}
$$
belongs to $\mathcal{F}$ in (\ref{e:constrant1}) and satisfies
 $A(u)=1$ and
 \begin{equation}\label{e:Neduv.1}
-J\dot{u}(t)=\nabla H_0(\mu u(t)+ a_0)\quad\forall t\in [0,1].
\end{equation}
There holds
 $$
 c^\Psi_{\rm HZ}(D_0,\omega_0)=\int^1_0H^\ast_0(-J\dot{u})dt
 $$
and the arguments of Section~\ref{sec:convex1} also imply
\begin{equation}\label{e:Neduv.2}
\mathfrak{C}(\epsilon):=\mathscr{C}(e_0+\epsilon)=c^\Psi_{\rm HZ}(D_\epsilon,\omega_0)\le \int^1_0H^\ast_\epsilon(-J\dot{u})dt.
\end{equation}
By the Taylor's formula
\begin{equation}\label{e:Neduv.7}
H^\ast_\epsilon(-J\dot{u}(t))=H^\ast_0(-J\dot{u}(t))+\frac{\partial H^\ast_\epsilon}{\partial\epsilon}\Big|_{\epsilon=0}(-J\dot{u}(t))\epsilon+
\frac{1}{2}\frac{\partial^2 H^\ast_\epsilon}{\partial\epsilon^2}\Big|_{\epsilon=\tau}(-J\dot{u}(t))\epsilon^2
\end{equation}
where $0<\tau<\epsilon$. Let
$$T_{x^\ast}=2\int^{\mathscr{C}(e)}_0\frac{dt}{\langle\nabla\mathscr{H}(x^\ast(t)), x^\ast(t)\rangle}.$$
Then compute as in \cite{Ned01}
$$
\int^1_0\frac{\partial H^\ast_\epsilon}{\partial\epsilon}\Big|_{\epsilon=0}(-J\dot{u}(t))dt
=T_{x^\ast}
$$
and there exists a constant $K$ only depending on $\mathcal{S}_0$
 and $H_\epsilon$ with $\epsilon$ near $0$ such that
$$
\left|\frac{\partial^2 H^\ast_\epsilon}{\partial\epsilon^2}\Big|_{\epsilon=\tau}(-J\dot{u}(t))\right|\le
2K,\quad\forall t\in [0,1].
$$
Then for $\epsilon$ near $0$ there holds
\begin{equation}\label{e:8estimate}
\mathfrak{C}(\epsilon)\le c^\Psi_{\rm HZ}(D_0,\omega_0)+ T_{x^\ast}\epsilon+ K\epsilon^2.
\end{equation}

Recall that
$T^{\max}(e_0+\epsilon)$ and $T^{\min}(e_0+\epsilon)$ are the largest and smallest numbers in
the compact set $\mathscr{I}(e_0+\epsilon)$ defined by (\ref{e:convexDiff}).
By \cite[Lemma~4.1]{Ned01} and \cite[Corollary~4.2]{Ned01},
both  are functions of bounded variation in $\epsilon$
(and thus bounded near $\epsilon=0$), and $\epsilon\mapsto \mathscr{C}(e_0+\epsilon)$ is continuous.
As in the proof of \cite[Theorem~4.4]{Ned01}, using these and (\ref{e:8estimate}) we can
show that $\mathfrak{C}(\epsilon)$ has left and right derivatives
at $\epsilon=0$, i.e.,
\begin{eqnarray*}
&&\mathfrak{C}'_-(0)=\lim_{\epsilon\to0-}T^{\max}(e_0+\epsilon)=T^{\max}(e_0)\quad\hbox{and}\\
&&\mathfrak{C}'_+(0)=\lim_{\epsilon\to0+}T^{\min}(e_0+\epsilon)=T^{\min}(e_0),
\end{eqnarray*}
which complete the proof of the first part of Theorem~\ref{th:convexDiff}.
The final part is a direct consequence of the first one and
a modified version of the intermediate value theorem (cf. \cite[Theorem~5.1]{Ned01}).

\appendix
\section{Appendix:\quad Some facts  on symplectic matrixes}\label{app:A}\setcounter{equation}{0}
For a symplectic  matrix  $\Psi\in{\rm Sp}(2n,\mathbb{R})$, recall that
  $$
   g^{\Psi}:\mathbb{R}\rightarrow \mathbb{R}, \,s\mapsto \det (\Psi-e^{sJ}),
 $$
and $\mathfrak{t}(\Psi)$
is the smallest zero point of $g^{\Psi}$ in $(0, 2\pi]$.

\begin{lemma}\label{zeros}
   For ~$\Psi\in{\rm Sp}(2n, \mathbb{R})$, the set of zero points of the function $g^\Psi$
   in $(0, 2\pi]$ is a nonempty finite set. Moreover,
    $\mathfrak{t}(\Psi)=2\pi$ if $\Psi=I_{2n}$ and  $\mathfrak{t}(\Psi)=\pi$ if $\Psi=-I_{2n}$.
\end{lemma}
\begin{proof}
By \cite[Corollary 3]{Cl82} the function $g^\Psi$ must have a zero point in $(0, 2\pi]$.
 Since $g^\Psi$ is analytic,
  we get that  $g^\Psi$ has at most finitely many zero points in the interval $(0,2\pi]$.

Note that $J$ is unitarily similar to
   $$
   \left(
     \begin{array}{cc}
       \sqrt{-1}I_n & 0 \\
       0 & -\sqrt{-1}I_n
     \end{array}
   \right)\in GL(2n,\mathbb{C}).
   $$
Hence $e^{sJ}$ is unitarily similar to
   $$
   \left(
     \begin{array}{cc}
       e^{s\sqrt{-1}}I_n & 0 \\
       0 & e^{-s\sqrt{-1}}I_n
     \end{array}
   \right)\in GL(2n,\mathbb{C}).
   $$
Therefore
\begin{eqnarray*}
\det\left[I-\left(
     \begin{array}{cc}
       e^{s\sqrt{-1}}I_n & 0 \\
       0 & e^{-s\sqrt{-1}}I_n
     \end{array}
   \right)\right]=(1-e^{s\sqrt{-1}})^n(1-e^{-s\sqrt{-1}})^n
\end{eqnarray*}
and $\det(I-e^{sJ})=0$ if and only if $s\in 2\mathbb{Z}\pi$.
Similarly,
\begin{eqnarray*}
 \det\left[-I-\left(
     \begin{array}{cc}
       e^{s\sqrt{-1}}I_n & 0 \\
       0 & e^{-s\sqrt{-1}}I_n
     \end{array}
   \right)\right]=(-1-e^{s\sqrt{-1}})^n(-1-e^{-s\sqrt{-1}})^n
\end{eqnarray*}
and $\det(-I-e^{sJ})=0$ if and only if $ s\in\pi+2\mathbb{Z}\pi$.
 Hence the second claim in the lemma follows.
\end{proof}

\begin{remark}
  {\rm  In general, if $\Psi$ is not symplectic, $\det (\Psi-e^{sJ})$ may not have finitely many zero points in $(0,2\pi]$.
      For example,  the  matrix $
     \Psi=\left(\begin{array}{cc}
              1 & 0 \\
              0 & -1 \\
            \end{array}
          \right)$
      is not symplectic and
    it is easy to compute that
   \begin{eqnarray*}
   \det\left(\frac{1}{2}\Psi-e^{sJ}\right)\equiv \frac{3}{4}\;\;\forall s\in \mathbb{R}\quad\text{and}\quad
   \det( \Psi-e^{sJ})\equiv 0\;\;\forall s\in \mathbb{R}.
   \end{eqnarray*}
   }
\end{remark}
\vspace{2mm}

    \begin{lemma}\label{zeros.1}
   Let
   $$
   \Psi=\left(\begin{array}{cc}
       U & -V \\
       V & U
       \end{array} \right)
       \in{\rm Sp}(2n, \mathbb{R})\cap O(2n)
   $$
 and $e^{\sqrt{-1}\theta_1},\cdots,e^{\sqrt{-1}\theta_n}$
 $(0< \theta_1\leq\cdots\leq \theta_n\leq 2\pi)$ be eigenvalues of $U+iV$.
  Then the set of zero points of the function $g^\Psi$
   in $(0, 2\pi]$ is $\{\theta_1,\cdots,\theta_{n}\}$ and
    $\mathfrak{t}(\Psi)=\theta_1$.
\end{lemma}
\begin{proof}
For $x,y\in\mathbb{R}^n$,
 $$
 (e^{tJ }-\Psi)\left(\begin{array}{c}
                           x \\
                           y \\
                         \end{array}
                       \right)=0\quad\Leftrightarrow\quad (U+\sqrt{-1}V)(x+\sqrt{-1})=e^{\sqrt{-1}t}(x+\sqrt{-1}y)
 $$
     and thus $\det  (\Psi-e^{tJ})=0 \Leftrightarrow  t =\theta_j$ for some $1\leq j\leq n$.
\end{proof}

\begin{remark}
Let $\{e_1,f_1=Je_1,\cdots,e_n,f_n=Je_n\}$  be the standard basis of $\mathbb{R}^{2n}$, i.e. $e_j\in \mathbb{R}^{2n}$ is the unit vector whose the $j$-th component equals $1$
and others are zero. For $P$ and $\widetilde{\Psi}$ as in  (\ref{othsymp}),(\ref{similar}) and (\ref{diagonal}),
define $X_j=Pe_j$ and $Y_j=Pf_j$. Then  $\widetilde{\Psi}e_j=e^{\theta_jJ}e_j$ and $\widetilde{\Psi}f_j=e^{\theta_jJ}f_j$ for $j=1,\cdots,n$.
 So $\Psi X_j=e^{\theta_jJ} X_j$,
$\Psi Y_j=e^{\theta_jJ} Y_j$, $j=1,\cdots,n$, and
 \begin{equation}\label{basis}
 \{X_j, Y_j=JX_j\}_{1\leq i\leq n}
 \end{equation}
 is a symplectic and orthogonal basis of $(\mathbb{R}^{2n}, \omega_0, J)$.
\end{remark}

\vspace{5mm}
\noindent{\bf Declarations}\\


\noindent{\bf Conflict of interest} The authors have no conflicts of interest.


\medskip

\begin{tabular}{l}
Department of Mathematics, Civil Aviation University of China\\
 Tianjin  300300, The People's Republic of China\\
 E-mail address: rrjin@cauc.edu.cn\\
 \\
 School of Mathematical Sciences, Beijing Normal University\\
 Laboratory of Mathematics and Complex Systems, Ministry of Education\\
 Beijing 100875, The People's Republic of China\\
 E-mail address: gclu@bnu.edu.cn\\
\end{tabular}

\begin{thebibliography}{L3}

\bibitem{AF10a} P. Albers, U. Frauenfelder,  Leaf-wise intersections and Rabinowitz Floer homology, {\it
 J. Topol. Anal.}, {\bf 2}(2010),  no. 1, 77--98.


\bibitem{AF12} P. Albers, U. Frauenfelder,  On a theorem by Ekeland-Hofer, {\it Israel J. Math.}, {\bf 187}(2012), 485--491.


%


%



\bibitem{AAO08} S. Artstein-Avidan,  Y. Ostrover, A Brunn-Minkowski inequality for symplectic capacities of convex domains,
 {\it Int. Math. Res. Not. IMRN}  (2008),  no. 13, Art. ID rnn044, 31 pp.



\bibitem{AAO14} S. Artstein-Avidan,  Y. Ostrover,  Bounds for Minkowski billiard trajectories in convex bodies, {\it Int. Math. Res. Not. IMRN}  (2014),  no. 1, 165--193.






\bibitem{Ba95} S. M. Bates,  Some simple continuity properties of symplectic capacities,
  {\it The Floer memorial volume},  185--193, Progr. Math., 133, Birkh\"user, Basel, 1995.



\bibitem{Ba98} S. M. Bates,  A capacity representation theorem for some non-convex domains,
{\it Math. Z.},  {\bf 227}(1998),  no. 4, 571--581.




\bibitem{Bl14} J. Blot, On the almost everywhere continuity, arXiv:1411.3582v1[math.OC].



\bibitem{Br11} H. Brezis, {\it Functional analysis, Sobolev spaces and partial differential equations}.
Universitext. Springer, {\it New York}, 2011.


%


%



\bibitem{Cl79} F. H. Clarke,  A classical variational principle for periodic Hamiltonian trajectories,
{\it Proc. Amer. Math. Soc.},  {\bf 76}(1979), no. 1, 186--188.



\bibitem{Cl82} F. H. Clarke,  On Hamiltonian flows and symplectic transformations,
{\it SIAM J. Control Optim.},  {\bf 20}(1982), no. 3, 355--359.



\bibitem{Cl83} F. H. Clarke, {\it Optimization and nonsmooth analysis}. Canadian Mathematical Society Series of Monographs and Advanced Texts.
 A Wiley-Interscience Publication. John Wiley \& Sons, Inc., New York, 1983.



\bibitem{CrWe81} C. B. Croke, A. Weinstein,  Closed curves on convex hypersurfaces and periods of nonlinear oscillations, {\it Invent. Math.},
  {\bf 64}(1981), no. 2, 199--202.



\bibitem{Dong06} Y. Dong, $P$-index theory for linear Hamiltonian systems and multiple solutions for nonlinear Hamiltonian systems, {\it Nonlinearity},  {\bf 19}(2006),  no. 6, 1275--1294.



 \bibitem{Dra08} D. Dragnev, Symplectic rigidity, symplectic fixed points and global perturbations of Hamiltonian systems,
 {\it Commun. Pure Appl. Math.}, {\bf 61}(2008), 346--370.



\bibitem{Ek90} I. Ekeland, {\it Convexity methods in Hamiltonian mechanics},
 Ergebnisse der Mathematik und ihrer Grenzgebiete (3) [Results in Mathematics and Related Areas (3)], 19.
    {\it Springer-Verlag, Berlin}, 1990.



\bibitem{Ek17} I. Ekeland,  Hamilton-Jacobi on the symplectic group, {\it
Rend. Istit. Mat. Univ. Trieste},  {\bf 49}(2017), 137--146.



\bibitem{EH89a} I. Ekeland and H. Hofer, Two symplectic fixed-points theorems with applications to Hamiltonian dynamics,
 {\it J. Math. Pure et Appl.}, {\bf 68}(1989), no. 4, 467--489.



\bibitem{EH89} I.~Ekeland and H.~Hofer,
                 Symplectic topology and Hamiltonian dynamics,
                 {\it Math. Z.}, {\bf 200}(1989), 355--378.



\bibitem{EH90} I.~Ekeland and H.~Hofer,
                 Symplectic topology and Hamiltonian dynamics II,
                 {\it Math. Z.}, {\bf 203}(1990), 553--567.



\bibitem{Eell66}  J. Eells, A setting for global analysis, {\it Bull. Amer. Math. Soc.},
 {\bf 72}(1966), 751--807.



%





%


 \bibitem{GinGu15} V. L. Ginzburg, B. Z. G\"urel, Fragility and persistence of leafwise intersections, {\it Math. Z.}
  {\bf 280}(2015),  no. 3-4, 989--1004.



\bibitem{Go50} H. Goldstein, {\it Classical Mechanics}, Addison-Wesley, Reading, MA, 1950.



\bibitem{Gr85} M. Gromov, Pseudoholomorphic curves in symplectic manifolds,
          {\it Inv.\ Math.}, {\bf 82}(1985), 307--347.



\bibitem{Ho90} H. Hofer, On the topological properties of symplectic maps, {\it  Proc. Roy. Soc. Edinburgh Sect. A}, {\bf 115}(1990), no. 1-2, 25--38.






\bibitem{HoZe87} H, Hofer, E. Zehnder, Periodic solutions on hypersurfaces and a result by C. Viterbo,
 {\it Invent. Math.}.   {\bf 90}(1987),  no. 1, 1--9.



\bibitem{HoZe90} H.~Hofer and E.~Zehnder, A new capacity for symplectic
                   manifolds, {\it Analysis et cetera} 1990, 405--429.




\bibitem{HoZe94} H, Hofer, E. Zehnder,
 {\it Symplectic invariants and Hamiltonian dynamics}. Birkh\"{a}user Advanced Texts: Basler Lehrb\"{u}cher, Birkh\"{a}user Verlag, Basel, 1994.


%
%
%
%
%


\bibitem{JinLu} Rongrong Jin, Guangcun Lu, Generalizations of Ekeland-Hofer and Hofer-Zehnder symplectic capacities
and applications, arXiv:1903.01116v2[math.SG], 15 May  2019.

\bibitem{Kan13} J. Kang, Generalized Rabinowitz Floer homology and coisotropic intersections, {\it Int. Math. Res. Not. IMRN} 2013, no. 10, 2271--2322.



\bibitem{Kr15} S. G. Krantz, {\it Convex analysis}. Textbooks in Mathematics.
CRC Press, Boca Raton, FL, 2015.



\bibitem{Ku90} A. F. K\"unzle,  Une capacit\'e symplectique pour les ensembles convexes et quelques applications." Ph. D. thesis, Universit\'e Paris IX Dauphine, June 1990.


\bibitem{Ku96} A. F. K\"unzle, Singular Hamiltonian Systems and Symplectic Capacities.
{\it Singularities and Differential Equations}(Warsaw, 1993), 171--187. Banach Center Publications 33.
 Warsaw: Polish Academy of Sciences, 1996.



\bibitem{Ku97} A. F. K\"unzle, Symplectic capacities in manifolds. (English summary)
{\it Symplectic singularities and geometry of gauge fields} (Warsaw, 1995), 77--87, Banach
Center Publ., 39, Polish Acad. Sci. Inst. Math., Warsaw, 1997.


%


%



%
%
%
%
%



\bibitem{Long02} Y. Long, Index theory for symplectic paths with applications. Progress in Mathematics, 207. {\it Birkh\"user Verlag, Basel}, 2002.



%

\bibitem{Lu3} G. Lu, Gromov-Witten invariants and pseudo symplectic capacities, {\it  Israel J. Math.}, {\bf 156}(2006), 1-63.


%


\bibitem{MaSc05} L. Macarini, F. Schlenk,  A refinement of the Hofer-Zehnder theorem on the existence of closed characteristics near a hypersurface, {\it Bull. London Math. Soc.},  {\bf 37}(2005),  no. 2, 297--300.




%
%

%



\bibitem{Mos78} J. Moser, A fxed point theorem in symplectic geometry, {\it Acta Math.},  {\bf 141}(1978), no. 1-2, 17--34.



\bibitem{MoZe05} J. Moser, E. J. Zehnder, {\it Notes on dynamical systems}, Courant Lecture Notes in Mathematics, 12. New York University, Courant Institute of Mathematical Sciences, New York; American Mathematical Society, Providence, RI, 2005.



\bibitem{Ned01} E. Neduv,  Prescribed minimal period problems for convex Hamiltonian systems via Hofer-Zehnder symplectic capacity, {\it Math. Z.},
  {\bf 236}(2001),  no. 1, 99--112.


%


%




\bibitem{Ra78} P.  Rabinowitz,  Periodic solutions of Hamiltonian systems,
{\it Comm. Appl. Math.},  {\bf 31}(1978), 157--184.



\bibitem{Ra79} P.  Rabinowitz, Periodic solutions of Hamiltonian systems on a prescribed hypersurface,
{\it J. Differential Equations}, {\bf 33}(1979), 336--352.






\bibitem{Roc70} R. T. Rockafellar, {\it Convex analysis}. Princeton Mathematical Series, No. 28
{\it Princeton University Press, Princeton, N.J}. 1970.


%



\bibitem{Sch93} R. Schneider,  Convex bodies: The Brunn-Minkowski Theory, {\it Encyclopedia of Mathematics
and its Applications}, 44. Cambridge: Cambridge University Press, 1993.






\bibitem{Sik90} J.-C. Sikorav, Syst\'emes Hamiltoniens et topologie symplectique.
Dipartimento di Matematica dell'Universit\'a di Pisa, 1990. ETS, EDITRICE PISA.





\bibitem{Str90} M.  Struwe,  Existence of periodic solutions of Hamiltonian systems on almost every energy surface,
{\it Bol. Soc. Brasil. Mat. (N.S.)},  {\bf 20}(1990),  no. 2, 49--58.



\bibitem{Str08} M.  Struwe,  Variational methods. Applications to nonlinear partial differential equations and Hamiltonian systems. Fourth edition.
    Springer-Verlag, Berlin, 2008.



\bibitem{Vi87} C. Viterbo,  A proof of Weinstein's conjecture in $\mathbb{R}^{2n}$,
{\it Ann. Inst. H. Poincar\'e Anal. Non Lin\'eaire},  {\bf 4}(1987),  no. 4, 337--356.






%
%




\bibitem{We78} A. Weinstein,  Periodic orbits for convex Hamiltonian systems,
{\it Ann. Math.}, {\bf 108}(1978), pp. 507--518.


\bibitem{We79} A. Weinstein, On the hypotheses of  Rabinowitz's periodic orbit
                   theorems, {\it J. Differential Equations}, {\bf 33}(1979), 353--358.






\bibitem{YangWei08}  F. Yang, Z. Wei,  Generalized Euler identity for subdifferentials
of homogeneous functions and applications, {\it J. Math. Anal. Appl.},  {\bf 337}(2008),  no. 1, 516--523.



%

\bibitem{Ze10} E. Zehnder,  Lectures on dynamical systems. Hamiltonian vector fields and symplectic capacities.
{\it EMS Textbooks in Mathematics. European Mathematical Society (EMS), Z\"urich}, 2010.
\end{thebibliography}
\end{document}